\newcommand{\br}[1]{\overline{#1}}
\newcommand{\td}[1]{\widetilde{#1}}
\newcommand{\FF}{\mathbb{F}}
\newcommand{\GG}{\mathbb{G}}
\newcommand{\MS}{\mathbb{S}}
\newcommand{\HH}{\mathbb{H}}
\newcommand{\Sp}{\mathrm{Sp}}
\newcommand{\Set}{\mathrm{Set}}
\newcommand{\Mod}{\mathrm{Mod}}
\newcommand{\Alg}{\mathrm{Alg}}
\theoremstyle{definition}
 \newtheorem{thm}{Theorem}[subsection]
 \newtheorem{cor}[thm]{Corollary}
 \newtheorem{lem}[thm]{Lemma}
 \newtheorem{prop}[thm]{Proposition}
 \newtheorem{defn}[thm]{Definition}
 \newtheorem{rmk}[thm]{Remark}
 \newtheorem{assumption}[thm]{Assumption}
\newtheorem*{thm*}{Theorem}
\newtheorem*{cor*}{Corollary}
\newtheorem*{lem*}{Lemma}
\newtheorem*{prop*}{Proposition}
\newtheorem*{defn*}{Definition}
\newtheorem*{ex*}{Example}
\newtheorem*{exs*}{Examples}
\newtheorem*{rmk*}{Remark}
\newtheorem*{claim*}{Claim}
\numberwithin{equation}{section}
\numberwithin{figure}{section}
\DeclareMathOperator{\Res}{Res}
\DeclareMathOperator{\Hom}{Hom}
\DeclareMathOperator{\Sta}{Stab}
\DeclareMathOperator{\Map}{Map}
\DeclareMathOperator{\MAP}{\mathbf{Map}}
\DeclareMathOperator*{\holim}{holim}
\DeclareMathOperator*{\hocolim}{hocolim}
\DeclareMathOperator*{\colim}{colim}
\DeclareMathOperator{\Ind}{Ind}
\DeclareMathOperator{\CoInd}{CoInd}
\title{The homotopy fixed point spectra of profinite Galois extensions}
\author[Mark Behrens]{Mark Behrens$\sp 1$}
\address{
Department of Mathematics \\
Massachusetts Institute of Technology \\
Cambridge, MA 02139, U.S.A.}
\author[Daniel G. Davis]{Daniel G. Davis$\sp 2$}
\address{
Department of Mathematics \\
University of Louisiana at Lafayette \\
Lafayette, LA 70504, U.S.A. }
\begin{document}

\begin{abstract}
Let $E$ be a $k$-local profinite $G$-Galois extension of
an $E_\infty$-ring spectrum $A$ (in the sense of Rognes).  We show that
$E$ may be regarded as producing
a discrete $G$-spectrum.
Also, we prove that if $E$ is a profaithful $k$-local 
profinite extension which satisfies
certain extra conditions, then the forward direction of 
Rognes's Galois correspondence extends to the profinite setting. 
We show that the function spectrum $F_A((E^{hH})_k, (E^{hK})_k)$ is equivalent
to the localized homotopy fixed point spectrum
$((E[[G/H]])^{hK})_k$ where $H$ and $K$ are closed 
subgroups of $G$.  Applications to Morava $E$-theory are given, including 
showing that the homotopy fixed points defined by Devinatz and Hopkins for
closed subgroups of the extended 
Morava stabilizer group agree with those defined with respect to a 
continuous action in terms 
of the derived functor of fixed points.
\end{abstract}

\maketitle

\footnotetext[1]{The first author was supported by NSF grant DMS-0605100, 
the Sloan Foundation, and DARPA.}
\footnotetext[2]{Part of the second author's work on this paper was 
supported by an NSF VIGRE grant at Purdue University, 
a visit to the Mittag-Leffler Institute, and 
a grant from the Louisiana Board of Regents Support Fund.}

\setcounter{tocdepth}{1}
\tableofcontents

\section{Introduction}

In \cite{Rognes}, John Rognes develops a Galois theory of commutative
$S$-algebras which mimics Galois theory for commutative rings.  
Let $k$ be an $S$-module, and let $(-)_k$ denote Bousfield localization with
respect to $k$.  Given a $k$-local cofibrant commutative $S$-algebra $A$, and a 
cofibrant commutative
$A$-algebra $E$ that is $k$-local, Rognes gives the following definition of a
finite $k$-local Galois extension.

\begin{defn}[Finite Galois extension]\label{defn:finiteGalois}
The spectrum $E$ is a \emph{$k$-local $G$-Galois extension} of $A$, for a 
finite discrete group $G$, if it satisfies the following conditions:
\begin{enumerate}
\item $G$ acts on $E$ through commutative $A$-algebra maps.
\item The canonical map $A \rightarrow E^{hG}$ is an equivalence.
\item The canonical map $(E \wedge_A E)_k \rightarrow \Map(G, E)$ is an 
equivalence.
\end{enumerate}
$E$ is said to be \emph{$k$-locally faithful} over $A$ if 
$(M \wedge_A E)_k \simeq \ast$ 
implies
that $M_k \simeq \ast$ for every $A$-module $M$.  In the context of
$k$-local Galois extensions, we shall simply refer to such extensions as
\emph{faithful}.
\end{defn}

\begin{rmk}\label{rmk:Hilbert90}
Rognes (\cite[Prop.~6.3.3]{Rognes}; see also \cite{BakerRichter})
shows that a $k$-local $G$-Galois extension is 
faithful if and only if the additive form of 
Hilbert's Theorem 90 holds:
$$ (E^{tG})_k \simeq \ast. $$
We will mostly 
consider faithful Galois extensions, because these are the
Galois extensions for which the fundamental theorem of Galois theory holds.
We refer the interested reader to
\cite{unfaithful} for an example, due to Ben Wieland, of a Galois
extension that is not faithful.
\end{rmk}

Let $G$ be a profinite group.
Following (and slightly modifying) Rognes's definition 
\cite[Def.~8.1.1]{Rognes} of a $k$-local pro-$G$-Galois extension,
we define a (profaithful) 
$k$-local 
profinite $G$-Galois extension $E$ of $A$ to be a colimit (in
the category of commutative $A$-algebras) of (faithful) 
$k$-local $G/U_\alpha$-Galois extensions $E_\alpha$ of $A$, for a cofinal
system of open normal subgroups $U_\alpha$ of $G$ (see
Definition~\ref{def:galois}).  
Since a colimit of $k$-local spectra need not
be $k$-local, the spectrum $E$ is not necessarily $k$-local.

In \cite{Davis}, the second author developed a category of discrete
$G$-spectra and defined their homotopy fixed points (see also
\cite{Thomason}, 
\cite{JardinePresheaves}, \cite{MitchellHyper}, \cite{Goerss}, 
\cite{Jardineetale}).
In this paper, we examine $k$-local profinite  
$G$-Galois extensions $E$ of $A$
as objects in the category of discrete $G$-spectra, and we study the spectra
of $A$-module maps between the various 
homotopy fixed point spectra of $E$.  Unfortunately, to say meaningful
things it seems that we must impose more hypotheses on our profinite Galois
extensions.

\begin{assumption}\label{assumption}
In this paper, we shall only concern ourselves with
localizations $(-)_k$ which are given as a composite of two localization
functors $((-)_T)_M$, where $(-)_T$ is a smashing localization and $(-)_M$ is a
localization with respect to a finite spectrum $M$.  The spectra
$S$, $H\FF_p$, $E(n)$, and $K(n)$ are all examples of such 
localizations $k$ (see \cite{Bousfield}, \cite{Hovey}).
\end{assumption}

For a cofibrant commutative $S$-algebra $B$ and a cofibrant
commutative $B$-algebra $C$, the $k$-local Amitsur
derived completion $B^\wedge_{k,C}$ is the 
homotopy limit of the 
cosimplicial spectrum
$$ C_k \Rightarrow (C \wedge_B C)_k \Rrightarrow (C \wedge_B C
\wedge_B C)_k \cdots $$
(see, for example, \cite[Def.~8.2.1]{Rognes}).

\begin{defn} Let $E$ be a $k$-local profinite $G$-Galois extension of $A$.
\begin{enumerate}

\item
The extension $E$ is \emph{consistent} if the 
coaugmentation of the $k$-local 
Amitsur derived completion
$$ A \rightarrow A^\wedge_{k,E} $$
is an equivalence.  

\item
The extension $E$ is of \emph{finite virtual cohomological
dimension} (finite vcd) 
if the profinite group $G$ has finite vcd (i.e., $G$ has an open subgroup $U$ of finite
cohomological dimension:
there exists a $d$ such that $H^s_c(U;M) = 0$ for 
each $s > d$ and each discrete $U$-module $M$).

\end{enumerate}
\end{defn}

Assumption~\ref{assumption} ensures that 
if $E$ has finite vcd, then
the condition of $E$ being consistent is equivalent to requiring that the
map
$$ A \rightarrow (E^{hG})_k $$ 
is an equivalence.  This is proven as
Corollary~\ref{cor:consistent}. 

It then follows that the maps 
$$ E_\alpha \rightarrow (E^{hU_\alpha})_k $$
are equivalences (Lemma~\ref{lem:consistency}). 
The consistency hypothesis may be
unnecessary, since we do not know of any profinite Galois
extensions which are not consistent.

The main concern of this paper is the study of the intermediate homotopy
fixed point spectra $E^{hH}$ with respect to \emph{closed} subgroups $H$ of
$G$.  We prove the ``forward'' direction of the Galois correspondence.

\begin{thm*}[\ref{thm:Galois}]
Suppose that $E$ is a consistent profaithful $k$-local profinite $G$-Galois extension of
$A$ of finite vcd, and 
that $H$ is a closed subgroup of $G$.  
\begin{enumerate}

\item
The spectrum $E$ is $k$-locally $H$-equivariantly equivalent to a
consistent profaithful $k$-local  
$H$-Galois extension of $(E^{hH})_k$ of finite
vcd.

\item
If $H$ is a normal subgroup of $G$, then the spectrum $E^{hH}$ is
$k$-locally equivalent to a
profaithful $k$-local $G/H$-Galois extension of $A$.  
If the quotient $G/H$
has finite vcd, then this extension is consistent
(and of finite vcd) over $A$.

\end{enumerate}
\end{thm*}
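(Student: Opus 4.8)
The plan is to deduce both statements from Rognes's \emph{finite} Galois correspondence applied level-by-level to the extensions $E_\alpha$, together with the theory of discrete $G$-spectra and the finite vcd hypothesis to control the passage to colimits. Write $W_\alpha := H\cap U_\alpha$; these form a cofinal system of open normal subgroups of the profinite group $H$, with $H/W_\alpha\cong HU_\alpha/U_\alpha=:\br{H}_\alpha$, a subgroup of $G/U_\alpha$ — and, when $H\trianglelefteq G$, a normal subgroup, with $(G/U_\alpha)/\br{H}_\alpha = G/(HU_\alpha)$. Since closed subgroups and continuous quotients of a profinite group of finite vcd again have finite vcd, $H$ (and $G/H$ in part (2)) has finite vcd exactly when asserted.

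For part (1), set $B:=(E^{hH})_k$. Applying the finite Galois correspondence of \cite{Rognes} to the faithful $G/U_\alpha$-Galois extension $E_\alpha$ of $A$ and the subgroup $\br{H}_\alpha\le G/U_\alpha$, the spectrum $E_\alpha$ is a faithful $\br{H}_\alpha$-Galois extension of $B_\alpha:=(E_\alpha)^{h\br{H}_\alpha}$. Since base change preserves faithful Galois extensions, $F_\alpha:=(E_\alpha\wedge_{B_\alpha}B)_k$ is a faithful $\br{H}_\alpha$-Galois extension of $B$. I would then show that $\colim_\alpha F_\alpha$, over the cofinal system $\{W_\alpha\}$ of open normal subgroups of $H$, is $k$-locally $H$-equivariantly equivalent to $E$; this exhibits $E$ as a $k$-local profinite $H$-Galois extension of $B$, profaithful since each $F_\alpha$ is faithful. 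To get $(\colim_\alpha F_\alpha)_k\simeq E_k$: first replace $B$ by $\colim_\gamma B_\gamma$ (the colimit-commutation input below); then, for $U_\gamma\subseteq U_\alpha$, identify $(E_\alpha\wedge_{B_\alpha}B_\gamma)_k$ with $(E_\gamma)^{hN_{\alpha\gamma}}$, where $N_{\alpha\gamma}=\ker(\br{H}_\gamma\twoheadrightarrow\br{H}_\alpha)$, using the rigidity of faithful Galois extensions \cite{Rognes} (the base change of $E_\alpha$ along $B_\alpha\to B_\gamma$ and the $\br{H}_\alpha$-Galois extension $(E_\gamma)^{hN_{\alpha\gamma}}$ of $B_\gamma$ receive an equivalence from the natural $\br{H}_\alpha$-equivariant $B_\gamma$-algebra map between them); finally, since smashing and $(-)_k$ commute with filtered colimits up to the ambient localizations, one is left with $\bigl(\colim_\alpha\colim_{\gamma\ge\alpha}(E_\gamma)^{hN_{\alpha\gamma}}\bigr)_k$, and a reindexing collapses this: for fixed $\gamma$ the inner index set has terminal object $\alpha=\gamma$, with $N_{\gamma\gamma}$ trivial, giving $E_\gamma$, and $\colim_\gamma E_\gamma=E$. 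Consistency of $E$ over $B$ then follows from Corollary~\ref{cor:consistent}: as $H$ has finite vcd, consistency is equivalent to $B\to(E^{hH})_k$ being an equivalence, which is the identity of $B$.

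For part (2), now $H\trianglelefteq G$, so $\br{H}_\alpha\trianglelefteq G/U_\alpha$, and the finite Galois correspondence for normal subgroups \cite{Rognes} shows $B_\alpha=(E_\alpha)^{h\br{H}_\alpha}$ is a faithful $G/(HU_\alpha)$-Galois extension of $A$. Over the cofinal system $\{HU_\alpha/H\}$ of open normal subgroups of $G/H$, the discrete $G/H$-spectrum $\colim_\alpha B_\alpha$ is by definition a profaithful $k$-local profinite $G/H$-Galois extension of $A$, so it remains to produce a $G/H$-equivariant $k$-local equivalence $E^{hH}\simeq_k\colim_\alpha B_\alpha$ — again the colimit-commutation statement $\colim_\gamma B_\gamma\simeq_k(E^{hH})_k$, together with the identification of the residual $G/H$-action on $E^{hH}$ as a discrete one. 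If $G/H$ has finite vcd, Corollary~\ref{cor:consistent} reduces consistency to $A\to\bigl((E^{hH})^{h(G/H)}\bigr)_k$ being an equivalence; the iterated homotopy fixed point property for the closed normal subgroup $H\trianglelefteq G$ gives $(E^{hH})^{h(G/H)}\simeq E^{hG}$, and $(E^{hG})_k\simeq A$ because $E$ is a consistent extension of $A$ of finite vcd, again by Corollary~\ref{cor:consistent}; the finite vcd of this extension is that of $G/H$.

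The main obstacle is the colimit-commutation input used in both parts: that for a closed subgroup $H\le G$ one has $(E^{hH})_k\simeq_k\bigl(\colim_\gamma(E_\gamma)^{h\br{H}_\gamma}\bigr)_k$, i.e.\ that the $k$-local homotopy fixed points for $H$ are computed by the colimit of the finite-group homotopy fixed points of the levels $E_\gamma$. This is where the finite vcd hypothesis is indispensable — it supplies the horizontal vanishing line in the homotopy fixed point spectral sequences needed to commute the homotopy limit defining $(-)^{hH}$ past the defining filtered colimit of $E$ — and Assumption~\ref{assumption} is what makes $(-)_k$ sufficiently compatible with filtered colimits for the argument to close (cf.\ Lemma~\ref{lem:consistency}). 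Secondary technical points are the rigidity of maps of Galois extensions used to identify the transition spectra, and, in part (2), verifying that the residual $G/H$-action on $E^{hH}$ is genuinely that of a discrete $G/H$-spectrum.
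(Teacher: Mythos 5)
Your overall architecture is close in spirit to the paper's: the intermediate $\br{H}_\alpha$-Galois extensions of $(E^{hH})_k$ are indeed obtained (up to equivalence) as base changes of the finite levels, the double colimit reindexing is fine, and the consistency verifications via Corollary~\ref{cor:consistent} match what the paper does. The problem is that you have isolated the entire difficulty of the theorem into your ``colimit-commutation input'' --- the statement that $(\colim_{H\le U\le_o G}E^{hU})_k\to(E^{hH})_k$ is an equivalence, which is precisely Theorem~\ref{thm:Galoiscolim} and occupies most of Section~\ref{sec:closed} --- and the justification you sketch for it does not work. You claim finite vcd ``supplies the horizontal vanishing line in the homotopy fixed point spectral sequences needed to commute the homotopy limit defining $(-)^{hH}$ past the defining filtered colimit.'' But finite \emph{virtual} cohomological dimension gives a vanishing line only for subgroups of some open $V$ of finite cd; for a general open $U\supseteq H$ (which may contain torsion) the groups $H^s_c(U;-)$ need not vanish in high degrees, so the spectral sequences $E_r(U;X)$ have no uniform vanishing line. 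Worse, even where the $E_2$-terms do agree in the colimit, a colimit of conditionally convergent descent spectral sequences need not converge to the colimit of the abutments --- this is exactly the obstruction spelled out in Section~\ref{sec:difficulties}, where the paper explains that this spectral-sequence comparison fails in general and is why Proposition~\ref{prop:fcdHfp} is restricted to finite cd.

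The missing idea is that the Galois hypotheses, not just finite vcd, are what make Theorem~\ref{thm:Galoiscolim} true. The paper's proof picks an open normal $V$ of finite cd, writes $Q=HV/V\cong H/(H\cap V)$, and uses profaithfulness via Remark~\ref{rmk:Hilbert90} to replace the finite homotopy fixed points $(-)^{hQ}$ by homotopy \emph{orbits} $(-)_{hQ}$, which do commute with the filtered colimit over $U$; Proposition~\ref{prop:fcdHfp} is then applied only to $H\cap V\le V$, where finite cd genuinely holds, and the base-change identification $((E^{hV})_k\wedge_{(E^{hHV})_k}(E')^{hH})_k\simeq((E')^{h(H\cap V)})_k$ (Lemma~\ref{lem:HcapV}) reassembles the answer. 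Without this norm-map maneuver (or an equivalent substitute) your proof of both parts is incomplete, since part (1) needs the colimit statement to identify $\colim_\alpha F_\alpha$ with $E$ and part (2) needs it both for the equivalence $E^{hH}\simeq_k\colim_\alpha B_\alpha$ and for the iterated fixed point identity $(E^{hH})^{hG/H}\simeq E^{hG}$, which you assert but which is not available for closed (non-open) normal $H$ without Corollary~\ref{cor:GaloisHfixed}.
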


\begin{rmk}
Note that the open subgroups of a profinite group $G$ 
are precisely the closed subgroups of
finite index.  Also, if $G$ has finite vcd, then it easily follows from
\cite[I.3.3]{Serre} that every closed subgroup also has finite vcd.
\end{rmk}

We also identify the function spectrum of $A$-module maps between any two
such homotopy fixed point spectra.

\begin{thm*}[\ref{thm:funcspec}]
Let $E$ be a consistent profaithful 
$k$-local profinite $G$-Galois extension of finite
vcd, and let $H$ and $K$ be closed subgroups 
of $G$.  Then 
there is an equivalence
\begin{equation}\label{eqn:funcspec}
F_A((E^{hH})_k, (E^{hK})_k) \simeq ((E[[G/H]])^{hK})_k.
\end{equation}
\end{thm*}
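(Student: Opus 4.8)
The plan is to reduce the statement to the finite-level case, where it becomes the standard computation of the function spectrum of a module over a finite Galois extension, and then pass to the limit using the consistency hypothesis and the discreteness of $E$ as a $G$-spectrum. First I would reduce to the case $H = U_\alpha$ open: by Theorem~\ref{thm:Galois}(1), $(E^{hH})_k$ is itself a consistent profaithful $k$-local profinite $H$-Galois extension of $(E^{hH})_k$, and more to the point $E \simeq \colim_\alpha E^{hHU_\alpha}$ appropriately, so that $(E^{hH})_k \simeq (\colim_\alpha (E^{hHU_\alpha})_k)_k$ where the $HU_\alpha$ run over a cofinal system of open subgroups containing $H$. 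Since $F_A(-, Y)$ carries this localized colimit to a homotopy limit, one reduces~\eqref{eqn:funcspec} to proving the analogous equivalence with $H$ replaced by each open subgroup $HU_\alpha$, together with a check that the target $((E[[G/H]])^{hK})_k$ is the corresponding homotopy limit of the targets $((E[[G/HU_\alpha]])^{hK})_k$ — this last point uses that $E[[G/H]] = \colim_\alpha E[[G/HU_\alpha]]$ as discrete $G$-spectra (indeed $G/H = \lim_\alpha G/HU_\alpha$) and that homotopy fixed points followed by $k$-localization commute with the relevant homotopy limit, which is where finite vcd enters via a descent/conditional-convergence argument as in the proof of Corollary~\ref{cor:consistent}.

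With $H$ open, the key input is the finite Galois extension $(E^{hH})_k$ of $A$: I would first establish, for $H$ open and $K$ arbitrary closed, the identification
\begin{equation*}
F_A((E^{hH})_k, (E^{hK})_k) \simeq F_{(E^{hH})_k}\bigl((E^{hH})_k \wedge_A (E^{hK})_k,\ (E^{hK})_k\bigr),
\end{equation*}
and then use that $E$ is a profaithful $G$-Galois extension to compute the base change $(E^{hH})_k \wedge_A (E^{hK})_k$. Using Theorem~\ref{thm:Galois}(1) to regard $E$ as an $H$-Galois extension of $(E^{hH})_k$ and as a $K$-Galois extension of $(E^{hK})_k$, the self-smash formula (condition~(3) of Definition~\ref{defn:finiteGalois}, in its profinite/continuous-mapping form) gives $(E \wedge_A E)_k \simeq \Map_c(G, E)$ as $G\times G$-spectra, and hence after taking $H$-homotopy fixed points in one factor and base-changing,
\begin{equation*}
\bigl((E^{hH})_k \wedge_A E\bigr)_k \simeq \Map_c(H\backslash G,\ E) \simeq E[[G/H]]
\end{equation*}
as $K$-spectra (the last identification because $K$ acts on the right and the discrete $G$-spectrum $E[[G/H]]$ is by definition $\colim_\alpha \Map(H\backslash G / ?, E)$-type object — here one must be careful whether it is $H\backslash G$ or $G/H$ and insert the antipode/inverse as needed). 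Smashing further with $(E^{hK})_k$ over $(E^{hH})_k$ and using that $E$ is $K$-Galois over $(E^{hK})_k$ (so $E^{hK}$-base change of $E$ is $\Map(K,E)$-like) collapses the double coset to give, after taking $K$-homotopy fixed points, the right-hand side $((E[[G/H]])^{hK})_k$.

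I expect the main obstacle to be the bookkeeping in passing the colimit over $\alpha$ through the two-sided homotopy fixed points while staying $k$-local: a colimit of $k$-local spectra need not be $k$-local (as the excerpt emphasizes), so one cannot naively commute $\colim_\alpha$ with $(-)^{hK}$ and with $(-)_k$. The honest argument must run through the descent spectral sequence for $(E[[G/H]])^{hK}$, show it converges (finite vcd of $K$ — guaranteed by the remark after Theorem~\ref{thm:Galois}, since $K$ is closed in a group of finite vcd), and match it term-by-term with the limit of the finite-level spectral sequences, where at each finite level the computation is purely algebraic Galois descent for the module $E[[G/HU_\alpha]]$ over the finite extension $(E^{hK U_\alpha})_k$ — essentially Rognes's finite-level function-spectrum formula combined with the fact (Lemma~\ref{lem:consistency}) that $E_\alpha \simeq (E^{hU_\alpha})_k$. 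A secondary technical point, worth isolating as a lemma, is the identification $(E[[G/H]])^{hK} \simeq \MAP_{K}\bigl((G/H)_+, E\bigr)^{?}$-type formula showing that $E[[G/H]]$ really does represent the continuous induction, so that the right-hand side of~\eqref{eqn:funcspec} is manifestly the derived-fixed-point object one obtains from the left-hand side; once that is in hand, the equivalence should follow by assembling the pieces above.
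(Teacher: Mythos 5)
Your overall skeleton --- reduce to open subgroups $U$ with $H \le U \le_o G$ via the colimit description of $(E^{hH})_k$ (Theorem~\ref{thm:Galoiscolim}), use finite Galois duality at each open level to trade $F_A((E^{hU})_k,-)$ for smashing with $E[G/U]$, and then commute with the $K$-homotopy fixed points --- matches the paper's proof, which establishes $E[G/U] \simeq \colim_V F_A((E^{hU})_k,(E^{hV})_k)$ (Lemmas~\ref{lem:weak} and \ref{lem:technicallemma2}) and then runs the $K$-descent cosimplicial object through $F_A((E^{hU})_k,-)$. But the execution has concrete errors. The displayed adjunction $F_A((E^{hH})_k,(E^{hK})_k)\simeq F_{(E^{hH})_k}((E^{hH})_k\wedge_A(E^{hK})_k,(E^{hK})_k)$ does not typecheck: base change along $A\to B$ is left adjoint to restriction, so the target must be a $B$-module, and $(E^{hK})_k$ is not an $(E^{hH})_k$-module. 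The paper base-changes to the \emph{target} ring $(E^{hV})_k$ for $V$ open normal in $G$ with $V\le U$, where finite Galois theory (dualizability, the discriminant equivalence) applies; your variant, which wants to exploit that $E$ is $K$-Galois over $(E^{hK})_k$ for an arbitrary closed $K$, has no finite self-smash formula or dualizability available, and the concluding ``collapses the double coset'' step is not an argument. Separately, $E[[G/H]]$ is not $\colim_\alpha E[[G/HU_\alpha]]$: since $G/H=\lim_\alpha G/HU_\alpha$, it is by definition the pro-spectrum $\{E[G/HU_\alpha]\}_\alpha$, and $((E[[G/H]])^{hK})_k$ is a $k$-localized \emph{homotopy limit} of the finite-level fixed points, so the commutation issues are of limit type and are handled by Corollary~\ref{cor:kholim}, not by exactness of filtered colimits.

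The genuine gap is at the technical heart: showing that $F_A((E^{hU})_k,-)$ commutes with the totalization $\holim_\Delta \Map^c(K^\bullet,-)$ computing $(-)^{hK}$ and with the $k$-localization. The paper does this \emph{before} totalizing, producing an equivalence of cosimplicial spectra $(\Gamma_K^\bullet E[G/U])_k \simeq F_A((E^{hU})_k,(\Gamma_K^\bullet E)_k)$ via the $k$-local $F$-smallness of the dualizable $A$-module $(E^{hU})_k$ (Lemmas~\ref{lem:technicallemma1}, \ref{lem:mapc}, \ref{lem:holimdualizable}), and only then applies $\holim_\Delta$ and $\holim_U$. You propose instead to ``run through the descent spectral sequence and match term-by-term,'' but an isomorphism of conditionally convergent spectral sequences does not by itself yield the claimed equivalence of spectra (one still needs a map of spectra realizing it), and interchanging (co)limits with the abutments of such spectral sequences is precisely the pitfall the paper flags in Section~\ref{sec:difficulties}. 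Without the $F$-smallness/dualizability mechanism (or a substitute), the passage from the finite levels to the statement for closed $H$ and $K$ is not established.
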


The spectrum $E[[G/H]]$ that appears on the right-hand side of
(\ref{eqn:funcspec}) 
is the \emph{continuous} $G$-spectrum with the diagonal action.
The case where $K = H = \{ e \}$ was
handled by Rognes \cite[(8.1.3)]{Rognes}.

In the context of Morava $E$-theory, 
(\ref{eqn:funcspec}) was proven in \cite{GHMR} 
under the additional
assumption that $K$ is finite,
and
it was suggested by the authors of \cite{GHMR} that (\ref{eqn:funcspec}) 
should be true
with this extra assumption removed.
Another source of motivation for this work arises from the fact that a 
special case of (\ref{eqn:funcspec}) (Corollary~\ref{cor:trivial}) 
was needed in an essential way 
by the first author in \cite{Behrens} 
(see \cite[Thm. 2.3.2, Cor. 2.3.3]{Behrens}).

One important example of a profinite Galois extension 
is given by Morava $E$-theory.
Let $k = K(n)$ be the $n$th Morava $K$-theory spectrum and let $A =
S_{K(n)}$ be the $K(n)$-local sphere spectrum.
Let $G = \GG_n$ be the $n$th extended Morava stabilizer group 
$\MS_n \rtimes \mathrm{Gal}(\mathbb{F}_{p^n}/\mathbb{F}_p)$.  
Let 
$E_n$ be the $n$th Morava $E$-theory spectrum, where 
$(E_n)_\ast=W(\mathbb{F}_{p^n})[[u_1, ..., u_{n-1}]][u^{\pm 1}]$.
Goerss and Hopkins \cite{GoerssHopkins}, 
building on work of Hopkins 
and Miller \cite{RezkHMT}, 
have shown that $\GG_n$ acts on $E_n$ by maps of commutative $S$-algebras.  
Devinatz and
Hopkins \cite{DevinatzHopkins} have given constructions of 
homotopy fixed point spectra $E_n^{dhH}$ for closed subgroups $H$
of $\GG_n$.  
In particular, they show that there is an equivalence
$$ E_n^{dh\GG_n} \simeq S_{K(n)}. $$
Thus, the homotopy fixed point spectra of $E_n$ are intimately related to the 
$n$th chromatic layer of the sphere spectrum.

Rognes \cite[Thm.~5.4.4, Prop.~5.4.9]{Rognes} 
proved for $U$ an open normal subgroup of $\GG_n$, that
the work of Devinatz and Hopkins 
\cite{LHS, DevinatzHopkins} shows that $E_n^{dhU}$ is a faithful 
$K(n)$-local
$\GG_n/U$-Galois extension of $S_{K(n)}$.
Therefore, the discrete $\GG_n$-spectrum
$$ F_n = \colim_{U \trianglelefteq_o \GG_n} E_n^{dhU} $$
is a profaithful $K(n)$-local profinite 
$\GG_n$-Galois extension of $S_{K(n)}$. 
Additionally, the profinite extension $F_n$ of $S_{K(n)}$ is 
consistent and has finite vcd (Proposition~\ref{prop:EGalois}).
The spectrum $E_n$ is recovered by the equivalence \cite{DevinatzHopkins}
$$ E_n \simeq (F_n)_{K(n)}. $$

As mentioned above, for any closed subgroup $H$ of $\GG_n$, Devinatz and 
Hopkins \cite{DevinatzHopkins} constructed the commutative $S$-algebra 
$E_n^{dhH}$. Further, they showed that $E_n^{dhH}$ behaves like a homotopy fixed point spectrum with respect to a continuous action of $H$. In more detail, 
\cite{DevinatzHopkins} showed that $E_n^{dhH}$ has the following properties: (a) there is a $K(n)$-local 
$E_n$-Adams spectral sequence 
\[H^s_c(H;\pi_t(E_n)) \Rightarrow \pi_{t-s}(E_n^{dhH}),\] where the $E_2$-term 
is the continuous cohomology of $H$, with coefficients in the profinite 
$H$-module $\pi_t(E_n)$, and this spectral sequence has the form of a 
descent spectral sequence; (b) when $H$ is finite, there is a weak equivalence 
$E_n^{dhH} \rightarrow E_n^{hH}$, and the descent spectral sequence for 
$E_n^{hH}$ is isomorphic to the spectral sequence in (a); and 
(c) $E_n^{dhH}$ is an $(N(H)/H)$-spectrum, where $N(H)$ is the normalizer of $H$ in $\GG_n$. 
\par
On the other hand, when $H$ is not finite, $E_n^{dhH}$ 
is not known to actually be the $H$-homotopy 
fixed point spectrum of $E_n$, because (a) it is not constructed with respect to a continuous $H$-action, and (b) it is not obtained by taking the total right derived 
functor of fixed points (and homotopy fixed points are, by definition, the total right derived 
functor of fixed points, in some sense - see \cite[Remark 8.4]{Davis} for the precise definition in the case of a continuous $H$-spectrum that arises from a tower of discrete 
$H$-spectra). To address this situation, in \cite{Davis}, the second author showed that $H$ does act continuously on $E_n$ and there is an actual $H$-homotopy fixed point spectrum $E_n^{hH}$, with a descent spectral sequence 
\[H^s_c(H;\pi_t(E_n)) \Rightarrow \pi_{t-s}(E_n^{hH}).\] 
 \par
From the above discussion,
we see from the properties of $E_n^{dhH}$ and $E_n^{hH}$ that
they should be equivalent to each other, and by a result in
the second author's thesis \cite{Davisthesis}, they are.
However, since this part of \cite{Davisthesis} 
was never published, 
we use the machinery of this paper to prove the equivalence of these two 
spectra. In more detail, we give proofs 
of the following two results (which originally appeared in
\cite{Davisthesis}). 
\begin{thm*}[\ref{dhGhG}]
For every closed subgroup $H$ of $\GG_n$, there is an equivalence
$$ E_n^{dhH} \simeq E_n^{hH} $$
between the Devinatz-Hopkins construction and the homotopy 
fixed points that are defined with respect to the continuous action of $H$.
\end{thm*}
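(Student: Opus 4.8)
The plan is to identify both the Devinatz--Hopkins spectrum $E_n^{dhH}$ and the continuous homotopy fixed point spectrum $E_n^{hH}$ of \cite{Davis} with $(F_n^{hH})_{K(n)}$, where $F_n = \colim_{U \trianglelefteq_o \GG_n} E_n^{dhU}$ is the profaithful consistent $K(n)$-local profinite $\GG_n$-Galois extension of $S_{K(n)}$ of finite vcd of Proposition~\ref{prop:EGalois} and $F_n^{hH}$ denotes its homotopy fixed points in the category of discrete $H$-spectra. The equivalence $E_n^{hH} \simeq (F_n^{hH})_{K(n)}$ comes from the construction of $E_n$ as a continuous $\GG_n$-spectrum in \cite{Davis}: by Assumption~\ref{assumption}, $(-)_{K(n)}$ is a smashing localization followed by localization at a finite spectrum, so $E_n \simeq (F_n)_{K(n)}$ is assembled from the discrete $\GG_n$-spectrum $F_n$ by smashing with the smashing-localization sphere (which keeps it discrete) and then forming the tower of smash products with the finite spectra $M_i$ realizing the remaining localization, and $E_n^{hH}$ is by definition the homotopy fixed points of this tower; since smashing with a finite spectrum and the smashing localization commute with $H$-homotopy fixed points of discrete $H$-spectra, one obtains $E_n^{hH} \simeq (F_n^{hH})_{K(n)}$. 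It thus suffices to prove $(F_n^{hH})_{K(n)} \simeq E_n^{dhH}$; note that Theorem~\ref{thm:Galois}(1) already records that $F_n$ restricted to $H$ is a consistent profaithful $K(n)$-local $H$-Galois extension of $(F_n^{hH})_{K(n)}$ of finite vcd, so the target is a well-behaved object.

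Next I would handle open subgroups. For $U \trianglelefteq_o \GG_n$ open normal, $E_n^{dhU}$ is by construction the $\GG_n/U$-Galois extension $E_\alpha$ whose colimit defines $F_n$, so consistency of $F_n$ together with Lemma~\ref{lem:consistency} gives a natural equivalence $E_n^{dhU} \simeq (F_n^{hU})_{K(n)}$. For an arbitrary open subgroup $U \le \GG_n$, pick an open normal $U' \trianglelefteq_o \GG_n$ with $U' \le U$; Devinatz and Hopkins set $E_n^{dhU} = (E_n^{dhU'})^{h(U/U')}$, while $F_n^{hU} \simeq (F_n^{hU'})^{h(U/U')}$ by the compatibility of homotopy fixed points with the subgroup $U' \le U$ in the discrete-spectrum model of \cite{Davis} (using that $F_n^{hU'}$ is a discrete $\GG_n/U'$-spectrum), and because $U/U'$ is finite, $(-)^{h(U/U')}$ commutes with $(-)_{K(n)}$. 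Combining, $E_n^{dhU} \simeq (F_n^{hU})_{K(n)}$ for every open $U$, compatibly with the restriction maps.

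Finally I would pass from open to closed subgroups. Fixing a cofinal system $\{U_\alpha\}$ of open normal subgroups of $\GG_n$, the subgroups $U_\alpha H$ form a cofinal system of open subgroups containing $H$ with $\bigcap_\alpha U_\alpha H = H$, and Devinatz and Hopkins define $E_n^{dhH} = \bigl(\hocolim_\alpha E_n^{dh(U_\alpha H)}\bigr)_{K(n)}$. By the previous paragraph this equals $\bigl(\hocolim_\alpha (F_n^{h(U_\alpha H)})_{K(n)}\bigr)_{K(n)} \simeq \bigl(\hocolim_\alpha F_n^{h(U_\alpha H)}\bigr)_{K(n)}$, the last equivalence because a filtered homotopy colimit of $K(n)$-equivalences is a $K(n)$-equivalence. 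To identify $\hocolim_\alpha F_n^{h(U_\alpha H)}$ with $F_n^{hH}$, I would compute $F_n^{hH}$ as $(F_n^{\mathrm{fib}})^H$ for a fibrant replacement $F_n^{\mathrm{fib}}$ of $F_n$ in discrete $\GG_n$-spectra (whose restriction to any closed subgroup is again fibrant). The decisive point is that $F_n^{\mathrm{fib}}$ is a \emph{discrete} $\GG_n$-spectrum, so every element of it has open stabilizer; hence an element fixed by the closed subgroup $H$ is already fixed by some open $U_\alpha H$, whence $(F_n^{\mathrm{fib}})^H = \colim_\alpha (F_n^{\mathrm{fib}})^{U_\alpha H}$, and as a filtered colimit of fibrant spectra this also models the homotopy colimit. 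Therefore $F_n^{hH} \simeq \hocolim_\alpha F_n^{h(U_\alpha H)}$, and assembling the chain of equivalences gives $E_n^{dhH} \simeq (F_n^{hH})_{K(n)} \simeq E_n^{hH}$.

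The main obstacle is the bookkeeping around the composite localization of Assumption~\ref{assumption}: establishing $E_n^{hH} \simeq (F_n^{hH})_{K(n)}$ and that $(-)^{h(U/U')}$ for the finite quotient commutes with $(-)_{K(n)}$ both require knowing that homotopy fixed points interact correctly with the smashing localization and with the finite spectra $M_i$, and this is where the finiteness of the virtual cohomological dimension of $\GG_n$ (via Corollary~\ref{cor:consistent} and the hypotheses feeding Theorem~\ref{thm:Galois}) is used to control the convergence of the homotopy fixed point spectral sequences. By contrast, once the open case is settled the passage to a closed subgroup is formal, resting only on the discreteness of $F_n$ and the resulting identity $(F_n^{\mathrm{fib}})^H = \colim_\alpha (F_n^{\mathrm{fib}})^{U_\alpha H}$.
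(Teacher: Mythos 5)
Your overall strategy --- identifying both $E_n^{dhH}$ and $E_n^{hH}$ with $(F_n^{hH})_{K(n)}$, settling the case of open subgroups via Lemma~\ref{lem:consistency}, and then passing to closed subgroups through a colimit over the open subgroups containing $H$ --- matches the paper's. The open-subgroup portion is essentially right: the paper phrases the commutation of $(-)^{hV/W}$ with $K(n)$-localization by smashing with the finite Moore spectra $M_I$ and invoking Proposition~\ref{prop:iterate}, which is what your appeal to the finiteness of $U/U'$ amounts to once one knows the relevant spectra are $T$-local.

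There is, however, a genuine gap in the closed-subgroup step. You compute $F_n^{hH}$ as $(F_n^{\mathrm{fib}})^H$ for a fibrant replacement in $\Sigma\Sp_{\GG_n}$, asserting parenthetically that this restriction is again fibrant as a discrete $H$-spectrum. That is precisely the assertion that Section~\ref{sec:difficulties} flags as unknown and likely false for closed subgroups that are not open: $\Res_{\GG_n}^H$ does not preserve limits, has no left adjoint, and is not known to preserve fibrant objects. The identity $(F_n^{\mathrm{fib}})^H = \colim_\alpha (F_n^{\mathrm{fib}})^{U_\alpha H}$ is correct (it is Corollary~\ref{cor:mysterious}), but it identifies $\colim_\alpha F_n^{hU_\alpha H}$ with $(F_n^{\mathrm{fib}})^H$, not with $F_n^{hH}$; there is only a natural map from the former to the latter. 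Nor can you fall back on Proposition~\ref{prop:fcdHfp} or Theorem~\ref{thm:fcdHfp}, since $\GG_n$ has finite \emph{virtual} cohomological dimension but not finite cohomological dimension. The statement you actually need --- that $(\colim_{H \le V \le_o \GG_n} F_n^{hV})_{K(n)} \rightarrow (F_n^{hH})_{K(n)}$ is an equivalence --- is Theorem~\ref{thm:Galoiscolim}, whose proof is the technical core of the paper and uses the full Galois package: consistency, profaithfulness via the norm equivalences of Remark~\ref{rmk:Hilbert90}, Lemma~\ref{lem:HcapV}, and a reduction to an open normal subgroup of finite cohomological dimension. The step you describe as ``formal, resting only on the discreteness of $F_n$'' is in fact the hardest part of the argument; the rest of your outline is sound once that theorem is invoked.
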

The above theorem shows that $E_n^{dhH}$ can be referred to as a 
homotopy fixed point spectrum, whereas, previously, $E_n^{dhH}$ 
was only known to behave like a homotopy fixed point spectrum. 
\begin{thm*}[\ref{finite}, \ref{finite2}, \ref{ss}] 
Let $H$ be a closed subgroup of $\GG_n$ and let $X$ be a finite spectrum. Then 
there is an equivalence 
\[ E_n^{dhH} \wedge X \simeq (E_n \wedge X)^{hH} \] and 
the $K(n)$-local $E_n$-Adams spectral sequence for $\pi_\ast(E_n^{dhH} \wedge X)$ is 
isomorphic to the descent spectral sequence for $\pi_\ast((E_n \wedge X)^{hH})$ 
from the $E_2$-terms onward. 
In particular, 
\[(X)_{K(n)} \simeq (E_n \wedge X)^{h\GG_n}.\]
\end{thm*}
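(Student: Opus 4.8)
\emph{Approach.} The plan is to deduce everything from the case $X = S^0$, which is Theorem~\ref{dhGhG}, by exploiting throughout that a finite spectrum $X$ is dualizable, so that $Y\wedge X\simeq F(DX,Y)$ for every $Y$. Two consequences of dualizability will be used repeatedly. First, if $Y$ is $K(n)$-local then so is $Y\wedge X\simeq F(DX,Y)$, and for any $Z$ the localization $Z\to Z_{K(n)}$ induces a $K(n)$-equivalence $Z\wedge X\to Z_{K(n)}\wedge X$; hence $Z_{K(n)}\wedge X\simeq (Z\wedge X)_{K(n)}$. Second, since $F(DX,-)$ preserves homotopy limits and the homotopy fixed point spectrum of a discrete $H$-spectrum is computed by a homotopy limit, one has a natural equivalence $(Z\wedge X)^{hH}\simeq Z^{hH}\wedge X$ for every discrete $H$-spectrum $Z$, compatibly with the associated descent spectral sequences. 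Note also that $F_n\wedge X\simeq\colim_{U\trianglelefteq_o\GG_n}(E_n^{dhU}\wedge X)$ is again a discrete $\GG_n$-spectrum, hence a discrete $H$-spectrum for every closed subgroup $H$ of $\GG_n$.

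\emph{The equivalence.} By \cite{DevinatzHopkins} and Proposition~\ref{prop:EGalois}, $F_n$ is a consistent profaithful $K(n)$-local profinite $\GG_n$-Galois extension of finite vcd with $(F_n)_{K(n)}\simeq E_n$. Since $X$ is finite, $(F_n\wedge X)_{K(n)}\simeq E_n\wedge X$, which identifies the continuous $H$-spectrum $E_n\wedge X$ with the $K(n)$-localization of the discrete $H$-spectrum $F_n\wedge X$; following the construction of homotopy fixed points for such continuous spectra from \cite{Davis}, this gives $(E_n\wedge X)^{hH}\simeq\bigl((F_n\wedge X)^{hH}\bigr)_{K(n)}$. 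Theorem~\ref{dhGhG} gives $E_n^{dhH}\simeq E_n^{hH}=(F_n^{hH})_{K(n)}$, and combining this with the two facts above,
\[ E_n^{dhH}\wedge X\;\simeq\;(F_n^{hH})_{K(n)}\wedge X\;\simeq\;(F_n^{hH}\wedge X)_{K(n)}\;\simeq\;\bigl((F_n\wedge X)^{hH}\bigr)_{K(n)}\;\simeq\;(E_n\wedge X)^{hH}, \]
which is Theorems~\ref{finite} and~\ref{finite2}. Specializing to $H=\GG_n$ and using $E_n^{dh\GG_n}\simeq S_{K(n)}$ yields $(E_n\wedge X)^{h\GG_n}\simeq S_{K(n)}\wedge X\simeq(X)_{K(n)}$, by the first fact applied to $Z=S$.

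\emph{The spectral sequences.} Both are conditionally convergent spectral sequences with $E_2$-term $H^s_c(H;\pi_t(E_n\wedge X))=H^s_c(H;(E_n)_t(X))$, the latter making sense because, $X$ being finite, $(E_n)_t(X)$ is finitely generated over $(E_n)_\ast$ and hence a profinite $H$-module. The $K(n)$-local $E_n$-Adams spectral sequence for $\pi_\ast(E_n^{dhH}\wedge X)$ is obtained by smashing the $K(n)$-local $E_n$-Adams resolution of $E_n^{dhH}$ of \cite{DevinatzHopkins} with $X$; the descent spectral sequence for $\pi_\ast((E_n\wedge X)^{hH})$ is, by the second fact above together with $(E_n\wedge X)^{hH}\simeq\bigl((F_n\wedge X)^{hH}\bigr)_{K(n)}$, obtained by smashing the descent resolution computing $E_n^{hH}$ with $X$. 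The proof of Theorem~\ref{dhGhG} identifies these two resolutions in the case $X=S^0$ compatibly with their spectral sequences; smashing that identification with $X$ and passing to homotopy groups --- harmless since $X$ is finite --- gives the asserted isomorphism of spectral sequences from the $E_2$-pages onward, lying over the identity of $H^s_c(H;(E_n)_t(X))$.

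\emph{Main obstacle.} The substantive content is already contained in Theorem~\ref{dhGhG}; what remains is the bookkeeping ensuring that ``smashing with the finite spectrum $X$'' is compatible with $K(n)$-localization, with homotopy fixed points of discrete $H$-spectra, with the comparison of \cite{Davis} between continuous and discrete $H$-spectra, and with the construction and convergence of the descent and $E_n$-Adams spectral sequences. The only genuinely delicate point is the second fact above --- that smashing with $X$ commutes with $(-)^{hH}$ on discrete $H$-spectra and with the descent spectral sequence --- which depends essentially on the dualizability of $X$ and fails for infinite $X$; everything else is routine once $X$ is finite.
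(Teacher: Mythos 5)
Your treatment of the equivalence $E_n^{dhH}\wedge X\simeq(E_n\wedge X)^{hH}$ and of the special case $H=\GG_n$ is essentially the paper's argument: both reduce to Theorem~\ref{dhGhG} plus the fact that smashing with a finite (hence dualizable) $X$ commutes with $K(n)$-localization up to $K(n)$-equivalence and with the homotopy fixed points of the relevant (pro-)discrete spectra; the paper outsources the latter commutation to \cite[Thm.~9.9]{Davis} where you argue it via $F(DX,-)$, which is fine.

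The spectral sequence comparison, however, has a genuine gap. You assert that ``the proof of Theorem~\ref{dhGhG} identifies these two resolutions in the case $X=S^0$ compatibly with their spectral sequences,'' and then propose to smash that identification with $X$. But the proof of Theorem~\ref{dhGhG} does no such thing: it produces an equivalence of \emph{spectra} by a chain of Galois-theoretic manipulations (Lemma~\ref{lem:consistency}, Theorem~\ref{thm:Galoiscolim}, \cite[Thm.~4]{DevinatzHopkins}, Proposition~\ref{prop:iterate}) and never compares the Devinatz--Hopkins $K(n)$-local $E_n$-Adams resolution with the cosimplicial cobar resolution $\Map^c(H^\bullet,-)$ underlying the descent spectral sequence. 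An equivalence of abutments together with abstractly isomorphic $E_2$-terms does not yield an isomorphism of (even conditionally convergent) spectral sequences; one must exhibit a map of the underlying towers or resolutions inducing the identification on $E_2$. This is exactly the content of the paper's proof of Theorem~\ref{ss}, which is not a formality: it first writes both spectral sequences as inverse limits over the tower $\{M_I\}$, then regards the Devinatz--Hopkins Adams spectral sequence as arising from the canonical $E_n$-resolution in the category of $E_n^{dhH}$-modules (Lemma~\ref{lem:adams}), constructs a map of resolutions from Devinatz's finite-level Lyndon--Hochschild--Serre Adams spectral sequences of \cite[(0.1)]{LHS} (Lemma~\ref{lem:mapofSS}), invokes the comparison theorem \cite[Thm.~A.1]{LHS} identifying those with the ordinary homotopy fixed point spectral sequences for the finite quotients $NH/N$, and finally matches the colimit over $N$ of the latter with the descent spectral sequence via the identification $(F_n\wedge M_I\wedge X)^{hH}\simeq\holim_\Delta\colim_N\Map^c((NH/N)^\bullet,E_n^{dhN}\wedge M_I\wedge X)$. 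Without an argument of this kind (or some substitute producing an actual map of towers), the claimed isomorphism of spectral sequences from $E_2$ onward is unjustified.
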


The paper is organized as follows.
Our notion of homotopy fixed point spectra 
uses the framework of equivariant spectra
(with respect to a profinite group) as developed
by the second author \cite{Davis}. 
The foundations in \cite{Davis} use Bousfield-Friedlander spectra.  
Since we need to work with structured ring spectra to do Galois theory, 
it is essential for this paper that we 
reformulate portions of \cite{Davis} in the context
of symmetric spectra.  A concise summary of these foundations 
appears in Section~\ref{sec:Gspt}.
In Section~\ref{sec:hfp}, we describe properties of the homotopy fixed point
functor.
In Section~\ref{sec:cont}, we describe continuous
$G$-spectra, generalizing somewhat the setting of \cite{Davis}.
In Section~\ref{sec:alg}, we explain how to extend
our constructions to categories of modules and commutative algebras of
spectra.
In Section~\ref{sec:progalois}, we explain how profinite Galois extensions give
rise to discrete $G$-spectra, and we show that the homotopy fixed points with
respect to open subgroups of the Galois group give rise to intermediate
finite Galois extensions.
In Section~\ref{sec:closed}, we prove our results concerning the homotopy
fixed point spectra with respect to closed subgroups of the Galois group.
In Section~\ref{sec:Etheory}, we show that the hypotheses on profinite
Galois extensions which we require are satisfied by Morava $E$-theory.  We
then apply our machinery to show that the Devinatz-Hopkins homotopy fixed
points agree with the second author's homotopy fixed points, and deduce some
corollaries.
\vspace{10pt}

\noindent
\textbf{Acknowledgments.} 
The first author benefited from the input of Halvard Fausk, Paul Goerss,
and Daniel Isaksen.
The second author thanks Paul Goerss for
helpful discussions, when he was a Ph.D. student, 
regarding the results in
Sections~\ref{sec:EGalois} and \ref{sec:comparison}. 
Also, the
second author is grateful to Paul for later helpful
conversations, to Mark Hovey for providing some intuition
related to Theorem~\ref{thm:sigmaspmodel}, and John
Rognes for a helpful discussion
regarding group actions.
The second author spent
several weeks in the summer of 2008
in the Department of Math at Rice
University working on this paper, and
he thanks the department for its
hospitality.  The authors would also like to express their thanks to the
referee, for suggesting numerous improvements to this paper.

\section{Discrete symmetric $G$-spectra}\label{sec:Gspt}

Let $G$ be a profinite group.
We begin this section by describing the basic categories of discrete
$G$-objects that will be used
in this paper.  We then describe and compare 
the model structures on the categories of discrete $G$-objects in
Bousfield-Friedlander and symmetric spectra.  We end this section with
descriptions of some basic constructions in the category of discrete
$G$-spectra.  More detailed accounts of some of 
these model categories and constructions can be found in \cite{Davis}. 

\subsection{Simplicial discrete $G$-sets}\label{sec:Set_G}

A $G$-set $Z$ is said to be discrete if, for every element $z \in Z$, the
stabilizer $\Sta_G(z)$ is open in $G$.  We may express this condition by
saying that $Z$ is the colimit of its fixed points:
$$ Z = \colim_{U \le_o G} Z^U, $$
where the colimit is taken over all open subgroups. These conditions are 
equivalent to the condition that the action map $G \times Z \rightarrow Z$ 
is continuous, when $Z$ is given the discrete topology.
A simplicial discrete $G$-set is a simplicial object in the category of
discrete $G$-sets.

Goerss showed that the category $s\Set_G$ of simplicial discrete $G$-sets
admits a model category structure \cite{Goerss}.

\begin{thm}[Goerss]
The category $s\Set_G$ admits a model category structure where
\begin{itemize}
\item the cofibrations are the monomorphisms,
\item the weak equivalences are those morphisms which are weak equivalences
on underlying simplicial sets,
\item the fibrations are determined.
\end{itemize}
\end{thm}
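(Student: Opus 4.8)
The plan is to obtain this as a combinatorial model structure by means of J.~H.~Smith's recognition theorem, using the forgetful functor to detect the weak equivalences. Write $\Res\colon s\Set_G\to s\Set$ for the functor forgetting the $G$-action. I would first record the structural facts. The category $s\Set_G=[\Delta^{\mathrm{op}},\Set_G^{\mathrm{disc}}]$ of simplicial objects in the Grothendieck topos of discrete $G$-sets is locally presentable; in particular it is complete and cocomplete and the small object argument applies. Colimits in $s\Set_G$ are created by $\Res$ (a colimit of discrete $G$-sets, formed in $\Set$ with the induced action, is again discrete), a map is a monomorphism if and only if $\Res$ of it is, and $\Res X=\colim_{U\le_o G}X^U$ is a filtered colimit over the open subgroups of $G$ ordered by reverse inclusion; hence $\Res$ preserves colimits and monomorphisms and is an accessible functor.

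Next I would fix the candidate generating cofibrations
$$ I \;=\; \{\, G/U\times\partial\Delta^n \hookrightarrow G/U\times\Delta^n \;:\; U\le_o G,\ n\ge 0 \,\}, $$
with $G$ acting trivially on $\partial\Delta^n$ and $\Delta^n$, and claim that $I\text{-cof}$ (retracts of transfinite composites of pushouts of maps in $I$) is exactly the class of monomorphisms of $s\Set_G$. The containment $I\text{-cof}\subseteq\mathrm{Mono}$ is immediate, since monomorphisms are stable under pushout, transfinite composition and retract, all of which may be checked after applying $\Res$. For the reverse containment, given a monomorphism $A\hookrightarrow B$ I would run the skeletal filtration $B^{(-1)}=A$, $B^{(n)}=A\cup\mathrm{sk}_n B$: the set $S_n$ of nondegenerate $n$-simplices of $B$ not lying in $A$ is a discrete $G$-set — this is the one place the discreteness hypothesis is used, as it guarantees that the $G$-stabilizer of each simplex is open — so $S_n\cong\coprod_j G/U_j$ with each $U_j$ open, and the inclusion $B^{(n-1)}\hookrightarrow B^{(n)}$ sits in a pushout square along $\coprod_j\bigl(G/U_j\times\partial\Delta^n\hookrightarrow G/U_j\times\Delta^n\bigr)$. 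Since $B=\colim_n B^{(n)}$, this realizes $A\hookrightarrow B$ as an $I$-cell complex.

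Setting $W:=\Res^{-1}(\text{weak equivalences of }s\Set)$, I would then check the three hypotheses of Smith's theorem. (i) $W$ has the two-out-of-three property and is closed under retracts, inherited from $s\Set$, and $W$ is an accessible, full subcategory of the category of morphisms of $s\Set_G$, because the weak equivalences of $s\Set$ are accessible and $\Res$ is accessible. (ii) $I\text{-cof}\cap W$ is closed under pushout and transfinite composition: applying $\Res$ carries such a pushout or transfinite composite to the corresponding pushout or composite of a trivial cofibration in $s\Set$, which remains a trivial cofibration. (iii) $I\text{-inj}\subseteq W$: by the adjunction $G/U\times(-)\dashv(-)^U$ between $s\Set$ and $s\Set_G$, a map $p\colon X\to Y$ is $I$-injective if and only if $p^U\colon X^U\to Y^U$ has the right lifting property against each $\partial\Delta^n\hookrightarrow\Delta^n$, i.e.\ is a trivial Kan fibration, for every open $U$; since $\partial\Delta^n$ and $\Delta^n$ are finite simplicial sets and the $X^U$, $Y^U$ form filtered systems, the filtered colimit $\Res p=\colim_U p^U$ is again a trivial Kan fibration, hence a weak equivalence, so $p\in W$. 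By Smith's recognition theorem, (i)--(iii) yield a cofibrantly generated (indeed combinatorial) model structure on $s\Set_G$ with generating cofibrations $I$ and weak equivalences $W$; by the preceding paragraph its cofibrations are precisely the monomorphisms, its weak equivalences are precisely the underlying weak equivalences, and the fibrations are then determined as the maps with the right lifting property against the trivial cofibrations.

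The main obstacle I anticipate is the bookkeeping in the second step: one must confirm that in the profinite setting the $G$-set of nondegenerate simplices genuinely decomposes into orbits $G/U$ with $U$ open — this is precisely what the discreteness hypothesis buys, and it is also what makes $I$ a set — so that the equivariant skeletal filtration proceeds exactly as in the nonequivariant case. As a cross-check and an alternative approach, one may note that discrete $G$-sets are the sheaves on the canonical site of the profinite group $G$, whose topos has a single point with stalk functor $\Res$; therefore the ``local'' weak equivalences of simplicial discrete $G$-sets coincide with the underlying weak equivalences, and the model structure above is an instance of Jardine's local injective model structure on simplicial sheaves \cite{JardinePresheaves, Jardineetale}.
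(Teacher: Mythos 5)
Your argument is correct, but note that the paper itself offers no proof of this statement: it is quoted as Goerss's theorem, with a bare citation to \cite{Goerss}, and the only place the paper engages with its internals is the following lemma, where it reproduces Goerss's generating sets $I$ and $J$ (the latter being the \emph{bounded} trivial cofibrations, $\# B \le \alpha$ for a cardinal $\alpha$ exceeding $\abs{G}$). So the comparison is really with Goerss's original argument rather than with anything in this paper.

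With that understood, your route and Goerss's are close in substance but differently packaged. Your key combinatorial input --- that openness of stabilizers lets you decompose the nondegenerate simplices outside $A$ into orbits $G/U_j$ with $U_j$ open, so that the equivariant skeletal filtration exhibits every monomorphism as an $I$-cell complex --- is exactly the point that makes Goerss's $I$ the right generating set, and it matches what the paper quotes. Where you invoke Smith's recognition theorem (accessibility of $W = \Res^{-1}(\mathrm{we})$ via pseudopullback of accessible categories), Goerss instead runs the Bousfield--Smith cardinality argument by hand, producing the explicit bounded set $J$ of generating trivial cofibrations; your approach trades that explicit cardinal bookkeeping for the abstract accessibility hypothesis, at the mild cost that the generating trivial cofibrations are less explicit (which is why the paper, needing cellularity in Lemma 2.1.2, goes back to Goerss's concrete $J$). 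Your closing remark is also the other standard route: discrete $G$-sets are the sheaves on the site of finite $G$-orbits, the forgetful functor is a conservative stalk functor, and the structure is an instance of the Joyal--Jardine injective local model structure --- this is precisely the ``transport a Jardine model structure on presheaves on an appropriate site'' strategy the authors say was used in \cite{Davis} for the spectrum-level analogue. All three verifications in your Smith-theorem step are sound; in particular the adjunction $G/U \times (-) \dashv (-)^U$ together with finiteness of $\partial\Delta^n \hookrightarrow \Delta^n$ and filteredness of the poset of open subgroups correctly yields $I\text{-inj} \subseteq W$. I see no gap.
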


\begin{lem}
The model structure on $s\Set_G$ is left proper and cellular.
\end{lem}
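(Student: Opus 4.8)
The plan is to verify left properness and cellularity separately, in each case reducing to classical facts about simplicial sets together with the topos-theoretic structure of the category $\Set_G$ of discrete $G$-sets.

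For left properness we must show that the pushout of a weak equivalence along a cofibration is again a weak equivalence. The forgetful functor $U \colon s\Set_G \to s\Set$ admits a right adjoint, namely the levelwise continuous mapping object $X \mapsto \colim_{V \le_o G}\Map(G/V, X)$; hence $U$ preserves all colimits, in particular pushouts. Moreover $U$ preserves monomorphisms, and by the definition of the model structure on $s\Set_G$ it both preserves and reflects weak equivalences. Consequently, applying $U$ to a pushout square in $s\Set_G$ whose top map is a cofibration and whose left map is a weak equivalence produces a pushout square of simplicial sets whose top map is a monomorphism and whose left map is a weak equivalence; since $s\Set$ is left proper, the right-hand map of the image square is a weak equivalence, and therefore, as $U$ reflects weak equivalences, so was the right-hand map of the original square.

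For cellularity I would check the four conditions in Hirschhorn's definition of a cellular model category. First, the model structure is cofibrantly generated: Goerss's construction (see \cite{Goerss} and \cite{Davis}) realizes it as such, with generating cofibrations the maps $G/V \times \partial\Delta[n] \hookrightarrow G/V \times \Delta[n]$ and generating acyclic cofibrations the maps $G/V \times \Lambda^k[n] \hookrightarrow G/V \times \Delta[n]$, where $V$ ranges over the open subgroups of $G$ and $G/V$ denotes the associated finite discrete $G$-set. Second and third, the domains and codomains of the generating cofibrations and the domains of the generating acyclic cofibrations must be appropriately small and compact: the category $\Set_G$ is a Grothendieck topos, hence locally finitely presentable with the finite discrete $G$-sets as its compact objects, so $s\Set_G$ is locally finitely presentable; in particular every object is small, and the finite objects $G/V \times \partial\Delta[n]$, $G/V \times \Delta[n]$, $G/V \times \Lambda^k[n]$ are compact relative to the generating cofibrations in Hirschhorn's sense. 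Fourth, the cofibrations are effective monomorphisms: a cofibration is a monomorphism, hence a levelwise monomorphism of discrete $G$-sets, and in the topos $\Set_G$ every monomorphism is an effective monomorphism, i.e.\ the equalizer of its cokernel pair; since equalizers and cokernel pairs in $s\Set_G$ are computed levelwise, every monomorphism of $s\Set_G$ is an effective monomorphism.

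The only point that requires genuine work is confirming that Goerss's model structure is cofibrantly generated by the sets exhibited above and that Hirschhorn's compactness hypothesis (which is slightly stronger than mere smallness) holds for the domains and codomains of the generating cofibrations; both follow from the local finite presentability of $s\Set_G$, so I expect this to be routine bookkeeping rather than a real obstacle. Fuller accounts of these model categories may be found in \cite{Davis}.
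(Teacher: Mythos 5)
Your overall strategy coincides with the paper's: left properness is inherited from $s\Set$ because cofibrations and weak equivalences are detected on underlying simplicial sets (your extra remark that the forgetful functor preserves pushouts, being a left adjoint to $\CoInd_1^G$, is the point the paper leaves implicit), and cellularity is checked from the generating sets together with smallness and the effective-monomorphism condition, the latter handled exactly as you do via levelwise (co)equalizers of (discrete $G$-)sets.

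There is, however, one genuine error: the generating trivial cofibrations of Goerss's model structure are \emph{not} the maps $G/V \times \Lambda^n_k \hookrightarrow G/V \times \Delta^n$. Those maps would generate a projective-type structure whose fibrations are the fixed-point-wise Kan fibrations and whose weak equivalences are the fixed-point-wise weak equivalences; already for $G$ finite this differs from the structure at hand (e.g.\ $EG \rightarrow \ast$ is an underlying weak equivalence but not a fixed-point-wise one), and a map that is a monomorphism and an underlying weak equivalence need not lift against every fixed-point-wise Kan fibration, since fibrant objects here must satisfy a descent condition. As in Jardine-style injective model structures, one must instead take $J$ to be the set of all trivial cofibrations $A \rightarrow B$ with $\# B$ bounded by a fixed infinite cardinal $\alpha$ exceeding the cardinality of $G$ (the Bousfield--Smith cardinality argument; this is Lemma~1.13 of \cite{Goerss}). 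Fortunately your verification of cellularity survives this correction: the domains of this $J$ are $\alpha$-small (every object of $s\Set_G$ is small, as you note via local presentability), the domains and codomains of $I$ are finite relative to $I$, and the effective-monomorphism condition is unaffected. So the conclusion stands once $J$ is identified correctly, but as written the cofibrant-generation step is based on a false description of the model structure.
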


\begin{proof}
The model structure is left proper because the cofibrations and weak
equivalences are precisely the cofibrations and weak equivalences on the
underlying simplicial sets, and the model category structure on simplicial
sets is left proper.  The model structure in \cite{Goerss} is cofibrantly
generated, with generating cofibrations $I$ and generating trivial
cofibrations $J$, where:
\begin{align*}
I & = \{ G/U \times \partial \Delta^n \hookrightarrow G/U \times \Delta^n \:
: \: U \le_o G, \: n \ge 0 \}, \\
J & = \left\{ A \xrightarrow{j} B \: : \: 
\begin{array}{l}
\text{$j$ is a trivial cofibration,} \\
\# B \le \alpha.
\end{array}
\right\}.
\end{align*} 
Here, $\alpha$ is a fixed infinite cardinal greater than the cardinality of $G$
and $\# B$ denotes the cardinality of the set of non-degenerate simplices
of $B$ (see the proof of Lemma~1.13 of \cite{Goerss}).  The axioms of being
cellular are immediately verified from this description of the generating
cofibrations.
\end{proof}

The category $(s\Set_G)_{*}$ of pointed simplicial discrete $G$-sets, being
an under-category, inherits a model structure from $s\Set_G$.  The
cofibrations, weak equivalences, and fibrations are detected on the level
of underlying simplicial discrete $G$-sets.  If $K$ and $L$ are pointed 
simplicial
discrete $G$-sets, then their smash product
$$ K \wedge L $$
is easily seen to be a simplicial discrete $G$-set.  The smash product
gives the category $(s\Set_G)_*$ a symmetric monoidal structure.  (It does
\emph{not} extend to a closed symmetric monoidal structure.)

\begin{lem}\label{lem:cellular}
The model category structure on $(s\Set_G)_*$ is left proper and cellular.
With respect to the symmetric monoidal structure given by the smash product,
the model category $(s\Set_G)_*$ is a symmetric monoidal model
category.
\end{lem}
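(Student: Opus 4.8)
The plan is to deduce every assertion from the corresponding facts about $s\Set_G$ (the previous lemma) and about pointed simplicial sets $s\Set_*$, exploiting that $(s\Set_G)_*$ is the under-category $\ast \downarrow s\Set_G$ and that its cofibrations and weak equivalences are detected on underlying simplicial discrete $G$-sets, while its smash products and the pushouts relevant below are computed there. For left properness: the forgetful functor $(s\Set_G)_* \to s\Set_G$ creates connected colimits, in particular pushouts, and detects the cofibrations and weak equivalences; since $s\Set_G$ is left proper by the previous lemma, the pushout in $(s\Set_G)_*$ of a weak equivalence along a cofibration—being computed in $s\Set_G$—is again a weak equivalence, so $(s\Set_G)_*$ is left proper. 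For cellularity, $(s\Set_G)_*$ is cofibrantly generated with generating cofibrations
$$ I_* = \bigl\{\, (G/U \times \partial\Delta^n)_+ \hookrightarrow (G/U \times \Delta^n)_+ \ : \ U \le_o G,\ n \ge 0 \,\bigr\} $$
and generating trivial cofibrations $J_*$ obtained by adjoining a disjoint basepoint to a set of generating trivial cofibrations of $s\Set_G$; adjoining a disjoint basepoint affects neither compactness of domains and codomains, nor smallness of domains relative to the cofibrations, nor the property that these monomorphisms be effective monomorphisms (each of these being detected on underlying simplicial discrete $G$-sets, where it was checked in the previous lemma), so the cellularity axioms hold. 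This is also an instance of the general fact that an under-category of a left proper, resp.\ cellular, model category is again left proper, resp.\ cellular.

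For the symmetric monoidal model structure, we verify the pushout--product axiom: for cofibrations $i\colon A \to B$ and $j\colon C \to D$ in $(s\Set_G)_*$, the map
$$ i\,\square\,j\colon (B \wedge C)\cup_{A \wedge C}(A \wedge D) \longrightarrow B \wedge D $$
is a cofibration which is trivial whenever $i$ or $j$ is. The underlying pointed simplicial set of $K \wedge L$ is the smash product of the underlying pointed simplicial sets of $K$ and $L$ equipped with the diagonal $G$-action (which stays discrete, since $\Sta_G((z,w)) = \Sta_G(z) \cap \Sta_G(w)$ is open), and the pushout defining $i\,\square\,j$ is a connected colimit and so is likewise formed on underlying pointed simplicial sets. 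Hence, after passing to $s\Set_*$, the map $i\,\square\,j$ becomes precisely the pushout--product of the underlying maps $\underline{i}$ and $\underline{j}$. Since $s\Set_*$ is a symmetric monoidal model category, this underlying pushout--product is a monomorphism—so $i\,\square\,j$ is a cofibration in $(s\Set_G)_*$—and it is a weak equivalence of simplicial sets whenever $\underline{i}$ or $\underline{j}$ is—so $i\,\square\,j$ is a weak equivalence in $(s\Set_G)_*$ whenever $i$ or $j$ is. Finally, the unit $S^0$, the two-point pointed simplicial discrete $G$-set with trivial action, is cofibrant, because the inclusion of the initial object (the one-point pointed set) into $S^0$ is a monomorphism and hence a cofibration; therefore the unit axiom holds automatically, and $(s\Set_G)_*$ is a symmetric monoidal model category.

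There is no real obstacle here. The only points requiring a little care are that the forgetful functor $(s\Set_G)_* \to s\Set_*$ genuinely carries the smash product and the pushout occurring in the pushout--product to the corresponding constructions in $s\Set_*$, so that the axioms reduce to the already known case of pointed simplicial sets, and that the diagonal $G$-action on a smash product of discrete $G$-sets is again discrete; both are immediate.
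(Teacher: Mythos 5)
Your proposal is correct and follows essentially the same route as the paper: left properness is inherited from $s\Set_G$, cellularity is checked on the generating (trivial) cofibrations obtained by adjoining disjoint basepoints, and the monoidal axioms are verified by reduction to the underlying pointed simplicial sets (the paper merely asserts this last step is "easily verified," and your reduction via the forgetful functor is the intended argument). No gaps.
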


\begin{proof}
Left properness follows from the fact that $s\Set_G$ is left proper.  The
model structure on $(s\Set_G)_*$ is cofibrantly generated with generating
cofibrations (respectively generating trivial cofibrations) $I_+$
(respectively $J_+$).  Here, $I_+$ and $J_+$ are the sets of maps obtained
from $I$ and $J$ by adding a disjoint basepoint on which $G$ acts
trivially.  The axioms of being a symmetric monoidal
model category are easily verified.
\end{proof}

\subsection{Discrete $G$-spectra}\label{sec:Sp_G}

Define the category of discrete $G$-spectra $\Sp_G$ 
to be the category of 
Bousfield-Friedlander spectra of simplicial discrete
$G$-sets.  An object $X \in \Sp_G$ consists of a sequence
$\{X_i\}_{i \ge 0}$, where each $X_i$ is a pointed 
simplicial discrete $G$-set,
together with $G$-equivariant maps
$$ \sigma_i : S^1 \wedge X_i \rightarrow X_{i+1}. $$
Here, $S^1$ 
is given the trivial $G$-action.

A map $f :X \rightarrow Y$ of
discrete $G$-spectra is a sequence of $G$-equivariant maps of pointed 
simplicial
sets $f_i : X_i \rightarrow Y_i$ which are compatible with the 
spectrum structure maps.

In \cite{Davis}, the second author studied the 
following model structure.

\begin{thm}\label{thm:spmodel}
The category $\Sp_G$ admits a model structure where
\begin{itemize}
\item the cofibrations are the cofibrations of underlying Bousfield-Friedlander
spectra,
\item the weak equivalences are the stable weak equivalences of the underlying
Bousfield-Friedlander spectra,
\item the fibrations are determined.
\end{itemize}
\end{thm}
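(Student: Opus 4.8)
The plan is to build the model structure on $\Sp_G$ in two stages, regarding $\Sp_G$ as the category $\Sp^{\mathbb{N}}((s\Set_G)_*,\Sigma)$ of (Bousfield-Friedlander) spectra internal to the pointed simplicial model category $(s\Set_G)_*$ of Lemma~\ref{lem:cellular}, where $\Sigma = S^1 \wedge(-)$. First I would equip $\Sp_G$ with a \emph{levelwise} (projective) model structure, in which weak equivalences and fibrations are detected levelwise; then I would left Bousfield localize to stabilize it, thereby inverting the maps that become stable equivalences on underlying spectra. Since left Bousfield localization does not change cofibrations, the cofibrations of the resulting structure are exactly the levelwise cofibrations, and the crux of the theorem is that the localized weak equivalences can be identified with the stable weak equivalences of underlying Bousfield-Friedlander spectra.

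The formal inputs are easy to arrange. Because $S^1$ carries the trivial $G$-action, $\Sigma$ preserves colimits and has a right adjoint $\Omega = \Map_*(S^1,-)$; the one thing to check is that $\Map_*(S^1,X)$ really is a simplicial discrete $G$-set, which holds because $S^1$ is a \emph{finite} simplicial set, so the internal mapping object is computed by finite limits and finite limits of discrete $G$-sets are discrete. For the same reason, $\Omega$ commutes with filtered colimits in $\Sp_G$. Moreover $\Sigma$ is a left Quillen endofunctor of $(s\Set_G)_*$: the (trivial) cofibrations there are detected on underlying pointed simplicial sets by Lemma~\ref{lem:cellular}, and $S^1 \wedge(-)$ preserves (trivial) cofibrations of pointed simplicial sets. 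Since $(s\Set_G)_*$ is left proper, cellular and cofibrantly generated (Lemma~\ref{lem:cellular}), the levelwise model structure on $\Sp^{\mathbb{N}}((s\Set_G)_*,\Sigma)$ exists and is again left proper and cellular, with generating (trivial) cofibrations $F_n(i)$ for $i$ a generating (trivial) cofibration of $(s\Set_G)_*$ and $F_n$ the left adjoint of evaluation at level $n$. Hirschhorn's localization theorem then produces the left Bousfield localization at the set of stabilization maps $F_{n+1}(S^1 \wedge A) \to F_n(A)$, with $A$ running over the (co)domains of the generating cofibrations of $(s\Set_G)_*$; this is the claimed stable model structure (compare the general machinery for spectra in a model category, due to Bousfield-Friedlander and Hovey).

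It then remains to match the three classes of maps with those in the statement. A map $f$ of $\Sp_G$ is a stable cofibration iff it is a levelwise cofibration, and an elementary lifting argument identifies this with the condition that $f_0$ is a monomorphism and each latching map $X_{n+1}\cup_{S^1\wedge X_n}(S^1\wedge Y_n)\to Y_{n+1}$ is a monomorphism of simplicial discrete $G$-sets --- which, since monomorphisms in $s\Set_G$ are detected on underlying simplicial sets, says exactly that $f$ is a cofibration of underlying Bousfield-Friedlander spectra. For the weak equivalences I would show that the stably-fibrant objects of $\Sp_G$ are the levelwise-fibrant $\Omega$-spectra, and that the localization's weak equivalences are precisely the maps inducing isomorphisms on $\pi_\ast = \colim_n \pi_{\ast+n}(-)_n$ of the underlying spectra; it follows that $f$ is a weak equivalence in $\Sp_G$ iff its underlying map of Bousfield-Friedlander spectra is a stable weak equivalence. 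The fibrations are then forced (``determined'') as the maps with the right lifting property against trivial cofibrations.

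The step I expect to be the main obstacle is this last identification of the abstract stable equivalences produced by the Bousfield localization with the concrete underlying $\pi_\ast$-isomorphisms. The localization machinery yields a model structure formally, but making $\Sp_G$ usable for homotopy fixed points requires a workable description of its weak equivalences. The way to get it is to construct an explicit stably-fibrant replacement inside $\Sp_G$ --- essentially the Bousfield-Friedlander $Q$-construction $X \mapsto \{\colim_k \Omega^k X_{n+k}\}_n$, performed internally to $(s\Set_G)_*$ --- and to verify that it lands in $\Omega$-spectra and, because colimits and the relevant limits in $s\Set_G$ are computed on underlying simplicial sets, agrees on underlying spectra with the classical $Q$-construction. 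This is exactly where the finiteness of $S^1$, hence the commutation of $\Omega$ with the filtered colimits appearing in $Q$, is essential. The remaining verifications --- left properness, cellularity, and the Quillen-functor checks --- are routine once Lemma~\ref{lem:cellular} is available.
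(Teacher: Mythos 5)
Your setup is the same as the paper's: both proofs run Hovey's stabilization machinery on spectra internal to $(s\Set_G)_*$ (levelwise structure followed by localization at the stabilization maps), and your identification of the cofibrations is correct and matches the paper. The divergence --- and the gap --- is in how you identify the localized weak equivalences with the stable equivalences of underlying Bousfield--Friedlander spectra. You propose to do this by showing that the internal $Q$-construction $X \mapsto \{\colim_k \Omega^k X_{n+k}\}_n$ is a \emph{stably fibrant replacement in $\Sp_G$}. That claim does not go through. Stable fibrancy in $\Sp_G$ requires each level to be fibrant in $s\Set_G$, and fibrancy there is not the condition of being a Kan complex on underlying simplicial sets: it is a descent condition, and the generating trivial cofibrations of Goerss's model structure have domains of cardinality up to a fixed $\alpha > \lvert G\rvert$. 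Consequently the sequential colimits $\colim_k \Omega^k (X_{n+k})$ appearing in $Q$ (even after a levelwise fibrant replacement in $s\Set_G$) are not fibrant in $s\Set_G$ in general --- no smallness argument applies to the generating trivial cofibrations, and concretely the descent condition amounts to interchanging a $\holim_\Delta$ with a filtered colimit, which fails for $G$ of unbounded cohomological dimension. The finiteness of $S^1$, which you correctly invoke, gives you that $\Omega$ commutes with the colimit and that $QX$ is an $\Omega$-spectrum on underlying spectra, but it does nothing for this fibrancy problem. Without $QX$ being local, the implication ``$f$ is a weak equivalence in the localized structure $\Rightarrow$ $\mathcal{U}f$ is a $\pi_*$-isomorphism'' is not established; this is exactly the failure mode that forces the finite (virtual) cohomological dimension hypotheses elsewhere in this subject.

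The paper closes this gap by a different, softer argument that you should adopt: the forgetful functor $\mathcal{U}\colon s\Set_G \to s\Set$ is a \emph{left} Quillen functor (it is left adjoint to $\CoInd_1^G$), hence so is the induced functor $\Sp_G \to \Sp$ by Hovey's Proposition~5.5. Therefore $\mathcal{U}$ carries the trivial cofibrations $X \to X_{fG}$ to stable equivalences of underlying spectra. Combining this with the separate fact that fibrant objects of $\Sp_G$ have fibrant underlying spectra (via $X = \colim_{U \le_o G} X^U$ with each $X^U$ fibrant, filtered colimits of fibrant objects being fibrant \emph{in $\Sp$}, where the lifting problems are against finite complexes), and with the fact that in both $\Sp_G$ and $\Sp$ the weak equivalences between fibrant objects are the levelwise equivalences, one gets $\phi$ is a weak equivalence iff $\phi_{fG}$ is a levelwise equivalence iff $\mathcal{U}\phi_{fG}$ is a stable equivalence iff $\mathcal{U}\phi$ is. This reduction to maps between fibrant objects is what replaces your $Q$-construction; as written, your route to the key identification would fail.
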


The method used in \cite{Davis} was to transport a Jardine model structure
on presheaves of spectra on an appropriate site.  However, an alternative
approach is given below using the machinery of M.~Hovey \cite{Hoveyspec}.

\begin{proof}
Observe that $(s\Set_G)_*$ satisfies the conditions of Definition~3.3 of
\cite{Hoveyspec}.  Therefore, the category $\Sp_G$ of spectra of
simplicial discrete $G$-sets admits a stable model category structure where
\begin{itemize}
\item the cofibrations are those morphisms $A \rightarrow B$ where the
induced maps
\begin{align*}
A_0 & \rightarrow B_0 \\
A_i \cup_{S^1 \wedge A_{i-1}} S^1 \wedge B_{i-1} & \rightarrow B_i \qquad n
\ge 1
\end{align*}
are cofibrations.
\item the fibrant objects $X$ are those spectra for which
\begin{enumerate}
\item the spaces $X_i$ are fibrant as simplicial discrete $G$-sets, 
\item the maps $X_i \rightarrow \Omega X_{i+1}$ are weak equivalences.
\end{enumerate}
\item the weak equivalences $f: X \rightarrow Y$ between fibrant objects
are those $f$ for which the maps $f_i: X_i \rightarrow Y_i $ are all weak
equivalences.
\end{itemize}
Clearly the cofibrations of $\Sp_G$ are the maps which are cofibrations of
underlying Bousfield-Friedlander spectra.  We are left with verifying that
the weak equivalences in $\Sp_G$ 
are precisely the stable equivalences of underlying
Bousfield-Friedlander spectra.

The forgetful functor
$$ \mathcal{U}: s\Set_G \rightarrow s\Set $$
from simplicial discrete $G$-sets to simplicial sets is a left Quillen
functor (it preserves cofibrations and trivial cofibrations, and is left
adjoint to the functor $\CoInd_1^G$ of Section~\ref{sec:homo}).
By Proposition~5.5 of \cite{Hoveyspec}, the induced forgetful functor
$$ \mathcal{U}: \Sp_G \rightarrow \Sp $$
is a left Quillen functor.  
Let $(-)_{fG}$ denote the functorial fibrant replacement in $\Sp_G$, so
that there are natural trivial cofibrations
$$ \alpha_{G,X} : X \rightarrow X_{fG}. $$

Suppose that $\phi: X \rightarrow Y$ is a morphism in $\Sp_G$, and consider
the following diagram:
$$
\xymatrix{
X \ar[r]^{\phi} \ar[d]_{\alpha_{G,X}} &
Y \ar[d]^{\alpha_{G,Y}} 
\\
X_{fG} \ar[r]_{\phi_{fG}} &
Y_{fG}
}
$$
We claim that $\phi$ is a stable equivalence in $\Sp_G$ if and only if the
induced morphism $\mathcal{U}\phi$ is a stable equivalence of underlying
Bousfield-Friedlander spectra.  Because $\mathcal{U}$ is a left Quillen functor, and the morphisms
$\alpha_{G,-}$ are trivial cofibrations, we may conclude that the morphisms
$\mathcal{U}\alpha_{G,-}$ induce stable equivalences of underlying
Bousfield-Friedlander spectra.  

Since the underlying spectrum of a fibrant object in $\Sp_G$ is fibrant in
$\Sp$, and since the stable equivalences between fibrant objects in $\Sp_G$
and $\Sp$ are precisely the levelwise equivalences, we see that $\phi_{fG}$
is a stable equivalence if and only if $\mathcal{U}\phi_{fG}$ is a stable
equivalence.  Therefore, we may deduce that $\phi$ is a stable equivalence
if and only if $\mathcal{U}\phi$ is a stable equivalence.
\end{proof}

\subsection{Discrete symmetric $G$-spectra}\label{sec:SigmaSp_G}

Let $\Sigma\Sp$ denote the category of symmetric spectra
(see \cite{HSS}, \cite{MMSS} 
for accounts of symmetric spectra).
Define the category of discrete symmetric $G$-spectra $\Sigma\Sp_G$ 
to be the category of 
symmetric spectra of simplicial discrete
$G$-sets.  
Let $\Sigma_i$ denote the $i$th symmetric group.
An object $X \in \Sigma\Sp_G$ consists of a sequence
$\{X_i\}_{i \ge 0}$, where each $X_i$ is a pointed 
simplicial discrete $G \times \Sigma_i$-set,
together with suitably compatible
$G \times \Sigma_{i} \times \Sigma_j$-equivariant maps
$$ \sigma_{i,j} : S^i \wedge X_j \rightarrow X_{i+j}. $$
Here, $S^i = (S^1)^{\wedge i}$ 
is given the trivial $G$-action, and $\Sigma_i$ permutes the
factors of the smash product $(S^1)^{\wedge i}$. 
When $G$ is finite, a discrete symmetric $G$-spectrum is 
simply a na\"\i ve symmetric $G$-spectrum, and not a genuine equivariant 
symmetric $G$-spectrum in the sense of \cite{Mandell}.

Maps $f :X \rightarrow Y$ of
discrete symmetric $G$-spectra are sequences of 
$G \times \Sigma_i$-equivariant maps of pointed simplicial sets 
$f_i : X_i \rightarrow Y_i$
which are compatible with the spectrum structure maps.

For a cofibrantly generated model category $\mathcal{C}$, let
$\mathcal{C}^{\Sigma_j}$ denote the diagram category of
$\Sigma_j$-equivariant objects in $\mathcal{C}$ with the projective model
structure (\cite[Thm.~11.6.1]{Hirschhorn}).  

\begin{lem}\label{lem:diagmodelstructure}
In the projective model category structure on $(s\Set_G)_*^{\Sigma_j}$:
\begin{itemize}
\item the cofibrations are those maps that are projective cofibrations in the underlying
category $s\Set_*^{\Sigma_j}$,
\item the weak equivalences are those maps that are weak equivalences in the underlying
category $s\Set_*^{\Sigma_j}$,
\item the fibrations are determined.
\end{itemize}
\end{lem}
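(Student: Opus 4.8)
The plan is to read all three assertions off the standard construction of the projective model structure on a diagram category, using only the explicit generating cofibrations of $(s\Set_G)_*$. Since $(s\Set_G)_*$ is cofibrantly generated (indeed cellular, by Lemma~\ref{lem:cellular}), \cite[Thm.~11.6.1]{Hirschhorn} produces the projective model structure on $(s\Set_G)_*^{\Sigma_j}$ with the following features: the weak equivalences and fibrations are created by the forgetful functor $V\colon(s\Set_G)_*^{\Sigma_j}\to(s\Set_G)_*$ that forgets the $\Sigma_j$-action, and a set of generating (trivial) cofibrations is $F(I_+)$ (respectively $F(J_+)$), where $I_+,J_+$ are the generating (trivial) cofibrations of $(s\Set_G)_*$ and $F(Z)=(\Sigma_j)_+\wedge Z$ is the free-$\Sigma_j$-object functor. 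The clause about fibrations is then just the definition (fibrations $=$ maps with the right lifting property against $F(J_+)$), so only the clauses about weak equivalences and cofibrations require argument.

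For the weak equivalences I would trace the creation property down to simplicial sets: a map $f$ of $(s\Set_G)_*^{\Sigma_j}$ is a projective weak equivalence exactly when $Vf$ is a weak equivalence in $(s\Set_G)_*$, and since the model structure on $(s\Set_G)_*$ is the under-category structure induced from Goerss's model structure on $s\Set_G$, this holds exactly when the underlying map of pointed simplicial sets of $f$ is a weak equivalence. The projective weak equivalences of $s\Set_*^{\Sigma_j}$ are characterized on underlying simplicial sets in precisely the same way, so the two classes agree. For the ``easy half'' of the cofibration clause I would bring in the forgetful functor $\mathcal{U}\colon(s\Set_G)_*^{\Sigma_j}\to s\Set_*^{\Sigma_j}$ that forgets the $G$-action. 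It is a left adjoint — the $\Sigma_j$-equivariant version of the forgetful functor $s\Set_G\to s\Set$, which is left adjoint to $\CoInd_1^G$ — hence preserves colimits and retracts, and it sends the generating cofibration $F\bigl((G/U\times\partial\Delta^n)_+\hookrightarrow(G/U\times\Delta^n)_+\bigr)$ to $(\Sigma_j)_+\wedge(S\times\partial\Delta^n)_+\hookrightarrow(\Sigma_j)_+\wedge(S\times\Delta^n)_+$, where $S=G/U$ is a finite set because $U$ is open in the profinite group $G$; this is a finite wedge of copies of a generating projective cofibration of $s\Set_*^{\Sigma_j}$. As every cofibration of $(s\Set_G)_*^{\Sigma_j}$ is a retract of a transfinite composition of pushouts of maps in $F(I_+)$, it follows that $\mathcal{U}$ carries cofibrations of $(s\Set_G)_*^{\Sigma_j}$ to projective cofibrations of $s\Set_*^{\Sigma_j}$.

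The hard part will be the converse, that $\mathcal{U}$ also reflects cofibrations. The natural approach is to obtain an intrinsic description of the cofibrations from the small object argument applied to $F(I_+)$: a cofibration is, up to retract, a relative $F(I_+)$-cell complex, hence in particular a monomorphism along which $\Sigma_j$ acts freely on the simplices (other than the basepoint) lying outside the image — a condition visible already on the underlying simplicial $\Sigma_j$-set, which would yield the reflection statement. The subtlety is that each cell of $(s\Set_G)_*^{\Sigma_j}$ carries a $G$-orbit $G/U$ alongside a free $\Sigma_j$-orbit, so a priori the $(G\times\Sigma_j)$-orbit types occurring in a cell complex are restricted; checking that keeping track of the two commuting actions imposes no extra constraint beyond the $\Sigma_j$-freeness is, I expect, the one genuinely non-formal step, and is where the argument must be made carefully.
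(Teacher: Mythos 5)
Your proposal establishes the weak-equivalence clause and one half of the cofibration clause (that forgetting the $G$-action carries projective cofibrations of $(s\Set_G)_*^{\Sigma_j}$ to projective cofibrations of $s\Set_*^{\Sigma_j}$), but the converse half --- that a map in $(s\Set_G)_*^{\Sigma_j}$ whose underlying map is a projective cofibration of $s\Set_*^{\Sigma_j}$ is already a projective cofibration of $(s\Set_G)_*^{\Sigma_j}$ --- is the substantive content of the lemma, and you explicitly leave it as an open step. As written, the proposal therefore does not prove the statement. The paper closes this gap in one stroke via the relative skeletal filtration: it asserts that the class of cofibrations generated by $I_+^{\Sigma_j} = \{ (\Sigma_j \times G/U \times \partial\Delta^n)_+ \hookrightarrow (\Sigma_j \times G/U \times \Delta^n)_+ \}$ consists exactly of the monomorphisms that are relative free $\Sigma_j$-complexes, and identifies the latter with the projective cofibrations of $s\Set_*^{\Sigma_j}$.

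Moreover, the orbit-type issue you raise at the end is not a formality that can be waved away; it is precisely where the skeletal-filtration argument requires care. For a monomorphism $A \to B$ in $(s\Set_G)_*^{\Sigma_j}$, the nondegenerate $n$-simplices of $B$ outside $A$ form a discrete $(G\times\Sigma_j)$-set, and to attach them using cells from $I_+^{\Sigma_j}$ one needs this set to decompose into orbits of the form $\Sigma_j \times G/U$, i.e., to be a free $\Sigma_j$-object \emph{in discrete $G$-sets}. Freeness of the $\Sigma_j$-action alone does not force this: an orbit $(G\times\Sigma_j)/\Gamma$ with $\Gamma$ the graph of a nontrivial homomorphism $U \to \Sigma_j$ ($U$ open in $G$) is $\Sigma_j$-free, yet it is not isomorphic to any $\Sigma_j \times G/V$, nor a retract of a coproduct of such orbits. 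So the step you defer is genuinely the crux, and a complete proof must either rule out such twisted orbits in the relevant skeletal stages or recast the cell-attachment condition in terms of $\Sigma_j$-freeness internal to discrete $G$-sets; until one of these is carried out, the proposal is incomplete.
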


\begin{proof}
The statement concerning weak equivalences follows immediately from the
definition of the weak equivalences in the projective model structure.
The projective cofibrations in $(s\Set_G)_*^{\Sigma_j}$ are generated by
the set
$$ I_+^{\Sigma_j} = 
\{ (\Sigma_j \times G/U \times \partial \Delta^n)_+ \hookrightarrow 
(\Sigma_j \times G/U \times \Delta^n)_+ \: : \: U \le_o G, \: n \ge 0 \}. $$
Using the relative skeletal filtration, it is easy to see that the
class of cofibrations generated by the set $I_+^{\Sigma_j}$ are the
monomorphisms which are relative free $\Sigma_j$-complexes.  However, these
are precisely the projective cofibrations in $s\Set_*^{\Sigma_j}$.
\end{proof}

\begin{thm}\label{thm:sigmaspmodel}
The category $\Sigma\Sp_G$ admits a left proper cellular model structure where
\begin{itemize}
\item the cofibrations are the cofibrations of underlying symmetric spectra,
\item the weak equivalences are the stable weak equivalences of underlying
symmetric spectra,
\item the fibrations are determined.
\end{itemize}
\end{thm}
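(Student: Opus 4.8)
The plan is to realize the desired model structure as a left Bousfield localization of a projective \emph{level} model structure on $\Sigma\Sp_G$, just as the proof of Theorem~\ref{thm:spmodel} built the model structure on $\Sp_G$ from Hovey's machinery. For the level structure, declare $f\colon X\to Y$ to be a weak equivalence (respectively a fibration) if each $f_i\colon X_i\to Y_i$ is a weak equivalence (respectively a fibration) in the projective model category $(s\Set_G)_*^{\Sigma_i}$ of Lemma~\ref{lem:diagmodelstructure}; this is cofibrantly generated, with generating (trivial) cofibrations the maps $\mathrm{Fr}_n(i)$ (respectively $\mathrm{Fr}_n(j)$), where $\mathrm{Fr}_n$ is the free symmetric spectrum functor left adjoint to evaluation at level $n$ and $i$ (respectively $j$) runs over the generating (trivial) cofibrations $I_+$ (respectively $J_+$) of $(s\Set_G)_*$. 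Because the forgetful functor $(s\Set_G)_*\to s\Set_*$ is a left adjoint, colimits in $\Sigma\Sp_G$, hence latching objects, are computed on underlying symmetric sequences, and the smash powers $S^{\wedge a}$ carry the trivial $G$-action; so Lemma~\ref{lem:diagmodelstructure} shows that a map of $\Sigma\Sp_G$ is a cofibration, respectively a level weak equivalence, if and only if its underlying map of symmetric spectra is. Left properness and cellularity of $(s\Set_G)_*$ (Lemma~\ref{lem:cellular}), hence of each $(s\Set_G)_*^{\Sigma_i}$, then propagate to make this level model structure on $\Sigma\Sp_G$ left proper and cellular.

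I would then left Bousfield localize at the set $S$ of stabilization maps $\lambda^A_n\colon \mathrm{Fr}_{n+1}(S^1\wedge A)\to \mathrm{Fr}_n(A)$, with $A$ running over the domains and codomains of the generating cofibrations of the categories $(s\Set_G)_*^{\Sigma_n}$. Hirschhorn's localization theorem applies since the level structure is left proper and cellular, and produces a left proper cellular model structure, which is the stable model structure on $\Sigma\Sp_G$. Since left Bousfield localization does not change the cofibrations, the cofibrations of the stable structure are the level cofibrations, which by the previous paragraph are exactly the maps with underlying cofibration of symmetric spectra; as the stable model structure on $\Sigma\Sp$ has the same cofibrations as its projective level structure (see \cite{HSS}), this is the stated description.

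For the weak equivalences, let $(-)_{fG}$ be a functorial fibrant replacement in the stable model structure on $\Sigma\Sp_G$, with natural trivial cofibrations $\alpha_{G,X}\colon X\to X_{fG}$. The forgetful functor $\mathcal{U}\colon\Sigma\Sp_G\to\Sigma\Sp$, left adjoint to $\CoInd_1^G$, is left Quillen for the level structures; and it carries each $\lambda^A_n\in S$ to the map $\mathrm{Fr}_{n+1}(S^1\wedge\mathcal{U}A)\to\mathrm{Fr}_n(\mathcal{U}A)$, which — writing $\mathcal{U}A$ as a wedge of copies of a pointed simplicial set $K$ — is a wedge of copies of the ordinary stabilization map $\lambda^K_n$, and hence a stable equivalence of symmetric spectra, since $\lambda^K_n$ is a stable equivalence for every pointed simplicial set $K$ and stable equivalences between cofibrant symmetric spectra are preserved by wedges. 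By the universal property of left Bousfield localization, $\mathcal{U}$ is left Quillen for the stable structures as well, so each $\mathcal{U}\alpha_{G,X}$ is a trivial cofibration of symmetric spectra, in particular a stable equivalence. Applying two-out-of-three to the naturality square of $\alpha_{G,-}$ reduces the claim that $\phi$ is a stable equivalence if and only if $\mathcal{U}\phi$ is, to the case of a map $\phi_{fG}\colon X_{fG}\to Y_{fG}$ between stably fibrant objects. Now a stably fibrant object $Z$ of $\Sigma\Sp_G$ is level fibrant (so each $Z_i$ has underlying Kan complex, by Lemma~\ref{lem:diagmodelstructure}) and, being local with respect to the maps in $S$ built from $(G/U)_+$, has $Z_n\to\Omega Z_{n+1}$ inducing a weak equivalence on $U$-fixed points for every open $U$; passing to the colimit over $U$, and using that $\Omega Z_{n+1}$ is again a simplicial discrete $G$-set, shows $Z_n\to\Omega Z_{n+1}$ is a weak equivalence, so by the characterization of stably fibrant symmetric spectra as levelwise Kan $\Omega$-spectra (\cite{HSS}) the symmetric spectrum $\mathcal{U}Z$ is stably fibrant. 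Since weak equivalences between fibrant objects of a left Bousfield localization coincide with weak equivalences of the underlying model structure, $\phi_{fG}$ is a stable equivalence of $\Sigma\Sp_G$ if and only if it is a level equivalence, if and only if $\mathcal{U}\phi_{fG}$ is a level equivalence of symmetric spectra (Lemma~\ref{lem:diagmodelstructure}), if and only if (both sides being stably fibrant) $\mathcal{U}\phi_{fG}$ is a stable equivalence of symmetric spectra. Tracing back through the $\alpha_{G,-}$ squares gives the characterization for arbitrary $\phi$.

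The main obstacle is the analysis of the stably fibrant objects in the last paragraph: unlike the Bousfield--Friedlander case of Theorem~\ref{thm:spmodel}, the $\Omega$-spectrum characterization of fibrancy for symmetric spectra is delicate, so one must be careful in invoking \cite{HSS} and must confirm that the localizing set $S$ is chosen so that $S$-locality is precisely level fibrancy together with the $\Omega$-spectrum condition.
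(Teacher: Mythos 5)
Your proof is correct and follows essentially the same route as the paper: the paper invokes Hovey's general stabilization machinery (Definition~8.7 of \cite{Hoveyspec}), which is internally exactly your projective level structure followed by left Bousfield localization at the stabilization maps, then identifies the cofibrations via Lemma~\ref{lem:diagmodelstructure} and the weak equivalences via the same forgetful-functor-is-left-Quillen and fibrant-replacement argument you give (carried over verbatim from the proof of Theorem~\ref{thm:spmodel}). The only difference is that you unpack the characterization of the stably fibrant objects and the $\Omega$-spectrum condition by hand, where the paper simply quotes Hovey's theorem.
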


\begin{proof}
Observe that $(s\Set_G)_*$ satisfies the conditions of Definition~8.7 of
\cite{Hoveyspec}.  Therefore, the category $\Sigma\Sp_G$ of symmetric 
spectra of
simplicial discrete $G$-sets admits a stable model category structure where
\begin{itemize}
\item the cofibrations are those morphisms $A \rightarrow B$ where the
induced maps
\begin{align*}
A_0 & \rightarrow B_0 \\
A_i \cup_{L_i A} L_i B & \rightarrow B_i \qquad i
\ge 1
\end{align*}
are projective cofibrations in $(s\Set_G)^{\Sigma_j}_*$, where $L_n$ is the
latching object of \cite[Def.~8.4]{Hoveyspec}.
\item the fibrant objects $X$ are those spectra for which
\begin{enumerate}
\item the spaces $X_i$ are fibrant as simplicial discrete $G$-sets, 
\item the maps $X_i \rightarrow \Omega X_{i+1}$ are weak equivalences.
\end{enumerate}
\item the weak equivalences $f: X \rightarrow Y$ between fibrant objects
are those $f$ for which the maps $f_i: X_i \rightarrow Y_i $ are all weak
equivalences of underlying simplicial sets.
\end{itemize}
The cofibrations are immediately seen to be the cofibrations of underlying
symmetric spectra, using Lemma~\ref{lem:diagmodelstructure}.  
The verification that the weak equivalences are
precisely the
stable equivalences of underlying symmetric spectra is identical to
the argument given in the proof of Theorem~\ref{thm:spmodel}.
\end{proof}

We have the following proposition, which helps
to translate results in
the category $\Sp_G$, to the category $\Sigma\Sp_G$.

\begin{prop}
There is a Quillen equivalence
$$
\xymatrix{
\mathbb{V}_G : \Sp_G \rightleftarrows \Sigma\Sp_G : \mathbb{U}_G  
}
$$
where $\mathbb{U}_G$ is the forgetful functor.
\end{prop}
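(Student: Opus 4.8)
The plan is to produce the Quillen equivalence by comparing both adjunctions to the analogous (non-equivariant) Quillen equivalence $\mathbb{V}\colon \Sp \rightleftarrows \Sigma\Sp \colon \mathbb{U}$ between Bousfield--Friedlander spectra and symmetric spectra, which is Hovey--Shipley--Smith \cite{HSS}. Both $\Sp_G$ and $\Sigma\Sp_G$ are built from the pointed simplicial discrete $G$-set category $(s\Set_G)_*$ by Hovey's spectrum machine \cite{Hoveyspec}: $\Sp_G$ via the evaluation-suspension adjunction using $S^1$, and $\Sigma\Sp_G$ via the symmetric-spectrum construction on the same data (Theorems~\ref{thm:spmodel} and \ref{thm:sigmaspmodel} above). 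First I would define $\mathbb{V}_G$ as the left adjoint of the forgetful functor $\mathbb{U}_G \colon \Sigma\Sp_G \to \Sp_G$: on a discrete $G$-spectrum $X$, $\mathbb{V}_G(X)$ is built levelwise by smashing in the free $\Sigma_i$-action, exactly as in the non-equivariant case, and since $G$ acts through simplicial discrete $G$-set maps compatibly with the trivial $\Sigma_i$-action on $S^1$, the resulting object is indeed a discrete symmetric $G$-spectrum. The existence of the adjoint is formal once one observes that the underlying functors commute with the forgetful functors to $\Sp$ and $\Sigma\Sp$.

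The second step is to check that $(\mathbb{V}_G, \mathbb{U}_G)$ is a Quillen adjunction. This is where the model structures of Theorems~\ref{thm:spmodel} and \ref{thm:sigmaspmodel} do the work: in both categories the cofibrations and weak equivalences are created by the forgetful functors to $\Sp$ and $\Sigma\Sp$. Since $\mathbb{U}_G$ strictly commutes with these forgetful functors, and since the non-equivariant $\mathbb{U}$ is a right Quillen functor, it follows immediately that $\mathbb{U}_G$ preserves fibrations and trivial fibrations; hence $(\mathbb{V}_G,\mathbb{U}_G)$ is Quillen. Alternatively, one could invoke Proposition~5.5 (and its symmetric analogue Proposition~8.8) of \cite{Hoveyspec}, which say that a left Quillen functor on the base category $(s\Set_G)_*$ that respects the relevant structure induces left Quillen functors on the associated spectrum categories; applied to the identity-type comparison of the two spectrum constructions, this gives the Quillen pair with slightly more bookkeeping.

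The third and main step is to show this Quillen pair is a Quillen \emph{equivalence}, i.e.\ that for a cofibrant $X \in \Sp_G$ and a fibrant $Y \in \Sigma\Sp_G$ a map $\mathbb{V}_G X \to Y$ is a weak equivalence iff its adjoint $X \to \mathbb{U}_G Y$ is. Here I expect the main obstacle to be that the fibrant objects of $\Sigma\Sp_G$ are only \emph{semistable}-type objects (their underlying symmetric spectra need not be $\Omega$-spectra in the strict sense used in \cite{HSS}), so one cannot directly quote levelwise-equivalence criteria; the resolution is to pass to underlying spectra and use that a map in $\Sp_G$ (resp.\ $\Sigma\Sp_G$) is a weak equivalence iff its image under the forgetful functor to $\Sp$ (resp.\ $\Sigma\Sp$) is, together with the fact that the square
\begin{equation*}
\xymatrix{
\Sp_G \ar[r]^{\mathbb{V}_G} \ar[d]_{\mathcal{U}} & \Sigma\Sp_G \ar[d]^{\mathcal{U}} \\
\Sp \ar[r]_{\mathbb{V}} & \Sigma\Sp
}
\end{equation*}
commutes (on the nose for the left adjoints, and up to natural isomorphism for the right adjoints). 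Given a cofibrant $X$ and fibrant $Y$, the adjoint maps $\mathbb{V}_G X \to Y$ and $X \to \mathbb{U}_G Y$ have underlying maps that are the adjoints of the corresponding map for the non-equivariant Quillen equivalence $(\mathbb{V},\mathbb{U})$ applied to the cofibrant object $\mathcal{U}X$ and a fibrant replacement of $\mathcal{U}Y$; since $(\mathbb{V},\mathbb{U})$ is a Quillen equivalence \cite{HSS}, one of these underlying maps is a weak equivalence iff the other is, and by the detection-of-weak-equivalences property in $\Sp_G$ and $\Sigma\Sp_G$ this transfers back upstairs. The one point requiring care is that $\mathcal{U}$ does not obviously preserve fibrant objects on the nose in $\Sigma\Sp_G$ versus $\Sigma\Sp$, but since the fibrant objects in $\Sigma\Sp_G$ are defined by exactly the same two conditions (levelwise fibrancy of the discrete $G$-sets, and the $X_i \to \Omega X_{i+1}$ being weak equivalences) whose underlying-simplicial-set analogues define fibrancy in $\Sigma\Sp$, this is immediate; with that observation the argument closes.
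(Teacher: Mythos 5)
Your overall strategy is the same as the paper's: construct $\mathbb{V}_G$ by the Hovey--Shipley--Smith formula applied to discrete $G$-objects, note that the square comparing $(\mathbb{V}_G,\mathbb{U}_G)$ with the non-equivariant $(\mathbb{V},\mathbb{U})$ commutes, and reduce the Quillen-equivalence condition to the non-equivariant one using that cofibrations and weak equivalences in $\Sp_G$ and $\Sigma\Sp_G$ are detected on underlying spectra, that $\mathcal{U}$ preserves cofibrations, and that $\mathcal{U}_\Sigma$ preserves fibrant objects. Your third step is essentially the paper's argument; the only soft spot there is calling it ``immediate'' that a fibrant object of $\Sigma\Sp_G$ has fibrant underlying symmetric spectrum --- one still needs that a fibrant simplicial discrete $G$-set has Kan underlying simplicial set, which comes from writing it as the filtered colimit of its (Kan) fixed-point subcomplexes --- but this is recorded in the proof of Theorem~\ref{thm:sigmaspmodel} and is a citation issue, not a gap.

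The step that does not go through as written is your verification that $(\mathbb{V}_G,\mathbb{U}_G)$ is a Quillen pair. You deduce that $\mathbb{U}_G$ preserves fibrations and trivial fibrations from the facts that $\mathbb{U}_G$ commutes with the forgetful functors and that the non-equivariant $\mathbb{U}$ is right Quillen. But the fibrations of $\Sp_G$ and $\Sigma\Sp_G$ are \emph{not} created by the forgetful functors --- only the cofibrations and weak equivalences are; the fibrations are ``determined,'' and fibrancy upstairs is strictly stronger than fibrancy of the underlying spectrum (it involves fibrancy as simplicial discrete $G$-sets). So knowing that $\mathbb{U}$ takes fibrations of $\Sigma\Sp$ to fibrations of $\Sp$ gives no information about whether $\mathbb{U}_G$ takes fibrations of $\Sigma\Sp_G$ to fibrations of $\Sp_G$; the inference is invalid. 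The correct argument is the dual one, which is what the paper uses: cofibrations and trivial cofibrations \emph{are} detected on underlying spectra in both equivariant categories, $\mathbb{V}_G$ commutes with the forgetful functors, and $\mathbb{V}$ is left Quillen, so $\mathbb{V}_G$ preserves cofibrations and trivial cofibrations. This is a one-line fix using ingredients you already have in hand. (Your fallback appeal to Hovey's Proposition~5.5 and its symmetric analogue also does not apply directly: those results transport Quillen functors of the \emph{base} categories to the associated spectrum categories, rather than comparing the non-symmetric and symmetric stabilizations of a single base category.)
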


\begin{proof}
The functor $\mathbb{V}_G$ 
is the left adjoint of $\mathbb{U}_G$: it is explicitly given by
(see \cite[Sec.~4.3]{HSS})
$$ \mathbb{V}_G(X) = S \otimes_{T(\mathbb{G}_1S^1)} \mathbb{G}X $$
where $\mathbb{G}X$ is the symmetric sequence given by
$$ (\mathbb{G}X)_i = (\Sigma_i)_+ \wedge X_i $$
(where $G$ acts through its action on $X_i$), $\mathbb{G}_1S^1$ is the
symmetric sequence 
$$ (*, S^1, *, *, \cdots ) $$ 
(with trivial $G$ action), 
$T(\mathbb{G}_1S^1)$ is the
free monoid on $\mathbb{G}_1S^1$ with respect to $\otimes$ (which
gives the symmetric
monoidal structure on symmetric sequences), 
and $S$ is the usual symmetric
sequence $(S^0, S^1, S^2, \cdots)$.
We have the following commutative diagram of functors
$$
\xymatrix{
\Sp_G \ar@<0.5ex>[r]^{\mathbb{V}_G} \ar[d]_{\mathcal{U}} & 
\Sigma\Sp_G \ar@<0.5ex>[l]^{\mathbb{U}_G} \ar[d]^{\mathcal{U}_\Sigma}
\\
\Sp \ar@<0.5ex>[r]^{\mathbb{V}} & 
\Sigma\Sp \ar@<0.5ex>[l]^{\mathbb{U}} 
}
$$
where the bottom row is the Quillen equivalence of \cite[Sec.~4]{HSS}.   

The functor $\mathbb{V}_G$ preserves cofibrations and trivial cofibrations
because the functors $\mathcal{U}$ and $\mathcal{U}_\Sigma$ 
reflect and detect 
cofibrations and trivial cofibrations, and the functor $\mathbb{V}$ is a
left Quillen functor.  Therefore $(\mathbb{V}_G, \mathbb{U}_G)$ forms a
Quillen pair.

To show that $(\mathbb{V}_G, \mathbb{U}_G)$ is a Quillen equivalence, we
must show that for all cofibrant $X$ in $\Sp_G$ and all fibrant $Y$ in
$\Sigma\Sp_G$, a morphism
$$ f: \mathbb{V}_GX \rightarrow Y $$
is a weak equivalence if and only if its adjoint
$$ \td{f}: X \rightarrow \mathbb{U}_GY $$
is a weak equivalence.  However, since the functors $\mathcal{U}$ and
$\mathcal{U}_\Sigma$ reflect
and detect weak equivalences, it suffices to show that:
$$
\left\{
\begin{array}{c}
\mathcal{U}_\Sigma f : \mathbb{V} \mathcal{U}X \rightarrow
\mathcal{U}_\Sigma Y \\
\text{is a weak equivalence}
\end{array}
\right\}
\quad \text{if and only if}
\quad
\left\{ 
\begin{array}{c}
\mathcal{U}\td{f} : \mathcal{U}X \rightarrow
\mathbb{U} \mathcal{U}_\Sigma Y \\
\text{is a weak equivalence}
\end{array}
\right\}.
$$
This follows from the fact that $\mathcal{U}$ preserves cofibrations
(Theorem~\ref{thm:spmodel}), $\mathcal{U}_\Sigma$ preserves fibrant objects
(proof of Theorem~\ref{thm:sigmaspmodel}), and $(\mathbb{V}, \mathbb{U})$
form a Quillen equivalence \cite[Thm.~4.2.5]{HSS}.
\end{proof}

For the rest of this paper, we shall be working in the world of symmetric 
spectra, and shall refer to a symmetric spectrum as simply a spectrum.
For a spectrum $X$, we shall always use $\pi_*X$ to refer to its
\emph{true} homotopy groups (the maps $[S^t, X]$ in the stable homotopy
category) and not the \emph{na\"ive} homotopy groups (the colimit of the
homotopy groups of $X_i$) \cite{Schwedepi}. 

\subsection{Mapping spectra}\label{sec:Map}

Let $K$ and $L$ be discrete $G$-sets.  Then the set
of (non-equivariant) functions $\Map(K,L)$ is a $G$-set with $G$ acting by
conjugation. This action is given as follows: 
for $g \in G$ and $f \in \Map(K,L)$, $g\cdot
f$ is the map
$$ (g\cdot f)(z) = gf(g^{-1}z). $$
Observe that $\Map(K,L)$ is not in general a discrete $G$-set, but it is 
if $K$ is finite.

For a finite set $K$ and a spectrum $X$, we define the mapping spectrum
$\Map(K,X)$ to be the spectrum whose $m$th space is
given by
$$ \Map(K,X)_m = \Map(K,X_m), $$ where the $n$-simplices of $\Map(K,X_m)$
is the set $\Map(K,(X_m)_n).$ If $X$ is a discrete $G$-spectrum, and $K$ is
a finite discrete $G$-set, then the above definitions combine to give
that
$\Map(K,X)$ is a discrete $G$-spectrum.
 
If $K = \lim_\alpha K_\alpha$ is a profinite set and $X$ is a spectrum, then 
the spectrum of
continuous maps is the spectrum
$$ \Map^c(K,X) = \colim_\alpha \Map(K_\alpha,X). $$  
If $K$ is a continuous $G$-space, with each $K_\alpha$ a discrete $G$-set,
and $X$ is a discrete $G$-spectrum, then $\Map^c(K,X)$ is a 
discrete $G$-spectrum.

\begin{lem}\label{lem:Mapexact}
Let $K = \lim_\alpha K_\alpha$ be a profinite set, where each of the
$K_\alpha$ is
finite, and each of the maps $K_\alpha \rightarrow K_\beta$ 
in the pro-system is a
surjection.  The functor
$$ \Map^c(K,-): \Sigma\Sp \rightarrow \Sigma\Sp $$
preserves stable equivalences.
\end{lem}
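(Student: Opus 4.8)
We want to show that $\Map^c(K,-)$ preserves stable equivalences of symmetric spectra, where $K=\lim_\alpha K_\alpha$ is a profinite set expressed as a cofiltered limit of finite sets along surjections. The plan is to reduce to the finite case and then pass to the colimit. First I would observe that for a fixed \emph{finite} set $K_\alpha$, the functor $\Map(K_\alpha,-)$ is, up to the usual identification, a finite product: choosing an (unnatural) bijection $K_\alpha\cong\{1,\dots,n\}$ gives $\Map(K_\alpha,X)\cong X^{\times n}$, and finite products of symmetric spectra agree with finite wedges up to stable equivalence and preserve stable equivalences (this is standard; one can also note $\Map(K_\alpha,-)$ is exact levelwise and commutes with the relevant homotopy-group computations). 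So each $\Map(K_\alpha,-)$ preserves stable equivalences.

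**Passing to the colimit.** Given a stable equivalence $f\colon X\to Y$, we get a map of filtered diagrams $\{\Map(K_\alpha,X)\}_\alpha\to\{\Map(K_\alpha,Y)\}_\alpha$ indexed by the cofiltered system $\{K_\alpha\}$ (which becomes a filtered system once we take $\Map(K_\alpha,-)$, since the transition maps $K_\alpha\to K_\beta$ induce $\Map(K_\beta,X)\to\Map(K_\alpha,X)$), in which every map $\Map(K_\alpha,X)\to\Map(K_\alpha,Y)$ is a stable equivalence. Then $\Map^c(K,f)=\colim_\alpha\Map(K_\alpha,f)$. The key input is that a filtered colimit of stable equivalences of symmetric spectra is a stable equivalence. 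For symmetric spectra one must be slightly careful because stable equivalences are not detected on naive homotopy groups; but filtered colimits are exact and one can either appeal to the fact that filtered colimits preserve stable equivalences (e.g.\ via detecting them on a set of generators, or via the fact that $\QQ$-cofibrant or suitably nice representatives have their stable equivalences detected on true homotopy groups, which commute with filtered colimits), or invoke the standard statement that in $\Sigma\Sp$ filtered colimits of stable equivalences are stable equivalences. This yields the claim.

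**The main obstacle.** The only subtle point — and the one I would spend care on — is the colimit step in the category of symmetric spectra, precisely because stable equivalences there are \emph{not} the maps inducing isomorphisms on naive homotopy groups. The clean way around this is to first replace $X$ and $Y$ functorially by stably fibrant (or at least semistable) objects, or to reduce to the case where $f$ is a stable equivalence between objects whose true homotopy groups are computed by a filtered-colimit-commuting construction; alternatively, use that $\Map(K_\alpha,-)$ preserves the class of stable equivalences \emph{and} the class of stably fibrant objects (it is a right Quillen-type functor in each variable for fixed finite $K_\alpha$, being a finite limit), so that the filtered colimit of fibrant replacements can be analyzed levelwise. Once the interaction of filtered colimits with stable equivalences in $\Sigma\Sp$ is pinned down, the rest is formal: finite $K_\alpha$ reduces to finite products, which are manifestly exact, and $\Map^c(K,-)=\colim_\alpha\Map(K_\alpha,-)$ then preserves stable equivalences.
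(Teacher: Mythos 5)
Your reduction to the finite case is exactly the paper's: for finite $K_\alpha$ one identifies $\Map(K_\alpha,-)$ with a finite product, notes that the canonical map from a finite wedge to a finite product is a $\pi_*$-isomorphism (hence a stable equivalence), and that stable equivalences are closed under finite wedges. The divergence is in the colimit step, which you correctly identify as the crux but then leave unresolved: you list three possible ways one might show that filtered colimits preserve stable equivalences in $\Sigma\Sp$ (generators, nice representatives, ``invoke the standard statement'') without carrying any of them out. That closure statement is in fact true in general (it can be extracted, e.g., from a detection functor that commutes with filtered colimits and converts stable equivalences into $\pi_*$-isomorphisms), so your route could be completed, but as written the essential step is an unproved citation rather than an argument.

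What you have missed is that the surjectivity hypothesis on the maps $K_\alpha \rightarrow K_\beta$ --- which appears in the statement of the lemma and which your proof never uses --- is there precisely to make the colimit step elementary. Surjectivity of $K_\alpha \rightarrow K_\beta$ makes each induced map $\Map(K_\beta,X) \rightarrow \Map(K_\alpha,X)$ a levelwise monomorphism, i.e., a cofibration in the \emph{injective} stable model structure on $\Sigma\Sp$, in which every object is cofibrant and the weak equivalences are the stable equivalences. Restricting to a cofinal $\lambda$-sequential subcategory, one then has a map of $\lambda$-sequences of cofibrations between cofibrant objects that is a levelwise weak equivalence, and \cite[Prop.~17.9.1]{Hirschhorn} gives that the map on colimits is a weak equivalence. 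This avoids any appeal to the general (and harder) closure of stable equivalences under arbitrary filtered colimits. So: same skeleton, identical first half, but your proof has a genuine gap at the decisive step, and the paper's hypothesis of surjective transition maps is the intended way to close it.
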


\begin{proof}
In \cite{Schwede}, it is shown that stable equivalences are preserved under
finite products.  The argument goes as follows: 
the canonical map from a finite wedge to
a finite product is a $\pi_*$-isomorphism, hence a stable equivalence, and
stable equivalences are preserved under finite wedges (this is easily
checked from the definition of stable equivalences, 
by mapping into injective $\Omega$-spectra).  Therefore, for each $\alpha$, the
functor
$$ X \mapsto \Map(K_\alpha, X) \cong \prod_{K_\alpha} X $$
preserves stable equivalences.
Since we have assumed that the morphisms $K_\alpha \rightarrow K_\beta$ are
surjections, the induced morphisms
$$ \Map(K_\beta, X) \rightarrow \Map(K_\alpha, X) $$
are levelwise monomorphisms for every $X$.  

The category of symmetric
spectra possesses an \emph{injective} stable model structure, where the
injective 
cofibrations are the levelwise monomorphisms and the weak equivalences are
the stable equivalences (see \cite[p.~199]{HSS}).  
The directed system
$$ \{ \Map(K_\alpha, X) \} $$
is a directed system of injective cofibrations between injectively
cofibrant objects (every object is cofibrant in the injective model
structure).
The colimit
$$ \Map^c(K, X) = \colim_\alpha \Map(K_\alpha, X) $$
may be computed on a cofinal $\lambda$-sequential subcategory of the indexing
category of the system $\{K_\alpha\}$, for some ordinal $\lambda$.  
We deduce, by 
\cite[Prop.~17.9.1]{Hirschhorn}, that the functor
$$ X \mapsto \Map^c(K, X) = \colim_\alpha \Map(K_\alpha, X) $$
preserves stable equivalences.
\end{proof}

If $A$ is an abelian group, endowed with the discrete topology, 
and $K = \lim_\alpha K_\alpha$ is a profinite set,
the continuous maps $\Map^c(K,A)$ are given by
$$ \Map^c(K,A) = \colim_\alpha \Map(K_\alpha,A). $$

\begin{lem}\label{lem:piMap}
Let $X$ be an object of $\Sigma\Sp$, and let 
$K$ be a profinite set satisfying the
hypotheses of Lemma~\ref{lem:Mapexact}. 
Then there is an isomorphism
$$ \pi_*\Map^c(K,X) \cong \Map^c(K, \pi_*(X)) $$
\end{lem}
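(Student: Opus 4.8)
The plan is to reduce the computation of $\pi_* \Map^c(K,X)$ to a colimit over the finite sets $K_\alpha$ and then to the known behavior of $\Map(K_\alpha,-) \cong \prod_{K_\alpha}(-)$. First I would observe that, by definition, $\Map^c(K,X) = \colim_\alpha \Map(K_\alpha,X)$, where by the argument in the proof of Lemma~\ref{lem:Mapexact} this colimit may be computed over a cofinal $\lambda$-sequential subcategory consisting of levelwise monomorphisms between injectively fibrant–cofibrant objects. The point of passing to such a presentation is that true homotopy groups commute with filtered colimits along such maps: a sequential colimit of injective cofibrations between $\Omega$-spectra is again an $\Omega$-spectrum (up to the usual levelwise-fibrancy caveat, which the injective model structure handles), and for $\Omega$-spectra the true homotopy groups agree with the na\"\i ve ones, which do commute with filtered colimits. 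Hence $\pi_*\Map^c(K,X) \cong \colim_\alpha \pi_*\Map(K_\alpha, X)$.

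Next I would identify each term. Since $K_\alpha$ is finite, $\Map(K_\alpha,X) \cong \prod_{K_\alpha} X$, a finite product. As recalled in the proof of Lemma~\ref{lem:Mapexact}, the canonical map from the finite wedge $\bigvee_{K_\alpha} X$ to the finite product $\prod_{K_\alpha} X$ is a $\pi_*$-isomorphism, so $\pi_*\Map(K_\alpha,X) \cong \bigoplus_{K_\alpha}\pi_*(X) \cong \Map(K_\alpha,\pi_*X)$, naturally in the surjections $K_\alpha \to K_\beta$ (a surjection of finite sets induces the evident summand-inclusion on $\bigoplus$, matching the transition maps in $\Map^c(K,\pi_*X) = \colim_\alpha \Map(K_\alpha,\pi_*X)$). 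Passing to the colimit over $\alpha$ then gives
\begin{equation*}
\pi_*\Map^c(K,X) \cong \colim_\alpha \Map(K_\alpha,\pi_*X) = \Map^c(K,\pi_*X),
\end{equation*}
as desired.

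The main obstacle is the first step: justifying that $\pi_*$ commutes with the filtered colimit $\colim_\alpha \Map(K_\alpha,X)$. True homotopy groups do not commute with arbitrary filtered colimits of symmetric spectra, so one genuinely needs the structural input from Lemma~\ref{lem:Mapexact}'s proof — that the diagram lives in the injective stable model structure, consists of cofibrations, and can be reindexed $\lambda$-sequentially — together with the fact (citing \cite{Schwedepi} or \cite{HSS}) that a filtered colimit along levelwise monomorphisms of injectively fibrant objects is again injectively fibrant, so that on the colimit the true and na\"\i ve homotopy groups coincide. Once that identification of $\pi_*$ with a na\"\i ve colimit is in hand, the remaining steps are the elementary facts about finite products and wedges already used in Lemma~\ref{lem:Mapexact}, and the verification that the resulting isomorphisms are compatible with the transition maps is routine.
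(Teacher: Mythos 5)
Your overall strategy is the paper's: the official proof is simply ``by Lemma~\ref{lem:Mapexact} it suffices to assume $X$ is fibrant; the result then follows from Corollary~\ref{cor:hocolim},'' i.e., reduce to fibrant $X$ and then commute $\pi_*$ with a filtered colimit of fibrant spectra, identifying each term via the finite wedge-to-product comparison exactly as you do.

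The one genuine gap is that you never actually perform the reduction to fibrant $X$. The lemma is stated for an arbitrary object $X$ of $\Sigma\Sp$, and for such $X$ the proof of Lemma~\ref{lem:Mapexact} only gives that the $\Map(K_\alpha,X)$ are injectively \emph{cofibrant} (every object is) and that the transition maps are injective cofibrations; your assertion that they are ``injectively fibrant--cofibrant'' is not available, and without some fibrancy the terms $\Map(K_\alpha,X)\cong\prod_{K_\alpha}X$ need not be $\Omega$-spectra, so neither the identification $\pi_*\Map(K_\alpha,X)\cong\bigoplus_{K_\alpha}\pi_*X$ via the wedge nor the commutation of true homotopy groups with the colimit is justified. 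The fix is exactly what you half-gesture at: since $\Map^c(K,-)$ preserves stable equivalences (Lemma~\ref{lem:Mapexact}) and $\Map^c(K,\pi_*(-))$ visibly depends only on $\pi_*X$, both sides of the desired isomorphism are invariant under stable equivalence in $X$, so you may replace $X$ by a fibrant model at the outset; after that, each $\Map(K_\alpha,X)$ is fibrant and your colimit argument (or, more directly, Corollary~\ref{cor:hocolim}) goes through. With that first step made explicit, the rest of your argument is correct.
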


\begin{proof}
By Lemma~\ref{lem:Mapexact} it suffices to assume that $X$ is fibrant.  The
result then follows from Corollary~\ref{cor:hocolim}.
\end{proof}

\subsection{Permutation spectra}

Let $K$ be a discrete $G$-set.  Then for $X$ a discrete
$G$-spectrum, we may define the permutation spectrum $X[K]$ to be
the spectrum whose $n$th space is given by
$$ X[K]_n = X_n \wedge K_+. $$
We let $G$ act on the spectrum $X[K]$ through the diagonal action.

\begin{lem}\label{lem:X[K]discrete}
The spectrum $X[K]$ is a discrete $G$-spectrum.
\end{lem}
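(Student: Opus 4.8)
The plan is to verify directly, from the definition of a discrete symmetric $G$-spectrum given in Section~\ref{sec:SigmaSp_G}, that $X[K]$ is one: that is, that each constituent space $X[K]_n = X_n \wedge K_+$ is a pointed simplicial discrete $G$-set, and that the structure maps are $G$-equivariant (with the appropriate symmetric-group equivariance) for the diagonal action. No deep input is needed; the work is entirely in unwinding the definitions.

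First I would note that, since $K$ is a discrete $G$-set, the pointed set $K_+$, viewed as a constant simplicial object, is a pointed simplicial discrete $G$-set: the added basepoint is fixed by all of $G$, and any other simplex is an element $k \in K$, whose stabilizer $\Sta_G(k)$ is open by hypothesis. Since $X$ is a discrete $G$-spectrum, each $X_n$ is a pointed simplicial discrete $G$-set as well. Hence $X[K]_n = X_n \wedge K_+$ is the smash product in $(s\Set_G)_*$ of two pointed simplicial discrete $G$-sets, which was observed in Section~\ref{sec:Set_G} to be again a simplicial discrete $G$-set; explicitly, a nonbasepoint $m$-simplex of $X_n \wedge K_+$ is a pair $(x,k)$ with $x \in (X_n)_m$ and $k \in K$, and its stabilizer is $\Sta_G(x) \cap \Sta_G(k)$, an intersection of two open subgroups and hence open. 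Thus every space of $X[K]$, with the diagonal action, lies in $(s\Set_G)_*$.

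It then remains to check the structure maps. Under the canonical isomorphism $S^i \wedge (X_j \wedge K_+) \cong (S^i \wedge X_j) \wedge K_+$, the structure map of $X[K]$ is $\sigma_{i,j} \wedge \mathrm{id}_{K_+}$, where $\sigma_{i,j}\colon S^i \wedge X_j \to X_{i+j}$ is that of $X$; since the spheres $S^i$ carry the trivial $G$-action and $\sigma_{i,j}$ is $G \times \Sigma_i \times \Sigma_j$-equivariant, smashing with $\mathrm{id}_{K_+}$ preserves equivariance for the diagonal $G$-action and for the symmetric-group actions. This exhibits $X[K]$ as an object of $\Sigma\Sp_G$. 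I do not anticipate a real obstacle here; the only point needing (minor) care is that smashing with the constant simplicial discrete $G$-set $K_+$ does not destroy discreteness at each simplicial level, which is precisely the closure of open subgroups under intersection used above. Alternatively, one can argue via the colimit characterization $Z = \colim_{U \le_o G} Z^U$: filtered colimits of pointed simplicial sets commute with the smash product, so $X_n \wedge K_+ = \colim_{U \le_o G}\bigl((X_n)^U \wedge (K_+)^U\bigr)$, and since $(X_n)^U \wedge (K_+)^U \subseteq (X_n \wedge K_+)^U \subseteq X_n \wedge K_+$, this presents $X_n \wedge K_+$ as the colimit of its open fixed-point subobjects.
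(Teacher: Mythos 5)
Your proof is correct and takes essentially the same route as the paper: both verify levelwise that $X_n \wedge K_+$ is a discrete $G$-set for the diagonal action, the paper via the double-colimit decomposition $\colim_{N,N'}(X_n^N \wedge K_+^{N'})$ and you via the equivalent open-stabilizer criterion (your closing "alternative" colimit argument is in fact the paper's proof verbatim in spirit). Your additional check that the structure maps remain equivariant is left implicit in the paper but is a harmless bonus.
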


\begin{proof}
Note that 
\begin{align*}
X_n \wedge K_+ & \cong (\colim_{N \trianglelefteq_o G} 
X_n^N) \wedge (\colim_{N' \trianglelefteq_o G} K^{N'}_+)
\\ & \cong \colim_{N,N' \trianglelefteq_o G} (X_n^N \wedge K^{N'}_+)
\end{align*} 
is a 
simplicial discrete $G$-set, with $G$ acting diagonally, 
since the simplicial set $X_n^N \wedge K^{N'}_+$ has a 
diagonal $G/(N \cap N')$-action and the group $G/(N \cap N')$ is finite. 
Thus, 
the spectrum $X[K]$ is a discrete $G$-spectrum.
\end{proof}

\subsection{Smash products}

Given discrete $G$-spectra $X$ and $Y$, we define their 
smash product
$$ X \wedge Y $$
to be the smash product of the underlying symmetric spectra with $G$ acting
diagonally.  
Since the smash product commutes 
with colimits, it follows, as in the proof of Lemma~\ref{lem:X[K]discrete}, 
that $X \wedge Y$ is a discrete $G$-spectrum. 
Also, 
if $K$ is a discrete $G$-set, 
then there is a $G$-equivariant isomorphism 
$$ X \wedge S[K] \cong X[K], $$
where the sphere spectrum $S$ has trivial $G$-action.

\section{Homotopy fixed points of discrete $G$-spectra}\label{sec:hfp}

Much of the material in this section is assembled from  
\cite{Thomason}, 
\cite{JardineSS}, \cite{Goerss}, \cite{Jardineetale},
\cite{MitchellHyper}, 
\cite{JardineNATO}, and \cite{Davis}.
Let $G$ be a profinite group.
We begin this section with an account of the model category theoretic
definition of $G$-homotopy fixed points.  We
then describe the comparison with hypercohomology spectra.  Finite index
restriction and induction functors, as well as iterated homotopy fixed
points for finite index subgroups are then discussed.  
We explain how continuous homomorphisms of groups induce various 
``change of group functors,''
of which induction,
coinduction, fixed points, and restriction functors are all special cases.
We then describe the various technical 
difficulties related to the homotopy fixed point
construction for closed subgroups of $G$.  The technical difficulties are
observed to vanish if $G$ has finite cohomological dimension.

As alluded to above, Sections \ref{sec:iterate}, \ref{sec:fcditerate}, and 
\ref{sec:difficulties} discuss the construction of iterated homotopy fixed 
points. Much of this material overlaps with portions of \cite{iterated}: 
it was necessary 
to repeat some of the material from \cite{iterated}, so that 
certain issues are clear and to give a 
context for the results of Section \ref{sec:Galoisiterate}. 

We note that, as explained in
Section 2.3, ``spectrum'' means ``symmetric
spectrum,'' so that, for example, a ``discrete
$G$-spectrum'' is a ``discrete $G$-symmetric
spectrum.'' 

\subsection{The homotopy fixed point 
spectrum}\label{sec:fixedpoint}

For a discrete $G$-spectrum $X$, we define the fixed point spectrum by
taking the fixed points levelwise:
$$ (X^G)_i = (X_i)^G. $$
The $G$-fixed points functor is right adjoint to the functor
${\it triv}$,
which
associates to a spectrum $X$ the discrete $G$-spectrum $X$,
where $X$ now has the trivial
$G$-action:
$$ {\it triv}: \Sigma\Sp \rightleftarrows \Sigma\Sp_G : (-)^G. $$

\begin{lem}\label{lem:Quillenfixedpoint}
The adjoint functors $({\it triv}, (-)^G)$ form a Quillen pair.
\end{lem}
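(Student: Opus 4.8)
The plan is to verify that the left adjoint ${\it triv}$ is a left Quillen functor; by the standard characterization of a Quillen pair this suffices, and it is cleaner than arguing directly with the fibrations of $\Sigma\Sp_G$, which Theorem~\ref{thm:sigmaspmodel} only describes as ``determined.''

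First I would observe that ${\it triv}$ is well defined, i.e.\ that a symmetric spectrum equipped with the trivial $G$-action really is a discrete $G$-spectrum: in the trivial action every point of each $X_i$ has stabilizer all of $G$, which is open in $G$, so the discreteness condition of Section~\ref{sec:Set_G} holds levelwise. Next I would note that ${\it triv}$ is a section of the forgetful functor $\mathcal{U}_\Sigma \colon \Sigma\Sp_G \to \Sigma\Sp$; that is, $\mathcal{U}_\Sigma \circ {\it triv} = \mathrm{id}_{\Sigma\Sp}$, since the underlying symmetric spectrum of ${\it triv}(X)$ is just $X$. By Theorem~\ref{thm:sigmaspmodel}, a map in $\Sigma\Sp_G$ is a cofibration (resp.\ a weak equivalence) precisely when its image under $\mathcal{U}_\Sigma$ is a cofibration (resp.\ a stable equivalence) of underlying symmetric spectra, and hence a trivial cofibration in $\Sigma\Sp_G$ is exactly a map whose underlying map of symmetric spectra is a trivial cofibration. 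Since $\mathcal{U}_\Sigma({\it triv}(f)) = f$ for every map $f$ of symmetric spectra, it follows immediately that ${\it triv}$ carries cofibrations to cofibrations and trivial cofibrations to trivial cofibrations. Therefore $({\it triv}, (-)^G)$ is a Quillen pair.

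There is essentially no obstacle here: all the real content was already built into the model structure of Theorem~\ref{thm:sigmaspmodel}, namely that cofibrations and weak equivalences in $\Sigma\Sp_G$ are created by $\mathcal{U}_\Sigma$. The only remaining verifications are the two routine points mentioned above — that ${\it triv}$ lands in discrete $G$-spectra, and (for the adjunction, already recorded in the text preceding the lemma) that $(-)^G$ computed levelwise as $(X^G)_i = (X_i)^G$ indeed produces a symmetric spectrum, since taking $G$-fixed points levelwise is compatible with the residual $\Sigma_i$-actions and with the structure maps $\sigma_{i,j}$.
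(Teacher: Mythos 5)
Your proposal is correct and matches the paper's (one-line) proof, which simply notes that ${\it triv}$ preserves cofibrations and weak equivalences because these are detected on underlying symmetric spectra; you have just spelled out the same observation in more detail.
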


\begin{proof}
The functor ${\it triv}$ preserves cofibrations and weak equivalences.
\end{proof}

Let $\alpha_{G,X}: X \rightarrow X_{fG}$ denote a functorial 
fibrant replacement functor
for the model category $\Sigma\Sp_G$, where $\alpha_{G,X}$ is a trivial
cofibration of discrete $G$-spectra.
The homotopy fixed point functor $(-)^{hG}$
is the Quillen right derived functor of $(-)^G$, and is thus given by
$$ X^{hG} = (X_{fG})^G. $$

\subsection{Hypercohomology spectra}

The functor $\Gamma_G = \Map^c(G, -)$ is a coaugmented comonad on the 
category of spectra,
with coproduct
$$ \psi: \Gamma_G = \Map^c(G,-) \rightarrow 
\Map^c(G\times G, -) \cong \Gamma_G \circ \Gamma_G $$
induced from the product on $G$, counit
$$ \Gamma_G = \Map^c(G,-) \rightarrow \Map^c({\it pt}, -) \cong \mathrm{Id} $$
induced from the unit on $G$, and coaugmentation
$$ \mathrm{Id} \rightarrow \Map^c(G, -)$$
given by the inclusion of the constant maps.

Discrete $G$-spectra are coalgebras over
the comonad $\Gamma_G$ (this follows from
considering the map of spectra
\begin{align*}
X & \rightarrow \Gamma_G(X), \\
x & \mapsto (g \mapsto g\cdot x),
\end{align*}
for any discrete $G$-spectrum $X$).

Let $\mathcal{C}$ and $\mathcal{D}$ be categories, and suppose that
$\Gamma$ is a comonad in $\mathcal{C}$.  Dualizing Definition~9.4 of
\cite{May}, there is a notion of a $\Gamma$-functor
$$ F : \mathcal{C} \rightarrow \mathcal{D}. $$
Let $Y$ be a $\Gamma$-coalgebra.
Dualizing Construction~9.6 of \cite{May}, one may associate to $(F,
\Gamma, Y)$ a cosimplicial object $C^\bullet(F, \Gamma, Y)$ 
in $\mathcal{D}$ 
(the comonadic cobar construction), given by
$$ C^s(F, \Gamma, Y) = F \Gamma^s Y. $$
If $\Gamma$ is a coaugmented comonad, then the identity functor
$\mathrm{Id}_{\mathcal{C}}$ is a $\Gamma$-functor.  We will let
$\Gamma^\bullet Y$ denote the cosimplicial object
$$ \Gamma^\bullet Y = C^\bullet(\mathrm{Id}_{\mathcal{C}}, \Gamma, Y) $$
in $\mathcal{C}$.

In \cite{Davis},
the homotopy fixed point spectrum was shown to have the
following alternate description, provided $G$ is sufficiently nice (see
also \cite{MitchellHyper}, \cite{Goerss},
\cite{JardinePresheaves}).

\begin{thm}\label{thm:cochaincomplex}
Suppose that $G$ has finite vcd, and that $X$ is a discrete $G$-spectrum.  
Then there is an
equivalence
\begin{align*}
X^{hG}
& \simeq \holim_\Delta \Gamma^\bullet_G X \\
& = \HH_c(G; X),
\end{align*}
where $\HH_c(G; X)$ is the hypercohomology spectrum.
\end{thm}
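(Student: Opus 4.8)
The plan is to identify $X^{hG}$ with the totalization of the comonadic cobar construction via an abstract comparison, and then to invoke the finite vcd hypothesis to pass from the homotopy limit over $\Delta$ to the homotopy limit over a truncation, where convergence is automatic. First I would observe that the functor $\Gamma_G = \Map^c(G,-)$, viewed as a functor $\Sigma\Sp_G \to \Sigma\Sp_G$ (using that $\Map^c(G,X)$ is a discrete $G$-spectrum when $X$ is, by Section~\ref{sec:Map}), is a \emph{right adjoint}: it is right adjoint to the forgetful functor $\Sigma\Sp \to \Sigma\Sp_G$ composed with $\mathit{triv}$, since $\Map^c(G,-)$ on the underlying category is the coinduction $\CoInd_1^G$ from the trivial subgroup. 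Hence $\Gamma_G^\bullet X$ is the cosimplicial object associated to the triple coming from the adjunction $(\mathcal{U}, \CoInd_1^G)$, and the standard triple-resolution/Godement argument shows that for $X$ fibrant in $\Sigma\Sp_G$ the coaugmentation $X \to \Gamma_G^\bullet X$ exhibits $X$ as the totalization: each $\Gamma_G^s X$ is again fibrant (coinduction from the trivial group of a fibrant spectrum is fibrant), and the cosimplicial spectrum has an extra codegeneracy after applying $\mathcal{U}$, so the underlying totalization spectral sequence collapses.

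The subtlety is that $(X^G)$ computes only the \emph{strict} fixed points, and $X^{hG} = (X_{fG})^G$; the cobar construction computes instead the derived functor of fixed points as the homotopy limit $\holim_\Delta \Gamma_G^\bullet(-)$ applied to a point-set model. So the second step is to compare $\holim_\Delta \Gamma_G^\bullet X$ with $(X_{fG})^G$. Replacing $X$ by $X_{fG}$ at the outset (both sides are homotopy-invariant: the left by Lemma~\ref{lem:Mapexact}, which gives that $\Map^c(G,-)$ preserves stable equivalences, and the right by definition), I may assume $X$ is fibrant as a discrete $G$-spectrum. Then $\Gamma_G^s X$ is fibrant for every $s$, so the homotopy limit over $\Delta$ agrees with the totalization of the Reedy-fibrant replacement, and I want to show this is $(X^G)$. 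This is exactly the assertion that $(-)^G$, applied to a fibrant $X$, already computes the derived fixed points \emph{in the presence of finite vcd}: the hypercohomology-spectrum description.

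Here is where the finite vcd hypothesis does the essential work, and it is the main obstacle. Without a cohomological-dimension bound, the $\holim_\Delta$ of a tower of fibrant spectra need not commute with the relevant (co)limits defining discreteness, and one cannot in general truncate. With $G$ of finite vcd, say of cohomological dimension $\le d$ on an open subgroup, I would use that the descent/homotopy-limit spectral sequence $E_2^{s,t} = H^s_c(G;\pi_t X) \Rightarrow \pi_{t-s}(X^{hG})$ has a horizontal vanishing line at $s = d+1$ (for $X$ with, say, a finite open subgroup argument reducing to the $U$-case and then a transfer argument, or directly from $\mathrm{cd}$), so the tower $\{\Tot_n \Gamma_G^\bullet X\}_n$ is eventually constant and $\holim_\Delta = \Tot_d$; the same vanishing line controls $X^{hG}$. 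Thus both $\holim_\Delta \Gamma_G^\bullet X$ and $X^{hG}$ are identified with the finite totalization $\Tot_d$ of a common Reedy-fibrant cosimplicial spectrum, which gives the equivalence. I expect the bookkeeping around ``fibrant replacement commutes with $\Gamma_G$ up to equivalence'' and the precise form of the vanishing line (reducing from $G$ to an open subgroup $U$ of finite $\mathrm{cd}$, then back up along the finite quotient $G/U$) to be the delicate points; everything else is the formal Godement/cobar machinery cited to \cite{May} and the model-categorical homotopy-limit formalism, together with Lemma~\ref{lem:Mapexact} for homotopy invariance.
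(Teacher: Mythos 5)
Your proposal contains a genuine gap, and in fact two. First, the key identification is circular. The heart of the matter is showing that for fibrant $X$ the strict fixed points $(X_{fG})^G$ agree with $\holim_\Delta \Gamma_G^\bullet X$; this is precisely the content of the theorem (it is \cite[Thm.~7.4]{Davis}, which the paper simply cites before adding the homotopy-invariance step). Your argument for it assumes the existence of a convergent spectral sequence $H^s_c(G;\pi_t X) \Rightarrow \pi_{t-s}(X^{hG})$ and that $X^{hG}$ is ``identified with the finite totalization $\Tot_d$ of a common Reedy-fibrant cosimplicial spectrum'' --- but producing a cosimplicial spectrum whose totalization is $X^{hG}$ and whose $E_2$-term is continuous cohomology \emph{is} the theorem; you never exhibit such an object. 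The extra-codegeneracy argument only shows that the \emph{underlying} spectrum of $X$ is the totalization of the underlying coaugmented cobar construction; it says nothing about fixed points, and one cannot simply commute $(-)^G$ past the homotopy limit, since limits in $\Sigma\Sp_G$ and in $\Sigma\Sp$ genuinely differ (this is exactly the difficulty discussed in Section~\ref{sec:difficulties}).

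Second, the claimed horizontal vanishing line is false under the stated hypothesis. Finite vcd only gives an open subgroup $U$ with $\mathrm{cd}(U)\le d$; the group $G$ itself can have $H^s_c(G;M)\ne 0$ for arbitrarily large $s$ (any nontrivial finite group already has vcd $0$ but infinite cohomological dimension), so $\holim_\Delta \ne \Tot_d$ for $G$. A transfer argument does not repair this integrally. The passage from $U$ back to $G$ --- via the finite-group case for $G/U$ and an iterated fixed-point statement as in Proposition~\ref{prop:iterate}(4) --- is the real content of the finite-vcd case, and you flag it as ``delicate'' without supplying it. What you do get right is the one step the paper actually proves here: reducing the general $X$ to $X_{fG}$ inside the cobar construction using Lemma~\ref{lem:Mapexact} and the homotopy invariance of $\holim$; the rest of your argument would need to be replaced either by a citation to \cite[Thm.~7.4]{Davis} or by a genuine proof of it.
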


\begin{proof}
In \cite[Thm.~7.4]{Davis} it is proven that there is an
equivalence
$$ 
X^{hG}
\simeq \holim_\Delta \Gamma^\bullet_G X_{fG}.
$$
(The
cosimplicial object defining the hypercohomology spectrum is different, but
isomorphic to that appearing in \cite{Davis}.)  
The result follows once we establish that the 
map induced from fibrant replacement
$$ \holim_\Delta \Gamma^\bullet_G X 
\rightarrow \holim_\Delta \Gamma^\bullet_G X_{fG} $$
is an equivalence.
This map is deduced to be an equivalence from the following
facts: (a) 
the fibrant replacement map $X \rightarrow
X_{fG}$ is an equivalence; (b) the functor $\Gamma_G$ preserves
equivalences, by Lemma~\ref{lem:Mapexact}; (c) 
the homotopy limit
construction sends levelwise equivalences to equivalences, since 
it is a Quillen derived functor.
\end{proof}

\subsection{Iterated homotopy fixed points}\label{sec:iterate}

Let $U$ be an open subgroup of $G$, so that $G/U$ is finite.

\begin{prop}\label{prop:iterate}
Let $X$ be a discrete $G$-spectrum.
\begin{enumerate}
\item If $U$ is normal in $G$, the 
$U$-fixed point spectrum $(X_{fG})^{U}$ is fibrant
as a discrete $G/U$-spectrum.

\item The fibrant discrete $G$-spectrum $X_{fG}$ is fibrant as a discrete
$U$-spectrum.

\item If $U$ is normal in $G$, the homotopy fixed point spectrum $X^{hU}$ is a
$G/U$-spectrum.

\item If $U$ is normal in $G$, 
there is an equivalence $X^{hG} \simeq (X^{hU})^{hG/U}$.
\end{enumerate}
\end{prop}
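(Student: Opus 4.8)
The plan is to prove the four statements in order, since (1)--(3) feed into (4). For part (1), I would start from the fact that $X_{fG}$ is fibrant in $\Sigma\Sp_G$, so in particular each space $(X_{fG})_i$ is a fibrant simplicial discrete $G$-set and the maps $(X_{fG})_i \to \Omega (X_{fG})_{i+1}$ are weak equivalences. Taking $U$-fixed points levelwise, I need to check that $(X_{fG})^U$ satisfies the corresponding conditions as a discrete $G/U$-spectrum: that each $((X_{fG})_i)^U$ is fibrant as a simplicial discrete $G/U$-set, and that the maps $((X_{fG})_i)^U \to \Omega((X_{fG})_{i+1})^U$ are weak equivalences. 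The first point reduces to a statement about simplicial discrete $G$-sets: the $U$-fixed points of a fibrant simplicial discrete $G$-set form a fibrant simplicial discrete $G/U$-set, which should follow from an adjunction argument (the functor $Z \mapsto Z^U$ from $s\Set_G$ to $s\Set_{G/U}$ is right adjoint to inflation, and inflation preserves cofibrations and trivial cofibrations because on underlying simplicial sets it is the identity). The second point uses that $(-)^U$ commutes with $\Omega = \Map_*(S^1,-)$ (as $S^1$ has trivial action and $U$ is finite-index, or more simply because $(-)^U$ is a limit and $\Omega$ is a cotensor) together with the fact that a levelwise weak equivalence of fibrant simplicial discrete $G$-sets remains a weak equivalence after taking $U$-fixed points --- again by the adjunction, since fixed points is a right Quillen functor and hence preserves weak equivalences between fibrant objects.

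For part (2), the claim is that $X_{fG}$, which is fibrant as a discrete $G$-spectrum, is also fibrant as a discrete $U$-spectrum. This is the crux of the iterated-fixed-points machinery and is where I expect the main subtlety. The cleanest route is to invoke the characterization of fibrant objects from Theorem~\ref{thm:sigmaspmodel}: an object is fibrant iff each $X_i$ is fibrant as a simplicial discrete $G$-set and each $X_i \to \Omega X_{i+1}$ is a weak equivalence. The second condition is about underlying simplicial sets and so is unaffected by restricting from $G$ to $U$. So everything reduces to showing that a fibrant simplicial discrete $G$-set is fibrant as a simplicial discrete $U$-set. Here I would use the restriction--coinduction adjunction: restriction $\Res^G_U : s\Set_G \to s\Set_U$ has a right adjoint $\CoInd_U^G$ (referenced in the excerpt as the functor of Section~\ref{sec:homo}), and to conclude that $\Res^G_U$ is right Quillen it suffices to know its left adjoint preserves cofibrations and trivial cofibrations. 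But cofibrations and weak equivalences in both $s\Set_G$ and $s\Set_U$ are detected on underlying simplicial sets, so $\Res^G_U$ itself preserves all of cofibrations, weak equivalences, and (being a right adjoint to a colimit-preserving-on-underlying functor) fibrations; in particular it preserves fibrant objects. Alternatively, one can cite the analogous statement already established in \cite{Davis}. Either way this gives (2).

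Part (3) is then nearly immediate: by (1), $(X_{fG})^U$ is fibrant as a discrete $G/U$-spectrum, and by definition of the derived functor, $X^{hU} = (X_{fU})^U$ for a $U$-fibrant replacement $X_{fU}$. By (2), $X_{fG}$ is $U$-fibrant, and the map $X \to X_{fG}$ is a trivial cofibration of discrete $U$-spectra (trivial cofibrations are detected on underlying spectra), so $X_{fG}$ serves as a model for $X_{fU}$. Hence $X^{hU} \simeq (X_{fG})^U$, which by (1) is a discrete $G/U$-spectrum. For part (4), I would compute $(X^{hU})^{hG/U}$ using the model $X^{hU} \simeq (X_{fG})^U$. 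I need a $G/U$-fibrant replacement of $(X_{fG})^U$, but by part (1) this spectrum is \emph{already} fibrant as a discrete $G/U$-spectrum, so $(X^{hU})^{hG/U} \simeq ((X_{fG})^U)^{G/U}$. Now the composite-of-fixed-points identity $(Z^U)^{G/U} = Z^G$ for a discrete $G$-spectrum $Z$ (valid levelwise, and with $U$ normal so that $G/U$ acts on $Z^U$) gives $((X_{fG})^U)^{G/U} = (X_{fG})^G = X^{hG}$. Assembling the equivalences yields $X^{hG} \simeq (X^{hU})^{hG/U}$, as desired. The one place demanding care is making sure each equivalence above is genuinely an equivalence of spectra and not merely a levelwise construction that happens to land in the right category --- this is exactly what parts (1) and (2) are designed to guarantee, so the logical ordering of the four parts is essential.
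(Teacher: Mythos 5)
Your overall architecture (prove (1) and (2) by exhibiting $(-)^U$ and $\Res_G^U$ as right Quillen functors, then deduce (3) and (4) by observing that $X \rightarrow X_{fG}$ is already a trivial cofibration in $\Sigma\Sp_U$ and that $((X_{fG})^U)^{G/U} = (X_{fG})^G$) is exactly the paper's, and parts (1), (3), (4) go through as you describe. But there is a genuine gap in part (2), which you yourself flag as the crux. To show that $\Res_G^U$ preserves fibrant objects you correctly state that you need its \emph{left} adjoint to preserve cofibrations and trivial cofibrations; yet you never identify that left adjoint or verify anything about it. The right adjoint $\CoInd_U^G$ that you invoke is irrelevant here (it only shows $\Res_G^U$ is \emph{left} Quillen), and the phrase ``being a right adjoint to a colimit-preserving-on-underlying functor'' carries no content: every left adjoint preserves colimits, and colimit preservation does not make a functor left Quillen.

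The missing ingredient is the induction functor $\Ind_U^G Y = G_+ \wedge_U Y$, together with the observation that, non-equivariantly, $\Ind_U^G Y \cong G/U_+ \wedge Y$ --- a finite wedge of copies of $Y$. This is what shows $\Ind_U^G$ preserves cofibrations and weak equivalences, hence that $\Res_G^U$ preserves fibrant objects; and it is precisely here that the openness (finite index) of $U$ enters. Your argument as written never uses that $U$ is open and would apply verbatim to any closed subgroup $H$ --- but for non-open $H$ the restriction functor does not even admit a left adjoint (it fails to preserve limits), and whether it preserves fibrant objects is exactly the open problem discussed in Section~\ref{sec:difficulties}, which the rest of the paper works hard to circumvent. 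So the step cannot be repaired by a formal adjunction argument alone; you must use the finiteness of $G/U$. (Citing the analogous statement in \cite{Davis} is a legitimate fallback, but the point of this section is to reprove it for symmetric spectra, and the proof there uses the same induction-functor device.)
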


\begin{proof}

To prove (1), observe that since $U$ is normal, for any discrete
$G$-spectrum $Y$, the $U$-fixed point spectrum $Y^U$ is naturally a 
$G/U$-spectrum.  
There is an adjoint pair of functors $(\Res_{G/U}^G, (-)^U)$
$$ \Res_{G/U}^G : \Sigma\Sp_{G/U} \rightleftarrows \Sigma\Sp_{G} : (-)^{U}, $$
where $\Res^G_{G/U}$ is defined by 
restriction along the quotient homomorphism 
$G \rightarrow G/U$.
Since $\Res_{G/U}^G$ preserves cofibrations and weak equivalences, the
functor $(-)^U$ preserves fibrant objects.

We verify (2) in a similar way 
(compare with \cite[Rmk.~6.26]{Jardineetale}). 
Define the 
induction
functor on a discrete $U$-spectrum $Y$ to be
$$ \Ind_U^G Y = G_+ \wedge_U Y. $$
Here, $G_+ \wedge_U Y$ is formed by regarding $G$ and $U$ as
discrete groups, but this is easily seen to produce a discrete
$G$-spectrum,
since $U$ is a subgroup of finite index.
The induction functor is the left adjoint of an adjunction
$$ \Ind_U^G: \Sigma\Sp_{U} \rightleftarrows \Sigma\Sp_{G} : \Res_G^U, $$
where $\Res_G^U$ is restriction along the inclusion $U \hookrightarrow G$.
Since non-equivariantly there is an isomorphism
$$ \Ind_U^G Y \cong G/U_+ \wedge Y, $$
we see that $\Ind_U^G$ preserves cofibrations and weak equivalences, from
which it follows that $\Res_G^U$ preserves fibrant objects.

By (2), $X_{fG}$ is a fibrant 
discrete $U$-spectrum. Also, $X \rightarrow X_{fG}$ is a trivial cofibration 
of spectra and it is $U$-equivariant, so it is a trivial cofibration in 
$\Sigma\mathrm{Sp}_U$. Thus, 
\[X^{hU} = (X_{fG})^U,\] 
which is a $G/U$-spectrum.
This proves (3).

(4) is proven using our fibrancy results.  There are equivalences:
$$
X^{hG} \simeq X_{fG}^G = (X_{fG}^U)^{G/U} \simeq (X^{hU})^{hG/U}.
$$
\end{proof}

\subsection{Homomorphisms of groups}\label{sec:homo}

If $f: H \rightarrow G$ is a continuous homomorphism of profinite groups,
we may regard discrete $G$-sets as discrete $H$-sets.
For a discrete $H$-set $Z$, we define the coinduced discrete $G$-set by
$$ 
f_*Z = \CoInd_H^G Z = \Map_H^c(G, Z) = \colim_{U \trianglelefteq_o G}
\Map_H(G/U, Z), $$
where the $G$-action is defined by
the formula
$$ (g \cdot \alpha)(g'U) = \alpha(g'gU), $$
for $g \in G$ and $\alpha \in \Map_H(G/U, Z)$. 
This construction extends to simplicial discrete $G$-sets and discrete
$G$-spectra in the obvious manner to give a functor
$$ f_*: \Sigma\Sp_H \rightarrow \Sigma\Sp_G. $$
The functor $f_*$ is the right adjoint of an adjoint pair $(f^*, f_*)$,
where
$$ f^* = \Res_G^H : \Sigma\Sp_G \rightarrow \Sigma\Sp_H $$
is the restriction functor along the homomorphism $f$.
Since $f^*$ clearly preserves cofibrations and weak equivalences, we have
the following lemma.

\begin{lem}\label{lem:homopair}
The adjoint functors $(f^*, f_*)$ form a Quillen pair.  In particular,
$f_*$ preserves fibrations and weak equivalences between fibrant objects.
\end{lem}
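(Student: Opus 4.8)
The plan is to verify that $f^* = \Res^H_G$ is a left Quillen functor; every assertion of the lemma then follows formally. The key point is that the model structures on $\Sigma\Sp_G$ and $\Sigma\Sp_H$ supplied by Theorem~\ref{thm:sigmaspmodel} have cofibrations and weak equivalences that are detected on underlying symmetric spectra, and that $f^*$ does not disturb the underlying symmetric spectrum: for a discrete $G$-spectrum $X$, the object $f^*X$ is $X$ itself equipped with the $H$-action $h \cdot x = f(h)\cdot x$, so the square of forgetful functors to $\Sigma\Sp$ commutes. The one step that is more than an appeal to definitions is checking that $f^*X$ really is a \emph{discrete} $H$-spectrum, i.e.\ that the $H$-action on each simplicial set $(f^*X)_i$ is continuous. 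This is where the hypothesis that $f$ is continuous enters: the stabilizer in $H$ of a simplex $z$ is $f^{-1}(\Sta_G(z))$, which is open in $H$ because $\Sta_G(z)$ is open in $G$ and $f$ is continuous. (The same remark shows $f^*$ is well defined on simplicial discrete $G$-sets in the first place, which is implicit in the construction of the adjunction above.)

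Granting this, $f^*$ carries cofibrations of $\Sigma\Sp_G$ to cofibrations of $\Sigma\Sp_H$ and weak equivalences to weak equivalences, since both classes are created by the forgetful functor to $\Sigma\Sp$ and $f^*$ commutes with that functor. In particular $f^*$ preserves trivial cofibrations, so $f^*$ is left Quillen and, together with its right adjoint $f_* = \CoInd_H^G$ constructed above, forms a Quillen pair.

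The final clause is then a standard consequence of the theory of Quillen adjunctions: the right member of a Quillen pair preserves fibrations and trivial fibrations, and by Ken Brown's lemma it sends weak equivalences between fibrant objects to weak equivalences. I do not anticipate any genuine obstacle here; the only substantive ingredient is the continuity argument in the first paragraph, and everything else is the formalism of cofibrantly generated model categories already set up in Section~\ref{sec:Gspt}.
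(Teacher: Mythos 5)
Your proof is correct and follows essentially the same route as the paper, which simply observes that $f^*$ preserves cofibrations and weak equivalences because both are detected on underlying symmetric spectra, and then invokes the standard consequences for the right adjoint. The extra care you take in checking that $f^*X$ is genuinely a discrete $H$-spectrum (openness of $f^{-1}(\Sta_G(z))$) is a detail the paper leaves implicit, and it is handled correctly.
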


We make the following observations.
\begin{enumerate}
\item The Quillen pair $(f^*, f_*)$ gives rise to a derived adjoint pair
$(Lf^*, Rf_*)$.

\item Since the functor $f^*$ preserves all weak equivalences, there are
equivalences $Lf^*X \simeq f^*X$ for all discrete $G$-spectra $X$.

\item If $j: H \hookrightarrow G$ 
is the inclusion of a closed subgroup, then for a
discrete $H$-spectrum $X$, we have a \emph{non-equivariant} isomorphism
$$ j_*X = \Map_H^c(G, X) \cong \Map^c(G/H, X). $$
By Lemma~\ref{lem:Mapexact}, we see that
$j_*$ preserves weak equivalences, and therefore there is an
equivalence $j_* X \simeq Rj_*X$.

\item The adjoint pair $({\it triv}, (-)^G)$ of Section~\ref{sec:fixedpoint}
agrees with the adjoint pair $(r^*, r_*)$ when $r: G \rightarrow
\{e\}$ 
is the
homomorphism to the trivial group.  Therefore, the homotopy fixed
point functor is given by $(-)^{hG} = Rr_*$.

\item Given continuous homomorphisms $H \xrightarrow{f} G
\xrightarrow{g} K$, there are natural isomorphisms
$(g \circ f)_* \cong g_* \circ f_*$ and $(g
\circ f)^* \cong f^* \circ g^*$.  
We get similar formulas on the level of
derived functors.

\item If $i : U \hookrightarrow G$ is the inclusion of an open subgroup,
then the induction functor $i_! = \Ind_U^G$ (Proposition~\ref{prop:iterate})
is the left adjoint of the Quillen pair $(i_!, i^*)$.
\end{enumerate}

We use these derived functors to prove a 
version of Shapiro's Lemma.

\begin{lem}\label{lem:Shapiro}
Let $X$ be a discrete $G$-spectrum, and suppose that 
$H$ is a closed subgroup of $G$.  Then there is an equivalence
$$ \Map^c(G/H, X)^{hG} \xrightarrow{\simeq} X^{hH}. $$
\end{lem}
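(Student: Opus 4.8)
The plan is to identify $\Map^c(G/H,X)$ with the coinduced discrete $G$-spectrum $j_*j^*X=\CoInd_H^G\Res_G^H X$, where $j\colon H\hookrightarrow G$ is the inclusion, and then to read off the $G$-homotopy fixed points directly from the Quillen adjunction $(j^*,j_*)$ of Lemma~\ref{lem:homopair}, without invoking any hypothesis on the cohomological dimension of $G$.

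First I would upgrade the non-equivariant isomorphism $j_*X\cong\Map^c(G/H,X)$ recorded after Lemma~\ref{lem:homopair} to a $G$-equivariant one: sending a continuous $H$-equivariant map $\phi\colon G\to X$ to $gH\mapsto g\cdot\phi(g^{-1})$ is well defined on right $H$-cosets and carries the translation $G$-action on $j_*j^*X$ to the conjugation $G$-action on $\Map^c(G/H,X)$, with inverse given by a similar formula. This step is elementary, but it is the only place where the two \emph{a priori} different $G$-actions must be matched up, so I expect it to be the one point of friction.

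Granting this, let $(j^*X)_{fH}$ be a functorial fibrant replacement of the discrete $H$-spectrum $j^*X=\Res_G^H X$ in $\Sigma\Sp_H$. Since $(j^*,j_*)$ is a Quillen pair, $j_*$ preserves fibrant objects, so $j_*\big((j^*X)_{fH}\big)$ is fibrant in $\Sigma\Sp_G$; and since $j_*\cong\Map^c(G/H,-)$ preserves all weak equivalences (by Lemma~\ref{lem:Mapexact}), the map $j_*j^*X\to j_*\big((j^*X)_{fH}\big)$ is a weak equivalence of discrete $G$-spectra. Combined with the identification of the previous paragraph, $j_*\big((j^*X)_{fH}\big)$ is a fibrant replacement of $\Map^c(G/H,X)$ in $\Sigma\Sp_G$; since $(-)^G$ sends weak equivalences between fibrant objects to weak equivalences, any fibrant replacement computes the homotopy fixed points, so
\[ \Map^c(G/H,X)^{hG}\ \simeq\ \Big(j_*\big((j^*X)_{fH}\big)\Big)^{G}. \]

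To finish I would use the natural isomorphism $(j_*Y)^G\cong Y^H$ for a discrete $H$-spectrum $Y$ — levelwise, a $G$-invariant $H$-equivariant map out of $G/U$ is constant with value in $Y^H$, which is just the identity $r_*\circ j_*=(r\circ j)_*$ for $H\xrightarrow{j}G\xrightarrow{r}\{e\}$ from the observations following Lemma~\ref{lem:homopair}. Applying it with $Y=(j^*X)_{fH}$ gives $\big(j_*((j^*X)_{fH})\big)^G\cong\big((j^*X)_{fH}\big)^H=X^{hH}$, and chaining the isomorphisms and the displayed equivalence produces the map of the lemma and shows that it is an equivalence (the comparison map in the statement being, equivalently, the one induced by evaluation at the $H$-fixed coset $eH$). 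Apart from the equivariant identification of the first step, the argument is entirely formal: it uses only that $j^*$ and $j_*$ preserve weak equivalences, that $j_*$ is right Quillen, and the exactness of $\Map^c(G/H,-)$ from Lemma~\ref{lem:Mapexact}; in particular no finiteness hypothesis on $G$ enters.
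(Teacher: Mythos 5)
Your proposal is correct and follows essentially the same route as the paper: the same explicit $G$-equivariant identification $\Map^c(G/H,X)\cong j_*j^*X$ via $\phi\mapsto(gH\mapsto g\cdot\phi(g^{-1}))$, the same use of $(j^*,j_*)$ being a Quillen pair with $j_*$ preserving all weak equivalences (via Lemma~\ref{lem:Mapexact}), and the same composition $r_*\circ j_*=s_*$ of coinduction functors. The paper merely phrases the last two steps in the language of derived functors, writing $Rr_*\,j_*j^*X\simeq Rr_*Rj_*\,j^*X\simeq Rs_*\,j^*X=X^{hH}$, which is exactly what you unpack at the level of fibrant replacements.
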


\begin{proof}
Consider the following diagram of groups.
$$
\xymatrix{
H \ar[rr]^{j} \ar[dr]_{s} && G \ar[dl]^{r} \\
& \{ e \}
}
$$
If $Z$ is a discrete $G$-set, there is a $G$-equivariant bijection
$$ \delta : j_* j^* Z = \Map^c_H(G, Z) \xrightarrow{\cong} 
\Map^c(G/H, Z). $$
The map $\delta$ sends a map $\alpha$ in $\Map^c_H(G,Z)$ to the map 
$$ \delta(\alpha): gH \mapsto g\alpha(g^{-1}). $$
The inverse $\delta^{-1}$ sends a map $\beta$ in $\Map^c(G/H, Z)$ to the
map
$$ \delta^{-1}(\beta): g \mapsto g \beta(g^{-1}H). $$
The isomorphism $\delta$ induces for a discrete $G$-spectrum $Y$ 
an isomorphism
$$ \delta: j_*j^* Y \xrightarrow{\cong} \Map^c(G/H, Y), $$
in $\Sigma\Sp_G$.
By Lemma~\ref{lem:homopair}, the functor $j_*$ sends $H$-fibrant 
objects to $G$-fibrant
objects.
Therefore we have equivalences:
\begin{align*}
\Map^c(G/H, X)^{hG} 
& \cong Rr_* j_* j^* X \\
& \simeq Rr_* Rj_* j^* X \\
& \simeq Rs_* j^* X \\
& = X^{hH}.
\end{align*}
\end{proof}

\subsection{Iterated fixed points for closed
subgroups}\label{sec:fcditerate}

We wish to extend the results of Section~\ref{sec:iterate} to closed
subgroups.  The following proposition may be compared to
\cite[Lem.~6.35]{Jardineetale}.

\begin{prop}\label{prop:closediterate}
Let $N$ be a closed normal subgroup of $G$, and let $X$ be a discrete
$G$-spectrum.  Then there is an equivalence
$$ ((X_{fG})^{N})^{hG/N} \simeq X^{hG}. $$
\end{prop}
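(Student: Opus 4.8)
The plan is to reduce the statement to a single Quillen-adjunction fact and then argue formally. By the definition of homotopy fixed points in Section~\ref{sec:fixedpoint}, $X^{hG} = (X_{fG})^G$, and since fixed points are computed levelwise we have the literal identity $(X_{fG})^G = ((X_{fG})^N)^{G/N}$. Thus it suffices to produce an equivalence $((X_{fG})^N)^{G/N} \simeq ((X_{fG})^N)^{hG/N}$, and for this it is enough to know that $(X_{fG})^N$ is already fibrant as a discrete $G/N$-spectrum.

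First I would establish that, for a closed normal subgroup $N \trianglelefteq G$, taking $N$-fixed points is a right Quillen functor
$$(-)^N \colon \Sigma\Sp_G \longrightarrow \Sigma\Sp_{G/N}.$$
Note that $G/N$ is again profinite. Restriction along the (continuous) quotient map $G \to G/N$ gives a functor $\Res^G_{G/N} \colon \Sigma\Sp_{G/N} \to \Sigma\Sp_G$, which carries discrete objects to discrete objects. For a discrete $G$-spectrum $Y$, the residual $G/N$-action on $Y^N$ is discrete: the stabilizer in $G/N$ of a point $y \in (Y_i)^N$ is the image of the open subgroup $\Sta_G(y) \supseteq N$. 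One then checks directly that $(\Res^G_{G/N}, (-)^N)$ is an adjoint pair — any $G$-equivariant map out of $\Res^G_{G/N}W$ automatically factors through $Y^N$. Because cofibrations and weak equivalences in both $\Sigma\Sp_G$ and $\Sigma\Sp_{G/N}$ are detected on underlying symmetric spectra (Theorem~\ref{thm:sigmaspmodel}) and $\Res^G_{G/N}$ does not alter the underlying symmetric spectrum, $\Res^G_{G/N}$ is left Quillen; hence $(-)^N$ preserves fibrant objects. This is the same argument as Proposition~\ref{prop:iterate}(1), where it was used only for open $U$. In particular $(X_{fG})^N$ is fibrant in $\Sigma\Sp_{G/N}$.

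To finish, I would observe that the functorial fibrant replacement $(X_{fG})^N \to ((X_{fG})^N)_{f(G/N)}$ is a trivial cofibration between fibrant objects. Since a right Quillen functor carries weak equivalences between fibrant objects to weak equivalences, applying $(-)^{G/N}$ (Lemma~\ref{lem:Quillenfixedpoint}, with $G$ replaced by $G/N$) produces an equivalence $((X_{fG})^N)^{G/N} \xrightarrow{\simeq} ((X_{fG})^N)^{hG/N}$. Chaining this with the identity $((X_{fG})^N)^{G/N} = (X_{fG})^G = X^{hG}$ establishes the proposition.

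I do not anticipate a serious obstacle; the argument is entirely ``soft.'' The only point requiring care — and the reason the statement is phrased in terms of $(X_{fG})^N$ rather than $X^{hN}$ — is that for $N$ merely closed (not open) the $G$-fibrant spectrum $X_{fG}$ need not be fibrant as a discrete $N$-spectrum, so one cannot in general replace $(X_{fG})^N$ by $X^{hN}$; working instead with the actual $N$-fixed points of the $G$-fibrant model sidesteps this difficulty.
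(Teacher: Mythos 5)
Your argument is correct and is essentially the paper's proof: the paper writes it as the one-line identification $((X_{fG})^N)^{hG/N} = Rs_*Rq_*X \simeq Rr_*X = X^{hG}$ using the change-of-group functors of Section~\ref{sec:homo}, where $q_*$ for the quotient map $q\colon G\to G/N$ is exactly the right Quillen functor $(-)^N$ whose left adjoint is $\Res^G_{G/N}$. Your write-up simply unpacks that composite-of-right-derived-functors step into the explicit fibrancy of $(X_{fG})^N$ in $\Sigma\Sp_{G/N}$, which is the same content.
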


\begin{proof}
Consider the following diagram.
$$
\xymatrix{
G \ar[rr]^q \ar[dr]_r && G/N \ar[dl]^{s} 
\\
& \{ e \}
}
$$
There is an equivalence
$$ ((X_{fG})^{N})^{hG/N} = Rs_* Rq_* X \simeq Rr_* X = X^{hG}. $$
\end{proof}

Let $H$ be a closed subgroup of $G$.
The reader might wonder if $X_{fG}$ is fibrant as a discrete $H$-spectrum,
but this does not appear to hold for nontrivial
closed subgroups $H$ that are not open.  
We discuss these difficulties in Section~\ref{sec:difficulties}.
Though $(X_{fG})^H$ is not known to always equal $X^{hH}$, the 
following 
result identifies $(X_{fG})^H$ with a canonical colimit that always maps 
to $X^{hH}$.

\begin{cor}\label{cor:mysterious}
Let $H$ be a closed subgroup of $G$.  There is an equivalence
$$ (X_{fG})^H \simeq \colim_{H \le U \le_o G} X^{hU}. $$
\end{cor}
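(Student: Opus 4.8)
The plan is to compute $(X_{fG})^H$ by writing the fibrant discrete $G$-spectrum $X_{fG}$ as a homotopy colimit of its fixed points under open subgroups, and then commuting fixed points past this colimit. First I would recall that $X_{fG}$ is a fibrant discrete $G$-spectrum, so in each simplicial degree and each spectrum level it is a discrete $G$-set, hence
\[
X_{fG} = \colim_{U \le_o G} (X_{fG})^U,
\]
the colimit being over all open subgroups $U$ of $G$ (this is just the defining property of a discrete $G$-set, applied levelwise and degreewise). Taking $H$-fixed points and using that filtered colimits of simplicial sets commute with the (finite-limit) operation of taking $H$-fixed points levelwise — here one uses that the transition maps are inclusions, so the colimit is filtered over the poset of open $U$, and that $H$ is compact so that an $H$-fixed point of the colimit already lies in some $(X_{fG})^U$ — I would obtain
\[
(X_{fG})^H \cong \colim_{U \le_o G} ((X_{fG})^U)^H \cong \colim_{H \le U \le_o G} (X_{fG})^U,
\]
where in the last step I restrict to those $U$ containing $H$, since for such $U$ we have $((X_{fG})^U)^H = (X_{fG})^U$ (the action of $H$ on $(X_{fG})^U$ factors through $G/U$, but more simply $H \le U$ makes it trivial), and the $U$ containing $H$ are cofinal among all open $U$ once we intersect with the $H$-fixed subobject.

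The second key step is to identify each term $(X_{fG})^U$ with $X^{hU}$. This is immediate from the fibrancy results of Section~\ref{sec:iterate}: by Proposition~\ref{prop:iterate}(2), the fibrant discrete $G$-spectrum $X_{fG}$ is fibrant as a discrete $U$-spectrum for $U$ open, and the map $X \to X_{fG}$ is a trivial cofibration of discrete $U$-spectra, so $X^{hU} = (X_{fG})^U$ on the nose. Substituting, the colimit in the previous display becomes $\colim_{H \le U \le_o G} X^{hU}$, which is the desired right-hand side.

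The remaining point — and the step I expect to require the most care — is that this levelwise/degreewise colimit identification is actually a colimit in the model category $\Sigma\Sp_G$ computing the \emph{homotopy} colimit, i.e.\ that no derived correction is needed. Here I would argue that the diagram $U \mapsto (X_{fG})^U$ is a filtered diagram of monomorphisms (each transition map $(X_{fG})^{U'} \hookrightarrow (X_{fG})^{U}$ for $U \le U'$ is a levelwise inclusion of simplicial discrete sets), hence a diagram of cofibrations between cofibrant objects in the injective-type structure underlying $\Sigma\Sp$; by the same reasoning as in the proof of Lemma~\ref{lem:Mapexact} (invoking \cite[Prop.~17.9.1]{Hirschhorn} on a cofinal $\lambda$-sequence), the colimit is a homotopy colimit and in particular the canonical map from the homotopy colimit is an equivalence. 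Since all $(X_{fG})^U = X^{hU}$ are already fibrant spectra, $\colim_{H \le U \le_o G} X^{hU}$ has the correct homotopy type, and one checks the canonical comparison map $\colim_{H \le U \le_o G} X^{hU} \to (X_{fG})^H$ is the identity map constructed above, giving the asserted equivalence.
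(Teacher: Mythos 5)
Your proof is correct and follows essentially the same route as the paper: the canonical isomorphism $(X_{fG})^H \cong \colim_{H \le U \le_o G} (X_{fG})^U$ coming from discreteness of the action, followed by Proposition~\ref{prop:iterate}(2) to identify each $(X_{fG})^U$ with $X^{hU}$. The extra care you take about the filtered colimit being a homotopy colimit is handled in the paper by its standing conventions on filtered colimits of fibrant spectra (Corollary~\ref{cor:hocolim}), but is not a difference in method.
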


\begin{proof}
Since $H$ acts discretely on $X_{fG}$, 
we have a canonical isomorphism
$$ (X_{fG})^H \cong \colim_{H \le U \le_o G} (X_{fG})^U. $$
By Proposition~\ref{prop:iterate}, the spectrum $X_{fG}$ is fibrant as a
discrete $U$-spectrum, so there are equivalences $(X_{fG})^U \simeq
X^{hU}$.
\end{proof}

By Corollary~\ref{cor:mysterious}, 
given a discrete $G$-spectrum $X$ and a closed subgroup $H$, 
there is a natural map 
\[(X_{fG})^H \simeq \colim_{H \leq U \leq_o G} X^{hU} \rightarrow X^{hH}.\]
If we restrict ourselves to the
case where $G$ has finite cohomological dimension, then, as shown
below, the iterated
homotopy fixed point spectrum behaves in a more satisfactory way.

\begin{prop}\label{prop:fcdHfp}
Suppose that $G$ has finite cohomological dimension, and suppose
that $X$ is a discrete $G$-spectrum.  Let $H$ be a closed subgroup of $G$.
Then the natural map
$$ \colim_{H \le U \le_o G} X^{hU} \rightarrow X^{hH} $$
is an equivalence.
\end{prop}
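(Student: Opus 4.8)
The plan is to reduce the statement for a closed subgroup $H$ to the case of open subgroups, where Proposition~\ref{prop:iterate} already applies, by exploiting finite cohomological dimension to control the behavior of homotopy fixed points under the relevant colimits. First I would recall from Corollary~\ref{cor:mysterious} that for \emph{any} closed subgroup $H$ there is a canonical equivalence $(X_{fG})^H \simeq \colim_{H \le U \le_o G} X^{hU}$ and a natural map $(X_{fG})^H \to X^{hH}$; the content of the proposition is that this map is an equivalence when $G$ has finite cohomological dimension. So it suffices to produce an equivalence $(X_{fG})^H \xrightarrow{\simeq} X^{hH}$ compatible with this map, i.e. to show that $X_{fG}$ already computes the $H$-homotopy fixed points on the nose.

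The key step is to show that when $G$ has finite cohomological dimension, the fibrant discrete $G$-spectrum $X_{fG}$ is in fact fibrant as a discrete $H$-spectrum (this is exactly the property that fails for general $G$, as flagged in the discussion preceding the proposition). Here is how I would get at it. Write $j: H \hookrightarrow G$ for the inclusion. Using the comparison with hypercohomology spectra (Theorem~\ref{thm:cochaincomplex}), which applies because $G$ — and hence $H$, by the remark after Theorem~\ref{thm:Galois} — has finite vcd, I would compute $X^{hH} \simeq \holim_\Delta \Gamma_H^\bullet(j^* X)$. On the other hand, one can identify the relevant cobar construction for $j^* X_{fG}$ with a cobar construction built out of $\Map^c(G/H, -)$ applied to $X_{fG}$, using the Shapiro-type isomorphism $j_* j^* Y \cong \Map^c(G/H, Y)$ from the proof of Lemma~\ref{lem:Shapiro}. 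Concretely, the idea is that $\Gamma_H^\bullet(j^* X_{fG})$ is obtained by restricting along $j$ a cosimplicial diagram of $G$-fibrant spectra (the iterated $\Map^c$'s of a $G$-fibrant spectrum are again $G$-fibrant by Lemma~\ref{lem:homopair} and Lemma~\ref{lem:Mapexact}), and finite cohomological dimension of $G$ forces this homotopy limit to be computed by a finite stage, at which point the answer is visibly $(X_{fG})^H$ up to equivalence. This is the same mechanism by which finite vcd makes the hypercohomology spectrum well-behaved in \cite{Davis}.

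Assembling the pieces: I would argue $X^{hH} \simeq \HH_c(H; j^* X) \simeq \HH_c(H; j^* X_{fG})$ (fibrant replacement induces an equivalence on hypercohomology spectra, since $\Gamma$ preserves equivalences and $\holim$ is a derived functor — as in the proof of Theorem~\ref{thm:cochaincomplex}), and then that $\HH_c(H; j^* X_{fG}) \simeq (X_{fG})^H$ because $X_{fG}$ restricted to $H$ is already "close enough to $H$-fibrant" that its $H$-fixed points compute $H$-homotopy fixed points — this is where finite cohomological dimension does its work, collapsing the cosimplicial homotopy limit. Chasing through the identifications shows the resulting equivalence $(X_{fG})^H \simeq X^{hH}$ is the canonical map, and combining with Corollary~\ref{cor:mysterious} yields $\colim_{H \le U \le_o G} X^{hU} \simeq X^{hH}$.

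The main obstacle I expect is the middle step: rigorously justifying that finite cohomological dimension of $G$ allows the passage from "$X_{fG}$ is $G$-fibrant" to "$X_{fG}$ computes $H$-homotopy fixed points." The subtlety is precisely that $X_{fG}$ need not be $H$-fibrant, so one cannot simply take fixed points; one must instead run the hypercohomology-spectrum comparison for $H$ and check that the cobar construction built from $G$-fibrant pieces has the homotopy type of $(X_{fG})^H$. Controlling this homotopy limit — ensuring that the finite-cohomological-dimension vanishing really does let one truncate, and that the truncated object is the plain fixed-point spectrum — is the delicate part, and it is where I would expect to invoke the cofinality and homotopy-limit arguments of \cite{Davis} most heavily.
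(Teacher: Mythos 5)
Your reduction is the right one: by Corollary~\ref{cor:mysterious} the statement is equivalent to showing that the canonical map $(X_{fG})^H \to X^{hH}$ is an equivalence, and you correctly locate the whole difficulty there. But the proposal never supplies the mechanism that makes finite cohomological dimension do this. The assertion that ``finite cohomological dimension forces this homotopy limit to be computed by a finite stage, at which point the answer is visibly $(X_{fG})^H$'' is not an argument: truncating the cosimplicial homotopy limit $\holim_\Delta \Gamma_H^\bullet X_{fG}$ at a finite stage does not identify it with the augmentation $(X_{fG})^H$, and what Theorem~\ref{thm:cochaincomplex} actually gives is $\HH_c(H;X_{fG}) \simeq (X_{fG})^{hH} = X^{hH}$ --- i.e.\ it reproduces the right-hand side of the map you are trying to analyze, not the left-hand side. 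The question of whether $(X_{fG})^H \simeq \colim_U X^{hU}$ agrees with $X^{hH}$ is a colimit-versus-homotopy-limit interchange, and Section~\ref{sec:difficulties} of the paper explicitly identifies the obstruction: the colimit of the descent spectral sequences need not converge to the colimit of the abutments. Your proposal contains no substitute for a convergence argument at exactly this point.

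The paper's proof is a spectral sequence comparison: there is a map of conditionally convergent descent spectral sequences $\colim_{H \le U \le_o G} E_r(U;X) \to E_r(H;X)$ which is an isomorphism on $E_2$-terms because continuous cohomology satisfies $\colim_U H^*_c(U;M) \cong H^*_c(H;M)$ (\cite[Thm.~9.7.2]{Wilson}); the hypothesis that $G$ (hence every closed subgroup, by \cite[I.3.3]{Serre}) has finite cohomological dimension provides a uniform horizontal vanishing line at $E_2$, which is precisely what \cite[Prop.~3.3]{MitchellHyper} needs to conclude that the colimit spectral sequence converges strongly to the colimit of the abutments. That convergence step is the entire content of the proposition and is what your ``collapsing'' remark would need to become. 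A further structural concern: your proposed first move --- proving that $X_{fG}$ is fibrant as a discrete $H$-spectrum --- is Theorem~\ref{thm:fcdHfp} of the paper, which is deduced \emph{from} Corollary~\ref{cor:fcdHfp}, itself a consequence of the present proposition; so organizing the proof around establishing $H$-fibrancy first would be circular unless you supply an independent proof of that fibrancy, which the proposal does not attempt.
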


\begin{proof}
For $K$ a profinite group of finite vcd, 
and $Y$ a discrete $K$-spectrum, let $E_r(K; Y)$ denote the 
conditionally convergent descent
spectral sequence
$$ E_2(K;Y) = H^*_c(K; \pi_*(Y)) \Rightarrow \pi_*(Y^{hK}). $$
There is a map of spectral sequences
$$ E_r'(H;X) := \colim_{H \le U \le_o G} E_r(U;X) \rightarrow E_r(H;
X), $$
which is an isomorphism on the level of $E_2$-terms
by \cite[Thm.~9.7.2]{Wilson}.  The proposition now follows from
\cite[Prop.~3.3]{MitchellHyper}.
\end{proof}

In Section~\ref{sec:Galoisiterate}, we will see that we may extend
Proposition~\ref{prop:fcdHfp}
to groups of finite \emph{virtual} cohomological dimension provided that 
we are taking homotopy fixed points of 
a consistent profaithful $k$-local profinite Galois extension.

\begin{cor}\label{cor:fcdHfp}
Let $G$ be of finite cohomological dimension, and let $X$ be a discrete
$G$-spectrum.  Suppose that $H$ is a closed subgroup of $G$.  
Then there is an equivalence $ (X_{fG})^H \simeq X^{hH}. $
\end{cor}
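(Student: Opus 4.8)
The plan is to combine the two previous results: Corollary~\ref{cor:mysterious}, which identifies $(X_{fG})^H$ with the colimit $\colim_{H \le U \le_o G} X^{hU}$ for \emph{any} closed subgroup $H$ of \emph{any} profinite group $G$, and Proposition~\ref{prop:fcdHfp}, which under the hypothesis that $G$ has finite cohomological dimension shows that the natural map from this colimit to $X^{hH}$ is an equivalence.

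First I would invoke Corollary~\ref{cor:mysterious} to obtain the equivalence
$$ (X_{fG})^H \simeq \colim_{H \le U \le_o G} X^{hU}. $$
Then, since by hypothesis $G$ has finite cohomological dimension, Proposition~\ref{prop:fcdHfp} applies and gives that the natural map
$$ \colim_{H \le U \le_o G} X^{hU} \rightarrow X^{hH} $$
is an equivalence. Composing these two equivalences yields the desired equivalence $(X_{fG})^H \simeq X^{hH}$, and one should check that the composite is in fact the natural comparison map, which it is by construction of the map in Corollary~\ref{cor:mysterious}.

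There is essentially no obstacle here: the statement is a formal consequence of the two cited results, and the proof is a one-line concatenation. The only thing worth remarking is that finite cohomological dimension (rather than merely finite vcd) is genuinely used, since it is the hypothesis of Proposition~\ref{prop:fcdHfp}; the extension to finite vcd in the Galois setting is deferred to Section~\ref{sec:Galoisiterate} as already noted in the text.

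\begin{proof}
By Corollary~\ref{cor:mysterious}, there is an equivalence
$$ (X_{fG})^H \simeq \colim_{H \le U \le_o G} X^{hU}. $$
Since $G$ has finite cohomological dimension, Proposition~\ref{prop:fcdHfp}
shows that the natural map
$$ \colim_{H \le U \le_o G} X^{hU} \rightarrow X^{hH} $$
is an equivalence.  Composing these two equivalences gives the result.
\end{proof}
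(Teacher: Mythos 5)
Your proof is correct and is exactly the argument the paper intends: the corollary is stated without proof precisely because it is the immediate concatenation of Corollary~\ref{cor:mysterious} with Proposition~\ref{prop:fcdHfp}, and the natural map $(X_{fG})^H \simeq \colim_{H \le U \le_o G} X^{hU} \rightarrow X^{hH}$ is already set up in the text preceding that proposition. Nothing further is needed.
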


\begin{thm}\label{thm:fcdHfp}
Let $G$ be of finite cohomological dimension, and let $X$ be a fibrant discrete
$G$-spectrum.  Suppose that $H$ is a closed subgroup of $G$.  Then $X$ is
fibrant as a discrete $H$-spectrum.
\end{thm}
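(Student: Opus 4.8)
The strategy is to use the model-category characterization of fibrancy together with the iterated-homotopy-fixed-point machinery developed in the preceding subsections. Recall that $X$ is fibrant as a discrete $H$-spectrum if and only if it satisfies the usual two conditions: the spaces $X_i$ are fibrant as simplicial discrete $H$-sets, and the maps $X_i \to \Omega X_{i+1}$ are weak equivalences. The second condition is automatic: since $X$ is fibrant as a discrete $G$-spectrum, the maps $X_i \to \Omega X_{i+1}$ are already weak equivalences of underlying simplicial sets, and this property is detected on underlying simplicial sets regardless of the group. So the real content is the first condition: \emph{$X_i$ is fibrant as a simplicial discrete $H$-set}. Here I want to exploit the fact that fibrancy in $s\Set_H$ is characterized by a lifting property against the generating trivial cofibrations $J_+$ built from trivial cofibrations $A \to B$ of simplicial discrete $H$-sets with $\#B$ bounded.

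\textbf{Key steps.} First I would reduce the statement to its essential case. Since $X$ is fibrant as a discrete $G$-spectrum, by Corollary~\ref{cor:fcdHfp} we have $(X_{fG})^H \simeq X^{hH}$, and taking $X$ itself fibrant means $X_{fG}$ can be taken to be $X$, so $(X)^H \simeq X^{hH}$. More importantly, I would use Proposition~\ref{prop:fcdHfp}: the natural map $\colim_{H \le U \le_o G} X^{hU} \to X^{hH}$ is an equivalence when $G$ has finite cohomological dimension. Combining with Corollary~\ref{cor:mysterious}, which identifies $(X_{fG})^H$ with this same colimit, we see $(X)^H = \colim_{H \le U \le_o G} X^U$ (using that $X$ fibrant implies $X$ is fibrant as each discrete $U$-spectrum, by Proposition~\ref{prop:iterate}(2), so $X^U \simeq X^{hU}$). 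The point is that this colimit is filtered (the open subgroups containing $H$ form a cofiltered poset), so it behaves well with respect to the fibrancy-detecting lifting problems. Concretely, to show $X_i$ is fibrant in $s\Set_H$, I would take a generating trivial cofibration $A \xrightarrow{j} B$ in $s\Set_H$ with $B$ of bounded cardinality, and a map $A \to X_i$. Because $B$ has bounded cardinality, its $H$-action factors through some finite quotient $H/(H \cap U) = HU/U$ for an open subgroup $U$ of $G$ containing... here I need to be slightly careful and instead argue that the map $A \to X_i$ lands in some $X_i^{U'}$ for $U'$ open with $H \le U'$, since $X_i$ is a discrete $G$-set and $A$ is (simplicially) small; then $j$ is $U'$-equivariant after enlarging, and I solve the lifting problem in the fibrant simplicial discrete $U'$-set $X_i$ (using Proposition~\ref{prop:iterate}(2) that $X$ is $U'$-fibrant, hence $X_i$ is fibrant in $s\Set_{U'}$), then push the solution into $X_i$.

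\textbf{The main obstacle.} The delicate point is the interchange between the cofiltered colimit over open subgroups $U \supseteq H$ and the finiteness properties needed to solve lifting problems: I must ensure that a trivial cofibration $A \to B$ of \emph{discrete $H$-sets} of bounded cardinality, together with a map $A \to X_i = \colim_U X_i^U$, all descend to a single open level $U$, so that the lifting can be performed $U$-equivariantly where we know fibrancy holds. The cardinality bound $\alpha$ on the generating trivial cofibrations $J_+$ (chosen larger than $\#G$) is exactly what makes this work: both $A$ and $B$ are $\alpha$-small, hence their images land in a fixed finite-level piece of the filtered colimit, and their $H$-action is likewise controlled. Once this descent is in place, finite cohomological dimension of $G$ enters only through Proposition~\ref{prop:fcdHfp} to guarantee that the colimit $\colim_U X^{hU}$ genuinely computes $X^{hH}$, which is what pins down $(X)^H$ as the correct (homotopy) fixed-point object and lets us conclude $X$ is $H$-fibrant rather than merely that $X^H$ has the right homotopy type. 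An alternative, possibly cleaner route would be to argue directly via the Quillen pair $(\Res^G_G, (-)^{?})$ and the derived-functor formalism of Section~\ref{sec:homo}, showing that the counit or the relevant comparison map is an isomorphism on fibrant objects under the finite-cohomological-dimension hypothesis; but I expect the colimit-of-open-subgroups approach above to be the most transparent.
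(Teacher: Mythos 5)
Your reduction to showing that each space $X_i$ is fibrant as a simplicial discrete $H$-set is correct and matches the paper, but the method you propose for establishing that fibrancy has a genuine gap. You want to take a generating trivial cofibration $j\colon A \to B$ of $(s\Set_H)_*$ together with a map $A \to X_i$, descend everything to some open subgroup $U'$ of $G$ containing $H$, and solve the lifting problem $U'$-equivariantly. The problem is that $A$ and $B$ are simplicial discrete \emph{$H$}-sets: their $H$-actions do not extend to actions of any open subgroup $U'$ of $G$, so there is no $U'$-equivariant lifting problem to solve. Smallness of $B$ lets you factor the map $A \to X_i = \colim_U X_i^U$ through a finite stage, but it does nothing to promote the $H$-action on the source and target of $j$ to a $U'$-action. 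This is exactly the obstruction the paper isolates in Section~\ref{sec:difficulties}: $\Res_G^H$ admits no left adjoint, so one cannot transpose $H$-equivariant lifting problems into the $G$- (or $U'$-) equivariant world the way one can for open subgroups via $\Ind_U^G$. A further warning sign is that your lifting argument, if it worked, would make no use of the finite cohomological dimension hypothesis and would prove that restriction to any closed subgroup preserves fibrant objects for arbitrary profinite $G$ --- a statement the authors explicitly doubt. Your remark that fcd enters ``only through Proposition~\ref{prop:fcdHfp} to pin down $X^H$'' does not repair this: knowing that $X^H$ has the correct homotopy type is strictly weaker than knowing that $X$ is $H$-fibrant.

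The paper's actual argument avoids lifting problems entirely. It invokes a descent-theoretic characterization of fibrancy for simplicial discrete $H$-sets (from \cite[10.2.7]{taf}): one must check that $X_i^V$ is a Kan complex for every open $V \le H$ (this follows from your filtered-colimit observation, since $X_i^V = \colim_{V \le U \le_o G} X_i^U$ and each $X_i^U$ is Kan), and that $X_i^V$ satisfies descent with respect to hypercovers of $H/V$. The hypercover condition is verified by comparing $X$ with its genuine $H$-fibrant replacement $X_{fH}$: one shows each $X^V$ is a fibrant spectrum and that $X^V \to (X_{fH})^V$ is an equivalence, using Proposition~\ref{prop:iterate} and Corollary~\ref{cor:fcdHfp} --- and this is precisely where finite cohomological dimension is needed. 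Levelwise, this transfers the known descent property of $X_{fH}$ back to $X_i$. If you want to salvage your write-up, you should replace the lifting-problem step with this hypercover comparison.
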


\begin{proof}
By the proof of Theorem~\ref{thm:sigmaspmodel}, the fibrant objects $Y$ of
$\Sigma\Sp_H$ are precisely the objects for which $Y_i$ are
fibrant as simplicial discrete $H$-sets, and the maps
$$ Y_i \rightarrow \Omega Y_{i+1} $$ 
are weak equivalences.

Since $X$ is fibrant as a discrete $G$-spectrum, each $X_i$ is fibrant as a
simplicial discrete $G$-set, and the maps
$$ X_i \rightarrow \Omega X_{i+1} $$
are weak equivalences.  The only thing remaining to check is that each
space $X_i$ is fibrant when regarded as a simplicial discrete $H$-set.  

A criterion for fibrancy is established in \cite[10.2.7]{taf}.  
We remark that \cite{taf} deals with the more general class of locally 
compact totally disconnected groups $G$, 
acting on ``simplicial smooth $G$-sets.''  However,
in the case where $G$ is a profinite group, the category of smooth $G$-sets
is the category of discrete $G$-sets.
We
need to check
\begin{enumerate}
\item the $V$-fixed points $X_i^V$ is a Kan complex for every open subgroup
$V \le H$,
\item for every open subgroup $V \le H$ and every hypercover
$\{H/V_{\alpha, \bullet}\}_{\alpha \in I_\bullet}$ of $H/V$, the induced map
$$ X_i^V \rightarrow \holim_{\Delta} 
\prod_{\alpha \in I_\bullet} X_i^{V_{\alpha,\bullet}} $$
is a weak equivalence.
\end{enumerate}

Let $V$ be an open subgroup of $H$.
We have 
$$ X_i^{V} = \colim_{V \le U \le_o G} X_i^{U}. $$
Now $X_i^U$ is a Kan complex, since $X_i$ is fibrant as a simplicial 
discrete $G$-set.  Since filtered 
colimits of Kan complexes are Kan complexes, we
deduce that $X_i^V$ is a Kan complex.  This verifies condition $(1)$.  
We
furthermore point out that for each open subgroup $U$ in $G$ containing
$V$, Proposition~\ref{prop:iterate} implies that the fixed point spectrum
$X^U$
is fibrant.
Therefore, the maps
$$ X_i^U \rightarrow \Omega X_{i+1}^U $$
are weak equivalences.  Since
the functor $\Omega$ commutes with
filtered colimits, and because filtered colimits of weak equivalences are weak
equivalences, the maps
$$ X^V_i \rightarrow \Omega X^V_{i+1} $$ 
are weak equivalences, and we deduce
that the spectrum $X^V$ is fibrant (as a symmetric spectrum).
By Proposition~\ref{prop:iterate}, we have
$$ (X_{fH})^V \simeq X^{hV}. $$
Corollary~\ref{cor:fcdHfp} therefore implies that 
the map
$$ X^V \rightarrow (X_{fH})^V $$
is a weak equivalence.  Since weak equivalences between fibrant symmetric
spectra are levelwise equivalences, we deduce that each map
$$ X_i^{V} \rightarrow (X_{fH})^V_i $$
is a weak equivalence.  

We now verify condition~(2).
Suppose that $\{ H/V_{\alpha, \bullet} \}_{\alpha \in 
I_\bullet}$ is a hypercover of $H/V$.  Consider the following diagram.
$$
\xymatrix{
X_i^V \ar[r] \ar[d] &
\holim_{\Delta} \prod_{\alpha \in I_\bullet} X_i^{V_{\alpha,\bullet}}
\ar[d]
\\
(X_{fH})^V_i \ar[r] &
\holim_{\Delta} \prod_{\alpha \in I_\bullet}
(X_{fH})^{V_{\alpha,\bullet}}_i
}
$$
The bottom map is a weak equivalence since $X_{fH}$ is fibrant as a
discrete $H$-spectrum.  We have shown that the vertical maps are weak
equivalences.  We deduce that the top map is a weak equivalence.
\end{proof}

\begin{cor}\label{cor:fcditerate}
Let $X$ be a discrete $G$-spectrum.
Suppose that $G$ has finite cohomological dimension, and 
suppose that $N$ is a closed normal subgroup of $G$.  Then the homotopy
fixed point spectrum $X^{hN}$ is a discrete $G/N$-spectrum, and there 
is an equivalence 
$$ (X^{hN})^{hG/N} \simeq X^{hG}. $$
\end{cor}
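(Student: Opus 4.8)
The plan is to derive this corollary from the tools already assembled in Sections~\ref{sec:homo} and \ref{sec:fcditerate}, essentially by combining Proposition~\ref{prop:closediterate} with the fibrancy result Theorem~\ref{thm:fcdHfp}. First I would address the assertion that $X^{hN}$ is a discrete $G/N$-spectrum. For this, take the functorial fibrant replacement $X_{fG}$ in $\Sigma\Sp_G$; since $N$ is normal, $(X_{fG})^N$ is naturally a $G/N$-spectrum (as in the proof of Proposition~\ref{prop:iterate}(1), using the adjoint pair $(\Res^G_{G/N}, (-)^N)$), and it is a discrete one because $X_{fG}$ is a discrete $G$-spectrum and $N$-fixed points of a discrete $G$-set form a discrete $G/N$-set. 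By Theorem~\ref{thm:fcdHfp}, since $G$ has finite cohomological dimension and $N$ is closed, $X_{fG}$ is fibrant as a discrete $N$-spectrum; combined with the fact that $X \to X_{fG}$ is a trivial cofibration of spectra and $N$-equivariant (hence a trivial cofibration in $\Sigma\Sp_N$), we get $X^{hN} = (X_{fG})^N$, which exhibits $X^{hN}$ as the discrete $G/N$-spectrum $(X_{fG})^N$.

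Next I would establish the iterated homotopy fixed point equivalence. The key point is that the model we have just identified for $X^{hN}$, namely $(X_{fG})^N$, is precisely the spectrum appearing in Proposition~\ref{prop:closediterate}. That proposition, applied to the diagram $G \xrightarrow{q} G/N$, $G \xrightarrow{r} \{e\}$, $G/N \xrightarrow{s} \{e\}$, gives the equivalence
$$ ((X_{fG})^N)^{hG/N} \simeq X^{hG}. $$
Since $(X_{fG})^N \simeq X^{hN}$ as discrete $G/N$-spectra, and the homotopy fixed point functor $(-)^{hG/N}$ is invariant under weak equivalences of discrete $G/N$-spectra (it is the right derived functor of $(-)^{G/N}$, and $G/N$ has finite cohomological dimension since it is a quotient of $G$), we may substitute to conclude
$$ (X^{hN})^{hG/N} \simeq ((X_{fG})^N)^{hG/N} \simeq X^{hG}. $$

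I expect the only subtle point — and the step I would be most careful about — is the compatibility between the $G/N$-spectrum structure on $(X_{fG})^N$ and the $G/N$-spectrum structure implicitly used when forming $X^{hN}$ and then its $G/N$-homotopy fixed points; that is, checking that the identification $X^{hN} = (X_{fG})^N$ is one of discrete $G/N$-spectra and not merely of underlying spectra. This is exactly the kind of bookkeeping handled in the proof of Proposition~\ref{prop:iterate}(3) for open subgroups, and the same argument applies here once Theorem~\ref{thm:fcdHfp} supplies the requisite $N$-fibrancy of $X_{fG}$. Everything else is a formal consequence of Proposition~\ref{prop:closediterate} together with the fact that under the finite cohomological dimension hypothesis there is no discrepancy between $(X_{fG})^H$ and $X^{hH}$ for closed $H$ (Corollary~\ref{cor:fcdHfp}).
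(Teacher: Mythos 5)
Your proposal is correct and follows essentially the same route as the paper's own proof: Theorem~\ref{thm:fcdHfp} gives that $X_{fG}$ is fibrant as a discrete $N$-spectrum, so $X^{hN} = (X_{fG})^N$ is a discrete $G/N$-spectrum, and Proposition~\ref{prop:closediterate} then yields $((X_{fG})^N)^{hG/N} \simeq X^{hG}$. The extra care you take about the $G/N$-equivariance of the identification $X^{hN} = (X_{fG})^N$ is appropriate but is already implicit in the paper's argument.
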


\begin{proof}
Since $X_{fG}$ is fibrant as a discrete $N$-spectrum, we have
$$ X^{hN} = (X_{fG})^N $$
and $(X_{fG})^N$ is a discrete $G/N$-spectrum.  Furthermore, by
Proposition~\ref{prop:closediterate},
we have
$$ X^{hG} \simeq ((X_{fG})^N)^{hG/N} \simeq (X^{hN})^{hG/N}. $$ 
\end{proof}

\subsection{The difficulties concerning arbitrary closed fixed
points}\label{sec:difficulties}

Let $H$ be a closed subgroup of an arbitrary profinite group $G$.  
We would be able to remove
the finite cohomological dimension hypothesis in
Section~\ref{sec:fcditerate}, if we knew that the restriction functor
$$ \Res_G^H : \Sigma\Sp_G \rightarrow \Sigma\Sp_H $$
sends $G$-fibrant objects to $H$-fibrant objects.  While we know of no
counterexamples to this assertion, we also doubt that it 
this is true in general.

We saw in Proposition~\ref{prop:iterate} that for $U$ an open subgroup of
$G$, the presence of an induction functor $\Ind_U^G$ which was a 
left Quillen 
adjoint to
$\Res_G^U$ allowed us to prove that $\Res_G^U$ preserves fibrant objects.

However, as pointed out to the second author 
by Jeff Smith, $\Res_G^H$ cannot possess a left adjoint, in general, 
since it 
does not preserve limits. This can be seen as follows. 

For a profinite 
group $K$ and a diagram $\{X_\alpha\}$ in the category $\Sigma\Sp_K$, 
let
$$ {\lim_\alpha}^K X_\alpha $$
denote the limit computed in the category $\Sigma\Sp_K$.
This limit is given by the following formula:
$$
{\lim_\alpha}^K X_\alpha = 
\colim_{U \trianglelefteq_o K} ({\lim_\alpha}^{\Sp} X_\alpha)^U. $$ 
Here, the limit $\lim^{\Sp}$ is the limit computed in the underlying 
category of symmetric spectra.

Thus, given a diagram $\{ X_\alpha\}$ in $\Sigma\Sp_G$, 
the restriction of the limit is given by
$$
\Res^H_G {\lim_\alpha}^G X_\alpha = 
\colim_{U \trianglelefteq_o G} ({\lim_\alpha}^{\Sp} X_\alpha)^U. 
$$ 
However, the limit of the restriction is computed to be
$$ {\lim_\alpha}^H \Res_G^H X_\alpha = 
\colim_{V \trianglelefteq_o H} 
({\lim_\alpha}^\Sp X_\alpha)^{V}.$$ 
When $H$ is not open in $G$, the lack of cofinality implies that
these subspectra of $\lim_\alpha^\Sp X_\alpha$ in general do not agree.  

One might suspect that one could still prove that the map
$$ \colim_{H \le U \le_o G} X^{hU} \rightarrow X^{hH} $$
is an equivalence if $G$ has
finite \emph{virtual} cohomological dimension, by a comparison 
of descent spectral sequences.  This approach, however, also presents
difficulties.
As in the proof of Proposition~\ref{prop:fcdHfp},
there is a map of spectral sequences
$$ E_r'(H;X) = \colim_{H \le U \le_o G} E_r(U;X) \rightarrow E_r(H; X) $$
which is an isomorphism on the level of $E_2$-terms
(see \cite[Thm.~9.7.2]{Wilson}).  The
problem is that the colimit of the spectral sequences does not converge to
the colimit of the abutments in general.  

\section{Continuous $G$-spectra}\label{sec:cont}

In this paper, a {\em continuous $G$-spectrum} is a pro-object in the category of
discrete $G$-spectra.  
In this section, we extend some of our constructions 
for $\Sigma \mathrm{Sp}_G$ to the category 
of continuous $G$-spectra.  For continuous 
$G$-spectra that are indexed over $\{0 \leftarrow 1 \leftarrow 2 \leftarrow 
\cdots \}$, part of this material appears in more detail in \cite{Davis}.

\subsection{Pro-objects in discrete $G$-spectra}

Following standard usage, a pro-object in a category $\mathcal{C}$ 
is a cofiltered diagram in $\mathcal{C}$.  We define the category of 
continuous $G$-spectra $\Sigma \mathrm{Sp}^c_G$ to be the category of 
pro-objects in $\Sigma \mathrm{Sp}_G$.  Thus, a continuous $G$-spectrum 
is a cofiltered diagram $\mathbf{X} = \{X_i\}_{i\in I}$ of discrete 
$G$-spectra.  Maps in the category of continuous $G$-spectra are given 
by $$\Sigma \mathrm{Sp}^c_G(\mathbf{X},\mathbf{Y}) = 
\lim_j \colim_i \Sigma \mathrm{Sp}_G(X_i,Y_j).$$  
\par
Any pro-spectrum 
$\mathbf{X} = \{X_i\}$ gives rise to a spectrum $X$ via the homotopy 
limit functor: $$X = \holim_i X_i.$$  We shall always denote our pro-spectra 
by boldface type and their homotopy limits with non-boldface type.  

\begin{rmk}
A more general theory of pro-spectra,
including a model category structure, has
been developed by Isaksen (see \cite{Isaksen} and
\cite[Section~1.1]{tmodel}).  Fausk has developed
a category of continuous \emph{genuine} $G$-spectra where $G$ is a 
compact Hausdorff topological group \cite{Fausk}.
The notion of continuous $G$-spectrum in this paper 
(that is, a pro-discrete $G$-spectrum) corresponds roughly, in \cite{Fausk}, 
to a pro-$G$-spectrum that is 
in the
full subcategory of cofibrant objects in the
Postnikov $\br{\mathrm{Lie}(G)}$-model structure on
pro-$\mathcal{M}_S$. For more detail, we refer
the reader to \cite[Section 11.3]{Fausk},
especially the discussion centered around
[op. cit., Eq. (11-15)]. 
\end{rmk}

\subsection{Continuous mapping spectra}

Let $K = \lim_i K_i$ be a 
profinite $G$-set.  Given a continuous $G$-spectrum $\mathbf{X}$, the 
continuous mapping spectrum $\mathbf{Map}^c(K,\mathbf{X})$ is defined to 
be the continuous $G$-spectrum $$\{\Map^c(K,X_j)\}_j.$$  We denote the 
homotopy limit of $\mathbf{Map}^c(K,\mathbf{X})$ by $\Map^c(K,\mathbf{X})$.
If $K$ satisfies the hypotheses of Lemma~\ref{lem:Mapexact}, and 
the derived functors 
$\lim^s_j \Map^c(K, \pi_t(X_j)) = 0,$ for all $s > 0$ and all $t \in 
\mathbb{Z}$, then the Bousfield-Kan spectral sequence $$\textstyle
\lim^s_j 
\Map^c(K, \pi_t(X_j)) \Rightarrow \pi_\ast(\Map^c(K,\mathbf{X}))$$ collapses, 
and thus, $$\pi_\ast(\Map^c(K,\mathbf{X})) \cong \Map^c(K, \lim_j 
\pi_\ast(X_j)). $$

\subsection{Continuous permutation spectra}

Let $K = \lim_i K_i$ be a 
profinite $G$-set, 
and let each finite set $K_j$, for each $j$ in the indexing set for $K$, be a discrete $G$-set.  
Also, let 
$\mathbf{X} = \{ X_i \}_i$ be a continuous $G$-spectrum.  Define the permutation 
spectrum $\mathbf{X}[[K]]$ to be the continuous $G$-spectrum given 
by 
$$\{X_i[K_j]\}_{i,j}.$$  
We denote the homotopy 
limit of $\mathbf{X}[[K]]$ by 
$$ X[[K]] = \holim_{i,j} X_i[K_j]. $$  
\par
Note that if $\lim^s_{i,j} \pi_t(X_i)[K_j] = 0$, for all $s > 0$ 
and all $t \in \mathbb{Z}$ (where $\pi_t(X_i)[K_j]$ is an abelian 
group), then $$\pi_\ast(X[[K]]) \cong \lim_{i,j} \pi_\ast(X_i)[K_j].$$
If $E$ is a 
discrete $G$-spectrum, we use $\mathbf{E}[[K]]$ to denote 
the continuous $G$-spectrum $\{E[K_j]\}_j$.  

\subsection{Continuous homotopy fixed points}

For a continuous $G$-spectrum $\mathbf{X}$, we define the homotopy fixed
point pro-spectrum 
$\mathbf{X}^{hG}$ 
to be
$$ \{ X_i^{hG} \}_i. $$
We denote the homotopy limit of $\mathbf{X}^{hG}$ by
$X^{hG}$, and we refer to $X^{hG}$ as
the \emph{homotopy fixed point spectrum}.

\subsection{Continuous hypercohomology spectra}

We define 
$$ \mathbf{\Gamma}_G : \mathrm{(pro}\negthinspace-\negthinspace \Sigma\Sp\mathrm{)}
\rightarrow \mathrm{(pro}\negthinspace-\negthinspace
\Sigma\Sp\mathrm{)} $$
to be the coaugmented comonad given
by
$$ \mathbf{\Gamma}_G(\mathbf{X}) = \MAP^c(G,\mathbf{X}). $$
Let $\Gamma_G(\mathbf{X})$ be the homotopy limit of 
$\mathbf{\Gamma}_G(\mathbf{X})$.

If $\mathbf{X}$ is a continuous $G$-spectrum, then it is a coalgebra over 
$\mathbf{\Gamma}_G$.  
Let $\HH_c(G; \mathbf{X})$ denote the pro-spectrum obtained by taking
hypercohomology levelwise:
$$ 
\HH_c(G; \mathbf{X}) = \{ \HH_c(G;X_i) \}_i.
$$
Let $\HH_c(G; X)$ denote the homotopy limit of the pro-spectrum $\HH_c(G;
\mathbf{X})$.
The following result follows immediately from
Theorem~\ref{thm:cochaincomplex}.

\begin{thm}\label{Davis}
Suppose that $\mathbf{X} = \{X_i \}$ is a continuous $G$-spectrum.
If $G$ has finite vcd,
then there is an equivalence
$$X^{hG} \simeq \HH_c(G; X). $$
\end{thm}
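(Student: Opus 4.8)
The plan is to apply Theorem~\ref{thm:cochaincomplex} objectwise along the pro-direction and then pass to homotopy limits, using that the homotopy limit of a pro-spectrum preserves objectwise weak equivalences. The one preliminary point is that the equivalence of Theorem~\ref{thm:cochaincomplex} is \emph{natural} in the discrete $G$-spectrum $X$: it is assembled from the zig-zag
$$
X^{hG} = (X_{fG})^G \xrightarrow{\ \simeq\ } \holim_\Delta \Gamma^\bullet_G X_{fG} \xleftarrow{\ \simeq\ } \holim_\Delta \Gamma^\bullet_G X = \HH_c(G;X),
$$
where the left equivalence is \cite[Thm.~7.4]{Davis} (natural in $X$ since the fibrant replacement $X \mapsto X_{fG}$ is functorial) and the right map is induced by the functorial trivial cofibration $\alpha_{G,X} \colon X \to X_{fG}$, using that $\Gamma_G$ preserves equivalences (Lemma~\ref{lem:Mapexact}) and that $\holim_\Delta$ preserves levelwise equivalences. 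When $G$ has finite vcd, every map in this zig-zag is a weak equivalence, by the proof of Theorem~\ref{thm:cochaincomplex}.

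Now let $\mathbf{X} = \{X_i\}_{i \in I}$ be a continuous $G$-spectrum. Applying the zig-zag above objectwise over $I$ produces a zig-zag of morphisms of pro-spectra connecting $\mathbf{X}^{hG} = \{X_i^{hG}\}_i$ to $\HH_c(G;\mathbf{X}) = \{\HH_c(G;X_i)\}_i$ which is an objectwise weak equivalence. The homotopy limit functor sends objectwise weak equivalences of pro-spectra to weak equivalences of spectra, being a right-derived functor computed after objectwise fibrant replacement; applying $\holim_I$ to the zig-zag therefore yields a zig-zag of weak equivalences
$$
X^{hG} = \holim_i X_i^{hG} \xrightarrow{\ \simeq\ } \cdots \xleftarrow{\ \simeq\ } \holim_i \HH_c(G;X_i) = \HH_c(G;X),
$$
which is the asserted equivalence.

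There is no real obstacle here -- this is why the theorem is said to follow immediately -- and the only point requiring attention is the naturality recorded in the first paragraph, which is what allows the objectwise comparison to be organized into a map of pro-spectra before homotopy limits are taken. The finite vcd hypothesis is used solely to invoke Theorem~\ref{thm:cochaincomplex}.
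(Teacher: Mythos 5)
Your proof is correct and is exactly the argument the paper intends: the paper simply states that the result "follows immediately from Theorem~\ref{thm:cochaincomplex}," and your write-up supplies the expected details (objectwise application of the zig-zag, naturality in $X$, and preservation of levelwise equivalences by $\holim$). No issues.
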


\subsection{Homotopy fixed point spectral sequence}

Let $G$ have finite vcd.   
Then Theorem~\ref{Davis} implies that
$$ X^{hG} \simeq \holim_\Delta \holim_i \Gamma_G^\bullet X_i, $$  
and, hence, the associated
Bousfield-Kan spectral sequence 
has the form 
$$ E_2^{s,t}(G;X) = \pi^s \pi_t(\holim_i \Gamma_G^\bullet X_i) 
\Rightarrow \pi_{t-s}(X^{hG}), $$  
giving the conditionally convergent
homotopy fixed point spectral sequence for $X^{hG}$.

Observe that there is a natural isomorphism
$$ \Gamma_G^k(-) \cong
\Map^c(G^k,-). $$
If 
$\lim^s_i \Map^c(G^k, \pi_t(X_i)) = 0$ for all $s > 0$, all $k \geq 0$, 
and all $t \in \mathbb{Z}$, then for each $k \ge 0$,
the Bousfield-Kan spectral sequence
$$\textstyle\lim^s_i \Map^c(G^k, \pi_t(X_i)) \Rightarrow \pi_\ast
(\holim_i \Map^c(G^k, X_i))$$
collapses, and thus, 
\begin{align*}
E_2^{s,t}(G;X) & \cong \pi^s(\lim_i
\Map^c(G^\bullet, \pi_t(X_i))) \\  
& \cong H^s(\Map^c(G^\bullet, \lim_i \pi_t(X_i))) \\
& \cong H^s(\Map^c(G^\bullet, \pi_t(X))) \\
& \cong H^s_c(G; \pi_t(X)).
\end{align*}
Here, $H^s_c(G; \pi_t(X))$
denotes the continuous cohomology of continuous cochains, with 
coefficients in the topological $G$-module 
$\pi_t(X) \cong \lim_i \pi_t(X_i).$

\subsection{Completed smash product}

If $\mathbf{X}$ and $\mathbf{Y}$ are continuous $G$-spectra,
we define the completed smash product $\mathbf{X} \wedge_c \mathbf{Y}$ 
to be the continuous 
$G$-spectrum 
$$ \{ X_i \wedge Y_j \}_{i,j}. $$
The completed
smash product gives $\Sigma\Sp^c_G$ a
symmetric monoidal product, where the unit is
$\{S^0\}$ (the sphere spectrum regarded as a
diagram indexed by a single element).

\section{Modules and commutative algebras of discrete
$G$-spectra}\label{sec:alg}

Let $A$ be a commutative symmetric ring spectrum and let $G$ be a
profinite group.
In this section, we describe the model categories of discrete
$G$-$A$-modules and discrete commutative $G$-$A$-algebras.  We show that 
the 
homotopy fixed points of a discrete $G$-$A$-module is an $A$-module and
the homotopy fixed points of a discrete commutative $G$-$A$-algebra is a
commutative $A$-algebra.
These structured homotopy
fixed point constructions are shown to agree, in the stable homotopy
category, with the usual homotopy fixed points of the underlying discrete
$G$-spectrum.  
We then make
comparisons between filtered homotopy colimits and filtered 
colimits of modules and commutative algebras, and conclude that, when
properly interpreted, they all coincide in the stable homotopy category.
We conclude this section by describing how to make the hypercohomology spectra
of discrete commutative $G$-$A$-algebras 
take values in the category of commutative $A$-algebras.

\subsection{Modules of discrete $G$-spectra}

Let $A$ be a commutative symmetric ring spectrum.  By a discrete 
$G$-$A$-module, we shall mean a discrete $G$-spectrum $X$ that also
possesses the structure of an $A$-module.  We require these structures to be
compatible in the following sense: for every element $g \in G$, the
following diagram must commute.
$$
\xymatrix{
A \wedge X \ar[r]^{\xi} \ar[d]_{A \wedge g} 
& X \ar[d]^{g} 
\\
A \wedge X \ar[r]^{\xi} 
& X
}
$$
Here, $\xi$ is the $A$-module structure map.  Let $\Mod_{G,A}$ denote the
category of discrete $G$-$A$-modules, 
with morphisms being the $G$-equivariant maps
that are also maps of $A$-modules.
Note that, given discrete $G$-$A$-modules $X$ and $Y$, 
their smash product $X \wedge_A
Y$ is easily seen to be a discrete $G$-$A$-module with the diagonal
action.

The following simplified 
variant of D.M.~Kan's
``lifting theorem'' will 
be used repeatedly 
to provide the desired model structures on structured 
categories like $\Mod_{G,A}$.

\begin{lem}\label{lem:pullback}
Suppose that $\mathcal{M}$ is a cofibrantly generated model category with
generating cofibrations $I$ and generating trivial cofibrations $J$.
Furthermore, assume that the domains of $I$ and $J$ are $\alpha$-small for
some cardinal $\alpha$.  Suppose
that we are given a complete and cocomplete category $\mathcal{N}$ and an 
adjoint pair $(F,G)$
$$ F: \mathcal{M} \rightleftarrows \mathcal{N} : G, $$
where:
\begin{enumerate}
\item $G$ commutes with filtered colimits; and
\item $G$ takes relative $FJ$-cell complexes to weak
equivalences.
\end{enumerate}
Then $\mathcal{N}$ admits an induced model category structure where the
fibrations and weak equivalences are those morphisms which get sent to
fibrations and weak equivalences by $G$, and the cofibrations are
determined.  This model category structure is cofibrantly generated with
generating cofibrations $FI$ and generating trivial cofibrations $FJ$.  The
domains of $FI$ and $FJ$ are $\alpha$-small in $\mathcal{N}$.
\end{lem}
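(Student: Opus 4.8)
The plan is to verify the model category axioms for $\mathcal{N}$ directly, following the standard transfer (Kan lifting) argument; see for instance \cite{Hirschhorn}. Define a morphism $f$ of $\mathcal{N}$ to be a \emph{weak equivalence} (respectively \emph{fibration}) exactly when $Gf$ is a weak equivalence (respectively fibration) in $\mathcal{M}$, and a \emph{cofibration} to be a morphism with the left lifting property with respect to every trivial fibration. Because $G$ is a functor, the two-out-of-three and retract axioms for weak equivalences and fibrations are immediately inherited from the corresponding axioms in $\mathcal{M}$, and $\mathcal{N}$ has all small limits and colimits by hypothesis. So the real content is the existence of the two functorial factorizations, the (trivial cofibration)--(fibration) lifting property, and the identification of $FI$ and $FJ$ as generating (trivial) cofibrations.

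First I would check that the domains of $FI$ and $FJ$ are $\alpha$-small in $\mathcal{N}$. For $A$ a domain of a map in $I$ and any $\alpha$-filtered diagram $\{X_\beta\}$ in $\mathcal{N}$, the adjunction together with the fact that $G$ commutes with filtered colimits gives $\Hom_{\mathcal{N}}(FA, \colim_\beta X_\beta) \cong \Hom_{\mathcal{M}}(A, \colim_\beta G X_\beta) \cong \colim_\beta \Hom_{\mathcal{M}}(A, G X_\beta) \cong \colim_\beta \Hom_{\mathcal{N}}(FA, X_\beta)$, the middle isomorphism being $\alpha$-smallness of $A$ in $\mathcal{M}$; the same holds for domains of $J$. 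Hence the small object argument applies to both $FI$ and $FJ$, yielding functorial factorizations of any map in $\mathcal{N}$ as a relative $FI$-cell complex followed by a map with the right lifting property against $FI$, and likewise for $FJ$. By adjunction, a map $p$ has the right lifting property against $FI$ (respectively $FJ$) if and only if $Gp$ has the right lifting property against $I$ (respectively $J$), that is, if and only if $Gp$ is a trivial fibration (respectively fibration); so these are precisely the trivial fibrations (respectively fibrations) of $\mathcal{N}$. Consequently every relative $FI$-cell complex and every relative $FJ$-cell complex is a cofibration, since it has the left lifting property against all trivial fibrations.

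The crux will be hypothesis (2): applying $G$ shows that every relative $FJ$-cell complex is a weak equivalence, so, combined with the previous paragraph, every relative $FJ$-cell complex is a trivial cofibration. The standard retract argument then finishes the verification. Given a cofibration $f$ that is also a weak equivalence, factor it as $f = p j$ with $j$ a relative $FJ$-cell complex and $p$ a fibration; then $j$ is a weak equivalence, so $p$ is one by two-out-of-three, hence $p$ is a trivial fibration; since $f$ is a cofibration it lifts against $p$, and the usual diagram exhibits $f$ as a retract of $j$, so $f$ has the left lifting property against all fibrations. This gives the (trivial cofibration)--(fibration) lifting axiom, the (cofibration)--(trivial fibration) axiom being built into the definition of cofibration, while the $FI$- and $FJ$-factorizations produced above supply the two factorization axioms. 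Thus $\mathcal{N}$ is a model category, and by construction it is cofibrantly generated with generating cofibrations $FI$ and generating trivial cofibrations $FJ$, whose domains were shown to be $\alpha$-small. The main obstacle is isolated in hypothesis (2): it is exactly the acyclicity condition needed to see that relative $FJ$-cell complexes are weak equivalences, and without it the argument collapses; everything else is formal manipulation of the adjunction $(F,G)$ and the small object argument.
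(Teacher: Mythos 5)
Your proof is correct and follows essentially the same route as the paper: the paper simply cites Hirschhorn's Theorem~11.3.2 (the standard transfer theorem) and verifies the one nonformal hypothesis, namely that the domains of $FI$ and $FJ$ are $\alpha$-small, via exactly the adjunction-plus-filtered-colimits computation you give. You have in effect unpacked that citation by writing out the transfer argument (small object argument, identification of RLP classes by adjunction, and the retract argument powered by hypothesis~(2)) in full, which is fine but adds nothing beyond what the cited theorem already supplies.
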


\begin{proof}
This lemma is a special case of Theorem~11.3.2 of \cite{Hirschhorn}.  To
see that the hypotheses of this theorem are met in our situation, we must
verify that
the domains of $FI$ and $FJ$ are $\alpha$-small with
respect to relative $FI$- and $FJ$-cell complexes,
respectively.
However, our hypotheses imply that $F$ preserves all $\alpha$-small objects:
given an $\alpha$-small object $X \in \mathcal{M}$, and a $\lambda$-sequence
($\lambda \ge \alpha$)
$$ Y_1 \rightarrow Y_2 \rightarrow Y_3 \rightarrow \cdots \rightarrow
Y_\beta \rightarrow \cdots \quad (\beta < \lambda) $$
in $\mathcal{N}$, we have:
\begin{align*}
\colim_i \Hom_{\mathcal{N}}(FX, Y_i) & \cong 
\colim_i \Hom_{\mathcal{M}}(X, GY_i) \\
& \cong \Hom_{\mathcal{M}}(X, \colim_i GY_i) \\
& \cong \Hom_{\mathcal{M}}(X, G \colim_i Y_i) \\
& \cong \Hom_{\mathcal{N}}(FX, \colim_i Y_i).
\end{align*}
\end{proof}

In \cite{MMSS}, a model category structure is defined on the category of
$A$-modules.  The fibrations and weak equivalences of this model structure
are the fibrations and weak equivalences of underlying symmetric spectra,
and the cofibrations are determined.

\begin{prop}\label{prop:modulemodel}
The category $\Mod_{G,A}$ is a model category, where the
fibrations and weak equivalences are the fibrations and weak equivalences 
of the underlying discrete $G$-spectra, and the cofibrations are the
cofibrations of the underlying $A$-modules.
\end{prop}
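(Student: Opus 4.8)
The plan is to obtain the model structure on $\Mod_{G,A}$ by transferring the model structure on discrete $G$-spectra (Theorem~\ref{thm:sigmaspmodel}) across the free--forgetful adjunction
$$ A\wedge(-)\colon \Sigma\Sp_G \rightleftarrows \Mod_{G,A} \colon \mathcal{U}, $$
where $A\wedge X$ carries the diagonal $G$-action (trivial on $A$) and $\mathcal{U}$ forgets the $A$-module structure. Concretely, I would apply Lemma~\ref{lem:pullback} with $\mathcal{M}=\Sigma\Sp_G$ (cofibrantly generated with generating cofibrations $I$ and generating trivial cofibrations $J$, whose domains are small, by Theorem~\ref{thm:sigmaspmodel}), $\mathcal{N}=\Mod_{G,A}$, and $(F,G)=(A\wedge(-),\mathcal{U})$. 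Since the model structure produced by Lemma~\ref{lem:pullback} has as its fibrations and weak equivalences exactly the morphisms sent by $\mathcal{U}$ to fibrations and weak equivalences of $\Sigma\Sp_G$, and as its generating cofibrations the set $A\wedge I$, this immediately yields the asserted description of the fibrations and weak equivalences and exhibits $\Mod_{G,A}$ as cofibrantly generated.

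Carrying out the hypotheses of Lemma~\ref{lem:pullback}: first one checks that $\Mod_{G,A}$ is complete and cocomplete and that $\mathcal{U}$, and indeed the composite forgetful functor $\Mod_{G,A}\to\Mod_A$, preserves filtered colimits. Limits in $\Mod_{G,A}$ are computed as in $\Sigma\Sp_G$, while colimits agree with those in the category of not-necessarily-discrete $G$-$A$-modules (the discrete objects form a coreflective subcategory, so the inclusion preserves colimits), and the latter are computed on underlying $A$-modules; this gives cocompleteness and hypothesis~(1). For hypothesis~(2), a relative $(A\wedge J)$-cell complex in $\Mod_{G,A}$ becomes, after applying the forgetful functor to $\Mod_A$, a transfinite composite of pushouts of coproducts of the maps $A\wedge j$ with $j\in J$; each $j$ is a trivial cofibration of underlying symmetric spectra (Theorem~\ref{thm:sigmaspmodel}), so each $A\wedge j$ is a trivial cofibration of $A$-modules because $A\wedge(-)\colon\Sigma\Sp\to\Mod_A$ is left Quillen \cite{MMSS}, whence the cell complex is a trivial cofibration of $A$-modules, in particular a stable equivalence. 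Since stable equivalences of discrete $G$-spectra are detected on underlying symmetric spectra, $\mathcal{U}$ sends the cell complex to a weak equivalence, as required. (The $\alpha$-smallness of the domains of $A\wedge I$ and $A\wedge J$ is part of the conclusion of Lemma~\ref{lem:pullback}.)

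It remains to identify the cofibrations of the transferred structure with the cofibrations of underlying $A$-modules. One inclusion is formal: $A\wedge(-)\colon\Sigma\Sp\to\Mod_A$ is left Quillen and, by Theorem~\ref{thm:sigmaspmodel}, the generating cofibrations $I$ of $\Sigma\Sp_G$ are cofibrations of underlying symmetric spectra, so the generators $A\wedge I$ map to cofibrations of $\Mod_A$; since the forgetful functor to $\Mod_A$ preserves colimits and retracts, every cofibration of $\Mod_{G,A}$ is a cofibration of underlying $A$-modules. The reverse inclusion is where I expect the real work to lie, because the non-equivariant lift supplied by the $\Mod_A$-structure must be promoted to a map of discrete $G$-$A$-modules, and no mere naturality of the adjunction accomplishes this. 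The route I would take is to mimic the passage from $\Sigma\Sp$ to $\Sigma\Sp_G$ in the proof of Theorem~\ref{thm:sigmaspmodel}: re-run the \cite{MMSS} construction of the $A$-module model structure with simplicial discrete $G$-sets in place of simplicial sets, invoking Lemma~\ref{lem:diagmodelstructure} to match the projective cofibrations of the relevant $G$-equivariant diagram categories with those of the underlying diagram categories. This produces a cellular/latching description of the cofibrations of $\Mod_{G,A}$ that is literally the underlying cofibration condition, and one then checks that the resulting model structure coincides with the transferred one above. Verifying this matching --- and, along the way, that acyclic fibrations of $\Mod_{G,A}$ are detected on underlying $A$-modules --- is the technical heart of the argument, but it introduces no idea not already present in the treatment of $\Sigma\Sp_G$.
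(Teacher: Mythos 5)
Your proposal follows the paper's proof almost exactly: both transfer the model structure along the free--forgetful adjunction $A\wedge(-)\dashv\mathcal{U}$ via Lemma~\ref{lem:pullback}, verify that $\mathcal{U}$ preserves filtered colimits (Lemma~\ref{lem:algebracolimits}) and sends relative $(A\wedge J)$-cell complexes to weak equivalences using that $A\wedge(-)$ is left Quillen on cofibrant objects, and observe that the generators $A\wedge I$ force every cofibration of $\Mod_{G,A}$ to be a cofibration of underlying $A$-modules. The only point of divergence is the converse inclusion for cofibrations, where the paper simply defers to ``an argument similar to Corollary~1.10 of \cite{Goerss}'' while you propose re-running the \cite{MMSS} construction equivariantly; both are sketches of the same technical step, and yours is a reasonable substitute.
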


\begin{proof}
We apply Lemma~\ref{lem:pullback} to the adjoint pair
$$ A \wedge - : \Sigma\Sp_G \rightleftarrows \Mod_{G,A} : \mathcal{U}, $$
where $\mathcal{U}$ is the forgetful functor.  

The category $\Sigma\Sp_G$ is cofibrantly generated with generating
cofibrations
$$ I = \{ F_n (G/U \times \partial \Delta^n)_+ \hookrightarrow 
F_n (G/U \times \Delta^n)_+ \: : \: U \le_o G, \: n \ge 0 \}.
$$
(Here, $F_n$ is the left adjoint to the functor which returns the $n$th
space of a symmetric spectrum.)  Thus, the domains of the maps in $I$ are 
$\omega$-small.  Since the model structure on $\Sigma\Sp_G$ is 
left proper and cellular, the Bousfield-Smith cardinality argument
\cite[Prop.~4.5.1]{Hirschhorn} implies that 
there exists a cardinal $\alpha$, such that the collection $J$ of inclusions
of $I$-cell complexes of size at most $\alpha$, which are weak
equivalences,
generates the trivial cofibrations of $\Sigma\Sp_G$.  In particular, since
the $I$-cells are finite, the domains of $J$ are $\alpha$-small.

By
Lemma~\ref{lem:algebracolimits} below, the functor $\mathcal{U}$ preserves
filtered colimits.  We need to verify that $\mathcal{U}$ takes
relative $(A \wedge J)$-cell complexes to weak equivalences.  However, the
domains and codomains of the maps in $J$ are cofibrant objects in
$\Sigma\Sp$, and $A \wedge -$
preserves stable equivalences between cofibrant symmetric spectra.

The model structure on $\Mod_{G,A}$ given by Lemma~\ref{lem:pullback} has
the desired fibrations and weak equivalences.  We need to check that the
cofibrations of this model structure are precisely the cofibrations of
underlying non-equivariant $A$-modules.
The cofibrations are
generated by $A \wedge I$.  Since all of the morphisms in $I$ are
cofibrations of underlying non-equivariant symmetric spectra, we deduce
that all of the cofibrations of $G$-$A$-modules are cofibrations of
underlying $A$-modules.  The converse is an argument similar to
Corollary~1.10 of \cite{Goerss}.
\end{proof}

Given a discrete $G$-$A$-module $X$, let $X_{fGA}$ denote a functorial fibrant
replacement in $\Mod_{G,A}$, so that the fibrant replacement map
$$ X \rightarrow X_{fGA} $$
is a trivial cofibration of discrete $G$-$A$-modules.
We define the homotopy fixed point $A$-module to be the fixed
point spectrum
$$ X^{h_AG} = (X_{fGA})^G. $$
The properties of our model category structures immediately give the following
lemma.

\begin{lem}
The spectrum $X_{fGA}$ is fibrant as a discrete $G$-spectrum, and there
exists a weak equivalence
$$ X_{fG} \xrightarrow{\simeq} X_{fGA} $$
in the category $\Sigma\Sp_G$.
\end{lem}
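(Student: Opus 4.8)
The plan is to exploit the fact that, by the construction via Lemma~\ref{lem:pullback} used in the proof of Proposition~\ref{prop:modulemodel}, the fibrations and weak equivalences of $\Mod_{G,A}$ are precisely the maps that are sent to fibrations, respectively weak equivalences, by the forgetful functor $\mathcal{U}: \Mod_{G,A} \to \Sigma\Sp_G$ (the right adjoint in the adjunction $A \wedge - \dashv \mathcal{U}$). In particular $\mathcal{U}$ preserves all fibrations and all weak equivalences, and carries the terminal object to the terminal object.

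For the first assertion, since $X_{fGA}$ is a fibrant object of $\Mod_{G,A}$, the canonical map $X_{fGA} \to \ast$ is a fibration in $\Mod_{G,A}$; applying $\mathcal{U}$ shows that $\mathcal{U}(X_{fGA}) \to \ast$ is a fibration in $\Sigma\Sp_G$, that is, $X_{fGA}$ is fibrant as a discrete $G$-spectrum.

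For the second assertion, the fibrant replacement map $X \to X_{fGA}$ is a trivial cofibration in $\Mod_{G,A}$, hence its underlying map is a weak equivalence in $\Sigma\Sp_G$. On the other hand $\alpha_{G,X}: X \to X_{fG}$ is a trivial cofibration in $\Sigma\Sp_G$ and, by the first assertion, $X_{fGA}$ is fibrant there; so there is a lift $\ell: X_{fG} \to X_{fGA}$ with $\ell \circ \alpha_{G,X}$ equal to the underlying map of $X \to X_{fGA}$. Both $\alpha_{G,X}$ and that underlying map are weak equivalences, so the two-out-of-three axiom gives that $\ell$ is a weak equivalence in $\Sigma\Sp_G$, as required.

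I do not anticipate a genuine obstacle: the only subtlety is making explicit that the phrase ``the underlying discrete $G$-spectra'' in Proposition~\ref{prop:modulemodel} refers to the image under $\mathcal{U}$, after which everything reduces to the standard comparison of two functorial fibrant replacements of $X$ via a lifting argument and the two-out-of-three axiom.
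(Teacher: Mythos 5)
Your argument is correct and is exactly the standard one the paper has in mind: the paper gives no explicit proof, asserting the lemma is immediate from the fact that fibrations and weak equivalences in $\Mod_{G,A}$ are detected by the forgetful functor to $\Sigma\Sp_G$, and your lifting-plus-two-out-of-three argument is the canonical way to make that precise.
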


Since $(-)^G$ preserves weak equivalences between fibrant spectra
in $\Sigma\Sp_G$,
we have the following corollary.

\begin{cor}\label{cor:5.1.4}
There is an equivalence
$$ X^{hG} \xrightarrow{\simeq} X^{h_AG}. $$ 
\end{cor}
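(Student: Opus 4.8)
The plan is to deduce the corollary directly from the lemma immediately preceding it, together with the Quillen pair $({\it triv}, (-)^G)$ of Lemma~\ref{lem:Quillenfixedpoint}. First I would record the two inputs supplied by that lemma: the spectrum $X_{fGA}$ is fibrant as a discrete $G$-spectrum, and there is a weak equivalence $X_{fG} \xrightarrow{\simeq} X_{fGA}$ in $\Sigma\Sp_G$. For the purposes of getting a \emph{canonical} comparison map in the right direction, I would produce this weak equivalence explicitly: the fibrant replacement $X \to X_{fG}$ is a trivial cofibration in $\Sigma\Sp_G$, and $X \to X_{fGA}$ is a weak equivalence of discrete $G$-spectra (being a trivial cofibration of discrete $G$-$A$-modules); since $X_{fGA}$ is $G$-fibrant, the lifting axiom in $\Sigma\Sp_G$ yields a map $\phi\colon X_{fG} \to X_{fGA}$ under $X$, and $\phi$ is a weak equivalence by two-out-of-three.

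Next I would apply the fixed-point functor $(-)^G$ to $\phi$. Both $X_{fG}$ (by construction) and $X_{fGA}$ (by the preceding lemma) are fibrant in $\Sigma\Sp_G$, and $(-)^G$ is the right adjoint of the Quillen pair $({\it triv}, (-)^G)$; hence, by Ken Brown's lemma, it carries weak equivalences between $G$-fibrant objects to weak equivalences — exactly as asserted in the sentence preceding the corollary. Therefore $\phi^G\colon (X_{fG})^G \to (X_{fGA})^G$ is a weak equivalence, which is precisely the asserted equivalence $X^{hG} \xrightarrow{\simeq} X^{h_AG}$.

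This argument is essentially formal, and I do not expect any genuine obstacle: all of the substantive work — that $X_{fGA}$ is $G$-fibrant and that $X_{fG} \simeq X_{fGA}$ in $\Sigma\Sp_G$ — has already been done in the preceding lemma, and the remaining ingredient (that $(-)^G$ preserves weak equivalences between $G$-fibrant objects) is immediate from Lemma~\ref{lem:Quillenfixedpoint}. The only point I would take a moment to make precise is the commutativity of the triangle over $X$, so that $\phi^G$ is manifestly the natural comparison map rather than some abstract equivalence; but this is forced by the lifting axiom.
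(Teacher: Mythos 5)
Your proposal is correct and follows exactly the paper's route: the paper deduces the corollary from the preceding lemma (that $X_{fGA}$ is $G$-fibrant and $X_{fG}\xrightarrow{\simeq}X_{fGA}$ in $\Sigma\Sp_G$) together with the fact that the right Quillen functor $(-)^G$ preserves weak equivalences between fibrant objects. Your extra care in constructing the comparison map by lifting under $X$ and invoking two-out-of-three only makes explicit what the paper leaves implicit.
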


Since colimits of discrete $G$-$A$-modules
are formed on the level of underlying
symmetric spectra, the comonad
$\Gamma_G(-) = \Map^c(G, -)$ restricts to the category of 
$A$-modules, where, given an $A$-module $X$, the $A$-module structure
is given by the composition
$$
A \wedge \Map^c(G,X) \rightarrow  \Map^c(G, A \wedge X) \xrightarrow{\xi_*}
\Map^c(G, X).
$$
In this composition, the first
map is given by the composite
\begin{align*} 
A \wedge \Map^c(G,X) & = A \wedge 
\colim_{N \trianglelefteq_o G} \Map(G/N, X) \\
& \xrightarrow{\cong} 
\colim_{N \trianglelefteq_o G} A \wedge \Map(G/N, X) \\
& \rightarrow \colim_{N \trianglelefteq_o G} 
\Map(G/N, A \wedge X) \\
& = \Map^c(G,A \wedge X). 
\end{align*}

Since fibrant discrete $G$-$A$-modules are fibrant in $\Sigma\Sp_G$, 
the following proposition is an immediate consequence of 
Theorem~\ref{thm:cochaincomplex}.

\begin{prop}\label{prop:Amodcochaincomplex}
Suppose $G$ has finite vcd, and let $X$ be a discrete
$G$-$A$-module.
Then there is an equivalence of $A$-modules
$$ X^{h_AG} \simeq \holim_\Delta \Gamma_G^\bullet X = \HH_c(G; X). $$
\end{prop}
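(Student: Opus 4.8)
The plan is to run the argument of Theorem~\ref{thm:cochaincomplex} internally to the category of $A$-modules, so that every equivalence it produces is an equivalence of $A$-modules and not merely of underlying spectra.

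First I would record that the cobar construction takes values in cosimplicial $A$-modules. As noted in the paragraph preceding the statement, the comonad $\Gamma_G = \Map^c(G,-)$ restricts to the category of $A$-modules, and for a discrete $G$-$A$-module $X$ the coalgebra structure map $x \mapsto (g \mapsto g\cdot x)$ is a map of $A$-modules, so $\Gamma_G^\bullet X$ is a cosimplicial $A$-module. Since the fibrations and weak equivalences of $\Mod_A$ are those of underlying symmetric spectra, and the forgetful functor $\Mod_A \to \Sigma\Sp$, being a right adjoint, preserves limits, the homotopy limit $\HH_c(G;X) = \holim_\Delta \Gamma_G^\bullet X$ may be computed in $\Mod_A$ and its underlying spectrum is the homotopy limit computed in $\Sigma\Sp$. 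This equips $\HH_c(G;X)$ with the $A$-module structure appearing in the statement; what remains is to identify it, as an $A$-module, with $X^{h_AG}$.

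Next I would use $X_{fGA}$, a functorial fibrant replacement of $X$ in $\Mod_{G,A}$, in place of $X_{fG}$ throughout the proof of Theorem~\ref{thm:cochaincomplex}. By the lemma preceding Corollary~\ref{cor:5.1.4}, $X_{fGA}$ is fibrant as a discrete $G$-spectrum, and $X \to X_{fGA}$ is an $A$-linear weak equivalence. Applying \cite[Thm.~7.4]{Davis} to the underlying discrete $G$-spectrum of $X_{fGA}$ gives $X^{hG} \simeq \holim_\Delta \Gamma_G^\bullet X_{fGA}$; since this comparison is assembled from the $G$-fixed-point functor, the comonad $\Gamma_G$, and homotopy limits --- each of which is $A$-linear on $A$-modules --- it reads, in $\Mod_A$, as $X^{h_AG} = (X_{fGA})^G \simeq \holim_\Delta \Gamma_G^\bullet X_{fGA}$. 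Finally, because $\Gamma_G$ preserves equivalences (Lemma~\ref{lem:Mapexact}) and homotopy limits preserve levelwise equivalences, the $A$-linear map $\holim_\Delta \Gamma_G^\bullet X \to \holim_\Delta \Gamma_G^\bullet X_{fGA}$ induced by fibrant replacement is an equivalence, exactly as in the proof of Theorem~\ref{thm:cochaincomplex}; composing the two equivalences gives the asserted equivalence of $A$-modules.

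The step needing genuine attention --- and the reason this is slightly more than a bare citation --- is checking that the comparison map of \cite[Thm.~7.4]{Davis} is built from $A$-linear maps when the input is an $A$-module, i.e.\ looking inside that proof rather than quoting it as a black box. Everything else (lifting $\Gamma_G$ to $\Mod_A$, computing $\holim_\Delta$ there, and using the $A$-linear fibrant replacement) is routine, which is why the proposition may fairly be described as an immediate consequence of Theorem~\ref{thm:cochaincomplex}.
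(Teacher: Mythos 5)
Your proposal is correct and follows the same route as the paper, which simply observes that fibrant discrete $G$-$A$-modules are fibrant in $\Sigma\Sp_G$ and then invokes Theorem~\ref{thm:cochaincomplex}. The extra care you take in checking that the comparison maps are $A$-linear is exactly the content the paper leaves implicit in calling the proposition an ``immediate consequence.''
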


Define the category $\Mod^c_{G,A}$ of \emph{continuous $G$-$A$-modules} 
to be the category
of pro-objects in $\Mod_{G,A}$.  Given a continuous $G$-$A$-module
$\mathbf{X} = \{ X_i \}_i$, we define
\begin{align*}
X^{h_AG} & := \holim_i X^{h_A G}_i, \\
\HH_c(G; X) & := \holim_i \HH_c(G; X_i).
\end{align*}
Proposition~\ref{prop:Amodcochaincomplex} has the following
corollary.

\begin{cor}
Suppose $G$ has finite vcd, and let $\mathbf{X}$ be a continuous
$G$-$A$-module.
Then there is an equivalence of $A$-modules
$$ X^{h_AG} \simeq \HH_c(G; X). $$
\end{cor}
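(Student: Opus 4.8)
The plan is to deduce this by applying Proposition~\ref{prop:Amodcochaincomplex} levelwise over the cofiltered indexing category of $\mathbf{X}$ and then passing to homotopy limits. Write $\mathbf{X} = \{X_i\}_{i \in I}$. By the definitions given just above the corollary, $X^{h_AG} = \holim_i X^{h_A G}_i$ and $\HH_c(G;X) = \holim_i \HH_c(G;X_i)$, both homotopy limits being formed in the category of $A$-modules, and Proposition~\ref{prop:Amodcochaincomplex} supplies for each $i \in I$ an equivalence of $A$-modules $X^{h_AG}_i \simeq \HH_c(G;X_i)$. The content of the proof is to see that these levelwise equivalences are natural enough to assemble into a map of $I$-diagrams, and then to invoke homotopy invariance of $\holim_i$.

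First I would record the naturality. The equivalence of Proposition~\ref{prop:Amodcochaincomplex} is the composite of the equivalence $X^{h_AG}_i \simeq X^{hG}_i$ of Corollary~\ref{cor:5.1.4} (induced by the natural weak equivalence $X_{fG} \to X_{fGA}$ followed by $(-)^G$) with the zigzag of Theorem~\ref{thm:cochaincomplex}, namely
$$ X^{hG}_i \;\simeq\; \holim_\Delta \Gamma_G^\bullet (X_i)_{fG} \;\xleftarrow{\;\simeq\;}\; \holim_\Delta \Gamma_G^\bullet X_i \;=\; \HH_c(G;X_i), $$
where the backward arrow is a levelwise equivalence because $\Gamma_G = \Map^c(G,-)$ preserves equivalences by Lemma~\ref{lem:Mapexact} and $\holim_\Delta$ sends levelwise equivalences to equivalences. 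Since fibrant replacement functors $(-)_{fG}$, $(-)_{fGA}$, the cobar construction $\Gamma_G^\bullet(-)$, and $\holim_\Delta$ are all functorial, the entire chain is a natural zigzag of functors $\Mod_{G,A} \to \Mod_A$, and every map in it respects the $A$-module structure, exactly as in Proposition~\ref{prop:Amodcochaincomplex}.

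Restricting this natural zigzag along the diagram $I \to \Mod_{G,A}$ given by $\mathbf{X}$ then produces a natural zigzag of $I$-shaped diagrams of $A$-modules which is an objectwise weak equivalence. Applying $\holim_i$, which as a right derived functor sends objectwise weak equivalences of $I$-diagrams to weak equivalences, yields
$$ X^{h_AG} = \holim_i X^{h_AG}_i \;\simeq\; \holim_i \HH_c(G;X_i) = \HH_c(G;X) $$
as $A$-modules.

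The only point requiring care — and the main, if minor, obstacle — is precisely this naturality bookkeeping: the "equivalence" of Proposition~\ref{prop:Amodcochaincomplex} is a priori only a zigzag of weak equivalences, so one must check it is a genuine natural transformation of functors before restricting to the pro-system and passing to $\holim_i$. Once that is in place, the statement is immediate.
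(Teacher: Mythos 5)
Your proof is correct and matches the paper's (implicit) argument: the corollary is stated in the paper as an immediate consequence of Proposition~\ref{prop:Amodcochaincomplex}, obtained by applying that levelwise equivalence over the indexing category and passing to homotopy limits, exactly as you do. Your extra attention to the naturality of the zigzag is a reasonable elaboration of a step the paper leaves implicit.
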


We shall henceforth drop the distinction between $(-)^{h_AG}$ and
$(-)^{hG}$: all homotopy
fixed points of discrete $G$-$A$-modules will 
implicitly be taken in the category of
discrete $G$-$A$-modules.

\subsection{Commutative algebras of discrete $G$-spectra}

By a \emph{discrete commutative $G$-$A$-algebra}, we shall mean a 
discrete $G$-$A$-module
$E$ together with a commutative $A$-algebra multiplication
$$ \mu : E \wedge_A E \rightarrow E, $$
such that $G$ acts on $E$ through maps of commutative 
$A$-algebras.  Let $\Alg_{A,G}$ denote
the category of discrete commutative $G$-$A$-algebras, with morphisms
being those morphisms in $\mathrm{Mod}_{G,A}$
that are also maps of commutative $A$-algebras.
Following
\cite{MMSS}, to place a
model structure on $\Alg_{A,G}$, we need to replace the model structure on
$\Sigma\Sp_G$ with a Quillen equivalent ``positive'' model structure.

\begin{lem}
The category of discrete $G$-spectra admits a positive stable 
model structure,
which we denote by $\Sigma\Sp_G^+$,
where the cofibrations are the positive cofibrations of underlying
symmetric spectra, the weak equivalences are the stable equivalences of
underlying symmetric spectra, and the positive fibrations are determined.
\end{lem}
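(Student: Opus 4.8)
The plan is to follow the proof of Theorem~\ref{thm:sigmaspmodel}, replacing the stable model structure on symmetric spectra by its positive variant throughout. Recall from \cite{MMSS} (see also \cite{HSS}) that the positive stable model structure on $\Sigma\Sp$ is obtained from the positive level model structure by left Bousfield localizing at the stabilization maps $F_{n+1}(S^1) \to F_n(S^0)$ with $n \ge 1$; equivalently, it arises from the same stabilization machinery that produces the stable model structure, but with the generating cofibrations of nonnegative level index replaced by those of level index at least $1$. Since, by Lemma~\ref{lem:cellular}, the category $(s\Set_G)_*$ is a left proper cellular symmetric monoidal model category, this construction applies verbatim to $\Sigma\Sp_G = \Sigma\Sp((s\Set_G)_*)$ and yields the positive stable model structure $\Sigma\Sp_G^+$, whose generating cofibrations are
$$ \{\, F_n((G/U \times \partial\Delta^m)_+) \hookrightarrow F_n((G/U \times \Delta^m)_+) \; : \; U \le_o G,\ m \ge 0,\ n \ge 1 \,\}, $$
that is, the generating cofibrations of $\Sigma\Sp_G$ with the level index $n$ restricted to $n \ge 1$.

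From this description, together with the positive-degree case of Lemma~\ref{lem:diagmodelstructure}, one reads off exactly as in the proof of Theorem~\ref{thm:sigmaspmodel} that a morphism is a cofibration of $\Sigma\Sp_G^+$ precisely when it is an isomorphism in level $0$ and a projective cofibration of $(s\Set_G)_*^{\Sigma_i}$ on latching maps in each level $i \ge 1$ --- that is, precisely when it is a positive cofibration of underlying symmetric spectra. In the same way, the fibrant objects $X$ of $\Sigma\Sp_G^+$ are those discrete $G$-spectra for which each $X_i$ with $i \ge 1$ is fibrant as a simplicial discrete $G$-set and each map $X_i \to \Omega X_{i+1}$ with $i \ge 1$ is a weak equivalence, and the weak equivalences between fibrant objects are the levelwise weak equivalences in positive degrees.

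To identify the weak equivalences of $\Sigma\Sp_G^+$ with the stable equivalences of underlying symmetric spectra I would repeat verbatim the argument in the proof of Theorem~\ref{thm:spmodel}. The forgetful functor $\mathcal{U}\colon s\Set_G \to s\Set$ is left Quillen (being left adjoint to $\CoInd_1^G$), so, just as in that proof (cf.\ \cite[Prop.~5.5]{Hoveyspec}), the induced functor $\mathcal{U}\colon \Sigma\Sp_G^+ \to \Sigma\Sp^+$ is again left Quillen; hence for the functorial positive-fibrant replacement $X \to X_{f^+G}$ the underlying maps are stable equivalences of symmetric spectra. Since, by the description above, the underlying symmetric spectrum of a positive-fibrant object of $\Sigma\Sp_G^+$ is positive-fibrant in $\Sigma\Sp^+$, a map between positive-fibrant objects of $\Sigma\Sp_G^+$ is a weak equivalence if and only if its underlying map is a stable equivalence, and chasing the square
$$
\xymatrix{
X \ar[r]^{\phi} \ar[d] & Y \ar[d] \\
X_{f^+G} \ar[r] & Y_{f^+G}
}
$$
extends this conclusion to arbitrary $\phi$. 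Finally, left properness of $\Sigma\Sp_G^+$ is inherited from Theorem~\ref{thm:sigmaspmodel}, since $\Sigma\Sp_G^+$ has the same class of weak equivalences, a subclass of the cofibrations, and the same colimits (computed on underlying symmetric spectra); cellularity follows from the shifted generating cofibrations just as in Lemma~\ref{lem:cellular}.

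The step I expect to demand the most care is this last comparison of weak equivalences: one must verify that the forgetful functor really does intertwine positive-fibrant replacements --- that positive-fibrancy in $\Sigma\Sp_G^+$ forces underlying positive-fibrancy (so that stable equivalences between such objects are detected on positive levels) and that the replacement maps $X \to X_{f^+G}$ stay stable equivalences after forgetting the $G$-action --- both of which, as in the proof of Theorem~\ref{thm:spmodel}, come down to $\mathcal{U}$ being left Quillen and to stable equivalences between positive-fibrant symmetric spectra being levelwise equivalences.
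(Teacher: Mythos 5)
Your proof is correct, and the construction itself---forming the positive level model structure on $\Sigma\Sp_G$ and left Bousfield localizing it via Hovey's machinery, then reading off the cofibrations and fibrant objects from Lemma~\ref{lem:diagmodelstructure}---coincides with the paper's. Where you genuinely diverge is in the one substantive step, identifying the localized weak equivalences with the stable equivalences of underlying symmetric spectra. You rerun the forgetful-functor argument of Theorems~\ref{thm:spmodel} and~\ref{thm:sigmaspmodel} in the positive setting: $\mathcal{U}\colon \Sigma\Sp_G^+ \to \Sigma\Sp^+$ is left Quillen, positive fibrant discrete $G$-spectra have positive fibrant underlying symmetric spectra, and stable equivalences between positive fibrant symmetric spectra are exactly the positive level equivalences. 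The paper instead stays inside $\Sigma\Sp_G$: it reduces to a map $\phi$ between positive stable fibrant objects, applies the functorial fibrant replacement in the already-established non-positive stable structure of Theorem~\ref{thm:sigmaspmodel} (whose fibrant objects are positive stable fibrant), and observes that positive level equivalences induce isomorphisms of na\"ive homotopy groups and hence are stable equivalences by \cite[Thm.~3.1.11]{HSS}. Each route has its own cost: yours needs the (true, but not free) fact that a fibrant simplicial discrete $G$-set has a Kan underlying simplicial set, so that positive fibrancy passes to the underlying spectrum, together with the positive analogue of \cite[Prop.~5.5]{Hoveyspec}; the paper's instead leans on the na\"ive-homotopy-group criterion for stable equivalences and on the interplay between the two model structures on $\Sigma\Sp_G$. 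One small imprecision in your write-up: the localizing set must range over the domains and codomains $C$ of the generating cofibrations of $s\Set_G$ (as in the paper's set of maps $F_nS^1 \wedge C \to F_{n+1}C$, $n \ge 1$), not just the bare stabilization maps $F_{n+1}S^1 \to F_nS^0$; your parenthetical appeal to ``the same stabilization machinery'' covers this, but the maps you name should reflect it.
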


\begin{proof}
We follow \cite[Sec.~14]{MMSS},
in its construction of the positive
stable model structure on symmetric spectra,
from start to finish, with some mild alterations.
The category $\Sigma\Sp_G$ admits a positive level model structure
that is defined as follows:
\begin{itemize}
\item the positive level fibrations are those maps $f: X \rightarrow Y$ where
the maps $f_i: X_i \rightarrow Y_i$ are fibrations of simplicial discrete
$G$-sets for $i \ge 1$;

\item the positive level weak equivalences are those maps $f: X \rightarrow
Y$ where the maps $f_i : X_i \rightarrow Y_i$ are weak equivalences of
underlying simplicial sets, for all $i \ge 1$; and

\item the positive cofibrations are those morphisms $X \rightarrow Y$ where the
induced map
$$ X_0 \rightarrow Y_0 $$
is an isomorphism and each of the induced maps
\begin{align*}
X_i & \rightarrow Y_i, \qquad i = 1,\\
X_i \cup_{L_i X} L_i Y & \rightarrow Y_i, \qquad i
\ge 2,
\end{align*}
is a projective cofibration in $(s\Set_G)^{\Sigma_i}_*$, where $L_i$ is the
latching object of \cite[Def.~8.4]{Hoveyspec} (that is to say, they are
precisely the positive cofibrations of underlying symmetric
spectra).
\end{itemize}
The positive level model structure on $\Sigma\Sp_G$ is left proper and
cellular, and hence, it 
admits a localization with respect to the set of maps
$$ F_n S^1 \wedge C \rightarrow F_{n+1} C, \quad n \ge 1, $$
where, as usual, $F_n$ is the left adjoint to the $n$th space evaluation
functor and $C$ runs through the cofibrant domains and codomains of the
generating cofibrations of $s\Set_G$.  This localized model structure is
the \emph{positive stable model structure}:
\begin{itemize}
\item the cofibrations are those morphisms which are positive cofibrations
on the underlying symmetric spectra;
\item the fibrant objects are the discrete $G$-spectra $X$ for which
\begin{enumerate}
\item the spaces $X_i$ are fibrant as simplicial discrete $G$-sets for $i
\ge 1$;
\item the spectrum structure maps $X_i \rightarrow \Omega X_{i+1}$ are weak
equivalences for $i \ge 1$,
\end{enumerate}
\item the weak equivalences between fibrant objects are those morphisms
that are positive level weak equivalences of discrete $G$-spectra.
\end{itemize}
We are left with verifying that the weak equivalences of the positive
stable model structure are precisely the stable equivalences of 
underlying symmetric
spectra.  It suffices to check this for morphisms between positive stable
fibrant objects.  
Let $\phi: X \rightarrow Y$ be a morphism between positive stable fibrant
objects in $\Sigma\Sp_G$, 
and consider the
functorial fibrant replacement (in the non-positive stable model structure) 
of $\phi$ in $\Sigma\Sp_G$.
$$ 
\xymatrix{
X \ar[r]^{\alpha_{G,X}} \ar[d]_\phi & 
X_{fG} \ar[d]^{\phi_{fG}}
\\
Y \ar[r]_{\alpha_{G,Y}} \ar[r] &
Y_{fG} 
}
$$
Note that the stably fibrant objects of $\Sigma\Sp_G$ are positive stable
fibrant.
The two morphisms $\alpha_{G,-}$ are therefore 
positive stable equivalences between positive stable fibrant discrete 
$G$-spectra,
and therefore are positive level weak equivalences.  It follows that
they induce isomorphisms on na\"ive homotopy groups, and hence are stable
equivalences \cite[Thm.~3.1.11]{HSS}.  The same argument shows that if
$\phi$ is a positive stable equivalence, then $\phi$ is a stable
equivalence.  Conversely, suppose that $\phi$ is a stable equivalence.
Then, by the two out of three axiom, the morphism $\phi_{fG}$ is a stable
equivalence.  Since $\phi_{fG}$ is a stable equivalence between stable
fibrant discrete $G$-spectra, we deduce that it is a levelwise
equivalence.  In particular, it is a positive level weak equivalence.
This allows us to
deduce that $\phi$ is a positive level weak equivalence, and
thus, it is a positive stable equivalence. 
\end{proof}

\begin{lem}
The identity functor from discrete $G$-spectra with the positive
stable model structure to discrete $G$-spectra with the stable model structure is
the left adjoint of a Quillen equivalence.
\end{lem}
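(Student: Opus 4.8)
The plan is to exhibit the adjunction explicitly and then observe that everything is forced by the fact that the two model structures on $\Sigma\Sp_G$ share the same class of weak equivalences. The right adjoint of the identity functor $\mathrm{Id}\colon \Sigma\Sp_G^+ \to \Sigma\Sp_G$ is again the identity functor $\mathrm{Id}\colon \Sigma\Sp_G \to \Sigma\Sp_G^+$, and the unit and counit of the adjunction are both the identity natural transformation, so there is nothing to verify about the adjunction itself.

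First I would check that $(\mathrm{Id},\mathrm{Id})$ is a Quillen pair, i.e.\ that $\mathrm{Id}\colon \Sigma\Sp_G^+ \to \Sigma\Sp_G$ preserves cofibrations and trivial cofibrations. By the previous lemma, a cofibration in $\Sigma\Sp_G^+$ is a positive cofibration of underlying symmetric spectra, and by Theorem~\ref{thm:sigmaspmodel} a cofibration in $\Sigma\Sp_G$ is a cofibration of underlying symmetric spectra; since the positive cofibrations of symmetric spectra form a subclass of the cofibrations (see \cite[Sec.~14]{MMSS}), the identity functor preserves cofibrations. Because the weak equivalences of $\Sigma\Sp_G^+$ and of $\Sigma\Sp_G$ are both exactly the stable equivalences of underlying symmetric spectra, a trivial cofibration in $\Sigma\Sp_G^+$ is in particular a cofibration of underlying symmetric spectra which is a stable equivalence, hence a trivial cofibration in $\Sigma\Sp_G$. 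Thus $(\mathrm{Id},\mathrm{Id})$ is a Quillen pair.

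It then remains to verify that this Quillen pair is a Quillen equivalence. Let $X$ be cofibrant in $\Sigma\Sp_G^+$ and let $Y$ be fibrant in $\Sigma\Sp_G$. A morphism $f\colon X = \mathrm{Id}(X) \to Y$ and its adjoint $\td{f}\colon X \to \mathrm{Id}(Y) = Y$ are literally the same map of discrete $G$-spectra; moreover $f$ is a weak equivalence in $\Sigma\Sp_G$ if and only if it is a stable equivalence of underlying symmetric spectra, if and only if $\td{f}$ is a weak equivalence in $\Sigma\Sp_G^+$. Hence the defining criterion for a Quillen equivalence holds trivially. (Equivalently, one can invoke the general fact that if two model structures on a category have the same weak equivalences and the cofibrations of one are contained in the cofibrations of the other, then the identity is a left Quillen equivalence.)

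There is no serious obstacle here: once the two model structures are in hand, every assertion reduces to the observation that they have the same weak equivalences and that positive cofibrations are cofibrations. The only point requiring any care is the inclusion of positive cofibrations into cofibrations, which follows by comparing the generating (positive) cofibrations described in the proofs above, or by citing the corresponding fact for symmetric spectra.
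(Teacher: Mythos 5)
Your proof is correct and follows essentially the same route as the paper, which simply observes that the two model structures have the same weak equivalences and that every positive cofibration is a cofibration; you have just spelled out the (trivial) adjunction and the Quillen-equivalence criterion explicitly.
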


\begin{proof}
This follows from the fact that the weak equivalences in both model
structures are the same, and every positive cofibration is a cofibration.
\end{proof}

The category $\Alg_S$ of commutative symmetric ring spectra 
admits a model category
structure where the weak equivalences and fibrations are detected on the
level of underlying symmetric spectra (with the positive model structure)
\cite{MMSS}.  The category $\Alg_A$ may be regarded as the category of
commutative symmetric ring spectra under $A$, and hence inherits a model
structure with the same cofibrations, fibrations, and weak equivalences.

\begin{thm}
The category of discrete commutative $G$-$A$-algebras admits a positive model
structure where the cofibrations are the cofibrations of underlying 
commutative
symmetric ring spectra, the weak equivalences are the stable equivalences
of underlying symmetric spectra, and the fibrations are the positive
fibrations of discrete $G$-spectra.
\end{thm}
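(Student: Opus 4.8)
The plan is to obtain this model structure by a lifting argument of the kind used in \cite{MMSS} to pass from symmetric spectra to commutative symmetric ring spectra, carried out over the discrete $G$-equivariant category and the base $A$. Write $\mathbb{P}$ for the free commutative $S$-algebra functor $\mathbb{P}(X)=\bigvee_{n\ge 0}X^{\wedge n}/\Sigma_n$, and consider the adjunction
$$
\mathbb{P}_A := A\wedge\mathbb{P}(-) : \Sigma\Sp_G^+ \rightleftarrows \Alg_{A,G} : \mathcal{U},
$$
where $\mathbb{P}_A(X)$ carries the diagonal $G$-action (with $G$ acting trivially on $A$), $\mathcal{U}$ forgets the algebra structure, and $\mathbb{P}_A(X)$ is the free discrete commutative $G$-$A$-algebra on the discrete $G$-spectrum $X$. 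Since $\Alg_{A,G}$ is the category of algebras for the resulting monad on $\Sigma\Sp_G$, it is complete and cocomplete, and all colimits --- in particular the pushouts appearing below --- are created by the forgetful functor to $\Sigma\Sp_G$ and hence computed on underlying commutative symmetric ring spectra, because colimits of discrete $G$-spectra are formed on underlying symmetric spectra. I would then verify the hypotheses of Lemma~\ref{lem:pullback}. As in the proof of Proposition~\ref{prop:modulemodel}, $\Sigma\Sp_G^+$ is cofibrantly generated with generating positive cofibrations $I^+$ (whose domains are $\omega$-small) and, via the Bousfield--Smith cardinality argument, generating positive trivial cofibrations $J^+$ whose domains are $\alpha$-small for a suitable cardinal $\alpha$. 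Hypothesis (1), that $\mathcal{U}$ commutes with filtered colimits, is established in Lemma~\ref{lem:algebracolimits}. Hypothesis (2) asks that $\mathcal{U}$ send relative $\mathbb{P}_A(J^+)$-cell complexes to stable equivalences of underlying symmetric spectra.

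Hypothesis (2) is the crux, and is exactly where the positive model structure is indispensable. Following \cite[Sec.~15]{MMSS}, I would reduce it to showing that for a positive trivial cofibration $j\colon X\to Y$ between positively cofibrant discrete $G$-spectra, any $B\in\Alg_{A,G}$, and any map $\mathbb{P}_A(X)\to B$, the cobase change $B\to B\wedge_{\mathbb{P}_A(X)}\mathbb{P}_A(Y)$ is a stable equivalence of underlying symmetric spectra. This pushout admits the skeletal filtration of \cite{MMSS}, whose filtration quotients are assembled from the symmetric powers $B\wedge_A (Y/X)^{\wedge_A m}/\Sigma_m$ (with the usual extra copies of $Y$ attached), so the claim reduces to the statement that the $m$th symmetric power functor takes positive trivial cofibrations between positively cofibrant objects to stable equivalences --- the standard consequence of the fact that for positively cofibrant $X$ the $\Sigma_m$-action on $X^{\wedge m}$ is free enough that $(E\Sigma_m)_+\wedge_{\Sigma_m}X^{\wedge m}\to X^{\wedge m}/\Sigma_m$ is a stable equivalence. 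Since the smash powers over $A$, the symmetric powers, and the skeletal filtration are all formed on underlying symmetric spectra with $G$ acting diagonally, and since stable equivalences are detected there, the argument of \cite{MMSS} applies verbatim after forgetting $G$; the only equivariant point is that $B\wedge_{\mathbb{P}_A(X)}\mathbb{P}_A(Y)$ has underlying commutative ring spectrum the corresponding non-equivariant pushout, which holds because that colimit is created by the forgetful functor. The main obstacle I anticipate is arranging this reduction so that the $G$-equivariant structure is visibly preserved at each stage of the filtration; once that is done, the MMSS symmetric-power estimates do all the work.

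Granting (2), Lemma~\ref{lem:pullback} equips $\Alg_{A,G}$ with a cofibrantly generated model structure with generating cofibrations $\mathbb{P}_A(I^+)$ and generating trivial cofibrations $\mathbb{P}_A(J^+)$, in which a morphism is a fibration, respectively a weak equivalence, if and only if its image under $\mathcal{U}$ is a positive fibration, respectively a stable equivalence, in $\Sigma\Sp_G^+$ --- that is, precisely the positive fibrations, respectively stable equivalences, of underlying discrete $G$-spectra. It then remains to identify the cofibrations. Applying the forgetful functor into $\Alg_S$ to a generating cofibration $\mathbb{P}_A(j)$ with $j\in I^+$ yields the free commutative symmetric ring spectrum on the underlying map of $j$; since $G/U$ is a finite discrete set, that underlying map is a wedge of generating positive cofibrations of $\Sigma\Sp$, so $\mathbb{P}_A(j)$ forgets to a generating cofibration of commutative symmetric ring spectra, and hence every cofibration of $\Alg_{A,G}$ has underlying map a cofibration of commutative symmetric ring spectra. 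The converse --- that a morphism of discrete commutative $G$-$A$-algebras whose underlying map is such a cofibration is already a cofibration in $\Alg_{A,G}$ --- follows by the retract argument used for modules in the proof of Proposition~\ref{prop:modulemodel} (compare \cite[Cor.~1.10]{Goerss}), using that trivial fibrations of $\Alg_{A,G}$ are detected on underlying objects. This completes the argument.
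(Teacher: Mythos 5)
Your proposal is correct and follows essentially the same route as the paper: both apply Lemma~\ref{lem:pullback} to the free commutative algebra adjunction over the positive stable model structure on discrete $G$-spectra, use Lemma~\ref{lem:algebracolimits} for hypothesis (1), and then identify the cofibrations. The only differences are cosmetic or a matter of detail: the paper first reduces to $A=S$ by viewing $\Alg_{A,G}$ as the undercategory $(A\downarrow\Alg_G)$ rather than working with $A\wedge\mathbb{P}(-)$ directly, and it verifies hypothesis (2) more briefly by showing that $\mathbb{P}$ preserves stable equivalences via the comparison of $\Sigma_i$-coinvariants with homotopy coinvariants for positive cofibrant objects (using \cite[Lem.~15.5]{MMSS}), whereas you carry out the fuller MMSS cobase-change filtration argument.
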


\begin{proof}
The proof follows \cite[Thm~15.2(i)]{MMSS}.
The category of discrete commutative $G$-$A$-algebras is 
the
undercategory
$(A \downarrow \mathrm{Alg}_G)$, where
$A$ is regarded as a discrete $G$-spectrum
with the trivial action and
$\mathrm{Alg}_G$ denotes the category
$\mathrm{Alg}_{S,G}$ of discrete commutative
$G$-algebras. Therefore, $\mathrm{Alg}_{A,G}$
inherits a model structure from $\mathrm{Alg}_G$,
by \cite[Thm~7.6.5(1)]{Hirschhorn}. Hence,
it suffices to prove the theorem when $A=S$.
We apply Lemma~\ref{lem:pullback} to the adjoint pair
$$ \mathbb{P} : \Sigma\Sp_G^+ \rightleftarrows \Alg_G : \mathbb{U}, $$
where $\mathbb{U}$ is the forgetful functor, and $\mathbb{P}$ is the free
commutative algebra functor:
$$ \mathbb{P}(X) = \bigvee_{i \ge 0} (X^{\wedge i})_{\Sigma_i}. $$

The category $\Sigma\Sp_G^+$ is cofibrantly generated with generating
cofibrations
$$ I^+ = \{ F_n (G/U \times \partial \Delta^n)_+ \hookrightarrow 
F_n (G/U \times \Delta^n)_+ \: : \: U \le_o G, \: n \ge 1 \}.
$$
The domains of the maps in $I^+$ are thus
$\omega$-small.  The same Bousfield-Smith cardinality argument used in the
proof of Proposition~\ref{prop:modulemodel} 
shows that there exists a cardinal $\alpha$ and a set $J^+$ of generating
trivial cofibrations such that 
the domains of $J^+$ are $\alpha$-small.

By Lemma~\ref{lem:algebracolimits} below, the functor $\mathbb{U}$ preserves
filtered colimits.  We just need to verify that for any stable equivalence
$$ \phi : X \rightarrow Y, $$
the induced morphism
$$ \mathbb{P}(\phi): \mathbb{P}(X) \rightarrow \mathbb{P}(Y) $$ is a stable
equivalence.  It suffices to verify that the map on coinvariants
$$ \phi_{i}: (X^{\wedge i})_{\Sigma_i} \rightarrow (Y^{\wedge
i})_{\Sigma_i} $$
is a stable equivalence for every $i$.  Consider the following diagram
of canonical maps.
$$
\xymatrix{
(X^{\wedge i})_{h\Sigma_i} \ar[r]^{q_{X,i}} \ar[d]_{(\phi_{i})_h} &
(X^{\wedge i})_{\Sigma_i} \ar[d]^{\phi_i}
\\
(Y^{\wedge i})_{h\Sigma_i} \ar[r]_{q_{Y,i}} &
(Y^{\wedge i})_{\Sigma_i}
}
$$
Since homotopy colimits preserve weak equivalences, we deduce that the maps
$(\phi_i)_h$ are stable equivalences.  The morphisms $q_{-,i}$ 
induce isomorphisms of na\"ive homotopy groups: this is seen by applying the
geometric realization functor levelwise and by using Lemma~15.5 of
\cite{MMSS} (the geometric realization of a positive cofibrant symmetric
spectrum of simplicial sets is easily seen to be a positive cofibrant
symmetric spectrum of topological spaces).  Therefore, by
\cite[Thm~3.1.11]{HSS}, the morphisms $q_{-,i}$ are stable equivalences.
We deduce that each $\phi_i$ is a stable equivalence, as desired.
\end{proof}

We have the following corollary.

\begin{cor}\label{cor:5.2.4}
The following pairs of adjoint functors are Quillen adjoints:
\begin{gather*}
{\it triv} : \Sigma\Sp^+ \rightleftarrows \Sigma\Sp^+_G : (-)^G, \\
{\it triv} : \Alg_{A} \rightleftarrows \Alg_{A,G} : (-)^G.
\end{gather*}
\end{cor}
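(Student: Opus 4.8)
The plan is simply to check, in each of the two cases, that ${\it triv}$ is a left Quillen functor; the adjunction itself is the usual fixed-point adjunction, and one only needs to confirm that it is well posed. This is the positive-model-structure version of Lemma~\ref{lem:Quillenfixedpoint}, and the argument is essentially the same: in every equivariant category appearing here the cofibrations and the weak equivalences have been arranged so as to be detected on the underlying non-equivariant object, and ${\it triv}$ is the identity on underlying objects.

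In detail, for the first adjunction, a map of discrete $G$-spectra is a cofibration (resp.\ weak equivalence) in the positive stable model structure $\Sigma\Sp_G^+$ exactly when its underlying map of symmetric spectra is a positive cofibration (resp.\ stable equivalence). Since the underlying map of ${\it triv}(f)$ is $f$ itself, the functor ${\it triv} : \Sigma\Sp^+ \to \Sigma\Sp_G^+$ sends cofibrations to cofibrations and weak equivalences to weak equivalences, hence sends trivial cofibrations to trivial cofibrations; thus it is left Quillen, with right adjoint $(-)^G$ via the natural bijection $\Sigma\Sp_G({\it triv}(X), Y) \cong \Sigma\Sp(X, Y^G)$. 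The same reasoning applies to the second adjunction: by construction the cofibrations of $\Alg_{A,G}$ are the cofibrations of underlying commutative symmetric ring spectra and the weak equivalences are the stable equivalences of underlying symmetric spectra, and ${\it triv}(B)$ has underlying commutative $A$-algebra $B$, so ${\it triv} : \Alg_A \to \Alg_{A,G}$ preserves cofibrations and weak equivalences, hence trivial cofibrations, and so is left Quillen.

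The one point worth recording --- though it presents no real obstacle --- is that the two adjunctions make sense. First, ${\it triv}$ lands in the discrete equivariant categories because the trivial $G$-action is discrete (every stabilizer is all of $G$, which is open). Second, $(-)^G$ lands in $\Sigma\Sp$ and in $\Alg_A$ respectively; the latter is where the commutative-algebra hypothesis on the $G$-action enters, since $G$ acts on a discrete commutative $G$-$A$-algebra through maps of commutative $A$-algebras, so that the levelwise fixed points $E^G$ inherit the structure of a commutative $A$-algebra and a map of commutative $A$-algebras ${\it triv}(B) \to E$ is $G$-equivariant precisely when it factors (uniquely) through $E^G$. Once these are noted, the corollary is immediate from the constructions of the positive model structures on $\Sigma\Sp_G^+$ and $\Alg_{A,G}$ given above.
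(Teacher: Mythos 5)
Your proof is correct and matches the paper's (implicit) argument: the corollary is stated without proof precisely because, as you observe, the cofibrations and weak equivalences of $\Sigma\Sp_G^+$ and of $\Alg_{A,G}$ are by construction detected on underlying non-equivariant objects, so ${\it triv}$ is left Quillen exactly as in Lemma~\ref{lem:Quillenfixedpoint}. Your added remarks on why the adjunctions are well posed (the trivial action is discrete, and $G$ acting through commutative $A$-algebra maps makes $E^G$ a commutative $A$-algebra) are accurate and consistent with the paper's setup.
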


For a discrete $G$-spectrum $X$, we use $X_{fG^+}$ to denote the functorial
fibrant replacement in the positive model structure, and we denote the
corresponding homotopy fixed point spectrum by
$$ X^{h^+G} = (X_{f^+G})^G. $$
We have the following result.

\begin{lem}\label{lem:5.2.5}
If $X$ is a discrete $G$-spectrum,
then there is an
equivalence
$$ X^{h^+G} \xrightarrow{\simeq} X^{hG}. $$
\end{lem}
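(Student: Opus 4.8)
The plan is to construct an explicit comparison map $X^{h^+G}\to X^{hG}$ by lifting one fibrant replacement through the other, and then to verify that this map is a weak equivalence by invoking Ken Brown's lemma for the positive stable model structure on $\Sigma\Sp_G$.

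First I would observe that the positive fibrant replacement map $\alpha^+\colon X\to X_{f^+G}$ is also a \emph{trivial cofibration for the ordinary (non-positive) stable model structure} on $\Sigma\Sp_G$: the two model structures have exactly the same class of weak equivalences, and every positive cofibration of symmetric spectra is a cofibration, so $\alpha^+$ is a stable cofibration that is moreover a stable equivalence. Writing $\alpha\colon X\to X_{fG}$ for the stable fibrant replacement map and noting that $X_{fG}\to\ast$ is a stable fibration (since $X_{fG}$ is stably fibrant), the lifting axiom applied to the commutative square with left edge $\alpha^+$, top edge $\alpha$, right edge $X_{fG}\to\ast$, and bottom edge $X_{f^+G}\to\ast$ produces a map of discrete $G$-spectra $g\colon X_{f^+G}\to X_{fG}$ with $g\alpha^+=\alpha$. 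Since $\alpha^+$ and $\alpha$ are both weak equivalences, the two-out-of-three axiom shows that $g$ is a weak equivalence.

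Next I would note that $g$ is a weak equivalence between objects that are both \emph{positively} fibrant: $X_{f^+G}$ is positively fibrant by construction, while $X_{fG}$, being stably fibrant, is a fortiori positively fibrant (stable fibrancy is the stronger condition, as it imposes the additional requirement on the $0$th space). By Corollary~\ref{cor:5.2.4}, the functor $(-)^G\colon\Sigma\Sp_G\to\Sigma\Sp$ is the right adjoint of a Quillen pair for the positive stable model structure, so Ken Brown's lemma guarantees that it sends weak equivalences between positively fibrant objects to weak equivalences. Applying $(-)^G$ to $g$ therefore yields a weak equivalence
$$ X^{h^+G}=(X_{f^+G})^G \xrightarrow{\simeq} (X_{fG})^G=X^{hG}, $$
which is the desired equivalence.

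The argument is essentially formal once these model-categorical inputs are assembled, and I expect the only delicate point — and the crux of the proof — to be the first step: verifying that $\alpha^+$ is genuinely a trivial cofibration for the stable model structure, so that the lift $g$ exists. This rests precisely on the two comparison facts recorded above for these model structures, namely that they share the same weak equivalences and that every positive cofibration is a cofibration.
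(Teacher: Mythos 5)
Your proof is correct and follows essentially the same route as the paper's: the paper likewise produces a stable equivalence $X_{f^+G}\to X_{fG}$ (using that stably fibrant objects are positively fibrant) and then applies $(-)^G$, which preserves stable equivalences between positively fibrant objects by the Quillen adjunction of Corollary~\ref{cor:5.2.4}. You simply make explicit, via the lifting axiom, the construction of the comparison map that the paper asserts without comment.
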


\begin{proof}
Since fibrant discrete $G$-spectra are positive fibrant discrete
$G$-spectra, there is a stable equivalence
$$ \alpha: X_{f^+G} \rightarrow X_{fG} $$
in the category $\Sigma\Sp_G$.
Since $(-)^G$ preserves stable equivalences between positive fibrant
discrete $G$-spectra, $\alpha$ induces an equivalence
$$ X^{h^+G} = (X_{f^+G})^G \xrightarrow{\alpha_*} (X_{fG})^G = X^{hG}. $$
\end{proof}

Given a discrete commutative $G$-$A$-algebra $E$, we shall denote the 
functorial
fibrant replacement by 
$E_{fGA-\Alg}$.  We define the homotopy fixed point
commutative $A$-algebra to be the fixed point spectrum
$$ E^{h_\Alg G} = (E_{fGA-\Alg})^G. $$
Since
$E_{fGA-\mathrm{Alg}}$ is a positive fibrant
discrete $G$-spectrum, a slight modification
of the proof of Lemma~\ref{lem:5.2.5} implies the following lemma.

\begin{lem}\label{lem:5.2.6}
If $E$ is a discrete commutative $G$-$A$-algebra, then
there is an equivalence
$$ E^{h^+G} \xrightarrow{\simeq} E^{h_\Alg G}. $$
\end{lem}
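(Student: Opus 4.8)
The plan is to follow the proof of Lemma~\ref{lem:5.2.5} almost verbatim, replacing the stably fibrant replacement used there by the algebra fibrant replacement $E_{fGA-\Alg}$. The first step is to record that $E_{fGA-\Alg}$ is \emph{positive fibrant} as a discrete $G$-spectrum: since the fibrations of $\Alg_{A,G}$ are, by definition of its model structure, the positive fibrations of underlying discrete $G$-spectra, the map $E_{fGA-\Alg} \to \ast$ is a positive fibration, so $E_{fGA-\Alg}$ is a fibrant object of $\Sigma\Sp_G^+$. The replacement $E_{f^+G}$ is fibrant in $\Sigma\Sp_G^+$ by construction.

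Next I would compare the two replacements. The fibrant replacement map $E \to E_{f^+G}$ is a trivial cofibration in $\Sigma\Sp_G^+$, while the fibrant replacement map $E \to E_{fGA-\Alg}$ is in particular a stable equivalence of underlying symmetric spectra. Since $E_{fGA-\Alg}$ is fibrant in $\Sigma\Sp_G^+$, the lifting axiom in $\Sigma\Sp_G^+$ produces a map $\beta : E_{f^+G} \to E_{fGA-\Alg}$ of discrete $G$-spectra with $\beta \circ (E \to E_{f^+G}) = (E \to E_{fGA-\Alg})$; by two-out-of-three, $\beta$ is a stable equivalence between positive fibrant discrete $G$-spectra. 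Note that $\beta$ is only required to be a map of discrete $G$-spectra, so no compatibility with the multiplicative structure enters the argument.

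Finally, by Corollary~\ref{cor:5.2.4} the fixed-point functor $(-)^G : \Sigma\Sp_G^+ \to \Sigma\Sp^+$ is right Quillen, hence preserves weak equivalences between fibrant objects. Applying it to $\beta$ gives an equivalence
$$ \beta^G : E^{h^+G} = (E_{f^+G})^G \xrightarrow{\simeq} (E_{fGA-\Alg})^G = E^{h_\Alg G}, $$
as desired.

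I do not anticipate any real difficulty here; the only point needing care — and the reason the positive model structures must be invoked rather than the stable ones — is the bookkeeping that $E_{fGA-\Alg}$ is positive fibrant (not stably fibrant) as a discrete $G$-spectrum, so that $(-)^G$ applied to the comparison map $\beta$ is an equivalence. This is precisely the ``slight modification'' of the proof of Lemma~\ref{lem:5.2.5} alluded to in the statement.
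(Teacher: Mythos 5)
Your proposal is correct and is essentially the argument the paper intends: the paper itself only remarks that $E_{fGA-\mathrm{Alg}}$ is positive fibrant as a discrete $G$-spectrum (precisely because the fibrations of $\Alg_{A,G}$ are the positive fibrations of underlying discrete $G$-spectra) and that the proof of Lemma~\ref{lem:5.2.5} then goes through with a slight modification. Your lifting construction of the comparison map $\beta$ and the appeal to Corollary~\ref{cor:5.2.4} fill in exactly that modification.
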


If $E$ is a discrete commutative $G$-$A$-algebra,
then Corollary~\ref{cor:5.1.4} and Lemmas~\ref{lem:5.2.5} and
\ref{lem:5.2.6}
imply that there is the following zig-zag of
equivalences:
\[E^{h_A G} \overset{\simeq}{\leftarrow} E^{hG}
\overset{\simeq}{\leftarrow} E^{h^+G}
\overset{\simeq}{\rightarrow} E^{h_\mathrm{Alg} G}.\]
Therefore, we shall henceforth not distinguish between the four 
equivalent types of
homotopy fixed points that appear in the above zig-zag.  
Also, homotopy fixed points of discrete
commutative $G$-$A$-algebras will always implicitly be taken in the category of
discrete commutative $G$-$A$-algebras.

\subsection{Filtered colimits}

We will make frequent use of filtered colimits.  
In this section, we show that
filtered colimits of spectra are rather
well-behaved. 
We begin with the
following lemma, whose
proof follows the proofs of \cite[Lemma~5.5]{Thomason}
and \cite[Proposition~3.2]{MitchellHyper} (written in
the context of Bousfield-Friedlander spectra). 
We remind
the reader that a fibrant
spectrum is positive fibrant.

\begin{lem}
In the category of symmetric spectra, filtered 
colimits preserve cofibrations,
fibrations, and positive fibrations.  
Filtered colimits preserve weak equivalences between 
fibrant spectra, and weak equivalences between positive fibrant spectra.
\end{lem}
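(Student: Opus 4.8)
The plan is to reduce each of the four assertions to an elementary fact about filtered colimits of simplicial sets, using the levelwise descriptions of cofibrations, (positive) fibrations, and weak equivalences between (positive) fibrant spectra contained in Theorem~\ref{thm:sigmaspmodel}, Lemma~\ref{lem:diagmodelstructure}, and the construction of the positive stable structure. For cofibrations, recall that a map $A \rightarrow B$ of symmetric spectra is a cofibration precisely when $A_0 \rightarrow B_0$ and each latching map $A_i \cup_{L_i A} L_i B \rightarrow B_i$ with $i \ge 1$ is a projective cofibration in $s\Set_*^{\Sigma_i}$, i.e.\ a monomorphism that is a relative free $\Sigma_i$-complex. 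The latching object $L_i$ is a finite colimit assembled from $A_0, \dots, A_{i-1}$ smashed with spheres, so $L_i$ and the pushout $A_i \cup_{L_i A} L_i B$ commute with filtered colimits. Monomorphisms of simplicial sets are stable under filtered colimits, since such colimits are computed degreewise in $\Set$, and the property of being a relative free $\Sigma_i$-complex is a degreewise condition on the $\Sigma_i$-set structure and is likewise stable. Hence the latching maps of $\colim_\alpha f_\alpha$ are filtered colimits of projective cofibrations, so $\colim_\alpha f_\alpha$ is again a cofibration.

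For (positive) fibrations, the key input is that the stable and positive stable model structures on symmetric spectra of simplicial sets are finitely generated: the generating trivial cofibrations $J$, resp.\ $J^{+}$, may be chosen with finitely presentable (in particular $\omega$-small) domains and codomains --- these are built from $F_n$ applied to finite simplicial sets, their mapping cylinders, and pushout-products with the generating cofibrations (see \cite{HSS}, \cite{MMSS}). For such a map $j$, a routine diagram chase shows that the right lifting property against $j$ is inherited by filtered colimits of morphisms: given a lifting problem against $\colim_\alpha f_\alpha$, the top and bottom maps factor through a single stage because the domain and codomain of $j$ are finitely presentable, the resulting square commutes already at some (possibly later) stage for the same reason, a lift exists at that stage, and its image in the colimit solves the original problem. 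Since a (positive) fibration is exactly a map with the right lifting property against $J$ (resp.\ $J^{+}$), it follows that filtered colimits preserve fibrations and positive fibrations.

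For weak equivalences between (positive) fibrant spectra, recall that a fibrant symmetric spectrum is an $\Omega$-spectrum --- each $X_i$ is a Kan complex and $X_i \rightarrow \Omega X_{i+1}$ is a weak equivalence --- while a positive fibrant spectrum satisfies these conditions for $i \ge 1$. Since $S^1$ is a finite simplicial set, $\Omega(-)$ commutes with filtered colimits; since filtered colimits of Kan complexes are Kan and finite simplicial sets are $\omega$-small, the homotopy groups of a Kan complex commute with filtered colimits, so filtered colimits preserve weak equivalences between Kan complexes. Therefore a filtered colimit of $\Omega$-spectra (resp.\ positive $\Omega$-spectra) is again an $\Omega$-spectrum (resp.\ positive $\Omega$-spectrum), its structure maps being filtered colimits of weak equivalences between Kan complexes. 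Now a stable equivalence between $\Omega$-spectra (resp.\ between positive $\Omega$-spectra) is a level equivalence in all levels (resp.\ in levels $i \ge 1$), because the naive and true homotopy groups agree for such spectra; so if $f_\alpha : X_\alpha \rightarrow Y_\alpha$ is a filtered diagram of weak equivalences between fibrant (resp.\ positive fibrant) spectra, then $\colim_\alpha f_\alpha$ is, in the relevant range of levels, a levelwise weak equivalence between $\Omega$-spectra (resp.\ positive $\Omega$-spectra). It therefore induces isomorphisms on naive homotopy groups, and hence is a stable equivalence by \cite[Thm.~3.1.11]{HSS}.

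The only genuinely non-formal ingredient --- and thus the main obstacle --- is that the stable and positive stable model structures on symmetric spectra of simplicial sets are finitely generated, so that the generating trivial cofibrations can be taken with compact domains \emph{and} codomains; granting this, the rest is bookkeeping about degreewise colimits in $\Set$ and the $\omega$-smallness of finite simplicial sets. One should also keep in mind throughout that ``weak equivalence'' means a \emph{true} stable equivalence, as emphasized in Section~\ref{sec:SigmaSp_G}; it is precisely for this reason that we first recognize the colimits in question as $\Omega$-spectra before applying the levelwise criterion.
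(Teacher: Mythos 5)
Your proof is correct and is essentially the argument the paper intends: the paper simply defers to the proofs of \cite[Lemma~5.5]{Thomason} and \cite[Prop.~3.2]{MitchellHyper}, which run exactly this way (smallness of the generating (trivial) cofibrations for the lifting properties, plus the $\Omega$-spectrum characterization of fibrant objects and of level equivalences), and you have correctly carried it over to symmetric spectra. You also rightly isolate the one non-formal input needed for that transfer, namely that the stable and positive stable structures on symmetric spectra of simplicial sets admit generating trivial cofibrations with finitely presentable domains \emph{and} codomains, which holds by the explicit sets of \cite{HSS} and \cite{MMSS}.
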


\begin{cor}\label{cor:hocolim}
Given a filtered diagram $\{ X_\alpha \}_{\alpha \in I}$ of 
(positive) fibrant spectra, there is a stable equivalence
$$ \hocolim_\alpha X_\alpha \xrightarrow{\simeq} \colim_\alpha X_\alpha. $$
\end{cor}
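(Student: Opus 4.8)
The plan is to deduce the corollary directly from the preceding lemma, using the fact that for a filtered diagram the homotopy colimit may be computed as the colimit after a functorial fibrant replacement of the diagram. First I would recall that for any filtered diagram $\{X_\alpha\}_{\alpha \in I}$ in symmetric spectra, there is a natural comparison map $\hocolim_\alpha X_\alpha \to \colim_\alpha X_\alpha$, arising from the fact that $\hocolim$ is the total left derived functor of $\colim$ and the colimit of the natural map from the bar construction. Concretely, choosing a functorial (positive) cofibrant replacement $X_\alpha \to \td{X_\alpha}$ levelwise on the diagram, one has $\hocolim_\alpha X_\alpha \simeq \colim_\alpha \td{X_\alpha}$ (the colimit of a filtered diagram of cofibrant objects computes the homotopy colimit, since $\colim_I$ is left Quillen on the projective model structure on $I$-diagrams and filtered colimits preserve cofibrations by the Lemma), and the comparison map is induced by the weak equivalences $\td{X_\alpha} \to X_\alpha$.

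The key step is then to observe that since each $X_\alpha$ is already (positive) fibrant, the maps $\td{X_\alpha} \to X_\alpha$ are weak equivalences between (after cofibrant replacement, still levelwise nice) spectra, and that passing to the filtered colimit preserves this. More precisely, I would factor the comparison as follows: $\colim_\alpha \td{X_\alpha} \to \colim_\alpha X_\alpha$ is the filtered colimit of the weak equivalences $\td{X_\alpha} \xrightarrow{\simeq} X_\alpha$. To conclude that this colimit map is a weak equivalence, it is not enough that it is a filtered colimit of weak equivalences in general — one needs the target (or source) to consist of fibrant objects. Here the $X_\alpha$ are fibrant by hypothesis, so by the last sentence of the Lemma, ``filtered colimits preserve weak equivalences between fibrant spectra,'' applied to the diagram $\{\td{X_\alpha} \to X_\alpha\}$ — or, more carefully, by first replacing the $\td{X_\alpha}$ by fibrant objects and chasing a commuting square — the induced map on filtered colimits is a weak equivalence. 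Hence $\hocolim_\alpha X_\alpha \simeq \colim_\alpha \td{X_\alpha} \xrightarrow{\simeq} \colim_\alpha X_\alpha$.

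The main obstacle is the bookkeeping in the previous paragraph: the Lemma as stated preserves weak equivalences \emph{between fibrant} spectra, but $\td{X_\alpha}$ is cofibrant, not necessarily fibrant. The clean fix is to use that every cofibrant symmetric spectrum can be functorially fibrantly replaced, obtaining a square
\[
\xymatrix{
\td{X_\alpha} \ar[r]^{\simeq} \ar[d] & X_\alpha \ar[d]^{\simeq} \\
(\td{X_\alpha})_f \ar[r]^{\simeq} & (X_\alpha)_f
}
\]
in which all maps except possibly the bottom are weak equivalences, hence the bottom one is too; now both bottom corners are fibrant, so the Lemma applies to the bottom row after taking filtered colimits, and the vertical maps induce weak equivalences on filtered colimits as well (again by the Lemma, since $X_\alpha$ is fibrant and $\td{X_\alpha}$ can be taken fibrant-cofibrant). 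Assembling these — two out of three for the resulting square of filtered colimits — gives that $\colim_\alpha \td{X_\alpha} \to \colim_\alpha X_\alpha$ is a weak equivalence, which is precisely the asserted stable equivalence $\hocolim_\alpha X_\alpha \xrightarrow{\simeq} \colim_\alpha X_\alpha$. The same argument works verbatim in the positive-fibrant case, using the corresponding clause of the Lemma.
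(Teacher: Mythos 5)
Your overall strategy---replace the diagram cofibrantly, identify the homotopy colimit with the colimit of the replacement, and then use the Lemma to see that the comparison map is a stable equivalence---is the same as the paper's, but two steps do not close as written. First, you take a \emph{levelwise} cofibrant replacement and assert that its colimit computes the homotopy colimit because ``$\colim_I$ is left Quillen on the projective model structure.'' Left Quillen--ness only tells you that $\colim$ of a \emph{projectively} cofibrant diagram is the homotopy colimit; a levelwise cofibrant filtered diagram need not be projectively cofibrant, so this justification is incomplete (the fact is true for filtered diagrams of symmetric spectra, but it needs a different argument or a citation). Second, and more seriously, your concluding two-out-of-three square has an unclosed edge: to deduce that the top map $\colim_\alpha \td{X}_\alpha \to \colim_\alpha X_\alpha$ is an equivalence you need the \emph{left} vertical map $\colim_\alpha \td{X}_\alpha \to \colim_\alpha (\td{X}_\alpha)_f$ to be an equivalence, and the Lemma does not apply to it because its sources $\td{X}_\alpha$ are not known to be fibrant. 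You patch this by saying ``$\td{X}_\alpha$ can be taken fibrant-cofibrant,'' but that is precisely the assertion that needs proof---and once you have it, the entire fibrant-replacement detour is unnecessary, since the Lemma then applies directly to $\td{X}_\alpha \to X_\alpha$.

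The observation that makes everything work, and that the paper uses, is this: take the cofibrant replacement $\phi \colon \{\td{X}_\alpha\} \to \{X_\alpha\}$ in the \emph{projective} model structure on $I$-diagrams, so that $\phi$ is a levelwise acyclic \emph{fibration}. An acyclic fibration with (positive) fibrant target has (positive) fibrant source, so each $\td{X}_\alpha$ is automatically (positive) fibrant. Then $\hocolim_\alpha X_\alpha = \colim_\alpha \td{X}_\alpha$ because the replaced diagram is projectively cofibrant, and $\colim_\alpha \td{X}_\alpha \to \colim_\alpha X_\alpha$ is a filtered colimit of weak equivalences between (positive) fibrant spectra, hence a stable equivalence by the Lemma. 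No auxiliary fibrant replacement or two-out-of-three chase is needed.
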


\begin{proof}
Let $\{\td{X}_\alpha\} \xrightarrow{\phi} \{X_\alpha\}$ be a cofibrant
replacement in the projective model 
category of $I$-shaped diagrams of spectra, so that
$\phi$ is a levelwise acyclic fibration.  
Then each spectrum $\td{X}_\alpha$ is
(positive) fibrant, and we have
$$ \hocolim_\alpha X_\alpha = \colim_\alpha \td{X}_\alpha
\xrightarrow[\simeq]{\phi} \colim_\alpha X_\alpha. $$
\end{proof}

\begin{cor}\label{cor:5.3.3}
(Positive) fibrant discrete $G$-spectra are (positive)
fibrant as non-equivariant spectra.
\end{cor}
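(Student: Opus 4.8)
The plan is to reduce the corollary to the explicit descriptions of fibrant objects that were extracted while setting up the model structures. Recall from the proof of Theorem~\ref{thm:sigmaspmodel} (and from the proof of the lemma establishing the positive stable structure $\Sigma\Sp_G^+$) that a (positive) fibrant discrete $G$-spectrum $X$ is exactly one for which each space $X_i$ is fibrant as a simplicial discrete $G$-set (for all $i \ge 0$, respectively for all $i \ge 1$) and each structure map $X_i \to \Omega X_{i+1}$ is a weak equivalence (for all $i \ge 0$, respectively for all $i \ge 1$). Running Hovey's machinery (Definition~8.7 of \cite{Hoveyspec}) with $(s\Set_G)_*$ replaced by $(s\Set)_*$ gives the same description of the (positive) fibrant symmetric spectra, with ``fibrant simplicial discrete $G$-set'' replaced by ``Kan complex.'' Since the forgetful functor manifestly preserves the $\Omega$-spectrum conditions on the structure maps, the whole corollary comes down to a single claim: the underlying simplicial set of a fibrant simplicial discrete $G$-set is a Kan complex.

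To prove that claim I would first use that every simplicial discrete $G$-set $Z$ is the filtered colimit $Z = \colim_{U \le_o G} Z^U$ of its open-fixed-point subcomplexes, together with the fact that a filtered colimit of Kan complexes is again a Kan complex (the horns $\Lambda^n_k$ being finite simplicial sets, any horn maps through a finite stage of the colimit). Thus it suffices to see that $Z^U$ is a Kan complex for each open $U \le G$. For this, I would observe that the map $G/U \times (\Lambda^n_k \hookrightarrow \Delta^n)$ — with $G$ acting by left translation on the finite set $G/U$ and trivially on $\Lambda^n_k$ and $\Delta^n$ — is a monomorphism of simplicial discrete $G$-sets whose underlying simplicial-set map is a finite coproduct of horn inclusions, hence a trivial cofibration; therefore it is a trivial cofibration in $s\Set_G$, and a fibrant $Z$ lifts against it. Transposing along the adjunction $\Hom_{s\Set_G}(G/U \times K, Z) \cong \Hom_{s\Set}(K, Z^U)$, valid for any $K$ carrying the trivial $G$-action since $(-)^U$ is right adjoint to $G/U \times (-)$, converts these lifts into precisely the horn-fillings showing $Z^U$ is Kan. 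Alternatively, one may simply invoke the fibrancy criterion \cite[10.2.7]{taf}, exactly as in the proof of Theorem~\ref{thm:fcdHfp}, whose very first condition asserts that $Z^U$ is a Kan complex for every open $U$.

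I do not anticipate any serious obstacle; the one point needing a little care is confirming that the notion of (positive) fibrant symmetric spectrum being compared against is literally the one produced by Hovey's construction in the proofs of Theorem~\ref{thm:spmodel} and Theorem~\ref{thm:sigmaspmodel}, rather than, say, the injective stable structure. This is exactly the model structure used throughout the paper, and the analogous statement for Bousfield-Friedlander spectra ($\Sp_G$ versus $\Sp$) is already invoked without comment in the proof of Theorem~\ref{thm:spmodel}, so the argument transcribes directly. It is also worth noting why one cannot simply cite a right-Quillen property of the forgetful functor: it has no left adjoint, since it fails to preserve limits of discrete $G$-spectra, so the fibrancy statement genuinely requires the lifting argument above.
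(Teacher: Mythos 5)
Your proof is correct, but it runs at a different level than the paper's. The paper's argument stays entirely at the level of spectra and is purely formal: for $U$ open normal, Proposition~\ref{prop:iterate}(2) (via the left Quillen induction functor $\Ind_U^G$) shows $X$ is fibrant as a discrete $U$-spectrum; the Quillen pair $({\it triv},(-)^U)$ of Lemma~\ref{lem:Quillenfixedpoint} (resp.\ Corollary~\ref{cor:5.2.4}) then makes each $X^U$ fibrant non-equivariantly; and the lemma immediately preceding the corollary (filtered colimits preserve fibrations and positive fibrations) finishes via $X=\colim_{U\trianglelefteq_o G}X^U$. You instead unwind Hovey's explicit characterization of the (positive) stably fibrant objects on both sides and reduce everything to the space-level statement that a fibrant simplicial discrete $G$-set has Kan underlying simplicial set, which you prove by the same $\colim_U(-)^U$ device one level down, using the adjunction $\Hom_{s\Set_G}(G/U\times K,Z)\cong\Hom_{s\Set}(K,Z^U)$ to convert equivariant lifts against $G/U\times(\Lambda^n_k\hookrightarrow\Delta^n)$ into horn fillers for $Z^U$. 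Both arguments are sound and share the same skeleton (fixed points of opens are fibrant, then pass to the filtered colimit); yours is more self-contained and makes visible exactly which space-level fact is doing the work (essentially condition (1) of the fibrancy criterion \cite[10.2.7]{taf}, also used in the proof of Theorem~\ref{thm:fcdHfp}), at the cost of having to match Hovey's description of fibrant objects against the HSS stable structure and to check that $\mathcal{U}$ commutes with $\Omega$ (which it does, since $S^1\wedge\Delta^n_+$ is a finite simplicial set and so mapping out of it commutes with the filtered colimit $\colim_U Y^U$); the paper's is shorter because it reuses adjunctions already established. Your closing observation that the forgetful functor cannot simply be cited as right Quillen, having no left adjoint because it fails to preserve limits, is accurate and is exactly the issue the paper confronts in Section~\ref{sec:difficulties}.
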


\begin{proof}
Let $X$ be a (positive) fibrant discrete $G$-spectrum.  
Let $U$ be an open normal
subgroup of $G$.  By Proposition~\ref{prop:iterate}(2) (and the obvious
analog in the positive fibrant case), $X$ is (positive)
fibrant as a
discrete $U$-spectrum.  Therefore, by Lemma~\ref{lem:Quillenfixedpoint}
(Corollary~\ref{cor:5.2.4}), 
the $U$-fixed
points $X^{U}$ is (positive) 
fibrant as a non-equivariant spectrum.  The formula
$$ X = \colim_{U \trianglelefteq_o G} X^U $$
shows that $X$ is (positive) fibrant as a non-equivariant spectrum.
\end{proof}

\begin{lem}\label{lem:algebracolimits}
Filtered colimits in both the category of $A$-modules and in the
category of commutative $A$-algebras are formed in the category of 
spectra.
\end{lem}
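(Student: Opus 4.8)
The plan is to handle the two cases separately. In each case the forgetful functor to $\Sigma\Sp$ will be exhibited as the forgetful functor associated to a category of algebras over a monad, and we will invoke the standard categorical fact that such a forgetful functor creates precisely those colimits which the monad preserves. The inputs we need are that $\Sigma\Sp$ is cocomplete and closed symmetric monoidal — so that $A \wedge (-)$ preserves all colimits and $-\wedge_A-$ preserves colimits in each variable separately — together with the fact that a finite product of filtered categories is filtered and that the diagonal functor $\mathcal{I} \to \mathcal{I}^i$ is final when $\mathcal{I}$ is filtered.

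For $A$-modules: $\Mod_A$ is the category of modules over the monad $T = A \wedge (-)$ on $\Sigma\Sp$. Since $T$ is a left adjoint (its right adjoint is the function spectrum $F(A,-)$), it preserves all colimits, so the forgetful functor $\Mod_A \to \Sigma\Sp$ creates all colimits, in particular filtered ones. Spelled out: given a filtered diagram $\{M_\alpha\}$ of $A$-modules, one equips $M = \colim_\alpha M_\alpha$ with the action $A \wedge M \cong \colim_\alpha(A \wedge M_\alpha) \to \colim_\alpha M_\alpha = M$ assembled from the individual module structure maps, checks associativity and unitality (immediate, since they hold stagewise and each side of each identity is the colimit of maps that agree), and verifies the universal property directly.

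For commutative $A$-algebras one must be more careful, since colimits in $\Alg_A$ are not in general formed in $\Sigma\Sp$: coproducts are relative smash products $E \wedge_A F$. Still, $\Alg_A$ is the category of algebras over the free commutative $A$-algebra monad $\mathbb{P}_A(M) = \bigvee_{i \ge 0}(M^{\wedge_A i})_{\Sigma_i}$ on $\Mod_A$, so by the same principle it suffices to show $\mathbb{P}_A$ preserves filtered colimits. The crux is that $M \mapsto M^{\wedge_A i}$ preserves filtered colimits: because $-\wedge_A-$ preserves colimits in each variable, $(\colim_\alpha M_\alpha)^{\wedge_A i}$ is the colimit over $\mathcal{I}^i$ of the iterated smash products $M_{\alpha_1} \wedge_A \cdots \wedge_A M_{\alpha_i}$, and when $\mathcal{I}$ is filtered the diagonal $\mathcal{I} \to \mathcal{I}^i$ is final, so this colimit is computed along the diagonal and equals $\colim_\alpha M_\alpha^{\wedge_A i}$. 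Forming $\Sigma_i$-orbits and then wedging over $i \ge 0$ are themselves colimit constructions and hence commute with the filtered colimit, so $\mathbb{P}_A$ preserves filtered colimits. Consequently $\Alg_A \to \Mod_A$ creates filtered colimits, and composing with the colimit-creating functor $\Mod_A \to \Sigma\Sp$ of the previous paragraph shows that filtered colimits in $\Alg_A$ are computed in $\Sigma\Sp$.

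The only non-formal ingredient — and the step I expect to need the most care — is the finality of the diagonal $\mathcal{I} \to \mathcal{I}^i$ for filtered $\mathcal{I}$, which is what makes finite relative smash powers commute with filtered colimits; the rest is bookkeeping with monads and adjunctions. I would also remark that for modules this argument in fact yields that \emph{all} colimits are formed in $\Sigma\Sp$, whereas for commutative algebras the restriction to filtered colimits is genuinely necessary; alternatively, one may simply cite the corresponding discussion in \cite{MMSS}.
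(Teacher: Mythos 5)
Your argument is correct, but it is packaged differently from the paper's. The paper treats the commutative-algebra case directly: it forms the colimit in $\Sigma\Sp$, writes down the multiplication via the chain $(\colim_\alpha E_\alpha) \wedge_A (\colim_\beta E_\beta) \cong \colim_\alpha \colim_\beta (E_\alpha \wedge_A E_\beta) \cong \colim_\alpha (E_\alpha \wedge_A E_\alpha) \rightarrow \colim_\alpha E_\alpha$ --- i.e., exactly the $i=2$ instance of your diagonal-cofinality step --- and then checks the universal property by hand (the module case is dismissed as similar). You instead route everything through the standard fact that the forgetful functor from algebras over a monad creates those colimits the monad preserves, applied first to $A \wedge (-)$ on $\Sigma\Sp$ and then to the free commutative $A$-algebra monad $\mathbb{P}_A$ on $\Mod_A$. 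What your version buys is that the universal-property verification, which the paper leaves as ``easily seen,'' is absorbed into a citable categorical lemma, and it cleanly explains the asymmetry you note at the end (all colimits for modules, only filtered --- or more generally sifted --- ones for commutative algebras). The cost is that you need the monadicity of $\Alg_A$ over $\Mod_A$ and preservation of filtered colimits by \emph{all} the symmetric powers $(M^{\wedge_A i})_{\Sigma_i}$, whereas the paper's direct construction only ever invokes the binary smash power. The one genuinely non-formal input --- finality of the diagonal $\mathcal{I} \to \mathcal{I}^i$ for filtered $\mathcal{I}$ --- is common to both proofs, and you have correctly identified it as the crux.
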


\begin{proof}
We treat the case of commutative $A$-algebras --- the case of $A$-modules
is similar.
Let $\{E_\alpha\}$ be a filtered diagram of commutative $A$-algebras.  Then
the colimit in the category of spectra 
is easily seen to have the structure of a commutative
$A$-algebra with multiplication given by
\begin{align*}
(\colim_\alpha E_\alpha) \wedge_A (\colim_\beta E_\beta) 
& \cong
\colim_\alpha \colim_\beta E_\alpha \wedge_A E_\beta \\
& \cong
\colim_\alpha E_\alpha \wedge_A E_\alpha \\
& \rightarrow \colim_\alpha E_\alpha.
\end{align*}
This filtered colimit is easily seen to satisfy the universal property.
\end{proof}

We shall henceforth 
always form
filtered colimits of spectra,
with
the understanding that we implicitly take functorial (positive)
fibrant
replacements before computing the filtered colimit, if the terms in the
colimit are not already (positive) fibrant.  
Therefore,
Corollary~\ref{cor:hocolim} implies that our filtered colimits will always
have the desired homotopy invariance, so we will
never need to take a filtered homotopy colimit of spectra.

If $\{X_\alpha\}$ is a filtered 
diagram of discrete commutative $G$-$A$-algebras, 
then whenever we are taking the filtered colimit, we shall 
implicitly be taking the filtered
colimit 
$\colim_\alpha (X_\alpha)_{fGA-\Alg}$
of the functorial fibrant replacements.  Note that 
the underlying spectrum of
each $(X_\alpha)_{fGA-\Alg}$ is positive fibrant.

Since
we often take filtered colimits of commutative
$A$-algebras, in the next result (whose proof
is similar to that of Corollary~\ref{cor:hocolim}),
we point out a nice relationship between this
colimit and the homotopy colimit in $\mathrm{Alg}_A$.
This result is also useful in relating the filtered colimits
of Section \ref{sec:Etheory} to the homotopy colimits
of commutative $S$-algebras that appear in
\cite[Def. 1.5, Sec. 6]{DevinatzHopkins}.

\begin{lem}
Suppose that $\{E_\alpha\}$ is a filtered diagram of fibrant commutative
$A$-algebras.
Then
there is an equivalence
$$ \colim_\alpha E_\alpha \simeq
{\hocolim_\alpha}^{\Alg} E_\alpha,
$$
where the homotopy colimit $\hocolim^\Alg$ 
is taken in the category of commutative
$A$-algebras.
\end{lem}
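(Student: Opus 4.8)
The plan is to imitate the proof of Corollary~\ref{cor:hocolim}. Since $\Alg_A$ is a cofibrantly generated model category, the category $\Alg_A^I$ of $I$-shaped diagrams in $\Alg_A$ carries the projective model structure \cite[Thm.~11.6.1]{Hirschhorn}, in which the weak equivalences and fibrations are detected objectwise and the colimit functor $\colim\colon \Alg_A^I \to \Alg_A$ is left Quillen (being left adjoint to the constant diagram functor). Consequently ${\hocolim_\alpha}^{\Alg} E_\alpha$ is computed as the colimit in $\Alg_A$ of a projectively cofibrant replacement of the diagram $\{E_\alpha\}$. So first I would choose such a replacement
\[ \phi\colon \{\td{E}_\alpha\} \longrightarrow \{E_\alpha\}, \]
which may be taken to be an objectwise acyclic fibration, so that there is an equivalence ${\hocolim_\alpha}^{\Alg} E_\alpha \simeq {\colim_\alpha}^{\Alg_A} \td{E}_\alpha$.

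The key observation is then that each $\td{E}_\alpha$ is again fibrant in $\Alg_A$: the map $\td{E}_\alpha \to \ast$ to the terminal object factors as $\td{E}_\alpha \xrightarrow{\phi_\alpha} E_\alpha \to \ast$, a composite of fibrations (using the hypothesis that $E_\alpha$ is fibrant), and hence is a fibration. Since fibrations and weak equivalences in $\Alg_A$ are detected on the underlying symmetric spectra with the positive stable model structure, each $\td{E}_\alpha$ --- like each $E_\alpha$ --- has positive fibrant underlying symmetric spectrum, and each $\phi_\alpha$ is a stable equivalence of underlying spectra.

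By Lemma~\ref{lem:algebracolimits}, the colimits ${\colim_\alpha}^{\Alg_A} \td{E}_\alpha$ and ${\colim_\alpha}^{\Alg_A} E_\alpha = \colim_\alpha E_\alpha$ are both formed on underlying symmetric spectra. Hence $\colim_\alpha \phi_\alpha$ is a filtered colimit of stable equivalences between positive fibrant symmetric spectra, and therefore is itself a stable equivalence, by the lemma preceding Corollary~\ref{cor:hocolim} asserting that filtered colimits of symmetric spectra preserve weak equivalences between positive fibrant spectra. Composing the two equivalences yields
\[ {\hocolim_\alpha}^{\Alg} E_\alpha \simeq {\colim_\alpha}^{\Alg_A} \td{E}_\alpha \xrightarrow{\ \simeq\ } \colim_\alpha E_\alpha, \]
as desired.

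There is no serious obstacle here; the argument is routine once one recognizes that it is the same pattern as the proof of Corollary~\ref{cor:hocolim}. The single point that must not be skipped is the verification that the objectwise cofibrant replacement $\td{E}_\alpha$ of the diagram remains objectwise fibrant --- so that forming the filtered colimit of the maps $\phi_\alpha$ is homotopy invariant --- and this is precisely where the assumption that each $E_\alpha$ is a \emph{fibrant} commutative $A$-algebra enters.
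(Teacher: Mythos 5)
Your argument is correct and is exactly the adaptation the paper intends: the paper gives no written proof beyond the remark that it is "similar to that of Corollary~\ref{cor:hocolim}," and your proof carries out that same pattern (projectively cofibrant replacement by an objectwise acyclic fibration, fibrancy of the replacement, Lemma~\ref{lem:algebracolimits} to compute the colimit on underlying spectra, and homotopy invariance of filtered colimits of positive fibrant spectra). Nothing is missing.
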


\subsection{Commutative hypercohomology algebras}

Let $E$ be a discrete commutative $G$-$A$-algebra.  
For any finite set $K$, the 
mapping spectrum $\Map(K,E)$ is naturally a commutative $A$-algebra,
by using the diagonal on $K$.  
Therefore, by Lemma~\ref{lem:algebracolimits},
the continuous mapping spectrum
$$ \Map^c(G, E) = \colim_{U \trianglelefteq_o G} \Map(G/U, E) $$
is a commutative $A$-algebra.

Since the category of spectra with the positive model structure is
Quillen equivalent to the category of spectra with the stable model 
structure \cite[Prop. 14.6]{MMSS},
there is an equivalence
$$ \HH^+_c(G; E) := {\holim_\Delta}^+ \Map^c(G^\bullet, E) \simeq
\holim_\Delta \Map^c(G^\bullet, E) = \HH_c(G; E) $$
between the homotopy limits computed in the positive and stable model
structures.  Since the homotopy limit of commutative $A$-algebras is
computed in the underlying category of spectra with the 
positive model structure, we have the following
lemma.

\begin{lem}
The hypercohomology spectrum $\HH_c(G; E)$ is equivalent to a commutative
$A$-algebra $\HH_c^+(G; E)$.
\end{lem}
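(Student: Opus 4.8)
The plan is to produce $\HH_c^+(G;E)$ directly as a homotopy limit over $\Delta$ taken \emph{inside} the category $\Alg_A$ of commutative $A$-algebras, and then to identify its underlying spectrum with $\HH_c(G;E)$ by passing between the positive and stable model structures. The input is the observation that the cobar construction $\Map^c(G^\bullet, E)$ computing $\HH_c(G;E)$ is not merely a cosimplicial spectrum but a cosimplicial commutative $A$-algebra.

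First I would check that $\Gamma_G = \Map^c(G,-)$ restricts to a coaugmented comonad on $\Alg_A$. For any finite set $K$ the spectrum $\Map(K,E) \cong \prod_K E$ is a finite product of copies of $E$, hence a commutative $A$-algebra (the product in $\Alg_A$); writing a profinite set $G^k = \lim_\alpha K_\alpha$ as a cofiltered limit of finite sets and using that the transition maps $\Map(K_\beta, E) \to \Map(K_\alpha, E)$ are maps of commutative $A$-algebras, Lemma~\ref{lem:algebracolimits} shows that $\Map^c(G^k, E) = \colim_\alpha \Map(K_\alpha, E)$ is a commutative $A$-algebra. Moreover the comonad structure maps --- the counit $\Map^c(G, -) \to \mathrm{Id}$ (evaluation at $e$), the comultiplication $\Map^c(G,-) \to \Map^c(G\times G, -)$ (restriction along the product $G \times G \to G$), the coaugmentation $\mathrm{Id} \to \Map^c(G,-)$ (inclusion of constant maps), and the $\Gamma_G$-coalgebra structure map $E \to \Map^c(G,E)$, $e \mapsto (g \mapsto g\cdot e)$ --- are all maps of commutative $A$-algebras; the last of these uses precisely that $G$ acts on $E$ through maps of commutative $A$-algebras, the defining property of a discrete commutative $G$-$A$-algebra. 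Hence $\Map^c(G^\bullet, E) = C^\bullet(\mathrm{Id}_{\Alg_A}, \Gamma_G, E)$ is a cosimplicial object of $\Alg_A$ whose underlying cosimplicial spectrum is the one defining $\HH_c(G;E)$.

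Next I would form $\HH_c^+(G;E) := {\holim_\Delta}^+ \Map^c(G^\bullet, E)$, the homotopy limit of this cosimplicial commutative $A$-algebra computed with respect to the positive model structure on $\Alg_A$; by construction this is a commutative $A$-algebra. The forgetful functor $\Alg_A \rightarrow \Sigma\Sp^+$ creates fibrations and weak equivalences, hence is a right Quillen functor, and therefore commutes with homotopy limits over the fixed index category $\Delta$ (pass to the Reedy model structures on cosimplicial objects); thus the underlying spectrum of $\HH_c^+(G;E)$ is ${\holim_\Delta}^+ \Map^c(G^\bullet, E)$ computed in $\Sigma\Sp^+$. Finally, the Quillen equivalence between the positive and stable model structures on symmetric spectra \cite[Prop.~14.6]{MMSS} (which share the same weak equivalences) identifies this with $\holim_\Delta \Map^c(G^\bullet, E) = \HH_c(G;E)$, giving the asserted equivalence $\HH_c^+(G;E) \simeq \HH_c(G;E)$.

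The substance of the argument is the middle step: that a homotopy limit over $\Delta$ formed in $\Alg_A$ is, as a spectrum, the homotopy limit of the underlying cosimplicial spectrum in $\Sigma\Sp^+$ --- the general principle that a right Quillen functor preserves homotopy limits over a fixed small category, specialized to $\Alg_A \to \Sigma\Sp^+$. Once that, together with the routine but necessary verification that the cobar cofaces and codegeneracies are $A$-algebra maps, is in place, the remaining steps are an assembly of Lemma~\ref{lem:algebracolimits} and \cite[Prop.~14.6]{MMSS}, both already available.
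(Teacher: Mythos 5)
Your proposal is correct and follows essentially the same route as the paper: identify $\Map^c(G^\bullet,E)$ as a cosimplicial commutative $A$-algebra (via the diagonal/product structure on $\Map(K,E)$ for finite $K$ and Lemma~\ref{lem:algebracolimits}), observe that the homotopy limit in $\Alg_A$ is computed on underlying spectra in the positive model structure, and then compare positive and stable homotopy limits via \cite[Prop.~14.6]{MMSS}. You spell out the comonad and cobar structure maps in more detail than the paper does, but the substance is identical.
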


When we take hypercohomology spectra of discrete commutative 
$G$-$A$-algebras, we shall
always implicitly be taking the homotopy limit with respect to the positive
model structure.

\section{Profinite Galois extensions}\label{sec:progalois}

Although the homotopy limit of $k$-local objects is $k$-local, it is not
true in general that $k$-localization commutes with homotopy limits.
We begin this section by explaining how, under a certain hypothesis,
Assumption~\ref{assumption} allows us to commute these two functors.
We then explain how a profinite Galois extension of a commutative
symmetric ring spectrum $A$ naturally gives rise to a
discrete commutative $G$-$A$ algebra, 
and we show that our consistency hypothesis allows us to
recover the intermediate finite Galois extensions using the 
homotopy fixed
point construction.

\subsection{Properties of $k$-localization}

Recall that we 
assume that the $k$-localization functor is given by $((-)_T)_M$, where
localization with respect to $T$ is smashing 
and $M$ is a finite spectrum
(Assumption~\ref{assumption}).  
These localizations are the functorial fibrant replacements in
appropriately localized model categories.
In this subsection we shall establish some lemmas
concerning such $k$-localizations.

The following lemma is immediate.

\begin{lem}
If $X$ is a $k$-local spectrum, then it is a $T$-local spectrum.
\end{lem}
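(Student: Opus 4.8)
The plan is to deduce this from the standard acyclic/local duality in Bousfield localization. Recall that a spectrum $Y$ is $E$-local precisely when $[W,Y]_\ast = 0$ for every $E$-acyclic spectrum $W$ (that is, every $W$ with $W_E \simeq \ast$); see \cite{Bousfield}. Applying this criterion with $E = T$, it suffices to check that $[W,X]_\ast = 0$ whenever $W$ is $T$-acyclic.

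First I would record the elementary observation that every $T$-acyclic spectrum is $k$-acyclic. Indeed, if $W_T \simeq \ast$, then since the $k$-localization functor is the composite $(-)_k = ((-)_T)_M$ we get
\[ W_k = (W_T)_M \simeq \ast_M \simeq \ast, \]
so $W$ is $k$-acyclic. (Equivalently, this is the inclusion of Bousfield classes $\langle k\rangle \le \langle T\rangle$.) Now let $W$ be an arbitrary $T$-acyclic spectrum. By the previous step $W$ is $k$-acyclic, and since $X$ is assumed $k$-local, we conclude $[W,X]_\ast = 0$. As $W$ ranges over all $T$-acyclic spectra, the criterion above shows that $X$ is $T$-local, which is the assertion of the lemma.

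I anticipate no real obstacle: the only ingredients are the definition of $(-)_k$ as the composite $((-)_T)_M$ and the acyclic characterization of local objects, both available in \cite{Bousfield} and \cite{Hovey}. If one preferred to avoid acyclicity language, one could instead exploit that $(-)_T$ is smashing and argue directly with $X_T \simeq X \wedge S_T$, but the route above is the most transparent one and explains why the authors regard the statement as immediate.
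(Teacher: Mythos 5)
Your argument is correct and is precisely the "immediate" reasoning the authors had in mind (the paper offers no written proof): since $(-)_k = ((-)_T)_M$, every $T$-acyclic spectrum is $k$-acyclic, so the orthogonality characterization of local objects gives $[W,X]_\ast = 0$ for all $T$-acyclic $W$ whenever $X$ is $k$-local. Nothing further is needed.
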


\begin{lem}\label{lem:Mholim}
Let $\{X_i\}$ be a diagram of spectra.  Then there is an equivalence
$$ (\holim_i X_i)_M \simeq \holim_i (X_i)_M. $$
\end{lem}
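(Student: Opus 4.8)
The plan is to exploit the fact that $M$ is a \emph{finite} spectrum, which is exactly what is needed to commute $M$-localization past homotopy limits. First I would use the functoriality of the fibrant replacement $(-)_M$ to obtain a map of diagrams $\{X_i\} \to \{(X_i)_M\}$, and hence a natural map $\phi \colon \holim_i X_i \to \holim_i (X_i)_M$. Since each $(X_i)_M$ is $M$-local and a homotopy limit of $M$-local spectra is again $M$-local (the inclusion of the $M$-local category into the stable homotopy category is a right adjoint), the target $\holim_i (X_i)_M$ is $M$-local; by the universal property of localization, $\phi$ therefore factors through a map $\overline{\phi}\colon (\holim_i X_i)_M \to \holim_i (X_i)_M$, and it remains to show $\overline{\phi}$ is an equivalence. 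For this it suffices to check that $\phi$ itself is an $M$-equivalence, i.e.\ becomes an equivalence after smashing with $M$: a map out of a spectrum $Y$ into an $M$-local spectrum which is an $M$-equivalence induces an equivalence $Y_M \xrightarrow{\simeq} (\text{target})$.

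To see that $\phi$ is an $M$-equivalence, the key input is that smashing with a finite spectrum commutes with homotopy limits. Since $M$ is finite it is dualizable, so there is a natural equivalence $M \wedge Y \simeq F(DM, Y)$, where $DM = F(M, S)$ is the Spanier--Whitehead dual (again a finite spectrum) and $F(-,-)$ is the function spectrum; and $F(W,-)$ commutes with homotopy limits in its second variable for every $W$. Hence $M \wedge \holim_i X_i \simeq \holim_i (M \wedge X_i)$, and likewise $M \wedge \holim_i (X_i)_M \simeq \holim_i (M \wedge (X_i)_M)$, both naturally. Now each map $M \wedge X_i \to M \wedge (X_i)_M$ is an equivalence, because $X_i \to (X_i)_M$ is an $M$-equivalence by construction; passing to homotopy limits and comparing via the two displayed equivalences, we conclude that $M \wedge \phi$ is an equivalence. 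Thus $\phi$ is an $M$-equivalence with $M$-local target, so $\overline{\phi}$ is an equivalence, which is precisely the asserted equivalence $(\holim_i X_i)_M \simeq \holim_i (X_i)_M$.

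The main obstacle — indeed essentially the only nonformal point — is the interchange $M \wedge \holim_i X_i \simeq \holim_i (M \wedge X_i)$; this fails for a general spectrum in place of $M$, and is exactly where the finiteness of $M$ built into Assumption~\ref{assumption} is used. The remaining steps are formal manipulations with Bousfield localization, keeping in mind that throughout $\holim$ denotes the homotopy-invariant construction and $(-)_M$ the derived localization (functorial fibrant replacement in the localized model structure), so that all of the equivalences above are statements in the stable homotopy category.
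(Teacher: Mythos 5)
Your proposal is correct and follows essentially the same route as the paper's proof: construct the comparison map using that $\holim_i (X_i)_M$ is $M$-local, then smash with $M$ and use the finiteness of $M$ to commute $M \wedge -$ past the homotopy limit, concluding that the map is an $M$-equivalence between $M$-local spectra and hence an equivalence. Your extra justification of the interchange via dualizability of $M$ is a fine elaboration of the step the paper states without proof.
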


\begin{proof}
The homotopy limit $\holim_i (X_i)_M$ is $M$-local, so there is a map
$$ f: (\holim_i X_i)_M \rightarrow \holim_i (X_i)_M. $$
Smashing with $M$, and using the fact that $M$ is a finite complex, we have
the following commutative diagram of equivalences
$$
\xymatrix{
M \wedge \holim_i X_i \ar[r]^\simeq \ar[d]_\simeq 
& \holim_i (M \wedge X_i) \ar[d]^\simeq
\\
M \wedge (\holim_i X_i)_M \ar[r]_{M \wedge f} 
& \holim_i (M \wedge (X_i)_M)
}
$$
from which we deduce that $f$ is an $M$-local equivalence.
Since $f$ is a map between $M$-local spectra, the map $f$ is an
equivalence.
\end{proof}

Arbitrary localizations do not commute with homotopy limits.
Our reason for making Assumption~\ref{assumption} on $k$-localization is
that it allows us to deduce the following corollary.

\begin{cor}\label{cor:kholim}
Let $\{X_i\}$ be a diagram of $T$-local spectra.  
Then there is an equivalence
$$ (\holim_i X_i)_k \simeq \holim_i (X_i)_k. $$
\end{cor}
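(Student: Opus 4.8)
The plan is to reduce the statement to Lemma~\ref{lem:Mholim} by observing that, since the $X_i$ are $T$-local, so is $\holim_i X_i$, and hence $k$-localization reduces to $M$-localization on every spectrum in sight.

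First I would use the fact that the full subcategory of $T$-local spectra is closed under homotopy limits (a general feature of Bousfield localization): as each $X_i$ is $T$-local, the spectrum $\holim_i X_i$ is $T$-local, so its coaugmentation $\holim_i X_i \to (\holim_i X_i)_T$ is an equivalence. By the definition $(-)_k = ((-)_T)_M$ of Assumption~\ref{assumption}, this gives
$$ (\holim_i X_i)_k = \bigl((\holim_i X_i)_T\bigr)_M \simeq (\holim_i X_i)_M. $$
Applying Lemma~\ref{lem:Mholim} to the diagram $\{X_i\}$ then yields $(\holim_i X_i)_M \simeq \holim_i (X_i)_M$. Finally, the same ``already $T$-local'' observation applied to each $X_i$ individually gives natural equivalences $(X_i)_k = ((X_i)_T)_M \simeq (X_i)_M$, which assemble to an equivalence $\holim_i (X_i)_M \simeq \holim_i (X_i)_k$. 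Composing the three equivalences proves the corollary.

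I do not expect a serious obstacle here: the only points requiring care are the order of the two localizations in the composite $k$-localization and invoking $T$-locality of homotopy limits both for $\holim_i X_i$ and for each $X_i$ separately. Note that the smashing hypothesis on $(-)_T$ is not actually needed for this particular corollary --- only closure of the $T$-local objects under homotopy limits, which holds for any Bousfield localization; the genuine input is Lemma~\ref{lem:Mholim}, whose proof already uses the finiteness of $M$.
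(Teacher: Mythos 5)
Your proof is correct and is precisely the argument the paper intends (the corollary is stated without proof as an immediate consequence of Lemma~\ref{lem:Mholim} together with the fact that $T$-local spectra are closed under homotopy limits, so that $(-)_k$ reduces to $(-)_M$ on every spectrum involved). Your side remark that the smashing hypothesis is not needed here, only closure of local objects under homotopy limits, is also accurate.
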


Since $T$-localization is smashing, it possesses the following pleasant
properties.

\begin{lem}\label{lem:smashing} $\quad$
\begin{enumerate}
\item Colimits of $T$-local spectra are $T$-local.
\item If $X$ is a $T$-local spectrum and $Y$ is any spectrum, then 
$X \wedge Y$
is $T$-local.
\item If $X$ is $T$-local, then $\Map^c(G,X)$ is $T$-local.
\end{enumerate}
\end{lem}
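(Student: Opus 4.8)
The plan is to exploit the defining feature of a smashing localization: $(X)_T \simeq X \wedge (S)_T$ naturally in $X$, so that a spectrum $X$ is $T$-local exactly when the coaugmentation $X \to X \wedge (S)_T$ is an equivalence. All three parts then reduce to the observation that the functor $- \wedge (S)_T$, being a left adjoint, commutes with (homotopy) colimits.

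For part (1): if $\{X_i\}$ is any diagram of $T$-local spectra (recall that in our conventions colimits of spectra are homotopy colimits, cf.\ Corollary~\ref{cor:hocolim}), then
$$ (\colim_i X_i)_T \simeq (\colim_i X_i) \wedge (S)_T \simeq \colim_i\bigl(X_i \wedge (S)_T\bigr) \simeq \colim_i (X_i)_T \simeq \colim_i X_i, $$
the last equivalence by $T$-locality of each $X_i$; since this identification is compatible with the coaugmentation, $\colim_i X_i$ is $T$-local. For part (2): with $X$ being $T$-local and $Y$ arbitrary,
$$ (X \wedge Y)_T \simeq (X \wedge (S)_T) \wedge Y \simeq (X)_T \wedge Y \simeq X \wedge Y, $$
so $X \wedge Y$ is $T$-local.

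For part (3): for each open normal subgroup $N \trianglelefteq_o G$ there is a non-equivariant isomorphism $\Map(G/N, X) \cong \prod_{G/N} X$, a finite product, which is equivalent to the finite wedge $\bigvee_{G/N} X$ (as in the proof of Lemma~\ref{lem:Mapexact}); hence by part (1) this finite wedge of $T$-local spectra is $T$-local. Since $\Map^c(G,X) = \colim_{N \trianglelefteq_o G} \Map(G/N, X)$ is a filtered colimit of $T$-local spectra (the open normal subgroups of $G$, ordered by reverse inclusion, form a directed set), part (1) applies once more, giving the result.

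I do not anticipate a genuine obstacle: the content is entirely the smashing property together with the fact that smashing preserves colimits. The only points requiring mild care are that ``colimit'' must be read as homotopy colimit --- handled by our standing convention of taking fibrant replacements so that Corollary~\ref{cor:hocolim} applies --- and that the interchange $(\colim_i X_i) \wedge (S)_T \simeq \colim_i(X_i \wedge (S)_T)$ is valid because $- \wedge (S)_T$ is a left adjoint; part (3) is then purely formal once (1) and (2) are in hand.
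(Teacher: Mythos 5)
Your argument is correct, and it is exactly the standard reasoning the paper has in mind: the paper in fact gives no proof of this lemma, merely prefacing it with the remark that these properties follow because $T$-localization is smashing. Your writeup supplies the routine details (local $=$ the coaugmentation $X \to X \wedge S_T$ is an equivalence, smashing commutes with colimits, and $\Map^c(G,-)$ is a filtered colimit of finite products) correctly.
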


\begin{lem}\label{lem:localfixedpoints}
Suppose that $f: X \rightarrow Y$ is a $k$-local equivalence of $T$-local
discrete $G$-spectra.  Then the induced map
$$ f_*: \HH_c(G; X)_k \rightarrow \HH_c(G; Y)_k $$
is an equivalence.
\end{lem}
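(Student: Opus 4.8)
The plan is to reduce the claim to a levelwise statement about the cosimplicial spectrum that computes continuous hypercohomology and then to exploit that $M$ is a finite spectrum. Recall that $\HH_c(G;X) = \holim_\Delta \Gamma_G^\bullet X$, where $\Gamma_G^n(-) \cong \Map^c(G^n,-)$.

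First I would note that, since $X$ is $T$-local, iterating Lemma~\ref{lem:smashing}(3) shows that each $\Gamma_G^n X$ is $T$-local, and likewise for $Y$; consequently $\HH_c(G;X)$ and $\HH_c(G;Y)$ are $T$-local, so their $k$-localizations coincide with their $M$-localizations. I would then apply Lemma~\ref{lem:Mholim} (equivalently, Corollary~\ref{cor:kholim} applied to the levelwise $T$-local cosimplicial spectra) to obtain natural equivalences $\HH_c(G;X)_k \simeq \holim_\Delta (\Gamma_G^n X)_M$ and $\HH_c(G;Y)_k \simeq \holim_\Delta (\Gamma_G^n Y)_M$, under which $f_*$ is identified with $\holim_\Delta (\Gamma_G^n f)_M$. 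Since a homotopy limit of levelwise equivalences is an equivalence, it then suffices to show that $(\Gamma_G^n f)_M$ is an equivalence for every $n \ge 0$; as $\Gamma_G^n X$ and $\Gamma_G^n Y$ are $T$-local, this is the same as showing that $M \wedge \Gamma_G^n f$ is a stable equivalence.

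For that last point I would use two facts. Because $X$ and $Y$ are $T$-local and $f$ is a $k$-local equivalence, $f$ is an $M_*$-equivalence, i.e.\ $M \wedge f$ is a stable equivalence. And $\Map^c(G^n,-)$ preserves stable equivalences, by Lemma~\ref{lem:Mapexact} applied to the profinite set $G^n = \lim_\alpha (G/U_\alpha)^n$, whose transition maps are surjective because those of $G$ are. The bridge between these is the interchange $M \wedge \Gamma_G^n(-) \simeq \Gamma_G^n(M \wedge -)$, natural in the argument: smashing with $M$ commutes with the filtered colimit defining $\Map^c$, and for a finite set $K$ the canonical map $\bigvee_K Z \to \prod_K Z = \Map(K,Z)$ is a $\pi_*$-isomorphism while $M \wedge \bigvee_K Z \cong \bigvee_K (M \wedge Z)$, so $M \wedge \Map(K,Z) \simeq \Map(K, M \wedge Z)$. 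Granting this interchange, $M \wedge \Gamma_G^n f \simeq \Gamma_G^n(M \wedge f)$ is a stable equivalence, completing the argument.

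The hard part will be the interchange of $M \wedge -$ with $\Map^c(G^n,-)$ in the last paragraph: this is where finiteness of $M$ is genuinely used (to trade finite products for finite wedges), while the rest is bookkeeping with the localization lemmas already in place. An alternative to the finite-wedge manipulation would be to prove directly that $\Map^c(G^n,-)$ preserves $M_*$-equivalences by rerunning the injective-model-structure and directed-colimit argument of Lemma~\ref{lem:Mapexact} after smashing with $M$; either route works.
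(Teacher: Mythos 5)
Your proposal is correct and follows essentially the same route as the paper: both reduce to an $M$-local equivalence using that $\HH_c(G;-)$ preserves $T$-locality (Lemma~\ref{lem:smashing}), then use finiteness of $M$ to commute $M\wedge-$ past the homotopy limit and past $\Map^c(G^\bullet,-)$, and finally invoke that $\Map^c(G^\bullet,-)$ preserves the stable equivalence $M\wedge f$. The only cosmetic difference is that you push the localization inside the totalization and argue cosimplicial-degree-wise, whereas the paper smashes the whole hypercohomology spectrum with $M$ at once.
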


\begin{proof}
Using Lemma~\ref{lem:smashing}, 
we see that the hypercohomology functor
$$ \HH_c(G;-) = \holim_\Delta \Map^c(G^{\bullet}, -) $$
sends $T$-local spectra to $T$-local spectra.
Therefore, we
just need to check that the map
$$ f_* : \HH_c(G; X) \rightarrow \HH_c(G; Y) $$
is an $M$-local equivalence.  Since $M$ is finite and $f$ is an $M$-local
equivalence, we have 
\begin{eqnarray*}
M \wedge \HH_c(G; X)
& \simeq & \HH_c(G; M \wedge X) \\
& \xrightarrow[\simeq]{(M \wedge f)_*} & \HH_c(G; M \wedge Y) \\
& \simeq & M \wedge \HH_c(G; Y),
\end{eqnarray*}
which implies that $M \wedge f_\ast$ is an
equivalence.
\end{proof}

Theorem~\ref{thm:cochaincomplex} implies the following corollary.

\begin{cor}\label{cor:localfixedpoints}
Suppose that $G$ has finite vcd and that 
$f: X \rightarrow Y$ is a $k$-local equivalence of $T$-local
discrete $G$-spectra.  Then the induced map
$$ f_*: X^{hG} \rightarrow Y^{hG} $$
is a $k$-local equivalence.
\end{cor}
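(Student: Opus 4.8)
The plan is to deduce this directly from the hypercohomology model for homotopy fixed points together with Lemma~\ref{lem:localfixedpoints}. Since $G$ has finite vcd, Theorem~\ref{thm:cochaincomplex} gives, naturally in the discrete $G$-spectrum, equivalences $X^{hG} \simeq \HH_c(G;X)$ and $Y^{hG} \simeq \HH_c(G;Y)$. Hence the map $f_*: X^{hG} \rightarrow Y^{hG}$ fits into a square in the stable homotopy category whose remaining three sides are $f_*: \HH_c(G;X) \rightarrow \HH_c(G;Y)$ and the two natural equivalences of Theorem~\ref{thm:cochaincomplex}; this square commutes.

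First I would apply $k$-localization to this square. Because $X$ and $Y$ are $T$-local, Lemma~\ref{lem:smashing} shows that $\HH_c(G;X)$ and $\HH_c(G;Y)$ are $T$-local, so Lemma~\ref{lem:localfixedpoints} applies and tells us that the $k$-localization of $f_*: \HH_c(G;X) \rightarrow \HH_c(G;Y)$ is an equivalence. The vertical maps remain equivalences after $k$-localization, since $k$-localization is a functor. By the two-out-of-three property in the $k$-local category, the $k$-localization of $f_*: X^{hG} \rightarrow Y^{hG}$ is then an equivalence, which is exactly the statement that $f_*$ is a $k$-local equivalence.

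I do not anticipate a genuine obstacle here; the one point calling for care is the claim that the equivalence of Theorem~\ref{thm:cochaincomplex} is natural in the discrete $G$-spectrum. For this I would inspect its proof: the functor $X \mapsto \holim_\Delta \Gamma^\bullet_G X$ is manifestly functorial, the comparison $X^{hG} = (X_{fG})^G \simeq \holim_\Delta \Gamma^\bullet_G X_{fG}$ of \cite[Thm.~7.4]{Davis} is natural, and the map $\holim_\Delta \Gamma^\bullet_G X \rightarrow \holim_\Delta \Gamma^\bullet_G X_{fG}$ induced by functorial fibrant replacement is natural as well; composing these yields the required naturality.
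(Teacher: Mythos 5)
Your proposal is correct and is exactly the argument the paper intends: the paper gives no explicit proof, stating only that Theorem~\ref{thm:cochaincomplex} (combined with the immediately preceding Lemma~\ref{lem:localfixedpoints}) implies the corollary, and you have filled in precisely that deduction, including the naturality of the hypercohomology comparison, which is the only point requiring care.
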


We end this section by briefly 
explaining the relationship between localized fixed
points, and the right derived functor of fixed points taken with respect
to a localized model structure on discrete $G$-spectra.

Let $\mathcal{C}_k$ be the class of morphisms in $\Sigma\Sp_G$ which are
$k$-local equivalences on the underlying symmetric spectra.
Let $(\Sigma\Sp_G)_k$ be the model category obtained by localizing 
$\Sigma\Sp_G$ at $\mathcal{C}_k$.  Such a localized model category
exists: the arguments of Bousfield \cite{Bousfield} carry over in our
setting to show that the $k$-local equivalences in $\Sigma\Sp_G$ 
is the class of $f$-local
equivalences for some map $f$, and the localized model structure exists
since the model structure on $\Sigma\Sp_G$ is left proper and cellular
(see, for instance, \cite[Thm.~4.1.1]{Hirschhorn}).  The $k$-local model
structure $(\Sigma\Sp_G)_k$ is characterized by:
\begin{itemize}
\item the cofibrations are the underlying cofibrations of symmetric
spectra,
\item the weak equivalences are the underlying $k$-local equivalences of
symmetric spectra,
\item the fibrations are determined.
\end{itemize}
We shall let
$$ \alpha_{G,k,X}: X \rightarrow X_{f_kG} $$
denote a fibrant replacement functor in $(\Sigma\Sp_G)_k$.

The pair of functors $(\mathit{triv}, (-)^G)$ forms a Quillen pair on the
localized model categories
$$ \mathit{triv} : \Sigma\Sp_k \rightleftarrows (\Sigma\Sp_G)_k : (-)^G $$
since $\mathit{triv}$ preserves cofibrations and $k$-local equivalences.
We shall define $(-)^{h_kG}$ to be the Quillen right derived functor of
$(-)^G$ with respect to the $k$-local model structure:
$$ X^{h_kG} = (X_{f_kG})^G. $$

\begin{prop}\label{prop:hkG}
Let $X$ be a discrete $G$-spectrum.
\begin{enumerate}
\item For each open subgroup $U$ contained in $G$, the spectrum $X^{h_kU}$ is $k$-local.
\item The spectrum $X_{f_kG}$ is $T$-local.
\item Suppose $G$ has finite vcd and $X$ is $T$-local.  Then the map
$$ (X^{hG})_k \rightarrow X^{h_kG} $$
is an equivalence.
\end{enumerate}
\end{prop}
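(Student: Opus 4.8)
\emph{Proof sketch.} The plan is to establish the three parts in order, feeding (1) and (2) into (3); only part (2) requires a genuinely new idea, while (1) is formal and (3) is an assembly of results already in hand. For part (1), argue purely from the formalism of left Bousfield localization. For any closed subgroup $U$ of $G$ (in particular an open one) the localized model category $(\Sigma\Sp_U)_k$ exists, by the same argument given in the excerpt for $G$, and $\mathit{triv}$ is left Quillen from $\Sigma\Sp_k$ to $(\Sigma\Sp_U)_k$ since it preserves cofibrations and $k$-local equivalences. Hence its right adjoint $(-)^U$ is right Quillen as a functor $(\Sigma\Sp_U)_k \to \Sigma\Sp_k$, and so carries the fibrant object $X_{f_kU}$ to a fibrant object of $\Sigma\Sp_k$, i.e. to a $k$-local spectrum. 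Thus $X^{h_kU} = (X_{f_kU})^U$ is $k$-local.

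For part (2), the essential point is that a $k$-locally fibrant discrete $G$-spectrum need only be $T$-local, and this is visible through the presentation of a discrete $G$-spectrum as the colimit of its fixed points. For each open normal subgroup $U \trianglelefteq_o G$, the induction functor $\Ind^G_U$ is left Quillen as a functor $(\Sigma\Sp_U)_k \to (\Sigma\Sp_G)_k$: it preserves cofibrations, and since $\Ind^G_U Y \cong (G/U)_+ \wedge Y$ non-equivariantly with $G/U$ finite, it preserves $k$-local equivalences. Therefore $\Res^U_G$ preserves $k$-locally fibrant objects, so $X_{f_kG}$ is $k$-locally fibrant when regarded as a discrete $U$-spectrum, and part (1) (applied with $U$ in place of $G$) shows that $(X_{f_kG})^U$ is $k$-local. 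Since $X_{f_kG}$ is a discrete $G$-spectrum, $X_{f_kG} = \colim_{U \trianglelefteq_o G} (X_{f_kG})^U$, a filtered colimit of $k$-local spectra; as $k$-local spectra are $T$-local and colimits of $T$-local spectra are $T$-local (Lemma~\ref{lem:smashing}(1)), it follows that $X_{f_kG}$ is $T$-local.

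For part (3), combine the hypercohomology description with Lemma~\ref{lem:localfixedpoints}. Since $G$ has finite vcd, Theorem~\ref{thm:cochaincomplex} gives $X^{hG} \simeq \HH_c(G;X)$; applying the same theorem to the discrete $G$-spectrum $X_{f_kG}$—which is fibrant in $\Sigma\Sp_G$ (fibrant objects of a localization are among those of the original structure), so that applying $(-)^G$ to it already computes its homotopy fixed points—gives $X^{h_kG} = (X_{f_kG})^G \simeq \HH_c(G; X_{f_kG})$. Now $X$ is $T$-local by hypothesis, $X_{f_kG}$ is $T$-local by part (2), and the fibrant replacement map $X \to X_{f_kG}$ is a $k$-local equivalence of discrete $G$-spectra, so Lemma~\ref{lem:localfixedpoints} yields an equivalence $\HH_c(G;X)_k \simeq \HH_c(G;X_{f_kG})_k$. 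By part (1), $\HH_c(G;X_{f_kG}) \simeq X^{h_kG}$ is already $k$-local, hence $\HH_c(G;X_{f_kG})_k \simeq X^{h_kG}$. Tracing the natural comparison map through these identifications—a diagram chase using the naturality of the equivalence in Theorem~\ref{thm:cochaincomplex}—it is the composite of equivalences $(X^{hG})_k \simeq \HH_c(G;X)_k \simeq \HH_c(G;X_{f_kG})_k \simeq X^{h_kG}$, which proves the claim.

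The main obstacle I expect is part (2): the naive guess that $X_{f_kG}$ is $k$-local is false, so one must instead extract $T$-locality from the colimit presentation, and this forces one to have part (1) available for open subgroups together with the observation that restriction along an open inclusion is right Quillen for the localized model structures. The verification in part (3) that the three equivalences assemble into \emph{the} natural map, rather than merely an abstract equivalence, is the only other place demanding care.
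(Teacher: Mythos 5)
Your proposal is correct and follows essentially the same route as the paper: part (1) via the right Quillen functor $(-)^U$ on the $k$-local model structures, part (2) via the $k$-local Quillen pair $(\Ind_U^G,\Res_G^U)$ and the presentation $X_{f_kG}\cong\colim_U (X_{f_kG})^U$ as a colimit of $k$-local (hence $T$-local) spectra, and part (3) by applying the hypercohomology comparison to the $k$-local equivalence between the two fibrant replacements. The only cosmetic difference is that the paper runs the comparison in (3) through the zig-zag $X_{fG}\to X_{f_kG}$ and Corollary~\ref{cor:localfixedpoints}, whereas you apply Lemma~\ref{lem:localfixedpoints} directly to $X\to X_{f_kG}$; both yield the same commutative diagram identifying the natural map.
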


\begin{proof}
Observe that the same arguments used in the proof of
Proposition~\ref{prop:iterate} apply to show that the adjunction
$$ \Ind_U^G : (\Sigma\Sp_U)_k \rightleftarrows (\Sigma\Sp_G)_k: \Res_G^U $$
is a Quillen pair with respect to the $k$-local model structures.  In
particular, $\Res_G^U$ preserves $k$-locally fibrant objects, so $X_{f_kG}$
is $k$-locally fibrant as a discrete $U$-spectrum.  Since the fixed point
functor
$$ (-)^U: (\Sigma\Sp_U)_k \rightarrow \Sigma\Sp_k $$
is a right Quillen functor, it follows that
$$ (X_{f_kG})^U = X^{h_kU} $$
is $k$-local.  This establishes (1).
(2) then follows immediately from Lemma~\ref{lem:smashing} since there is an
isomorphism
$$ X_{f_kG} \cong \colim_{U \le_o G} (X_{f_kG})^U. $$
The map in (3) arises from the fact that $X_{f_kG}$ is fibrant as a
discrete $G$-spectrum, hence there is a $k$-local equivalence
$$ X_{fG} \rightarrow X_{f_kG}. $$
Note that if $X$ is assumed to be $T$-local, then $X_{fG}$ must also be
$T$-local, since it is equivalent to $X$.
Consider the following diagram.
$$
\xymatrix{
X^{hG} \ar@{=}[r] \ar[d] &
(X_{fG})^G \ar[r]^{\simeq} \ar[d] &
(X_{fG})^{hG} \ar[d]
\\
X^{h_kG} \ar@{=}[r] &
(X_{f_kG})^G \ar[r]_{\simeq} &
(X_{f_kG})^{hG} 
}
$$
The rightmost vertical arrow is a $k$-local equivalence by
Corollary~\ref{cor:localfixedpoints}.  Therefore the leftmost vertical 
arrow is a
$k$-local equivalence, and the induced morphism
$$ (X^{hG})_k \rightarrow X^{h_kG} $$
is an equivalence.
\end{proof}

\begin{rmk}\label{rmk:klocal}
Although the underlying non-equivariant spectrum of $X_{f_kG}$ is $T$-local,
it is
\emph{not}, in general, $k$-local.  
As pointed out in the proof of
Proposition~\ref{prop:hkG}, the spectrum $X_{f_kG}$ is a filtered colimit of
$k$-local spectra.
\end{rmk}

We record the following proposition, whose proof is identical to the proof
of Proposition~\ref{prop:iterate}.

\begin{prop}\label{prop:localiterate}
Let $X$ be a discrete $G$-spectrum, and suppose that $U$ is an open
subgroup of $G$.
\begin{enumerate}
\item If $U$ is normal in $G$, the 
$U$-fixed point spectrum $(X_{f_kG})^{U}$ is $k$-locally fibrant
as a discrete $G/U$-spectrum.

\item The $k$-locally 
fibrant discrete $G$-spectrum $X_{f_kG}$ is $k$-locally fibrant as a discrete
$U$-spectrum.

\item If $U$ is normal in $G$, the homotopy fixed point spectrum $X^{h_kU}$ is a
$G/U$-spectrum.

\item If $U$ is normal in $G$, 
there is an equivalence $X^{h_kG} \simeq (X^{h_kU})^{h_kG/U}$.
\end{enumerate}
\end{prop}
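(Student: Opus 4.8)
The plan is to repeat, essentially verbatim, the proof of Proposition~\ref{prop:iterate}, with every model structure replaced by its $k$-localization and the word ``fibrant'' replaced throughout by ``$k$-locally fibrant.'' The one preliminary point to set up is that the two adjoint pairs which drove the proof of Proposition~\ref{prop:iterate},
\[ \Res_{G/U}^G : (\Sigma\Sp_{G/U})_k \rightleftarrows (\Sigma\Sp_G)_k : (-)^U, \qquad \Ind_U^G : (\Sigma\Sp_U)_k \rightleftarrows (\Sigma\Sp_G)_k : \Res_G^U, \]
remain Quillen pairs after localizing. Here the $k$-local model structure on $\Sigma\Sp_{G/U}$ exists by the same left-properness and cellularity argument used to construct $(\Sigma\Sp_G)_k$ (recall $G/U$ is finite, since $U$ is open). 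In both adjunctions the left adjoint preserves cofibrations, because cofibrations are unchanged by $k$-localization and were preserved non-equivariantly; and the left adjoint preserves $k$-local weak equivalences, because these are detected on underlying symmetric spectra, where $\Res_{G/U}^G$ is (up to isomorphism) the identity and $\Ind_U^G$ is $G/U_+ \wedge (-)$, a finite wedge --- and finite wedges preserve $k$-local equivalences. (The second of these two facts was already noted in the proof of Proposition~\ref{prop:hkG}.)

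Granting this, I would argue as follows. For (1): since $U$ is normal, the $U$-fixed points of a discrete $G$-spectrum naturally form a discrete $G/U$-spectrum, and $(-)^U$ is right Quillen for the $k$-local structures, hence preserves $k$-locally fibrant objects; therefore $(X_{f_kG})^U$ is $k$-locally fibrant in $(\Sigma\Sp_{G/U})_k$. For (2): $\Res_G^U$ is likewise right Quillen for the $k$-local structures, so it preserves $k$-locally fibrant objects, giving that $X_{f_kG}$ is $k$-locally fibrant as a discrete $U$-spectrum. For (3): the map $\alpha_{G,k,X}: X \to X_{f_kG}$ is a $U$-equivariant trivial cofibration of underlying symmetric spectra, hence a trivial cofibration in $(\Sigma\Sp_U)_k$, and by (2) its target is $k$-locally fibrant there; thus $X^{h_kU} = (X_{f_kG})^U$, which by (1) is a discrete $G/U$-spectrum. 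For (4): chase
\[ X^{h_kG} = (X_{f_kG})^G = \bigl((X_{f_kG})^U\bigr)^{G/U}, \]
and observe that by (1) the discrete $G/U$-spectrum $(X_{f_kG})^U = X^{h_kU}$ is $k$-locally fibrant, so the fibrant-replacement map $(X^{h_kU}) \to (X^{h_kU})_{f_k(G/U)}$ is a $k$-local equivalence between $k$-locally fibrant discrete $G/U$-spectra and hence induces an equivalence on $G/U$-fixed points, yielding $\bigl((X_{f_kG})^U\bigr)^{G/U} \simeq (X^{h_kU})^{h_kG/U}$.

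I do not expect a genuine obstacle here: once the Quillen-pair observation is in hand the rest is formal bookkeeping identical to Proposition~\ref{prop:iterate}. The single point that deserves explicit verification is precisely that observation --- namely that $\Res_{G/U}^G$ and $\Ind_U^G$ continue to preserve weak equivalences after localizing, i.e.\ that they preserve $k$-local equivalences and not merely stable equivalences --- which, as explained above, is immediate from the description of these functors on underlying symmetric spectra.
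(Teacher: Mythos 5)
Your proposal is correct and is exactly what the paper intends: the paper gives no separate argument for Proposition~\ref{prop:localiterate}, stating only that its proof is identical to that of Proposition~\ref{prop:iterate}, and the one point you single out for verification --- that $(\Res_{G/U}^G,(-)^U)$ and $(\Ind_U^G,\Res_G^U)$ remain Quillen pairs for the $k$-local model structures --- is precisely the observation the paper itself records in the proof of Proposition~\ref{prop:hkG}. Your justification (cofibrations are unchanged by localization, and both left adjoints preserve $k$-local equivalences since on underlying spectra they are the identity and a finite wedge, respectively) is the right one.
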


\subsection{Profinite Galois extensions as discrete $G$-spectra}

We first give the definition of a
profinite Galois extension, which is a slight
modification of the notion of a pro-$G$-Galois
extension, due to John Rognes (see
\cite[Section 8.1]{Rognes}).
Let $A$ be a $k$-local cofibrant commutative symmetric ring spectrum, 
let $E$ be a commutative
$A$-algebra, and let $G$ be a profinite group.

\begin{defn}[Profinite Galois extension]\label{def:galois}
The spectrum $E$ is a \emph{(profaithful) $k$-local $G$-Galois extension} of 
$A$ if
\begin{enumerate}
\item there is a directed system of (faithful) 
finite $k$-local $G/U_\alpha$-Galois extensions
$E_\alpha$ of $A$, for $\{U_\alpha\}$ a cofinal 
system of open normal subgroups of $G$;

\item all of the maps $E_\alpha \rightarrow E_\beta$
are $G$-equivariant and are cofibrations of underlying 
commutative $A$-algebras;

\item for $\alpha \le \beta$, letting $K_{\alpha, \beta}$ denote the
quotient $U_\alpha/U_\beta$,
the natural maps $E_\alpha \rightarrow E_\beta^{hK_{\alpha,\beta}}$ are
equivalences; and

\item the spectrum $E$ is the filtered colimit $\colim_\alpha E_\alpha$.
\end{enumerate}
\end{defn}

\begin{rmk}
The spectra $E_\alpha$ are $k$-local, but the spectrum $E$ need not be
$k$-local.  
However, Assumption \ref{assumption} does imply
that
$E$ is
$T$-local.
\end{rmk}

\begin{prop}
The spectrum $E$ in Definition \ref{def:galois} is a 
discrete commutative $G$-$A$-algebra.
\end{prop}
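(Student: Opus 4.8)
The plan is to verify directly the three ingredients in the definition of a discrete commutative $G$-$A$-algebra for $E$: that it is a discrete $G$-spectrum, that it carries a commutative $A$-algebra structure, and that the $G$-action is through $A$-algebra maps and is compatible with the $A$-module structure. All three will be deduced from the corresponding facts for the $E_\alpha$ by passage to the filtered colimit, so the argument is essentially formal except for one point.

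First I would observe that each $E_\alpha$ is itself a discrete commutative $G$-$A$-algebra. Indeed, since the quotient $G/U_\alpha$ is finite, any $G/U_\alpha$-set is automatically discrete, so the finite $k$-local $G/U_\alpha$-Galois extension $E_\alpha$ is a discrete $G/U_\alpha$-spectrum on which, by Definition~\ref{defn:finiteGalois}(1), $G/U_\alpha$ acts through commutative $A$-algebra maps; restricting along the quotient homomorphism $G \to G/U_\alpha$ exhibits $E_\alpha$ as an object of $\Alg_{A,G}$. The structure maps $E_\alpha \to E_\beta$ are $G$-equivariant maps of commutative $A$-algebras by Definition~\ref{def:galois}(2), so $\{E_\alpha\}$ is a filtered diagram in $\Alg_{A,G}$. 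By Lemma~\ref{lem:algebracolimits}, the colimit $E = \colim_\alpha E_\alpha$, computed in the category of spectra, is naturally a commutative $A$-algebra, with each coaugmentation $E_\alpha \to E$ a map of commutative $A$-algebras; since the diagram is $G$-equivariant, $G$ acts on $E$ compatibly with these maps. Because smash products (over $S$ and over $A$) commute with filtered colimits, the $A$-module structure map and the multiplication on $E$ are the filtered colimits of those on the $E_\alpha$, and the $G$-action on $E$ is the colimit of the $G$-actions on the $E_\alpha$. Hence the squares expressing compatibility of the $G$-action with the $A$-module structure, and the identities $g \circ \mu_{E_\alpha} = \mu_{E_\alpha} \circ (g \wedge_A g)$, hold at each stage $\alpha$ and therefore pass to the colimit: $G$ acts on $E$ through commutative $A$-algebra maps, and $E$ is a discrete $G$-$A$-module once we know it is a discrete $G$-spectrum.

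The remaining, and only nonformal, point is precisely that $E$ is a discrete $G$-spectrum. Since the colimit is computed in spectra, the $i$th pointed simplicial set of $E$ is $E_i = \colim_\alpha (E_\alpha)_i$ with the induced $G$-action, and I must show each $E_i$ is a simplicial discrete $G$-set. Given a simplex $z$ of $E_i$, filteredness produces an index $\alpha$ and a simplex $z_\alpha$ of $(E_\alpha)_i$ mapping to $z$; since $(E_\alpha)_i$ is a discrete $G$-set (its stabilizers contain the open subgroup $U_\alpha$), $\Sta_G(z_\alpha)$ is open, and therefore $\Sta_G(z) \supseteq \Sta_G(z_\alpha)$ is open. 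This is the same mechanism as in the proof of Lemma~\ref{lem:X[K]discrete}: a filtered colimit of simplicial discrete $G$-sets, over a system in which the open normal subgroups $U_\alpha$ act trivially on the stage $E_\alpha$, is again a simplicial discrete $G$-set. I expect this stabilizer bookkeeping to be the only genuine content of the proof. Combining it with the previous paragraph, $E$ is a discrete commutative $G$-$A$-algebra. One caveat to note is our standing convention that a filtered colimit of discrete commutative $G$-$A$-algebras is formed after functorial fibrant replacement; but each $(E_\alpha)_{fGA-\Alg}$ is again a discrete commutative $G$-$A$-algebra, and the $G$-equivariant structure maps induce $G$-equivariant maps between them, so the argument above applies verbatim to the diagram $\{(E_\alpha)_{fGA-\Alg}\}$ and yields the same conclusion.
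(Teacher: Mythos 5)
Your proposal is correct and follows the same route as the paper, which simply notes that each $E_\alpha$ is a discrete commutative $G$-$A$-algebra and that such objects are closed under filtered colimits taken in commutative $A$-algebras. Your elaboration of the stabilizer argument (each stabilizer contains some open $U_\alpha$) and the appeal to Lemma~\ref{lem:algebracolimits} are exactly the details the paper leaves implicit.
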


\begin{proof}
Clearly, $E_\alpha$ is a discrete commutative $G$-$A$-algebra.
Discrete commutative
$G$-$A$-algebras are closed under filtered colimits taken in the
category of commutative $A$-algebras.
\end{proof}

\begin{prop}[Rognes {\cite[Sec.~8.1]{Rognes}}]\label{prop:Rognesfuncspec}  
If $E$ is a $k$-local $G$-Galois extension of $A$, 
then there are natural equivalences:
\begin{gather*}
(E \wedge_A E)_k \xrightarrow{\simeq} (\Map^c(G, E))_k, \\
(E[[G]])_k \simeq F_A(E_k,E_k).
\end{gather*}
\end{prop}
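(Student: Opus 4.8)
The plan is to reduce both equivalences to the corresponding finite statements for the $G/U_\alpha$-Galois extensions $E_\alpha$ and then pass to the colimit (resp. homotopy limit), using Assumption~\ref{assumption} (via Lemma~\ref{lem:smashing} and Corollary~\ref{cor:kholim}) to commute $(-)_k$ past the relevant constructions; throughout, filtered colimits are taken on fibrant representatives, as in Section~\ref{sec:alg}. For the first equivalence, I would write $E = \colim_\alpha E_\alpha$, so $E \wedge_A E = \colim_{\alpha,\beta}(E_\alpha \wedge_A E_\beta) \simeq \colim_\gamma (E_\gamma \wedge_A E_\gamma)$, the last equivalence by cofinality of the diagonal in the directed index poset. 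Since smashing with the $S$-module $k$ commutes with filtered colimits, the natural map $\colim_\gamma X_\gamma \to \colim_\gamma (X_\gamma)_k$ is always a $k$-equivalence, hence $(E \wedge_A E)_k \simeq (\colim_\gamma (E_\gamma \wedge_A E_\gamma)_k)_k$; applying the defining equivalence $(E_\gamma \wedge_A E_\gamma)_k \simeq \Map(G/U_\gamma, E_\gamma)$ of the finite Galois extension $E_\gamma$ gives $(E \wedge_A E)_k \simeq (\colim_\gamma \Map(G/U_\gamma, E_\gamma))_k$. Finally, because each $G/U_\gamma$ is finite, $\Map(G/U_\gamma,-)$ commutes with filtered colimits, and an interchange of colimits (again using cofinality of the diagonal) identifies $\colim_\gamma \Map(G/U_\gamma, E_\gamma)$ with $\colim_\gamma \Map(G/U_\gamma, E) = \Map^c(G,E)$; one then checks that this composite agrees with the canonical map, using that the finite comparison maps $E_\gamma \wedge_A E_\gamma \to \Map(G/U_\gamma, E_\gamma)$ are natural in the extension $E_\gamma$.

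For the second equivalence I would compute the two sides separately and match them. On the left, each $E[G/U_\alpha]$ is $T$-local by Lemma~\ref{lem:smashing}, so Corollary~\ref{cor:kholim} gives $(E[[G]])_k = (\holim_\alpha E[G/U_\alpha])_k \simeq \holim_\alpha (E[G/U_\alpha])_k$, and since $G/U_\alpha$ is finite, $(E[G/U_\alpha])_k \simeq (\Map(G/U_\alpha, E))_k \simeq \Map(G/U_\alpha, E_k)$; thus $(E[[G]])_k \simeq \holim_\alpha \Map(G/U_\alpha, E_k)$. On the right, $E_k$ is $k$-local, so $F_A(E_k, E_k) \simeq F_A(\colim_\alpha E_\alpha, E_k) \simeq \holim_\alpha F_A(E_\alpha, E_k)$. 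Now each $E_\alpha$ is finite faithful Galois over $A$, hence dualizable as an $A$-module with $F_A(E_\alpha, A) \simeq E_\alpha$ by Rognes's finite Galois theory \cite{Rognes}, so $F_A(E_\alpha, E_k) \simeq E_\alpha \wedge_A E_k$, which is $k$-local (being a function spectrum into $E_k$) and $k$-equivalent to $E_\alpha \wedge_A E$, hence equivalent to $(E_\alpha \wedge_A E)_k$. It remains to identify $(E_\alpha \wedge_A E)_k$ with $\Map(G/U_\alpha, E_k)$: for $\gamma \ge \alpha$, regarding $E_\gamma$ as an $E_\alpha$-algebra via the identification $E_\alpha \simeq E_\gamma^{hK_{\alpha,\gamma}}$ of Definition~\ref{def:galois}, write $E_\alpha \wedge_A E_\gamma \cong (E_\alpha \wedge_A E_\alpha) \wedge_{E_\alpha} E_\gamma$, so that $(E_\alpha \wedge_A E_\gamma)_k \simeq ((E_\alpha \wedge_A E_\alpha)_k \wedge_{E_\alpha} E_\gamma)_k \simeq (\Map(G/U_\alpha, E_\alpha) \wedge_{E_\alpha} E_\gamma)_k \simeq \Map(G/U_\alpha, E_\gamma)$; taking the filtered colimit over $\gamma \ge \alpha$ and using finiteness of $G/U_\alpha$ yields $(E_\alpha \wedge_A E)_k \simeq \Map(G/U_\alpha, E_k)$. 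Assembling, $F_A(E_k, E_k) \simeq \holim_\alpha \Map(G/U_\alpha, E_k) \simeq (E[[G]])_k$.

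The delicate point is not any homotopical input — commuting $(-)_k$ past filtered colimits and past the relevant homotopy limits is exactly what Assumption~\ref{assumption}, Lemma~\ref{lem:smashing}, and Corollary~\ref{cor:kholim} provide — but the bookkeeping of maps needed to make the (co)limits agree: one must check that the finite-level equivalences are natural in $\alpha$ in the precise sense that the transition maps $F_A(E_\beta, E_k) \to F_A(E_\alpha, E_k)$ induced by the inclusions $E_\alpha \to E_\beta$ correspond, under $F_A(E_\alpha, E_k) \simeq \Map(G/U_\alpha, E_k)$, to the maps induced by the projections $G/U_\beta \to G/U_\alpha$ defining the pro-spectrum $\mathbf{E}[[G]]$. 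Tracking this through requires keeping careful account of the self-duality equivalences $F_A(E_\alpha, A) \simeq E_\alpha$ and their compatibility with the maps $E_\alpha \to E_\beta$; this is the one place where I expect to have to appeal to the finer structure of Rognes's finite Galois theory rather than just its formal consequences, and it is the main obstacle.
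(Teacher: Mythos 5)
The paper does not actually prove this proposition: it is quoted from Rognes \cite[Sec.~8.1]{Rognes} and used as a black box, so there is no in-paper proof to compare against. Your reconstruction is essentially the expected argument (reduce to the finite layers $E_\alpha$, use the finite Galois condition and dualizability there, and commute $(-)_k$ past the filtered colimits and the homotopy limit via Assumption~\ref{assumption}), and several of your intermediate computations do appear verbatim elsewhere in the paper: the identification $(E_\alpha \wedge_A E)_k \simeq (\Map(G/U_\alpha,E))_k$ via $E_\alpha \wedge_A E_\gamma \cong (E_\alpha\wedge_A E_\alpha)\wedge_{E_\alpha}E_\gamma$ is exactly the chain of equivalences in the proof of Lemma~\ref{lem:consistency}, and the duality manipulation $F_A(E_\alpha, -) \simeq (D_A(E_\alpha)\wedge_A -)_k$ is the one underlying Lemma~\ref{lem:weak}. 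One small correction: you invoke ``finite \emph{faithful} Galois'' for the $E_\alpha$, but faithfulness is neither assumed in the statement nor needed — dualizability and the discriminant equivalence $E_\alpha \simeq D_A(E_\alpha)$ hold for non-faithful finite Galois extensions by Proposition~\ref{prop:dualizable}.

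The one genuine gap is the one you flag yourself: to conclude $\holim_\alpha F_A(E_\alpha,E_k)\simeq \holim_\alpha \Map(G/U_\alpha,E_k)$ you must know that your termwise equivalences intertwine the restriction maps $F_A(E_\beta,E_k)\to F_A(E_\alpha,E_k)$ with the projection-induced maps $\Map(G/U_\alpha,E_k)\to\Map(G/U_\beta,E_k)$, and chasing this through the discriminant equivalences is genuinely awkward because $E_\alpha\simeq D_A(E_\alpha)$ is not obviously compatible with the wrong-way maps. The way this paper handles the identical issue in its generalization (Theorem~\ref{thm:funcspec}) is instructive: rather than tracking duality equivalences, one writes down an explicit $G$-equivariant map $\psi_V : (E^{hV})_k[G/U] \to F_A((E^{hU})_k,(E^{hV})_k)$ directly from the group action (Lemma~\ref{lem:xi}), checks once that it is an equivalence at each finite stage (Lemma~\ref{lem:weak}), and then passes to (co)limits; naturality is then automatic because the map is defined globally before any identifications are made. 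Adopting that strategy — define the comparison map $E[G]\to F_A(E,E)$, $e\wedge g\mapsto (x\mapsto e\cdot g(x))$, once and for all, and only then filter — would close the obstacle you identify.
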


\subsection{The consistent hypothesis}

In this subsection, we assume that
$E$ is a $k$-local profinite $G$-Galois extension of
$A$. We recall from the Introduction some terminology:
\begin{itemize}
\item $E$ is \emph{consistent} over $A$ if the map $A
\rightarrow A^\wedge_{k,E}$ is an equivalence; and

\item $E$ has \emph{finite vcd} if the
profinite group $G$ has finite virtual cohomological dimension.
\end{itemize}

\begin{prop}\label{prop:amitsurfixedpoint}
Let $E$ be a $k$-local profinite $G$-Galois extension of $A$ 
of finite vcd.
Then there is a natural equivalence
$$ A^\wedge_{k,E} \simeq (E^{hG})_k, $$
between the $k$-local Amitsur derived completion and the $k$-localization
of the homotopy fixed point
spectrum.
\end{prop}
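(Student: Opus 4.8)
The strategy is to identify both $A^\wedge_{k,E}$ and $(E^{hG})_k$ with the $k$-localization of the hypercohomology spectrum $\HH_c(G;E)$. By definition $A^\wedge_{k,E}$ is the homotopy limit of the cosimplicial spectrum with $s$-th term $(E^{\wedge_A(s+1)})_k$, while $\HH_c(G;E) = \holim_\Delta \Gamma_G^\bullet E$ with $\Gamma_G^s E = \Map^c(G^s,E)$. So the crux is to produce an equivalence of cosimplicial spectra
\[
\{(E^{\wedge_A(s+1)})_k\}_{s\ge 0} \;\simeq\; \{(\Gamma_G^s E)_k\}_{s\ge 0},
\]
and then to commute homotopy limits and localizations appropriately.

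To obtain this cosimplicial equivalence, I would pass to the defining colimit $E = \colim_\alpha E_\alpha$ of Definition~\ref{def:galois}. Smashing commutes with colimits, so $E^{\wedge_A(s+1)} = \colim_\alpha E_\alpha^{\wedge_A(s+1)}$. Each $E_\alpha$ is a finite $k$-local $G/U_\alpha$-Galois extension, and iterating condition~(3) of Definition~\ref{defn:finiteGalois} over $s$ (its $s=1$ instance) identifies, as cosimplicial spectra, the $k$-localized Amitsur cosimplicial spectrum of $E_\alpha$ with the cobar construction computing $\HH_c(G/U_\alpha; E_\alpha)$: $\{(E_\alpha^{\wedge_A(s+1)})_k\}_s \simeq \{\Map((G/U_\alpha)^s, E_\alpha)\}_s$. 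Now take $\colim_\alpha$. Since $(G/U_\alpha)^s$ is finite, $\Map((G/U_\alpha)^s,-)$ commutes with the colimit defining $E$, and cofinality of the $U_\alpha$ gives $\colim_\alpha \Map((G/U_\alpha)^s, E_\alpha) \simeq \Map^c(G^s, E)$. On the other hand, the canonical map $E^{\wedge_A(s+1)} = \colim_\alpha E_\alpha^{\wedge_A(s+1)} \to \colim_\alpha (E_\alpha^{\wedge_A(s+1)})_k$ is an $M$-equivalence, since smashing with the finite spectrum $M$ commutes with the colimit and each localization map is an $M$-equivalence; as all these spectra are $T$-local (Lemma~\ref{lem:smashing} and Assumption~\ref{assumption}), on them $M$-localization and $k$-localization coincide, so this map is a $k$-local equivalence. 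Applying $(-)_k$ and composing, I get $(E^{\wedge_A(s+1)})_k \simeq (\Map^c(G^s, E))_k = (\Gamma_G^s E)_k$, compatibly with the cosimplicial structure maps; this also recovers Proposition~\ref{prop:Rognesfuncspec} at level $s=1$.

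Taking $\holim_\Delta$ then gives $A^\wedge_{k,E} \simeq \holim_\Delta (\Gamma_G^\bullet E)_k$. Since $E$ is $T$-local, each $\Gamma_G^s E = \Map^c(G^s,E)$ is $T$-local (Lemma~\ref{lem:smashing}), so Corollary~\ref{cor:kholim} lets me pull the localization out of the homotopy limit: $\holim_\Delta (\Gamma_G^\bullet E)_k \simeq (\holim_\Delta \Gamma_G^\bullet E)_k = (\HH_c(G;E))_k$. Finally, $E$ is a discrete $G$-spectrum and $G$ has finite vcd, so Theorem~\ref{thm:cochaincomplex} gives $\HH_c(G;E) \simeq E^{hG}$ and hence $(\HH_c(G;E))_k \simeq (E^{hG})_k$; the composite of all these natural equivalences is the asserted one. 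The main obstacle is the cosimplicial identification in the second paragraph: one must check that the Amitsur coface and codegeneracy maps match those of the cobar construction, not merely that the levels are abstractly equivalent — this is the descent content underlying Proposition~\ref{prop:Rognesfuncspec}. A secondary technical point is the repeated use of Assumption~\ref{assumption}: it is precisely the finiteness of $M$ that makes $k$-localization commute with the filtered colimit $\colim_\alpha$ appearing above.
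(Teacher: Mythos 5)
Your proof is correct and follows essentially the same route as the paper: identify the $k$-localized Amitsur cosimplicial object levelwise with $(\Map^c(G^\bullet,E))_k$, pull the localization out of the homotopy limit via Corollary~\ref{cor:kholim}, and apply Theorem~\ref{thm:cochaincomplex}. The only difference is that the paper obtains the levelwise equivalence by simply iterating Proposition~\ref{prop:Rognesfuncspec}, whereas you re-derive it from the colimit presentation $E=\colim_\alpha E_\alpha$ and the finite Galois condition, which amounts to unpacking the proof of that cited proposition.
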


\begin{proof}
By iterating Proposition~\ref{prop:Rognesfuncspec}, the natural map
$$ (\underbrace{E \wedge_A E \wedge_A \cdots \wedge_A E}_{n+1})_k \rightarrow 
(\Map^c(G^{n}, E))_k $$
is an equivalence.  
Totalizing the associated cosimplicial spectra and 
using Corollary~\ref{cor:kholim} and Theorem~\ref{thm:cochaincomplex}, 
we have:
\begin{eqnarray*}
A^\wedge_{k,E} & = & \holim_\Delta (E^{\wedge_A \bullet + 1})_k \\
& \xrightarrow{\simeq} &
\holim_{\Delta} (\Map^c(G^\bullet, E))_k \\
& \simeq & 
(\holim_{\Delta} \Map^c(G^\bullet, E))_k \\
& \simeq & (E^{hG})_k.
\end{eqnarray*}
\end{proof}

\begin{cor}\label{cor:consistent}
Let $E$ be a $k$-local profinite $G$-Galois extension of $A$
of finite vcd.  Then the extension is consistent
if and only if the $A$-algebra unit map
$$ A \rightarrow (E^{hG})_k $$
is an equivalence.
\end{cor}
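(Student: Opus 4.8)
The plan is to obtain Corollary~\ref{cor:consistent} as a formal consequence of Proposition~\ref{prop:amitsurfixedpoint}. That proposition already produces a natural equivalence $A^\wedge_{k,E} \simeq (E^{hG})_k$; what is left is to identify, under this equivalence, the two maps out of $A$: the coaugmentation $A \to A^\wedge_{k,E}$ which defines consistency, and the $A$-algebra unit map $A \to (E^{hG})_k$. Granting this identification, the corollary is immediate: in a composite $A \to A^\wedge_{k,E} \xrightarrow{\simeq} (E^{hG})_k$ whose second arrow is an equivalence, the first arrow is an equivalence if and only if the composite is, so by the two-out-of-three property the coaugmentation is an equivalence exactly when the unit map is.

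The cleanest way I would carry out the identification is to promote the whole picture to commutative $A$-algebras and then invoke the initiality of $A$. Each term $(E^{\wedge_A n+1})_k$ of the $k$-local Amitsur complex is a commutative $A$-algebra, and its coface and codegeneracy maps are maps of commutative $A$-algebras; forming the homotopy limit in commutative $A$-algebras as in the subsection on commutative hypercohomology algebras, $A^\wedge_{k,E}$ becomes a commutative $A$-algebra whose coaugmentation from $A$ is a map of commutative $A$-algebras. On the other side, the machinery of Section~\ref{sec:alg} — in particular Proposition~\ref{prop:Amodcochaincomplex} together with the refinement of Rognes's equivalences of Proposition~\ref{prop:Rognesfuncspec} to algebra maps — makes the chain of equivalences in the proof of Proposition~\ref{prop:amitsurfixedpoint} a chain of equivalences of commutative $A$-algebras, with target the commutative $A$-algebra $(E^{hG})_k$. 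Hence the composite $A \to A^\wedge_{k,E} \xrightarrow{\simeq} (E^{hG})_k$ is a morphism in the homotopy category of commutative $A$-algebras; since $A$ (being cofibrant) is the initial commutative $A$-algebra, this composite is forced to be the $A$-algebra unit, which is exactly the identification we need.

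I expect the only point requiring genuine care — not a real obstacle, given how much is set up in Section~\ref{sec:alg} — to be the bookkeeping that the zig-zag in Proposition~\ref{prop:amitsurfixedpoint} is realized by commutative $A$-algebra maps: the commutation of $k$-localization with homotopy limits (Corollary~\ref{cor:kholim}) and the identification $\HH_c(G;E) \simeq E^{hG}$ (Theorem~\ref{thm:cochaincomplex}) must be tracked as compatible with the $A$-algebra structures. If one prefers not to carry algebra structures through, an equivalent route is to check the commuting triangle directly by naturality, tracing the coaugmentation $A \to (E^{\wedge_A \bullet+1})_k$ of the augmented Amitsur cosimplicial spectrum through Rognes's natural equivalence, past $k$-localization, and through Theorem~\ref{thm:cochaincomplex}; but the initiality argument renders this extra tracing unnecessary.
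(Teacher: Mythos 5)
Your proposal is correct and matches the paper's intent: Corollary~\ref{cor:consistent} is stated as an immediate consequence of Proposition~\ref{prop:amitsurfixedpoint}, with the coaugmentation and the unit map identified under the natural equivalence and the conclusion following by two-out-of-three. The extra care you take in justifying that identification (via initiality of $A$ in commutative $A$-algebras, or by tracing naturality of the zig-zag) is exactly the detail the paper leaves implicit, and both of your routes are sound given the structure set up in Section~\ref{sec:alg}.
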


We shall say that a $k$-local $A$-module $X$ is 
\emph{$k$-locally dualizable} if the map
$$ (D_A(X) \wedge_A X)_k \rightarrow F_A(X,X) $$
is an equivalence.  Here, $D_A(-) = F_A(-,A)$ is the Spanier-Whitehead dual
in the category of $A$-modules.  The following standard properties of
$k$-local dualizability are contained in \cite[Lem.~3.3.2(a),(b)]{Rognes}.

\begin{lem}\label{lem:rognesdualizable}
$\quad$
\begin{enumerate}
\item For $k$-local $A$-modules $X$, $Y$, and $Z$, the natural map
$$ (F_A(X, Y) \wedge_A Z)_k \rightarrow F_A(X, (Y \wedge_A Z)_k) $$
is an equivalence if either $X$ or $Z$ is $k$-locally dualizable.
\item If the $k$-local $A$-module $X$ is $k$-locally dualizable, then
$D_A(X)$ is also $k$-locally dualizable, and the natural map
$$ X \rightarrow D_A(D_A(X)) $$
is an equivalence.
\end{enumerate}
\end{lem}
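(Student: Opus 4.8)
The plan is to deduce both parts from the classical theory of dualizable objects in a closed symmetric monoidal category, applied to the homotopy category $\mathrm{Ho}(\Mod_A^k)$ of $k$-local $A$-modules; this is precisely the content of \cite[Lem.~3.3.2(a),(b)]{Rognes}, and what follows sketches why the formal machinery applies. First I would record that $\mathrm{Ho}(\Mod_A^k)$ is a closed symmetric monoidal category, with tensor product $(-\wedge_A-)_k$, unit $A$ (which is $k$-local by standing hypothesis), and internal function object $F_A(-,-)$, the coherence of $(-\wedge_A-)_k$ being a consequence of Assumption~\ref{assumption} on $k$-localization. The one point worth spelling out is that $F_A(X,Y)$ is already $k$-local whenever $Y$ is: for any $k$-acyclic $A$-module $C$, the $A$-module $C\wedge_A X$ is again $k$-acyclic, so $F_A(C,F_A(X,Y))\simeq F_A(C\wedge_A X,Y)\simeq\ast$, and the $k$-local $A$-modules are precisely those $Z$ with $F_A(C,Z)\simeq\ast$ for every $k$-acyclic $C$. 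Under this identification the canonical map in the statement of (1) is the usual comparison map of the closed symmetric monoidal structure, and ``$X$ is $k$-locally dualizable'' in the sense defined just above the lemma is exactly the assertion that the canonical map $(D_A X\wedge_A X)_k\to F_A(X,X)$ is an isomorphism in $\mathrm{Ho}(\Mod_A^k)$.

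Next I would invoke the classical characterization of dualizability (as in Lewis--May--Steinberger, or Hovey--Palmieri--Strickland, and recorded in this form by Rognes): an object $X$ of a closed symmetric monoidal category is dualizable --- that is, admits a coevaluation $A\to(X\wedge_A D_A X)_k$ and an evaluation $(D_A X\wedge_A X)_k\to A$ satisfying the triangle identities, with dual $D_A X$ --- if and only if the comparison map $(D_A X\wedge_A X)_k\to F_A(X,X)$ is an isomorphism, in which case the more general comparison map $(D_A X\wedge_A Y)_k\to F_A(X,Y)$ is an isomorphism for \emph{every} $k$-local $Y$. Hence ``$k$-locally dualizable'' coincides with ``dualizable in $\mathrm{Ho}(\Mod_A^k)$'', and part (2) is the familiar statement that in any symmetric monoidal category the dual of a dualizable object is dualizable with double dual canonically the original object, the structure maps for $D_A X$ being obtained by interchanging the roles of the evaluation and coevaluation for $X$.

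For part (1): if $X$ is $k$-locally dualizable, then $F_A(X,Y)\simeq(D_A X\wedge_A Y)_k$ and likewise $F_A(X,(Y\wedge_A Z)_k)\simeq(D_A X\wedge_A Y\wedge_A Z)_k$, while $(F_A(X,Y)\wedge_A Z)_k\simeq(D_A X\wedge_A Y\wedge_A Z)_k$ as well, and one checks that the comparison map realizes this identification. If instead $Z$ is $k$-locally dualizable, then by part (2) so is $D_A Z$, with $D_A(D_A Z)\simeq Z$, so for any $k$-local $W$ one has $(W\wedge_A Z)_k\simeq F_A(D_A Z,W)$; applying this with $W=F_A(X,Y)$ on the source, and using $(Y\wedge_A Z)_k\simeq F_A(D_A Z,Y)$ together with the adjunction $F_A(X,F_A(D_A Z,Y))\simeq F_A(X\wedge_A D_A Z,Y)$ on the target, both sides reduce compatibly to $F_A(D_A Z\wedge_A X,Y)$. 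Each of these identifications is a routine diagram chase inside $\mathrm{Ho}(\Mod_A^k)$.

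The only genuinely substantive ingredient is the dualizability criterion invoked in the second paragraph --- in particular the implication that an isomorphism $(D_A X\wedge_A X)_k\to F_A(X,X)$ produces honest evaluation and coevaluation maps satisfying the triangle identities --- and this is exactly what \cite[Lem.~3.3.2]{Rognes} supplies. Once it is granted, everything else is formal manipulation in the closed symmetric monoidal category $\mathrm{Ho}(\Mod_A^k)$, so I anticipate no real obstacle beyond bookkeeping.
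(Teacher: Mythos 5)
Your proposal is correct and takes essentially the same route as the paper, which offers no proof at all and simply records that both statements are contained in \cite[Lem.~3.3.2(a),(b)]{Rognes}; your write-up is just an unpacking of that citation via the standard theory of dualizable objects in the closed symmetric monoidal homotopy category of $k$-local $A$-modules. Nothing in your sketch conflicts with the paper's intent, and the one substantive input you flag (that the equivalence $(D_A(X)\wedge_A X)_k \to F_A(X,X)$ yields genuine evaluation/coevaluation data) is indeed exactly what the cited lemma of Rognes supplies.
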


We note the following useful consequence of $k$-local dualizability
which makes use of Assumption~\ref{assumption}.

\begin{lem}\label{lem:holimdualizable}
Suppose that $X$ is a $k$-local $A$-module which is $k$-locally dualizable,
and that $\{ Y_i \}$ is a diagram of $T$-local $A$-modules.  Then the
natural map
$$ (X \wedge_A \holim_i Y_i)_k \rightarrow (\holim_i X \wedge_A Y_i)_k $$
is an equivalence.
\end{lem}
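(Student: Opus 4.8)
The plan is to use $k$-local dualizability to replace the functor $X\wedge_A(-)$, after $k$-localization, by the function spectrum $F_A(D_A(X),-)$, which commutes with homotopy limits; the remaining work is to handle the fact that $(-)_k$ itself does \emph{not} commute with $\holim$.

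First I would record two preliminary observations. (i) For any $T$-local $A$-module $Z$, the $A$-module $X\wedge_A Z$ is again $T$-local: realizing $X\wedge_A Z$ as the colimit of the two-sided bar construction with $n$-simplices $X\wedge A^{\wedge n}\wedge Z$, each such term is $T$-local by Lemma~\ref{lem:smashing}(2) (as $X$ is $T$-local), so the conclusion follows from Lemma~\ref{lem:smashing}(1). In particular $\holim_i Y_i$ (which is $T$-local, since local objects are closed under homotopy limits) and all of $X\wedge_A Y_i$ and $X\wedge_A\holim_i Y_i$ are $T$-local. (ii) If $g\colon Z\to Z'$ is a $k$-local equivalence between $T$-local $A$-modules, then $X\wedge_A g$ is a $k$-local equivalence: since $Z$ and $Z'$ are $T$-local, $g$ being a $k$-local equivalence forces $M\wedge g$ to be an equivalence, hence so is $X\wedge_A(M\wedge g)\cong M\wedge(X\wedge_A g)$; as its source and target are $T$-local by (i), this says precisely that $X\wedge_A g$ is a $k$-local equivalence.

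Next I would combine these with dualizability. By Corollary~\ref{cor:kholim} the canonical map $\holim_i Y_i\to\holim_i(Y_i)_k$ is a $k$-local equivalence of $T$-local $A$-modules, so (ii) gives an equivalence $(X\wedge_A\holim_i Y_i)_k\simeq (X\wedge_A\holim_i(Y_i)_k)_k$. It therefore suffices to identify $(X\wedge_A Z)_k$ with $(\holim_i X\wedge_A Y_i)_k$, where $Z:=\holim_i(Y_i)_k$ is a $k$-local $A$-module. Put $W:=D_A(X)$; by Lemma~\ref{lem:rognesdualizable}(2), $W$ is $k$-locally dualizable and $X\simeq D_A(W)=F_A(W,A)$. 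Applying Lemma~\ref{lem:rognesdualizable}(1) to the triple $(W,A,Z)$ yields
$$ (X\wedge_A Z)_k \simeq (F_A(W,A)\wedge_A Z)_k \simeq F_A(W,(A\wedge_A Z)_k) \simeq F_A(W,Z), $$
and since $F_A(W,-)$ preserves homotopy limits,
$$ F_A(W,Z) = F_A\bigl(W,\holim_i(Y_i)_k\bigr) \simeq \holim_i F_A(W,(Y_i)_k). $$
Using Lemma~\ref{lem:rognesdualizable}(1) once more with $(Y_i)_k$ in place of $Z$, followed by observation (ii) for the $k$-local equivalence $Y_i\to(Y_i)_k$, identifies $F_A(W,(Y_i)_k)$ with $(X\wedge_A(Y_i)_k)_k\simeq(X\wedge_A Y_i)_k$. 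Finally each $X\wedge_A Y_i$ is $T$-local by (i), so Corollary~\ref{cor:kholim} gives $\holim_i(X\wedge_A Y_i)_k\simeq(\holim_i X\wedge_A Y_i)_k$. Stringing these equivalences together proves the claim; a routine check that each equivalence used is the natural one identifies the composite with the natural comparison map in the statement.

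The step I expect to be the main obstacle is the bookkeeping forced by the composite localization $(-)_k=((-)_T)_M$: homotopy limits commute with $(-)_M$ (Lemma~\ref{lem:Mholim}) but not with $(-)_T$, so one cannot pull $(-)_k$ through $\holim_i$ directly. Observation (ii) is the device that circumvents this — it lets us first replace $\{Y_i\}$ by the $k$-local diagram $\{(Y_i)_k\}$, and only afterwards bring in dualizability via Lemma~\ref{lem:rognesdualizable}.
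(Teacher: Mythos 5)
Your proof is correct and follows essentially the same route as the paper's: replace $X$ by $D_A(D_A(X))$, convert the smash product into $F_A(D_A(X),-)$ via Lemma~\ref{lem:rognesdualizable}, commute the function spectrum past the homotopy limit, and use Corollary~\ref{cor:kholim} to interchange $(-)_k$ with $\holim$ on the $T$-local diagrams. Your preliminary observations (i) and (ii) merely make explicit the $T$-locality and $k$-local-equivalence-preservation facts that the paper's chain of equivalences uses implicitly.
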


\begin{proof}
The result follows from the following chain of equivalences:
\begin{align*}
(X \wedge_A \holim_i Y_i)_k
& \simeq (D_A(D_A(X)) \wedge_A \holim_i Y_i)_k \\
& \simeq (D_A(D_A(X)) \wedge_A (\holim_i Y_i)_k)_k \\
& \simeq F_A(D_A(X), (\holim_i Y_i)_k) \\
& \simeq F_A(D_A(X), \holim_i (Y_i)_k) \\
& \simeq \holim_i F_A(D_A(X), (Y_i)_k) \\
& \simeq \holim_i (D_A(D_A(X)) \wedge_A (Y_i)_k)_k \\
& \simeq \holim_i (D_A(D_A(X)) \wedge_A Y_i)_k \\
& \simeq (\holim_i X \wedge_A Y_i)_k,
\end{align*}
where the fourth and 
last equivalences follow from Corollary~\ref{cor:kholim}.
\end{proof}

We shall repeatedly use the following dualizability result
\cite[Props.~6.2.1, 6.4.7]{Rognes}.

\begin{prop}\label{prop:dualizable}
If $E$ is a \emph{finite} $k$-local Galois extension of $A$ (not
required to be faithful), then $E$ is a
$k$-locally dualizable $A$-module.  Also,
there is a natural \emph{discriminant} map (in the stable homotopy
category)
$$ E \rightarrow D_A(E), $$
which is an equivalence.
\end{prop}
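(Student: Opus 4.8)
The plan is to manufacture the dualizing data explicitly from the Galois isomorphism, so that faithfulness is never used: the point of the parenthetical ``not required to be faithful'' is exactly that one cannot detect $k$-local equivalences of $A$-modules by smashing with $E$, and so every duality axiom has to be checked by hand rather than by descent. First I would record the basic computation. By condition~(3) of Definition~\ref{defn:finiteGalois} (equivalently, the first equivalence of Proposition~\ref{prop:Rognesfuncspec}), and since $G$ is finite so that $\Map(G,E)=\prod_{g\in G}E$ is already $k$-local, there is an equivalence $(E\wedge_A E)_k\simeq\Map(G,E)\simeq\bigvee_{g\in G}E$. Via either unit $\eta_L,\eta_R\colon E\to (E\wedge_A E)_k$ this is a finite free $E$-module, with $E$-basis the indicator functions $\{\chi_g\}$, and the indicator function of $\{e\}\subset G$ defines a section $s\colon E\to(E\wedge_A E)_k$ of the multiplication $\mu$ that is a map of $E$-bimodules (a separability idempotent, since $\mu s=\mathrm{id}_E$).

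Next I would use this to build the candidate duality datum. Let $\{\chi_g^\vee\}$ denote the $E$-linear dual basis, living in the $E$-linear dual of $(E\wedge_A E)_k$, and form the class $\sum_{g\in G}\chi_g\otimes\chi_g^\vee$; it is independent of the choice of basis and is $G$-invariant for the diagonal action. A finite $k$-local Galois extension is automatically consistent --- $A\simeq E^{hG}=\holim_\Delta\Map(G^\bullet,E)$, and by iterating condition~(3), Corollary~\ref{cor:kholim}, and Theorem~\ref{thm:cochaincomplex} this is $k$-locally $\holim_\Delta(E^{\wedge_A\bullet+1})_k=A^\wedge_{k,E}$ --- so this homotopy fixed point presentation of $A$ lets the $G$-invariant class descend to a coevaluation map $\mathrm{coev}\colon A\to(E\wedge_A D_A E)_k$. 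Paired with the canonical evaluation $\mathrm{ev}\colon (D_A E\wedge_A E)_k\to A$, this is the proposed dual pair exhibiting $D_A E$ as the $k$-local $A$-linear dual of $E$; concretely it is the standard argument that a finite separable algebra is dualizable, the Galois isomorphism being what supplies the finite $E$-basis $k$-locally.

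The main work --- and the step I expect to be the real obstacle --- is to verify the triangle identities for $(\mathrm{ev},\mathrm{coev})$, which simultaneously proves that the canonical map $(D_A E\wedge_A E)_k\to F_A(E,E)$ is an equivalence, i.e.\ that $E$ is $k$-locally dualizable. The two composites are $\pi_\ast E$-linear self-maps of the $k$-local spectra $E$ and $D_A E$, so it suffices to check them on homotopy groups; transporting through the Galois isomorphism they reduce to the elementary dual-basis identities for the split \'etale $E$-algebra $\prod_{g\in G}E$ over $E$. The delicate points are that the ``explicit basis'' constructions live in the homotopy category and their $A$-linearity relies on the descent/consistency input above, and that $(-)_k$ must be commuted past the homotopy limits defining $A^\wedge_{k,E}$ and $D_A E$ --- which is precisely what Assumption~\ref{assumption}, through Corollary~\ref{cor:kholim} and Lemma~\ref{lem:rognesdualizable}, is in force to license.

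Finally, for the discriminant map: once $E$ is known to be $k$-locally dualizable, $D_A E$ is an $E$-module and $D_A D_A E\simeq E$ by Lemma~\ref{lem:rognesdualizable}(2), and the discriminant map $E\to D_A E$ is the $A$-module map adjoint to the trace pairing $E\wedge_A E\xrightarrow{\mu}E\xrightarrow{\mathrm{tr}}A$, where $\mathrm{tr}$ is the categorical trace of the finite group action (classically $\sum_{g\in G}g$). Using the dualizing data already constructed, I would identify this map with the transpose of $\mathrm{coev}$ under $D_A D_A E\simeq E$, i.e.\ with the trace form of $\prod_{g\in G}E$ over $E$ descended to $A$; since the Gram matrix of that form in the basis $\{\chi_g\}$ is the identity, the discriminant map is a $k$-equivalence. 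I expect the only friction in this last step to be bookkeeping of the various canonical identifications, the genuinely new content having been absorbed into the construction of $\mathrm{coev}$.
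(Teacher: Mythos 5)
The paper does not prove this proposition; it is quoted directly from Rognes \cite[Props.~6.2.1, 6.4.7]{Rognes}, so the comparison below is with Rognes's argument. Your opening computation is correct and is the right starting point: $(E\wedge_A E)_k\simeq\Map(G,E)\simeq\bigvee_{g\in G}E$, and the indicator function $\chi_e$ is the separability idempotent. The construction of the coevaluation is where the argument breaks. The class $\sum_g\chi_g\otimes\chi_g^\vee$ is a dual-basis element for $(E\wedge_A E)_k$ as an $E$-module; it witnesses the (trivial) dualizability of $E\wedge_A E$ over $E$, not of $E$ over $A$. Passing from the former to the latter is descent of dualizability along $A\to E$, which is exactly the step that faithfulness would license and which the parenthetical in the statement forbids. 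Moreover, the proposed mechanism --- ``the $G$-invariant class descends along $A\simeq E^{hG}$'' --- is not a valid operation: a $G$-fixed element of $\pi_0$ of a spectrum with $G$-action is only a class in $E_2^{0,0}$ of the descent spectral sequence, and lifting it to $\pi_0$ of the homotopy fixed points, let alone identifying those homotopy fixed points with $(E\wedge_A D_AE)_k$, is unaddressed; the natural identification $E\wedge_A D_AE\simeq D_E(E\wedge_A E)$ you would need to even name the target is an instance of Lemma~\ref{lem:rognesdualizable}(1) and already presupposes dualizability, so the route is circular. The same problem recurs when you verify the triangle identities ``by transporting through the Galois isomorphism'': that transport is base change along $A\to E$, and without faithfulness an identity checked after smashing with $E$ over $A$ cannot be pulled back to $A$.

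The repair, which is Rognes's proof, works entirely over $A$ and needs no descent. A coevaluation $A\to (E\wedge_A E)_k$ of $A$-modules is nothing but an element of $\pi_0((E\wedge_A E)_k)\cong\Map(G,\pi_0E)$; take it to be $\chi_e$ itself, the separability idempotent you already wrote down. For the evaluation take $\epsilon=\mathrm{tr}\circ\mu\colon (E\wedge_A E)_k\to E\to A$, where $\mathrm{tr}\colon E\simeq(G_+\wedge E)_{hG}\to E_{hG}\xrightarrow{N}E^{hG}\simeq A$ uses the norm map, which exists for any finite group action: faithfulness is equivalent to the norm being a $k$-local equivalence (Remark~\ref{rmk:Hilbert90}), not to its existence. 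Writing $\chi_e=\sum_i x_i\otimes y_i$, the one triangle identity needed is the classical dual-basis computation
$$ \sum_i x_i\,\mathrm{tr}(y_i b)=\sum_{g\in G}\Bigl(\sum_i x_i\,g(y_i)\Bigr)g(b)=b, $$
carried out on homotopy. This datum simultaneously exhibits $E$ as dualizable and identifies $D_AE$ with $E$ via $b\mapsto\mathrm{tr}(b\cdot-)$, which is the discriminant equivalence; your description of the discriminant as the adjoint of the trace form is correct, but it only becomes a proof once the duality datum is produced this way.
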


Given a $k$-local profinite $G$-Galois extension 
$E = \colim_\alpha E_\alpha$ of $A$, each of the spectra $E_\alpha$
carries a
$G$-action, where the subgroup $U_\alpha$ acts trivially on $E_\alpha$.
Since the maps
$$ E_\alpha \rightarrow E_\beta $$
are $G$-equivariant, each of the maps
$$ E_\alpha \rightarrow \colim_\beta E_\beta = E $$
is $G$-equivariant.  Since the subgroup $U_\alpha$ acts trivially on
$E_\alpha$, we get an induced $G/U_\alpha$-equivariant map
$$ E_\alpha \rightarrow E^{U_\alpha} \rightarrow (E_{fGA-\mathrm{Alg}})^{U_\alpha}
\simeq E^{hU_\alpha}, $$
where the last equivalence follows from Proposition~\ref{prop:iterate}.

Being consistent implies the following consistency result.

\begin{lem}\label{lem:consistency}
Suppose that $E = \colim_\alpha E_\alpha$ 
is a consistent $k$-local profinite $G$-Galois extension of finite vcd.  
Then for each $\alpha$, the
natural $G/U_\alpha$-equivariant map
$$ E_\alpha \rightarrow ((E_{fGA-\mathrm{Alg}})^{U_\alpha})_k \simeq (E^{hU_\alpha})_k $$
is an equivalence.
\end{lem}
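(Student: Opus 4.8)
The plan is to realize $E$, equipped with its restricted $U_\alpha$-action, as a $k$-local profinite $U_\alpha$-Galois extension of $E_\alpha$, and then to reduce the assertion to the consistency of that extension by means of Proposition~\ref{prop:amitsurfixedpoint}.

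First I would check that $E$ is a profaithful $k$-local profinite $U_\alpha$-Galois extension of $E_\alpha$ of finite vcd. Since $U_\alpha$ is open and normal in $G$, the family $\{U_\beta\}_{\beta\ge\alpha}$ is a cofinal system of open normal subgroups of $U_\alpha$. For each $\beta\ge\alpha$ the extension $E_\beta/A$ is a faithful finite $G/U_\beta$-Galois extension, so Rognes's fundamental theorem of Galois theory for finite faithful extensions \cite{Rognes} shows that $E_\beta$ is a faithful $(U_\alpha/U_\beta)$-Galois extension of $E_\beta^{h(U_\alpha/U_\beta)}$, and Definition~\ref{def:galois}(3) identifies $E_\beta^{h(U_\alpha/U_\beta)}$ with $E_\alpha$. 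Passing to the colimit over $\beta\ge\alpha$ exhibits $E=\colim_\beta E_\beta$ as the desired profaithful extension; it has finite vcd because $U_\alpha$ is a closed subgroup of the finite-vcd group $G$ (remark following Theorem~\ref{thm:Galois}). Applying Proposition~\ref{prop:amitsurfixedpoint} to $E/E_\alpha$ then produces a natural equivalence $(E_\alpha)^\wedge_{k,E}\simeq(E^{hU_\alpha})_k$ under $E_\alpha$; this identifies the map of the statement with the coaugmentation of the $k$-local Amitsur derived completion of $E_\alpha$ along $E$, so it suffices to prove that $E$ is consistent over $E_\alpha$.

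To obtain this from consistency over $A$, I would compare the two Amitsur completions. Combining Proposition~\ref{prop:amitsurfixedpoint} (for $E/A$), the iterated homotopy fixed point equivalence of Proposition~\ref{prop:iterate}(4), and the fact that homotopy fixed points for the finite group $G/U_\alpha$ commute with $k$-localization of $T$-local spectra (Corollary~\ref{cor:localfixedpoints} and Proposition~\ref{prop:hkG}; here $E^{hU_\alpha}$ is $T$-local by Lemma~\ref{lem:smashing} and Theorem~\ref{thm:cochaincomplex}), one gets
\[ A^\wedge_{k,E}\ \simeq\ (E^{hG})_k\ \simeq\ \bigl((E^{hU_\alpha})^{h(G/U_\alpha)}\bigr)_k\ \simeq\ \bigl((E^{hU_\alpha})_k\bigr)^{h(G/U_\alpha)}, \]
and the right-hand side is $\bigl((E_\alpha)^\wedge_{k,E}\bigr)^{h(G/U_\alpha)}$ by the equivalence of the previous paragraph. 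Since $E$ is consistent over $A$, the left-hand side is $A$, so $\bigl((E_\alpha)^\wedge_{k,E}\bigr)^{h(G/U_\alpha)}\simeq A$. Now the coaugmentation $\iota_\alpha\colon E_\alpha\to(E_\alpha)^\wedge_{k,E}$ is a $G/U_\alpha$-equivariant map of commutative $A$-algebras, and since $A$ is initial among commutative $A$-algebras the composite of $\iota_\alpha^{h(G/U_\alpha)}$ with the equivalence $A\xrightarrow{\simeq}E_\alpha^{h(G/U_\alpha)}$ of Definition~\ref{defn:finiteGalois}(2) agrees with the $A$-algebra structure map of $\bigl((E_\alpha)^\wedge_{k,E}\bigr)^{h(G/U_\alpha)}$, which is an equivalence by consistency over $A$; hence $\iota_\alpha^{h(G/U_\alpha)}$ is an equivalence.

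It remains to pass from $\iota_\alpha^{h(G/U_\alpha)}$ being an equivalence to $\iota_\alpha$ being one, and for this I would invoke faithful Galois descent for the finite extension $E_\alpha/A$. Because $E_\alpha$ is a faithful finite $(G/U_\alpha)$-Galois extension of $A$, the Tate construction $(E_\alpha^{t(G/U_\alpha)})_k$ vanishes (Remark~\ref{rmk:Hilbert90}); together with $k$-local faithfulness of $E_\alpha/A$ this implies that $(-)^{h(G/U_\alpha)}$ reflects $k$-local equivalences between $G/U_\alpha$-equivariant $E_\alpha$-modules, since for such a module $N$ one has $N\simeq(E_\alpha\wedge_A N^{h(G/U_\alpha)})_k$ $k$-locally. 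Applying this to the cofiber of $\iota_\alpha$, we conclude that $\iota_\alpha$ is a $k$-local equivalence, and as both its source and target are $k$-local it is an equivalence; thus $E$ is consistent over $E_\alpha$ and the lemma follows. The main obstacle is this last step --- making the faithful Galois descent statement precise (equivalently, identifying exactly when $(-)^{h(G/U_\alpha)}$ reflects $k$-local triviality of equivariant $E_\alpha$-modules) --- together with the bookkeeping needed to keep every intermediate equivalence $k$-local, which throughout rests on Assumption~\ref{assumption} via Corollary~\ref{cor:kholim}, Corollary~\ref{cor:localfixedpoints}, and Lemma~\ref{lem:holimdualizable}.
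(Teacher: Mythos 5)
Your route is genuinely different from the paper's, and it has a gap that matters: both of its load-bearing steps require faithfulness, which is not among the hypotheses of the lemma. To exhibit $E$ as a profinite $U_\alpha$-Galois extension of $E_\alpha$ you invoke Rognes's fundamental theorem to make each $E_\beta$ a finite $U_\alpha/U_\beta$-Galois extension of $E_\alpha$; that theorem (and the paper's own use of it in Proposition~\ref{prop:independent}) is stated only for \emph{faithful} extensions. Your closing step is again a faithfulness statement: that $(-)^{h(G/U_\alpha)}$ reflects $k$-local equivalences of semilinear $G/U_\alpha$-equivariant $E_\alpha$-modules via $N\simeq(E_\alpha\wedge_A N^{h(G/U_\alpha)})_k$. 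This effective-descent claim is proved nowhere in the paper, and even granting it for coherently equivariant modules you would still need to check that the cofiber of $\iota_\alpha$ is such an object --- in particular that the $U_\alpha$-action on the Amitsur complex $(E^{\wedge_{E_\alpha}\bullet+1})_k$, which is nontrivial on each term since $U_\alpha$ acts nontrivially on $E$, genuinely descends to a $G/U_\alpha$-action on $(E_\alpha)^\wedge_{k,E}$. You flag this as "the main obstacle," but it is the crux of the argument rather than bookkeeping. There is also a softer issue in the middle step: knowing that source and target of $\iota_\alpha^{h(G/U_\alpha)}$ are both equivalent to $A$ only identifies the map after checking that every equivalence in your chain is a map of commutative $A$-algebras under $A$, which is plausible but not free.

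The paper's proof avoids faithfulness entirely and is a direct computation. It first shows $(E_\alpha\wedge_A E)_k\simeq(\Map(G/U_\alpha,E))_k$ by writing $E=\colim_{\beta\ge\alpha}E_\beta$ and using only the finite Galois condition for $E_\alpha/A$. It then smashes the consistency equivalence $A\xrightarrow{\simeq}(E^{hG})_k$ with $E_\alpha$ over $A$, uses that $E_\alpha$ is $k$-locally dualizable --- Proposition~\ref{prop:dualizable} explicitly does not require faithfulness --- to commute $E_\alpha\wedge_A(-)$ past the homotopy limit $\holim_\Delta\Map^c(G^\bullet,E)$ via Lemma~\ref{lem:holimdualizable}, and concludes with Shapiro's Lemma: $(\Map(G/U_\alpha,E)^{hG})_k\simeq(E^{hU_\alpha})_k$. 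If you add the profaithful hypothesis and fully justify the descent statement, your argument would prove a weaker version of the lemma; as written it does not prove the lemma as stated.
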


\begin{proof}
Since $E_\alpha$ is a $k$-local $G/U_\alpha$-Galois extension of $A$,
we have a chain of equivalences:
\begin{align*}
(E_\alpha \wedge_A E)_k 
& \simeq (E_\alpha \wedge_A \colim_{\beta \ge \alpha} E_\beta)_k \\
& \simeq (\colim_{\beta \ge \alpha} 
((E_{\alpha} \wedge_A E_{\alpha})\wedge_{E_\alpha} E_\beta))_k \\
& \simeq (\colim_{\beta \ge \alpha} 
(\Map(G/U_\alpha, E_{\alpha})\wedge_{E_\alpha} E_\beta))_k \\
& \simeq (\colim_{\beta \ge \alpha} \Map(G/U_\alpha, E_{\beta}))_k \\
& \simeq (\Map(G/U_\alpha, E))_k,
\end{align*}
where the $G$-action on the factor $E$
in $(E_\alpha \wedge_A E)_k$ corresponds
to the conjugation action on
$(\mathrm{Map}(G/U_\alpha,E))_k$.
By Corollary~\ref{cor:consistent}, the natural map
\begin{equation}\label{eq:consistency}
A \rightarrow (E^{hG})_k
\end{equation}
is an equivalence.
Smashing (\ref{eq:consistency}) over $A$ with $E_\alpha$, using
Theorem~\ref{thm:cochaincomplex},
employing the fact that $E_\alpha$ is a $k$-locally dualizable $A$-module
(Proposition~\ref{prop:dualizable} and Lemma~\ref{lem:holimdualizable}), 
and applying Corollary~\ref{cor:kholim} and Shapiro's
Lemma (\ref{lem:Shapiro}), we have the following equivalences:
\begin{align*}
E_\alpha 
& \simeq (E_\alpha \wedge_A A)_k \\
& \simeq (E_\alpha \wedge_A E^{hG})_k \\
& \simeq (E_\alpha \wedge_A \holim_\Delta \Map^c(G^\bullet, E))_k \\
& \simeq (\holim_\Delta \Map^c(G^\bullet, E_\alpha \wedge_A E))_k \\
& \simeq \holim_\Delta \Map^c(G^\bullet, E_\alpha \wedge_A E)_k \\
& \simeq \holim_\Delta \Map^c(G^\bullet, \Map(G/U_\alpha, E))_k \\
& \simeq (\holim_\Delta \Map^c(G^\bullet, \Map(G/U_\alpha, E)))_k \\
& \simeq (\Map(G/U_\alpha, E)^{hG})_k \\
& \simeq (E^{hU_\alpha})_k.
\end{align*}
\end{proof}

Adding the profaithful hypothesis allows us to expand the particular system
$\{ U_\alpha \}$ of open normal subgroups of $G$ to the collection of
all open normal subgroups of $G$.

\begin{prop}\label{prop:independent}
Let $E$ be a consistent profaithful $k$-local profinite $G$-Galois 
extension of $A$ of finite vcd.
\begin{enumerate}
\item For each
open normal subgroup $U$ of $G$, 
$(E^{hU})_k$ is a faithful $k$-local $G/U$-Galois extension of $A$.

\item
If $U \le V$ are a pair of open subgroups of $G$ with $U$ normal in $V$, then
$(E^{hU})_k$ is a faithful $k$-local $V/U$-Galois extension of $(E^{hV})_k$.
\end{enumerate}
\end{prop}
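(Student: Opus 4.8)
The plan is to reduce both statements to Rognes's fundamental theorem of Galois theory for \emph{finite} faithful $k$-local Galois extensions, by identifying each localized homotopy fixed point spectrum $(E^{hU})_k$ with a homotopy fixed point spectrum of one of the finite Galois extensions $E_\beta$ making up $E$.

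First, note that (1) is the special case $V = G$ of (2): the group $G$ is an open normal subgroup of itself, ``$U$ normal in $V=G$'' is the hypothesis of (1), $V/U = G/U$, and $(E^{hG})_k \simeq A$ by consistency (Corollary~\ref{cor:consistent}); so it suffices to prove (2). Fix open subgroups $U \trianglelefteq V$ of $G$. Since $E$ has finite vcd, $G$ — and hence the closed subgroups $U$ and $V$ — has finite vcd. Because the $U_\alpha$ form a neighborhood basis of $e$ consisting of open normal subgroups of $G$ and $U$ is open, we may choose $\beta$ with $U_\beta \subseteq U$; then $U_\beta$ is open and normal in $G$, hence open and normal in each of $U$ and $V$, and $U/U_\beta \trianglelefteq V/U_\beta$ are finite subgroups of the finite group $G/U_\beta$.

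The main step is to establish $V/U$-equivariant equivalences of commutative $A$-algebras
\[ (E^{hU})_k \simeq E_\beta^{h(U/U_\beta)}, \qquad (E^{hV})_k \simeq E_\beta^{h(V/U_\beta)}, \]
compatibly with the structure maps induced by $U \le V$ and $U/U_\beta \le V/U_\beta$. I would prove the first (the second being identical, with $U$ replaced by $V$, using only that $U_\beta$ is open normal in $V$) by the chain
\[ (E^{hU})_k \simeq E^{h_kU} \simeq (E^{h_kU_\beta})^{h_k(U/U_\beta)} \simeq E_\beta^{h_k(U/U_\beta)} \simeq E_\beta^{h(U/U_\beta)}. \]
Here the first equivalence is Proposition~\ref{prop:hkG}(3) ($E$ is $T$-local and $U$ has finite vcd); the second is the $k$-local iterated fixed point formula of Proposition~\ref{prop:localiterate}(4), applied to the profinite group $U$ with its open normal subgroup $U_\beta$; the third uses $E^{h_kU_\beta} \simeq (E^{hU_\beta})_k \simeq E_\beta$, again by Proposition~\ref{prop:hkG}(3) together with the consistency statement of Lemma~\ref{lem:consistency}; and the last holds because $E_\beta^{h(U/U_\beta)}$, being the homotopy fixed point spectrum of a $k$-local spectrum under a finite group, is $k$-local (a homotopy limit of finite products of $k$-local spectra), so that by Proposition~\ref{prop:hkG}(3) again $E_\beta^{h_k(U/U_\beta)} \simeq (E_\beta^{h(U/U_\beta)})_k \simeq E_\beta^{h(U/U_\beta)}$. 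Granting these identifications, the conclusion is immediate from Rognes's Galois correspondence: $E_\beta$ is a \emph{faithful} finite $k$-local $G/U_\beta$-Galois extension of $A$ — this is where the profaithfulness of $E$ enters, via condition (1) of Definition~\ref{def:galois} — and $U/U_\beta$ is a normal subgroup of the subgroup $V/U_\beta \le G/U_\beta$, so $E_\beta^{h(U/U_\beta)}$ is a faithful $k$-local $(V/U_\beta)/(U/U_\beta) \cong V/U$-Galois extension of $E_\beta^{h(V/U_\beta)}$. Transporting along the equivalences above proves (2); taking $V = G$ and using $(E^{hG})_k \simeq A$ proves (1).

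The step I expect to be the main obstacle is the bookkeeping in the main step: each of the cited equivalences ($(-)_k$ versus homotopy fixed points, $k$-local iterated homotopy fixed points, and Lemma~\ref{lem:consistency}) is a priori an equivalence of underlying spectra, and one must check, using the comparisons of Section~\ref{sec:alg}, that it can be promoted to an equivalence in the category of discrete equivariant commutative $A$-algebras and is compatible with the relevant change-of-group functors — so that the structure transported from $E_\beta^{h(U/U_\beta)}$ is precisely the $V/U$-Galois structure on $(E^{hU})_k$ referred to in the statement. A secondary point requiring care is that Propositions~\ref{prop:hkG}(3) and~\ref{prop:localiterate}(4) must be applied with ambient group $U$ or $V$ (with $U_\beta$ an open normal subgroup of it), not $G$; this is legitimate since $U$ and $V$ are closed in $G$, hence of finite vcd, and $U_\beta$ is open and normal in $G$ and therefore in both.
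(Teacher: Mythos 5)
Your proof is correct and follows essentially the same route as the paper: both identify $(E^{hU})_k$, via Lemma~\ref{lem:consistency} and iterated homotopy fixed points for open subgroups, with a finite homotopy fixed point spectrum of the faithful finite Galois extension $E_\beta \simeq (E^{hU_\beta})_k$, and then invoke Rognes's fundamental theorem. The only differences are cosmetic: the paper proves (1) first and deduces (2) from it using the unlocalized Proposition~\ref{prop:iterate} together with commuting $(-)_k$ past finite homotopy fixed points, whereas you prove (2) directly via the $k$-local machinery of Propositions~\ref{prop:hkG} and~\ref{prop:localiterate} and obtain (1) as the case $V=G$.
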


\begin{proof}
For both parts, we 
repeatedly use the fundamental theorem of Galois theory
\cite[Thm.~1.2]{Rognes}.

(1) Choose
$\alpha$ so that $U_\alpha$ is contained in $U$.  Then by
Lemma~\ref{lem:consistency}, there is an equivalence
$E_\alpha \simeq
(E^{hU_\alpha})_k$, so $(E^{hU_\alpha})_k$ 
is a faithful $k$-local $G/U_\alpha$-Galois extension of $A$.
Proposition~\ref{prop:iterate} implies that there are equivalences
$$ (E^{hU})_k \simeq ((E^{hU_\alpha})^{hU/U_\alpha})_k \simeq 
((E^{hU_\alpha})_k)^{hU/U_\alpha}. $$
Therefore, the fundamental theorem of 
Galois theory implies that $(E^{hU})_k$ is a faithful $k$-local
$G/U$-Galois extension of $A$.

(2) 
Let $N$ be an open normal subgroup of $G$ contained in $U$.
By (1), we know that $(E^{hN})_k$ is a faithful $k$-local
$G/N$-Galois extension of
$A$. 
By Proposition~\ref{prop:iterate}, we have
$$
(E^{hV})_k \simeq ((E^{hN})^{hV/N})_k \simeq ((E^{hN})_k)^{hV/N}.
$$
Thus, Galois theory implies that
$(E^{hN})_k$ is a faithful $k$-local $V/N$-Galois extension of $(E^{hV})_k$.
As before, we have
$$
(E^{hU})_k \simeq ((E^{hN})_k)^{hU/N}.
$$
Since $U/N$ is normal in $V/N$, with quotient $V/U$, 
Galois theory implies that $(E^{hU})_k$ is a faithful $k$-local 
$V/U$-Galois extension of $(E^{hV})_k$.
\end{proof}

\section{Closed homotopy fixed points of profinite Galois
extensions}\label{sec:closed}

Let $E$ be a consistent 
profaithful $k$-local profinite $G$-Galois extension of $A$ of finite
vcd.  We begin by showing that under these
hypotheses, 
the $H$-homotopy fixed points functor is well-behaved
when $H$ is 
any closed subgroup of $G$.  We then prove the forward
part of the profinite Galois correspondence, and we 
compute the homotopy type
of the function spectrum between arbitrary $k$-local closed homotopy 
fixed point spectra of $E$.

\subsection{Iterated Galois homotopy fixed points}\label{sec:Galoisiterate}

In this subsection 
we will extend the results of Section~\ref{sec:fcditerate} to
all closed subgroups $H$ of $G$.  Let $j: H \hookrightarrow G$ be the
inclusion
of the closed
subgroup $H$. Also, recall that, by hypothesis, $G$ has finite vcd.
Then we prove the following theorem and derive
consequences from it.

\begin{thm}\label{thm:Galoiscolim}
The natural map
$$ (\colim_{H \le U \le_o G} E^{hU})_k \rightarrow (E^{hH})_k $$
is an equivalence.
\end{thm}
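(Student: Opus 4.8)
The plan is to reduce the statement to a comparison of descent spectral sequences, exactly as one would like to do in the proof of Proposition~\ref{prop:fcdHfp}, but where the \emph{consistent profaithful Galois} hypothesis supplies the convergence that was missing in the purely group-theoretic setting (cf.\ the discussion in Section~\ref{sec:difficulties}). First I would replace $G$ by an open subgroup $G'$ of finite cohomological dimension (possible since $G$ has finite vcd), and set $H' = H \cap G'$, which is open in $H$ and still closed in $G'$; by Proposition~\ref{prop:iterate}(4) and Proposition~\ref{prop:localiterate}(4) applied to the finite groups $U/G'$ (for $H\le U\le_o G$ with $U\cap G' = H'$) and by Corollary~\ref{cor:fcdHfp} applied to $G'$, one can rewrite both sides in terms of $G'$-homotopy fixed points. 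More precisely, for the open subgroups $U$ with $H\le U\le_o G$, the spectra $E^{hU}$ are handled by Proposition~\ref{prop:independent}: each $(E^{hU})_k$ is a faithful $k$-local $G/U$-Galois extension of $A$, and for $V\le U$ one has $(E^{hV})_k$ faithful over $(E^{hU})_k$. So the colimit on the left is a colimit of intermediate finite Galois extensions, and the right-hand side is their "limit'' along the closed subgroup.

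The key step is to produce a comparison of conditionally convergent spectral sequences
$$ E_r'(H;E) := \colim_{H \le U \le_o G} E_r(U;E) \longrightarrow E_r(H;E), $$
where $E_r(K;E)$ is the $k$-localized descent spectral sequence for $(E^{hK})_k$ with $E_2$-term $H^*_c(K;\pi_*((E^{hK})_k)) = H^*_c(K; \pi_*(E_k))$ — here I am using that $E$ is $T$-local and Corollary~\ref{cor:kholim}, together with Theorem~\ref{thm:cochaincomplex}/Proposition~\ref{prop:amitsurfixedpoint} to identify the abutments. On $E_2$-terms this map is an isomorphism by the colimit formula for continuous cohomology, $\colim_U H^*_c(U;M) \cong H^*_c(H;M)$ for a discrete (or profinite) module $M$ (Wilson \cite[Thm.~9.7.2]{Wilson}), applied to $M = \pi_*(E_k)$. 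The point where the Galois hypothesis is essential is \emph{convergence of the colimit spectral sequence}: in general the colimit of the abutments $\colim_U \pi_*((E^{hU})_k)$ need not agree with $\pi_*((\colim_U E^{hU})_k)$, and the colimit spectral sequence need not converge to it. Here, however, $(\colim_U E^{hU})_k$ is itself $k$-local, the filtered colimit is a colimit of $k$-locally dualizable $A$-modules (Proposition~\ref{prop:dualizable}), and by Lemma~\ref{lem:consistency} the terms of the colimit are (up to $k$-localization) the Galois extensions $E_\alpha$; this should force the colimit spectral sequence to converge conditionally to $\pi_*((\colim_U E^{hU})_k)$, via a horizontal vanishing line coming from the finite cohomological dimension of $G'$ after the reduction above. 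Once both spectral sequences converge and agree from $E_2$ on, the map on abutments is an isomorphism, i.e.\ $(\colim_U E^{hU})_k \to (E^{hH})_k$ is a weak equivalence.

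I expect the main obstacle to be exactly this convergence/commutation issue: showing that $k$-localizing the filtered colimit $\colim_{H\le U\le_o G} E^{hU}$ commutes appropriately with the relevant homotopy limits, so that $\pi_*((\colim_U E^{hU})_k) \cong \colim_U \pi_*((E^{hU})_k)$ and the colimit spectral sequence is the descent spectral sequence for this colimit. The finite cohomological dimension reduction is what rescues this: with a uniform horizontal vanishing line, conditional convergence of each $E_r(U;E)$ upgrades to strong convergence uniformly in $U$, and strong convergence is preserved under filtered colimits, so the colimit spectral sequence strongly converges to $\colim_U \pi_*((E^{hU})_k)$, which one identifies with $\pi_*((E^{hH})_k)$ using Theorem~\ref{thm:fcdHfp} (applied to $G'$) together with Corollary~\ref{cor:kholim}. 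I would then invoke \cite[Prop.~3.3]{MitchellHyper} (as in Proposition~\ref{prop:fcdHfp}) to conclude that the $E_2$-isomorphism plus the convergence yields the desired equivalence of spectra.
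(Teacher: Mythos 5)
There is a genuine gap, and it sits exactly where you locate it: the convergence of the colimit of descent spectral sequences to the colimit of the abutments. The paper explicitly flags this approach as failing in Section~\ref{sec:difficulties}: the map $\colim_{H \le U \le_o G} E_r(U;X) \rightarrow E_r(H;X)$ is an isomorphism on $E_2$-terms by \cite[Thm.~9.7.2]{Wilson}, ``the problem is that the colimit of the spectral sequences does not converge to the colimit of the abutments in general.'' Your proposed rescue is a horizontal vanishing line ``coming from the finite cohomological dimension of $G'$,'' but no such vanishing line exists for the spectral sequences you are actually comparing: the groups $U$ (open in $G$) and $H$ (closed in $G$) have finite \emph{virtual} cohomological dimension only, so $H^s_c(U;\pi_t E)$ and $H^s_c(H;\pi_t E)$ can be nonzero for arbitrarily large $s$ (already for $U$ containing torsion). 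The reduction to $G'$ that is supposed to supply the vanishing line is not carried out: the expressions ``$U/G'$'' and the invocation of Proposition~\ref{prop:iterate}(4) require normality and containments that do not hold, and rewriting $E^{hU}$ and $E^{hH}$ in terms of $G'$-fixed points does not change which spectral sequences appear in the comparison. There is the further unresolved issue, which you also name but do not settle, of identifying $\pi_*\bigl((\colim_U E^{hU})_k\bigr)$ with $\colim_U \pi_*\bigl((E^{hU})_k\bigr)$; $k$-localization does not commute with filtered colimits under Assumption~\ref{assumption} (only $T$-localization does).

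The paper's proof avoids the spectral-sequence comparison entirely, and the mechanism it uses is the one missing from your plan. One chooses an open \emph{normal} $V \le G$ of finite cohomological dimension, applies Proposition~\ref{prop:fcdHfp} to handle $H \cap V$ (where finite cd genuinely holds), and then passes from $H\cap V$ back up to $H$ through the finite quotient $Q = HV/V \cong H/(H\cap V)$. The profaithful hypothesis enters through Remark~\ref{rmk:Hilbert90}: the relevant finite Galois extensions are faithful, so their Tate spectra vanish $k$-locally and the norm maps give $((-)_{hQ})_k \simeq ((-)^{hQ})_k$. This replaces $Q$-homotopy \emph{fixed points} (which do not commute with the filtered colimit over $U$) by $Q$-homotopy \emph{orbits} (which do), and that is what makes the colimit pass through. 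The other essential ingredient is Lemma~\ref{lem:HcapV}, identifying $((E')^{h(H\cap V)})_k$ with $\bigl((E^{hV})_k \wedge_{(E^{hHV})_k} (E')^{hH}\bigr)_k$, which converts the $(H\cap V)$-fixed points into a dualizable base change of the $H$-fixed points and lets one undo the $Q$-orbits at the end. Neither of these steps appears in your proposal, and without them (or a genuinely new convergence argument) the plan does not go through.
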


In order to prove Theorem~\ref{thm:Galoiscolim}, we need to
introduce a spectrum $E'$ which is equivalent to $E$, but which
has better point-set level properties.
Observe that by Proposition~\ref{prop:independent}, the collection of
homotopy fixed point spectra 
$\{ (E^{hU})_k \}_{U \trianglelefteq_o G}$ gives rise to a $k$-local
profinite $G$-Galois extension 
$$ E' = \colim_{U \trianglelefteq_o G} (E^{hU})_k. $$
of $A$.  

Strictly speaking, given an open subgroup $V$ of $G$, 
the spectrum $E$ is \emph{not} an $(E^{hV})_k$-algebra.
Since the spectrum $(E^{hU})_k$ is an $(E^{hV})_k$-algebra for
every open normal subgroup $U$ of $G$ contained in $V$, 
the spectrum $E'$ is an $(E^{hV})_k$-algebra.
Furthermore, by Lemma~\ref{lem:consistency}, the map
$$ E = \colim_\alpha E_\alpha \rightarrow \colim_\alpha (E^{hU_\alpha})_k 
\cong E' $$
is an equivalence of discrete commutative $G$-$A$-algebras.

We shall need the following fundamental lemma.

\begin{lem}\label{lem:HcapV}
Let $V$ be an open normal subgroup of $G$. 
Then the natural map
$$ ((E^{hV})_k \wedge_{(E^{hHV})_k} (E')^{hH})_k \rightarrow ((E')^{h(H\cap
V)})_k, $$
induced from the commutative diagram 
$$
\xymatrix{
(E^{hHV})_k \ar[r] \ar[d] & (\colim_{U \trianglelefteq_o G}
(E^{hU})_k)^H \ar[r] &
(E')^{hH} \ar[d]
\\
(E^{hV})_k \ar[r] &
(\colim_{U \trianglelefteq_o G}
(E^{hU})_k)^{H \cap V} \ar[r] &
(E')^{hH \cap V}
}
$$
of commutative symmetric ring spectra,
is an equivalence.
\end{lem}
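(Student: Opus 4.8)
The plan is to exploit Proposition~\ref{prop:independent}(2): since $V$ is open and normal in $G$, the subgroup $HV$ is open, $H\cap V$ is open and normal in $H$, and the map $(E^{hHV})_k \to (E^{hV})_k$ exhibits $(E^{hV})_k$ as a finite faithful $k$-local $\Gamma$-Galois extension of $(E^{hHV})_k$ for the finite group $\Gamma = HV/V \cong H/(H\cap V)$. Write $B = (E^{hHV})_k$ and $C = (E^{hV})_k$. By Proposition~\ref{prop:dualizable}, $C$ is a $k$-locally dualizable $B$-module. We regard $C$ and $B$ as discrete $H$-spectra, with $H$ acting on $C$ through the quotient $H \twoheadrightarrow \Gamma$ and trivially on $B$, and we regard $E'$ as a discrete $H$-$C$-algebra; the $C$-algebra structure on $E'$ arises from the $H$-equivariant maps $C = (E^{hV})_k \to (E^{hU})_k$ (for $U \trianglelefteq_o G$ with $U \le V$) supplied by iterated homotopy fixed points, and $E'$ is a $B$-algebra since each $(E^{hU})_k$ is, by Proposition~\ref{prop:independent}(2).

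The first main step is to identify $(C \wedge_B E')_k$, as a discrete $H$-spectrum, with $(\CoInd_{H\cap V}^H E')_k$. Since $E'$ is a $C$-algebra there is an isomorphism $C\wedge_B E' \cong (C\wedge_B C)\wedge_C E'$, and the defining Galois equivalence $(C\wedge_B C)_k \xrightarrow{\simeq} \Map(\Gamma, C)$ — smashed over $C$ with $E'$ and using that $\Gamma$ is finite — yields a $k$-local equivalence $(C\wedge_B E')_k \simeq (\Map(\Gamma, E'))_k$. Under the isomorphism $\Map(\Gamma, E') = \Map(H/(H\cap V), E') \cong \Map^c_{H\cap V}(H, E') = \CoInd_{H\cap V}^H E'$ of Lemma~\ref{lem:Shapiro}, the residual action matches the coinduced $H$-action, giving the claimed $H$-equivariant $k$-local equivalence. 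Checking that Rognes's Galois isomorphism intertwines the $\Gamma$-action on $C\wedge_B C$ with the standard action on $\Map(\Gamma, C)$, so that the $H$-equivariance comes out correctly, is the step I expect to require the most care.

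The second main step is the computation itself. Since $H$ is closed in $G$ it has finite vcd, so Theorem~\ref{thm:cochaincomplex} gives $(E')^{hH} \simeq \HH_c(H; E') = \holim_\Delta \Map^c(H^\bullet, E')$, a homotopy limit of $B$-modules. The $B$-module version of Lemma~\ref{lem:holimdualizable} (its proof carries over verbatim with $A$ replaced by $B$), the isomorphism $C\wedge_B \Map^c(H^n, E') \cong \Map^c(H^n, C\wedge_B E')$ (valid because $\Map^c(H^n,-)$ is a filtered colimit of finite products, both of which smash products preserve), and Corollary~\ref{cor:kholim} combine to give
\[
(C\wedge_B (E')^{hH})_k \simeq \bigl(\holim_\Delta \Map^c(H^\bullet, C\wedge_B E')\bigr)_k \simeq \bigl((C\wedge_B E')^{hH}\bigr)_k,
\]
the last equivalence being Theorem~\ref{thm:cochaincomplex} again. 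Now applying Corollary~\ref{cor:localfixedpoints} to the $k$-local equivalence of $T$-local discrete $H$-spectra from the first step, followed by Shapiro's Lemma~\ref{lem:Shapiro} in the form $(\CoInd_{H\cap V}^H E')^{hH} = \Map^c(H/(H\cap V), E')^{hH} \simeq (E')^{h(H\cap V)}$, yields $(C\wedge_B (E')^{hH})_k \simeq ((E')^{h(H\cap V)})_k$. Finally I would check that the composite of these natural equivalences is the map in the statement: each equivalence above is natural, and the composite is readily seen to be compatible with the two structure maps $(E^{hV})_k \to (E')^{h(H\cap V)}$ and $(E')^{hH} \to (E')^{h(H\cap V)}$ out of which the map of the lemma is built, so this is a routine diagram chase.
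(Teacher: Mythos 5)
Your proposal is correct and follows essentially the same route as the paper's proof: both use the finite faithful $Q$-Galois extension $(E^{hHV})_k \to (E^{hV})_k$ with $Q = HV/V \cong H/(H\cap V)$, dualizability of $(E^{hV})_k$ over $(E^{hHV})_k$ together with the hypercohomology description of $(-)^{hH}$ to commute the smash product past the homotopy fixed points, the Galois unramifiedness condition to identify $(E^{hV})_k \wedge_{(E^{hHV})_k} E'$ $k$-locally with $\Map(Q,E')$ carrying the conjugation $H$-action, and Shapiro's Lemma to conclude. The only difference is the order of these two main steps (the paper first pushes the smash inside $(-)^{hH}$ via an explicit zig-zag through $E'_{fH}$, then identifies the base change), which is immaterial.
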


\begin{proof}
The lemma will be proven by showing that there exists a zig-zag of
$k$-local equivalences between
$$ (E^{hV})_k \wedge_{(E^{hHV})_k} (E')^{hH} \qquad \text{and} \qquad
(E')^{hH \cap V} $$
which are maps both of commutative $(E^{hV})_k$-algebras and
commutative $(E')^{hH}$-algebras.
Let $Q$ be the finite
group $HV/V \cong H/(H \cap V)$.
Note that, by 
Proposition~\ref{prop:independent}, the map
$$ (E^{hHV})_k \rightarrow (E^{hV})_k $$
is a $k$-local $Q$-Galois extension.

Observe that there is a zig-zag of maps:
\begin{eqnarray*}
(E^{hV})_k \wedge_{(E^{hHV})_k} (E')^{hH}
& = & (E^{hV})_k \wedge_{(E^{hHV})_k} (E'_{fH})^{H} \\
& \xrightarrow{w_1} & ((E^{hV})_k \wedge_{(E^{hHV})_k} E'_{fH})^{H} \\
& \xrightarrow{w_2} & ((E^{hV})_k \wedge_{(E^{hHV})_k} E'_{fH})^{hH} \\
& \xleftarrow{\simeq} & ((E^{hV})_k \wedge_{(E^{hHV})_k} E')^{hH}.
\end{eqnarray*}
The map $w_2$ above is the inclusion of fixed points into homotopy
fixed points.
Each of the maps above is a map of commutative $(E^{hV})_k$-algebras and
commutative $(E')^{hH}$-algebras.  Furthermore, the composite 
$w = w_2 \circ w_1$ is a
$k$-local equivalence, since we have a commutative diagram (in the
$k$-local stable
homotopy category)
$$
\xymatrix{
(E^{hV})_k \wedge_{(E^{hHV})_k} (E'_{fH})^H \ar[r]^-{u} \ar[d]_{w} &
(E^{hV})_k \wedge_{(E^{hHV})_k} \HH_c(H; E'_{fH}) \ar[d]^{w'}
\\
((E^{hV})_k \wedge_{(E^{hHV})_k} E'_{fH})^{hH} \ar[r]_-{u'} &
\HH_c(H; (E^{hV})_k \wedge_{(E^{hHV})_k} E'_{fH} )
}
$$
where the maps $u$ and $u'$ are equivalences by
Theorem~\ref{thm:cochaincomplex}, and the map $w'$ is seen to be a $k$-local
equivalence by using
the fact that $(E^{hV})_k$ is
a $k$-locally dualizable $(E^{hHV})_k$-module 
(Proposition~\ref{prop:dualizable} and 
Lemma~\ref{lem:holimdualizable}). 

The composite
\begin{equation}\label{line1-3}
\begin{split}
(E^{hV})_k \wedge_{(E^{hHV})_k} E'
& \cong 
((E^{hV})_k \wedge_{(E^{hHV})_k} (E^{hV})_k) \wedge_{(E^{hV})_k} E' 
\\
& \rightarrow
\Map(Q, (E^{hV})_k) \wedge_{(E^{hV})_k} E' 
\\
& \xrightarrow{\simeq} \Map(Q, E')
\end{split}
\end{equation}
is a $k$-local equivalence.
Here, the smash product $\wedge_{(E^{hV})_k}$ on the right-hand side of the
first line of
(\ref{line1-3}) uses the left $(E^{hV})_k$-module
structure on
$(E^{hV})_k \wedge_{(E^{hHV})_k} (E^{hV})_k$.
Under the isomorphism
$$ E' \cong (E^{hV})_k \wedge_{(E^{hV})_k} E' $$
the $G$-action on $E'$ is transformed to the diagonal action on 
$(E^{hV})_k \wedge_{(E^{hV})_k} E'$.
Therefore, under (\ref{line1-3}),
the $H$-action on $E'$ is transformed to the conjugation action on 
$\Map(Q, E')$.
Furthermore, under (\ref{line1-3}):
\begin{enumerate}
\item the
$E'$-algebra structure on $(E^{hV})_k \wedge_{(E^{hHV})_k} E'$ 
is sent to that given by
the inclusion
$$ E' \rightarrow \Map(Q, E') $$
of the constant maps, and  
\item
the $(E^{hV})_k$-module structure on $(E^{hV})_k \wedge_{(E^{hHV})_k} E'$
is sent to that given by the composite
$$ (E^{hV})_k \xrightarrow{\xi} \Map(Q, (E^{hV})_k) \rightarrow
\Map(Q, E'), $$
where $\xi$ is the adjoint of the $Q$-action map.
\end{enumerate}
Taking $H$-homotopy fixed points of (\ref{line1-3}) gives, by
Corollary~\ref{cor:localfixedpoints}, a $k$-local equivalence
$$ ((E^{hV})_k \wedge_{(E^{hHV})_k} E')^{hH} \rightarrow 
\Map(Q, E')^{hH}
$$
which is a map of commutative $(E^{hV})_k$-algebras and of commutative 
$(E')^{hH}$-algebras.
The proof of the lemma is completed by observing that the equivalence 
given by Shapiro's Lemma (Lemma~\ref{lem:Shapiro}) 
$$ (E')^{hH\cap V} \xrightarrow{\simeq} \Map(H/(H \cap V), E')^{hH} \cong
\Map(Q, E')^{hH}
$$
is a map of commutative $(E^{hV})_k$-algebras and of commutative
$(E')^{hH}$-algebras.
\end{proof}

\begin{proof}[Proof of Theorem~\ref{thm:Galoiscolim}]
Choose an open normal subgroup $V$ of $G$ of finite cohomological
dimension. 
By Proposition~\ref{prop:fcdHfp},
we see that the map
\begin{equation}\label{eq:fcd}
\colim_{H \le U \le_o HV} E^{h(U\cap V)} \rightarrow E^{h(H \cap V)}
\end{equation}
is an equivalence.  
Let $Q = HV/V \cong H/(H \cap V)$ be the finite quotient group.  
For each open subgroup $U$ of $HV$ containing $H$, we have 
$UV = HV$.  Therefore, there is an isomorphism
$Q = UV/V \cong U/(U \cap V)$.
By Proposition~\ref{prop:independent}, the extensions
\begin{gather*}
(E^{hU})_k \rightarrow (E^{h(U \cap V)})_k, \\
(E^{hUV})_k \rightarrow (E^{hV})_k
\end{gather*}
are faithful $k$-local $Q$-Galois extensions.
Therefore, by Remark~\ref{rmk:Hilbert90}, the norm maps give the following
equivalences:
\begin{gather}
((E^{h(U \cap V)})_{hQ})_k \simeq ((E^{h(U \cap V)})^{hQ})_k, 
\label{norm1}
\\
((E^{hV})_{hQ})_k \simeq ((E^{hV})^{hQ})_k.
\label{norm2}
\end{gather}

We may now establish that the natural map 
$$ (\colim_{H \le U \le_o G} E^{hU})_k \rightarrow (  E^{hH}  )_k $$
is an equivalence
by establishing a chain of intermediate equivalences.
Using Proposition~\ref{prop:iterate}, we have
\begin{align*}
(\colim_{H \le U \le_o G} E^{hU})_k
& \simeq (\colim_{H \le U \le_o HV} E^{hU})_k \\
& \simeq  (\colim_{H \le U \le_o HV} (E^{h(U \cap V)})^{hU/(U \cap V)})_k \\
& \cong  (\colim_{H \le U \le_o HV} (E^{h(U \cap V)})^{hQ})_k.
\end{align*}
Using equivalence (\ref{norm1}), we have
\begin{align*}
(\colim_{H \le U \le_o HV} (E^{h(U \cap V)})^{hQ})_k
& \simeq (\colim_{H \le U \le_o HV} (E^{h(U \cap V)})_{hQ})_k \\
& \simeq ((\colim_{H \le U \le_o HV} E^{h(U \cap V)})_{hQ})_k. \\
\end{align*}
Since $H \cap V$ is a subgroup of $V$, and $V$ has finite cohomological
dimension, $H\cap V$ has finite cohomological dimension.  Therefore, we may
apply Proposition~\ref{prop:fcdHfp} to deduce that there is an equivalence 
$$
((\colim_{H \le U \le_o HV} E^{h(U \cap V)})_{hQ})_k 
\simeq ((E^{h(H \cap V)})_{hQ})_k.
$$
We may now use Lemma~\ref{lem:HcapV} to deduce that there are equivalences
\begin{align*}
((E^{h(H \cap V)})_{hQ})_k
& \simeq (((E')^{h(H \cap V)})_{hQ})_k \\
& \simeq ((  (E^{hV})_k \wedge_{(E^{hHV})_k} (E')^{hH}  )_{hQ})_k \\
& \simeq ((  ((E^{hV})_k)_{hQ})_k \wedge_{(E^{hHV})_k} (E')^{hH}  )_k. \\
\end{align*}
Applying equivalence (\ref{norm2}), and applying
Proposition~\ref{prop:iterate},
we have
\begin{align*}
((  ((E^{hV})_k)_{hQ})_k \wedge_{(E^{hHV})_k} (E')^{hH}  )_k
& \simeq ((  (E^{hV})^{hQ})_k \wedge_{(E^{hHV})_k} (E')^{hH}  )_k \\
& \cong ((  (E^{hV})^{h(HV/V)})_k \wedge_{(E^{hHV})_k} (E')^{hH}  )_k \\
& \simeq (  (E^{hHV})_k \wedge_{(E^{hHV})_k} (E')^{hH}  )_k \\
& \simeq (  (E')^{hH}  )_k \\
& \simeq (  E^{hH}  )_k,
\end{align*}
as desired.
\end{proof}

Using Corollary~\ref{cor:mysterious}, 
Theorem~\ref{thm:Galoiscolim} 
has the following corollary.

\begin{cor}\label{cor:GaloisHfixed}
There is an equivalence $((E_{fG})^H)_k \simeq (E^{hH})_k$.
\end{cor}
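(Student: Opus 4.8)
The plan is to deduce this directly from Corollary~\ref{cor:mysterious} and Theorem~\ref{thm:Galoiscolim}, so the proof is essentially formal. First I would invoke Corollary~\ref{cor:mysterious}, which for any closed subgroup $H \le G$ gives a canonical equivalence
$$ (E_{fG})^H \simeq \colim_{H \le U \le_o G} E^{hU}, $$
valid at the level of the underlying spectrum (indeed of the discrete $G$-spectrum $E$), before any localization is applied. The map realizing this equivalence is the one built from the identification $(E_{fG})^H \cong \colim_{H \le U \le_o G} (E_{fG})^U$ together with the equivalences $(E_{fG})^U \simeq E^{hU}$ coming from Proposition~\ref{prop:iterate}.

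Next I would apply the localization functor $(-)_k$ to both sides. Since $(-)_k$ preserves equivalences of spectra, this yields
$$ ((E_{fG})^H)_k \simeq \bigl(\colim_{H \le U \le_o G} E^{hU}\bigr)_k. $$
Finally, Theorem~\ref{thm:Galoiscolim} asserts precisely that the natural map $\bigl(\colim_{H \le U \le_o G} E^{hU}\bigr)_k \to (E^{hH})_k$ is an equivalence. Composing the two equivalences produces the asserted equivalence $((E_{fG})^H)_k \simeq (E^{hH})_k$.

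The only point requiring a little care is bookkeeping of the maps: one should verify that the composite equivalence is the one induced by the canonical comparison map $(E_{fG})^H \to E^{hH}$ (arising from fibrant replacement and the colimit structure), rather than merely an abstract zig-zag, so that the corollary really identifies the natural map. There is no genuine obstacle here, since all the real work has already been done: the delicate argument is contained in the proof of Theorem~\ref{thm:Galoiscolim}, where one introduces the auxiliary profinite Galois extension $E' = \colim_{U \trianglelefteq_o G} (E^{hU})_k$ with improved point-set behavior and carries out the comparison of relative smash products of homotopy fixed point spectra in Lemma~\ref{lem:HcapV}. By contrast, the present corollary is just the combination of that theorem with the general identification of $(E_{fG})^H$ in Corollary~\ref{cor:mysterious}.
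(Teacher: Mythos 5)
Your argument is exactly the paper's: the corollary is stated there as an immediate consequence of Corollary~\ref{cor:mysterious} (localized) combined with Theorem~\ref{thm:Galoiscolim}, which is precisely the composite you describe. The proposal is correct and requires no further comment.
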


We wish to derive from Corollary~\ref{cor:GaloisHfixed} an ``iterated
homotopy fixed
point theorem'' analogous to Corollary~\ref{cor:fcditerate}. The presence
of the $k$-localizations in the statement of
Corollary~\ref{cor:GaloisHfixed} presents some difficulties.  One might
hope that these could be overcome by using the $k$-local model
structures, and ask if $X_{f_kG}$ is $k$-locally fibrant as a discrete
$H$-spectrum. This is typically untrue, however.  For instance if $H = e$,
this would imply that the underlying spectrum of $X_{f_kG}$ is $k$-local,
which is typically false (see Remark~\ref{rmk:klocal}).

However, we can still formulate an iterated homotopy fixed point
theorem on the level of homotopy categories.  We first observe that we have
the following $k$-local analog of Proposition~\ref{prop:closediterate}.

\begin{prop}\label{prop:Galoisiterate}
Suppose that $N$ is a closed normal subgroup of $G$.
Then the functor 
$$ (-)^{N} : (\Sigma\Sp_G)_k \rightarrow (\Sigma\Sp_{G/N})_k $$
is a right Quillen functor. 
Let $R(-)^N$ denote its right derived functor.
Then the following diagram commutes up to
natural isomorphism.  
$$
\xymatrix{
\mathrm{Ho}((\Sigma\Sp_G)_k) \ar[rr]^{(-)^{h_kG}} \ar[dr]_{R(-)^N}
&& \mathrm{Ho}(\Sigma\Sp_k) 
\\
& \mathrm{Ho}((\Sigma\Sp_{G/N})_k) \ar[ur]_{(-)^{h_kG/N}}
}
$$
\end{prop}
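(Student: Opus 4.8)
The plan is to first verify the Quillen adjunction, then establish the commutativity of the triangle by a ``change-of-groups'' argument in the spirit of the proof of Proposition~\ref{prop:closediterate}, but carried out in the $k$-local model categories. For the first assertion, note that $(-)^N$ is right adjoint to $\Res^G_{G/N}$, restriction along the quotient $q: G \to G/N$ (this is the pair $(q^*, q_*)$ of Section~\ref{sec:homo}). Since $q^* = \Res^G_{G/N}$ preserves cofibrations (which are underlying cofibrations of symmetric spectra) and preserves $k$-local equivalences (which are underlying $k$-local equivalences), it is a left Quillen functor for the $k$-local model structures $(\Sigma\Sp_{G/N})_k \rightleftarrows (\Sigma\Sp_G)_k$. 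Hence $(-)^N$ is a right Quillen functor and admits a right derived functor $R(-)^N$, computed by $R(-)^N X = (X_{f_kG})^N$.

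For the triangle, I would argue entirely on the level of derived functors. Consider the diagram of continuous homomorphisms
$$
\xymatrix{
G \ar[rr]^q \ar[dr]_r && G/N \ar[dl]^{s}
\\
& \{e\}
}
$$
as in the proof of Proposition~\ref{prop:closediterate}, so that $r = s \circ q$. By observation (5) of Section~\ref{sec:homo}, there is a natural isomorphism $r_* \cong s_* \circ q_*$ of functors on the underlying categories, and since all three functors are right Quillen (for $r_*$ and $q_*$ this is the localized statement just proven; for $s_*$, restriction along $G/N \to \{e\}$ is $\mathit{triv}$, whose right adjoint $(-)^{G/N}$ is right Quillen by the localized analogue of Lemma~\ref{lem:Quillenfixedpoint} — the point being that $\mathit{triv}$ preserves cofibrations and $k$-local equivalences), we obtain a natural isomorphism of right derived functors $Rr_* \cong Rs_* \circ Rq_*$. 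Now $(-)^{h_kG} = Rr_*$ by the $k$-local analogue of observation (4) in Section~\ref{sec:homo}, namely the very definition of $(-)^{h_kG}$ as the Quillen right derived functor of $(-)^G$ with respect to the $k$-local model structure; likewise $(-)^{h_kG/N} = Rs_*$ and $R(-)^N = Rq_*$. Substituting gives the desired natural isomorphism $(-)^{h_kG} \cong (-)^{h_kG/N} \circ R(-)^N$, which is exactly the commutativity of the triangle.

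The main obstacle, such as it is, is bookkeeping rather than mathematics: one must be careful that the composite-of-derived-functors formula $R(s_* q_*) \cong Rs_* \, Rq_*$ genuinely applies here. This requires knowing that $q_*$ sends $k$-locally fibrant discrete $G$-spectra to $k$-locally fibrant discrete $G/N$-spectra — which is immediate from $q_*$ being right Quillen — so that the composite $Rs_* \circ Rq_*$ may be computed by iterating the point-set fibrant replacements, i.e.\ $(X_{f_kG})^N$ is already $k$-locally fibrant as a discrete $G/N$-spectrum, and applying $(-)^{G/N}$ to its $k$-local fibrant replacement recovers $R(-)^{G/N}$ of it. This is the standard ``composition of right Quillen functors'' lemma and needs no new input beyond what has been established. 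I would also remark that, unlike in the non-localized setting, one does \emph{not} assert here that $X_{f_kG}$ is $k$-locally fibrant as a discrete $N$-spectrum for arbitrary closed $N$ — which is why the statement is phrased purely in terms of the abstract derived functor $R(-)^N$ rather than a point-set formula, and why the $G$ here is not assumed to have finite vcd.
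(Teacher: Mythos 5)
Your proposal is correct and follows essentially the same route as the paper: the paper likewise observes that $\Res^G_{G/N}$ preserves cofibrations and $k$-local equivalences, so $(-)^N$ is right Quillen with $R(X)^N = (X_{f_kG})^N$ $k$-locally fibrant in $\Sigma\Sp_{G/N}$, and then deduces the triangle from the point-set identity $(X_{f_kG})^G = ((X_{f_kG})^N)^{G/N}$ — which is exactly your $Rr_* \cong Rs_* \circ Rq_*$ written out concretely. Your closing remarks correctly isolate the one point that matters (fibrancy of $(X_{f_kG})^N$ over $G/N$, not of $X_{f_kG}$ over $N$) and correctly note that no finite vcd hypothesis is used.
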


\begin{proof}
The adjoint pair $(\Res_{G/N}^{G}, (-)^N)$ forms a Quillen pair
$$ \Res^G_{G/N}: (\Sigma\Sp_{G/N})_k \rightleftarrows (\Sigma\Sp_G)_k :
(-)^N $$
since $\Res^{G}_{G/N}$ is easily seen to preserve cofibrations and
$k$-local equivalences.
The functor
$(-)^N$ has a Quillen right derived functor $R(-)^N$.   For any discrete
$G$-spectrum $X$, the functor $R(-)^N$ is computed to be
$$ R(X)^N = (X_{f_kG})^N. $$
Since $(-)^N$ is a right Quillen functor, $(X_{f_kG})^N$ is $k$-locally
fibrant as a discrete $G/N$-spectrum.  We therefore have
$$ X^{h_kG} = (X_{f_kG})^G = ((X_{f_kG})^N)^{G/N} \cong
((X_{f_kG})^N)^{h_kG/N} = (R(X)^N)^{h_kG/N} $$
in the $k$-local homotopy category.
\end{proof}

\begin{rmk}
Note that the previous proposition does not make use of the finite vcd
hypothesis on $G$.
\end{rmk}

As in Section~\ref{sec:fcditerate}, the difficulty with fully interpreting
Proposition~\ref{prop:Galoisiterate} as an iterated homotopy fixed
point theorem is that in general, 
the Quillen right derived functor of the functor
$$ (-)^N : (\Sigma\Sp_G)_k \rightarrow (\Sigma\Sp_{G/N})_k $$
may not agree, on the level of  
underlying non-equivariant spectra, 
with the Quillen right derived functor of the functor
$$ (-)^N: (\Sigma\Sp_N)_k \rightarrow \Sigma\Sp_k $$
in the $k$-local homotopy category.

However, the composite 
$$ \mathrm{Ho}((\Sigma\Sp_G)_k) \xrightarrow{R(-)^N}
\mathrm{Ho}((\Sigma\Sp_{G/N})_k)
\xrightarrow{\mathcal{U}} \mathrm{Ho}(\Sigma\Sp_k)
$$
is easily seen to be a total right derived functor (in the sense
of \cite[Def.~8.4.7]{Hirschhorn}) of the composite
$$ (\Sigma\Sp_G)_k \xrightarrow{(-)^N}  
(\Sigma\Sp_{G/N})_k \xrightarrow{\mathcal{U}} \Sigma\Sp_k.
$$
The universal property of total right derived functor gives a canonical
natural transformation in $\mathrm{Ho}(\Sigma\Sp_k)$
$$ \gamma_X: R(X)^N \rightarrow X^{h_kN}. $$

\begin{thm}\label{thm:galoisiterate}
The map
$$ \gamma_E: R(E)^N \rightarrow E^{h_kN} $$
is an isomorphism in $\mathrm{Ho}(\Sigma\Sp_k)$.
\end{thm}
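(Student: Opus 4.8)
The plan is to identify $R(E)^N$ with $\colim_{N\le U\le_o G}(E^{hU})_k$, and then to conclude by Theorem~\ref{thm:Galoiscolim} together with Proposition~\ref{prop:hkG}(3).

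First I would unwind $R(E)^N$. By Proposition~\ref{prop:Galoisiterate} the functor $(-)^N\colon (\Sigma\Sp_G)_k \to (\Sigma\Sp_{G/N})_k$ is right Quillen, so $R(E)^N$ is represented by $(E_{f_kG})^N$. Since $N$ acts discretely on the discrete $G$-spectrum $E_{f_kG}$, the same reasoning as in Corollary~\ref{cor:mysterious} gives $(E_{f_kG})^N = \colim_{N\le U\le_o G}(E_{f_kG})^U$, a filtered colimit over the open subgroups of $G$ containing $N$. Each such $U$ has finite index in $G$, hence finite vcd, and by Proposition~\ref{prop:localiterate}(2) the spectrum $E_{f_kG}$ is $k$-locally fibrant as a discrete $U$-spectrum; as $E\to E_{f_kG}$ is an underlying cofibration and a $k$-local equivalence, $E_{f_kG}$ serves as a $k$-locally fibrant replacement of $E$ in $\Sigma\Sp_U$, so $(E_{f_kG})^U \simeq E^{h_kU}$, and since $E$ is $T$-local, Proposition~\ref{prop:hkG}(3) gives $E^{h_kU}\simeq (E^{hU})_k$, naturally in $U$. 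Each $(E_{f_kG})^U$ is fibrant as a non-equivariant spectrum (Corollary~\ref{cor:5.3.3}), so the colimit is homotopy-invariant without further replacement, and we obtain $R(E)^N \simeq \colim_{N\le U\le_o G}(E^{hU})_k$.

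Next I would compare with the target. The maps $E^{hU}\to (E^{hU})_k$ are $k$-local equivalences, and a filtered colimit of $k$-local equivalences is again one: smash with the finite spectrum $M$ and use that filtered colimits preserve $M$-acyclicity, together with the fact that the smashing localization $(-)_T$ commutes with filtered colimits. Hence $\colim_{N\le U\le_o G}E^{hU} \to \colim_{N\le U\le_o G}(E^{hU})_k$ is a $k$-local equivalence, so after $k$-localizing, $(R(E)^N)_k \simeq (\colim_{N\le U\le_o G}E^{hU})_k$. By Theorem~\ref{thm:Galoiscolim}, applied to the closed subgroup $N$ (which has finite vcd because $G$ does), the latter is $(E^{hN})_k$, and by Proposition~\ref{prop:hkG}(3) this is $E^{h_kN}$. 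Since $E^{h_kN}$ is $k$-local, showing that $\gamma_E$ is an isomorphism in $\mathrm{Ho}(\Sigma\Sp_k)$ amounts to showing that the composite equivalence just built is $(\gamma_E)_k$.

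This last identification is the main obstacle. The transformation $\gamma_E$ is the canonical comparison coming from the universal property of total right derived functors applied to the factorization of $(-)^N$ through restriction to $N$; concretely it is obtained by applying $(-)^N$ to an $N$-local fibrant replacement $E_{f_kG}\to (E_{f_kG})_{f_kN}$, and under the decomposition $(E_{f_kG})^N = \colim_{N\le U\le_o G}(E_{f_kG})^U$ it restricts on each piece to the canonical restriction map $(E_{f_kG})^U \simeq E^{h_kU} \to E^{h_kN}$. Passing to the filtered colimit and $k$-localizing, $(\gamma_E)_k$ is therefore identified with the natural map $(\colim_{N\le U\le_o G}E^{hU})_k \to (E^{hN})_k$ of Theorem~\ref{thm:Galoiscolim}, which is an equivalence. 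Verifying this compatibility requires carefully tracking the two fibrant-replacement procedures (the $G$-local one used to compute $R(-)^N$ and the $N$-local one used to compute $(-)^{h_kN}$) and the naturality in $U$ of the comparison map in Proposition~\ref{prop:hkG}(3), but it introduces no homotopy-theoretic input beyond the results already established.
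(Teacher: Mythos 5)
Your proposal is correct and follows essentially the same route as the paper: both identify $R(E)^N=(E_{f_kG})^N$ with the colimit $\colim_{N\le U\le_o G}(E_{f_kG})^U$, use Proposition~\ref{prop:localiterate} and Proposition~\ref{prop:hkG}(3) to recognize each term as $(E^{hU})_k$ up to $k$-local equivalence, and then invoke Theorem~\ref{thm:Galoiscolim} (the paper does this via Corollary~\ref{cor:GaloisHfixed}, i.e.\ via $(E_{fG})^N$) before checking that the resulting equivalence is $\gamma_E$. The compatibility check you flag as the remaining obstacle is handled in the paper by exactly the diagram chase you describe, comparing the two fibrant replacements through the zig-zag $E_{f_kG}\to(E_{f_kG})_{f_kN}\leftarrow E_{f_kN}$.
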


\begin{proof}
Since $E_{f_kG}$ is a discrete $G$-spectrum, we have
$$ (E_{f_kG})^N = \colim_{N \le U \le_o G} (E_{f_kG})^{U}. $$
By Proposition~\ref{prop:localiterate}, $E_{f_kG}$ is $k$-locally fibrant
as a discrete $U$-spectrum.  We therefore have a $k$-local equivalence
$$ (E_{f_kG})^N \xrightarrow{\simeq_k} \colim_{N \le U \le_o G} E^{h_kU}. $$

We first observe that the map
$$ (E_{fG})^N \rightarrow (E_{f_kG})^N $$
is a $k$-local equivalence by examining the following diagram.
$$
\xymatrix{
(E_{fG})^N \ar[r] \ar[d]_{\simeq} &
(E_{f_kG})^N \ar[d]^{\simeq_k} 
\\
\colim_{N \le U \le_o G}E^{hU} \ar[r] &
\colim_{N \le U \le_o G}E^{h_kU} &
}
$$
The left map is the equivalence of Corollary~\ref{cor:mysterious}.  The
bottom map is a $k$-local equivalence by Proposition~\ref{prop:hkG}.  

For a discrete $G$-spectrum $X$,
the natural zig-zag
$$ X_{f_kG} \xrightarrow{\simeq_k} 
(X_{f_kG})_{f_kN} \xleftarrow{\simeq_k} X_{f_kN} $$
of discrete $N$-spectra induces the natural transformation
$$ \gamma_X: R(X)^N = (X_{f_{k}G})^N \rightarrow X^{h_kN} $$
in $\mathrm{Ho}(\Sigma\Sp_k)$.  
We see that $\gamma_E$ is a $k$-local equivalence by
studying the following diagram in $\mathrm{Ho}(\Sigma\Sp_k)$.
$$
\xymatrix{
(E_{fG})^N \ar[r] \ar[d] &
E^{hN} \ar[d] 
\\
(E_{f_kG})^{N} \ar[r] &
E^{h_kN}
}
$$
We showed that the left map is a $k$-local equivalence.  The top map is a
$k$-local equivalence by Corollary~\ref{cor:GaloisHfixed}, and the right
map is a $k$-local equivalence by Proposition~\ref{prop:hkG}.
\end{proof}

\subsection{Intermediate Galois extensions}\label{sec:Galois}

In this subsection we will prove the forward direction of the 
profinite Galois
correspondence.  

\begin{thm}\label{thm:Galois}
Suppose that $H$ is a closed subgroup of $G$.  
\begin{enumerate}

\item
The spectrum $E$ is $k$-locally $H$-equivariantly equivalent to a
consistent profaithful $k$-local  
$H$-Galois extension of $(E^{hH})_k$ of finite
vcd.

\item
If $H$ is a normal subgroup of $G$, then the spectrum $E^{hH}$ is
$k$-locally $G/H$-equivariantly equivalent to a
profaithful $k$-local 
$G/H$-Galois extension of $A$.  If the quotient $G/H$
has finite vcd, then this extension is consistent
(and of finite vcd) over $A$.

\end{enumerate}
\end{thm}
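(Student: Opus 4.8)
The plan is to handle both parts by writing down, in each case, an explicit candidate Galois extension --- a filtered colimit of $k$-localized homotopy fixed point spectra indexed over a cofinal system of open subgroups --- and checking the four conditions of Definition~\ref{def:galois} directly. The only phenomenon beyond Proposition~\ref{prop:independent} (which handles open subgroups of $G$) is that we now need finite Galois extensions indexed by open subgroups of $H$, respectively of $G/H$, that need not be induced from open subgroups of $G$; the device that transfers the open case to the closed case is Lemma~\ref{lem:HcapV}, which exhibits such intermediate objects as base changes of Galois extensions already understood one level up. Point-set issues --- cofibrancy of base ring spectra, replacing the bonding maps of a directed system by cofibrations --- are dealt with by the standard functorial (positive) fibrant/cofibrant replacements and I will not dwell on them. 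Since $G$ has finite vcd, so does every closed subgroup $H$ (\cite[I.3.3]{Serre}), so Proposition~\ref{prop:iterate}, Corollary~\ref{cor:localfixedpoints}, Proposition~\ref{prop:amitsurfixedpoint} and Corollary~\ref{cor:consistent} apply to $H$; for $G/H$ these are invoked only when $G/H$ is assumed to have finite vcd. I will also use that for a \emph{finite} group $Q$ and a $T$-local discrete $Q$-spectrum $Z$ one has $(Z^{hQ})_k \simeq (Z_k)^{hQ}$, by Theorem~\ref{thm:cochaincomplex} (applied with vcd $0$) and Corollary~\ref{cor:kholim}.

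For~(1), regard $E$ as a discrete $H$-spectrum and set $\widetilde E = \colim_{W \trianglelefteq_o H}(E^{hW})_k$, where $E^{hW} = (E_{fH})^W$ for $W$ open in $H$ (Proposition~\ref{prop:iterate}(2)); this is a discrete commutative $H$-$(E^{hH})_k$-algebra. As cofinal system of open normal subgroups of $H$ I take $\{\,H \cap V : V \trianglelefteq_o G\,\}$ (cofinality: given $W \trianglelefteq_o H$ pick $U \le_o G$ with $H\cap U \le W$, then replace $U$ by the intersection of its finitely many conjugates in $G$). Put $Q = H/(H\cap V) \cong HV/V$. Proposition~\ref{prop:independent}(2), applied to the pair of open subgroups $V \le HV$ with $V$ normal in $HV$, shows that $(E^{hHV})_k \to (E^{hV})_k$ is a faithful $k$-local $Q$-Galois extension; Lemma~\ref{lem:HcapV}, combined with the equivalence $E \simeq E'$ of discrete commutative $G$-$A$-algebras and Corollary~\ref{cor:localfixedpoints}, identifies the base change of this extension along the ring map $(E^{hHV})_k \to (E^{hH})_k$ with $(E^{hH})_k \to (E^{h(H\cap V)})_k$, which is therefore a faithful $k$-local $Q$-Galois extension (faithfulness reduces, via $(E^{h(H\cap V)})_k \simeq_k (E^{hV})_k \wedge_{(E^{hHV})_k}(E^{hH})_k$, to faithfulness of $(E^{hV})_k$ over $(E^{hHV})_k$; cf.\ \cite{Rognes}). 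This is condition~(1) of Definition~\ref{def:galois}; condition~(3) for the pair $H\cap V \le H\cap V'$ is Proposition~\ref{prop:iterate}(4) applied to $H$, $k$-localized (the quotient is finite); condition~(4) holds by the definition of $\widetilde E$. The required $k$-local $H$-equivariant equivalence $E \simeq \widetilde E$ is the zig-zag $E \xrightarrow{\simeq} E_{fH} = \colim_{W\trianglelefteq_o H}(E_{fH})^W \to \colim_{W\trianglelefteq_o H}((E_{fH})^W)_k = \widetilde E$, the second arrow being a filtered colimit of $k$-localization maps, hence a $k$-local equivalence. Finally $\widetilde E$ has finite vcd, and since $\widetilde E \simeq_k E|_H$ gives $(\widetilde E^{hH})_k \simeq (E^{hH})_k$ with the coaugmentation corresponding to the identity, Corollary~\ref{cor:consistent} shows $\widetilde E$ is consistent over $(E^{hH})_k$.

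For~(2), with $H \trianglelefteq G$, set $\mathcal E = \colim_{H \le U \trianglelefteq_o G}(E^{hU})_k$; here $\{\,H\le U\trianglelefteq_o G\,\}$ is cofinal among open subgroups containing $H$ (conjugation-core argument again), so $\{U/H\}$ runs over all open normal subgroups of $G/H$. Each $(E^{hU})_k$ is a discrete $G/U$-spectrum, hence, pulled back along $G/H \twoheadrightarrow G/U$, a discrete $G/H$-spectrum, so $\mathcal E$ is a discrete commutative $G/H$-$A$-algebra; by Proposition~\ref{prop:independent}(1) each term is a faithful $k$-local $(G/H)/(U/H)$-Galois extension of $A$ (condition~(1) of Definition~\ref{def:galois}), condition~(3) is again Proposition~\ref{prop:iterate}(4) ($k$-localized, finite quotient), and condition~(4) is the definition of $\mathcal E$. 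Thus $\mathcal E$ is a profaithful $k$-local profinite $G/H$-Galois extension of $A$, and it is $k$-locally $G/H$-equivariantly equivalent to $E^{hH}$: the map $(E_{fG})^H \simeq \colim_{H\le U\le_o G}E^{hU} \to \mathcal E$ (Corollary~\ref{cor:mysterious} followed by localization maps) is a $k$-local equivalence of discrete $G/H$-spectra, and $(E_{fG})^H \to E^{hH}$ is a $k$-local equivalence by Theorem~\ref{thm:Galoiscolim} and Corollary~\ref{cor:GaloisHfixed}. If $G/H$ has finite vcd, then $\mathcal E$ has finite vcd by definition and, by Corollary~\ref{cor:consistent}, is consistent iff $A \to (\mathcal E^{h(G/H)})_k$ is an equivalence; but $\mathcal E^{h(G/H)} \simeq_k ((E_{fG})^H)^{h(G/H)} \simeq E^{hG}$ by Corollary~\ref{cor:localfixedpoints} and Proposition~\ref{prop:closediterate}, so $(\mathcal E^{h(G/H)})_k \simeq (E^{hG})_k \simeq A$ using consistency of $E$, and the unit map is this equivalence.

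The main obstacle is the base-change step in~(1): recognizing the intermediate spectrum $(E^{h(H\cap V)})_k$ as a base change of the already-controlled extension $(E^{hV})_k/(E^{hHV})_k$. This is exactly Lemma~\ref{lem:HcapV}, and everything else in~(1) is bookkeeping built on that lemma and on the iterated homotopy fixed point statements. A secondary subtlety is arranging the point-set structure so that $\widetilde E$ (respectively $\mathcal E$) is literally a filtered colimit of commutative algebras along cofibrations, so that Definition~\ref{def:galois} applies verbatim, and confirming that these colimits recover $E$ (respectively $E^{hH}$) up to $k$-local \emph{equivariant} equivalence and not merely underlying equivalence. Part~(2), by contrast, is essentially formal once Proposition~\ref{prop:independent}, Corollary~\ref{cor:mysterious}, Theorem~\ref{thm:Galoiscolim} and Proposition~\ref{prop:closediterate} are in hand.
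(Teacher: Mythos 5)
Your proposal is correct and follows essentially the same route as the paper's proof: both parts rest on the cofinal system $\{H\cap V\}_{V\trianglelefteq_o G}$, on Proposition~\ref{prop:independent} to supply the faithful finite Galois extensions one level up (namely $(E^{hHV})_k \rightarrow (E^{hV})_k$), and on Lemma~\ref{lem:HcapV} to recognize the intermediate spectra $(E^{h(H\cap V)})_k$ as base changes of those, with part (2) being the same formal colimit argument in both treatments. There are two local variations worth recording: you quote Rognes's base-change principle to conclude that $(E^{hH})_k \rightarrow (E^{h(H\cap V)})_k$ is a faithful finite Galois extension, whereas the paper verifies conditions (2) and (3) of Definition~\ref{defn:finiteGalois} and faithfulness by explicit chains of equivalences built from Lemma~\ref{lem:HcapV}; and in part (1) you produce the $k$-local $H$-equivariant equivalence through $E_{fH}=\colim_W (E_{fH})^W$ and Proposition~\ref{prop:iterate}(2), whereas the paper routes through $(E_{fG})^{H\cap V}$ and Corollary~\ref{cor:GaloisHfixed}, so your version of part (1) does not invoke Theorem~\ref{thm:Galoiscolim} at all --- a mild simplification of the logical dependencies. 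Both variations are sound, and the remaining steps (cofinality of $\{H\cap V\}$, condition (3) of Definition~\ref{def:galois} via Proposition~\ref{prop:iterate}(4) plus Corollary~\ref{cor:kholim}, and the consistency checks via Corollaries~\ref{cor:consistent} and \ref{cor:localfixedpoints}) match the paper's.
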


\begin{rmk} 
It is
useful to note that if $G$ is a compact $p$-adic
analytic group, then for any closed normal subgroup $H$ of $G$,  
the quotient
group $G/H$ is a compact $p$-adic analytic group, and therefore must also
have finite vcd
\cite[Thm.~9.6]{DSMS}, \cite{Symonds}.
\end{rmk}

\begin{rmk}
Theorem~\ref{thm:Galois}, when applied to the $K(n)$-local profinite 
Galois extension
$F_n$ of $S_{K(n)}$, provides an extension of \cite[Thm.~5.4.4]{Rognes}.
\end{rmk}

\begin{proof}[Proof of part (1)] 
We shall prove that $E$ is $k$-locally $H$-equivariantly 
equivalent, as a discrete commutative 
$H$-algebra, to a commutative $(E^{hH})_k$-algebra $L$.
We will prove that the map 
$$ (E^{hH})_k \rightarrow L $$
is a consistent profaithful $k$-local $H$-Galois extension by proving
that there is a commutative diagram of commutative symmetric ring spectra
\begin{equation}\label{diag:LL'}
\xymatrix{
L \ar[r]^\simeq  
& L' 
\\
(E^{hH})_k \ar[r]_\simeq \ar[u]
& ((E')^{hH})_k \ar[u]
}
\end{equation}
where $E'$ is the discrete commutative $G$-$A$-algebra, equivalent to
$E$, introduced
before Lemma~\ref{lem:HcapV}, and $L'$ is a profaithful $k$-local 
$H$-Galois extension of $((E')^{hH})_k$.  
The proof concludes by showing
directly that $L$ is consistent over $(E^{hH})_k$.

Since $H$ is a closed subgroup of $G$, a group of finite vcd, 
we may conclude that $H$ has finite vcd.
The system $\{ H \cap V \}_{V \trianglelefteq_o G}$
is cofinal in the system of open normal subgroups of $H$.  Let $U = H \cap
V$ be one of these open normal subgroups.

{\it $((E')^{hU})_k$ is $k$-locally $H/U$-Galois over $((E')^{hH})_k$.}
We must check that the last two conditions of
Definition~\ref{defn:finiteGalois} are satisfied.
By Proposition~\ref{prop:iterate} and Corollary~\ref{cor:kholim}, we have
\begin{align*}
(((E')^{hU})_k)^{hH/U}
& \simeq (((E')^{hU})^{hH/U})_k \\
& \simeq ((E')^{hH})_k,
\end{align*}
which verifies the second condition.  The third condition is verified
through the use of Lemma~\ref{lem:HcapV} and the fact that $(E^{hV})_k$ is
a faithful $k$-local $HV/V$-Galois extension of $(E^{hHV})_k$
(Proposition~\ref{prop:independent}):
\begin{align*}
& (((E')^{hU})_k \wedge_{((E')^{hH})_k} ((E')^{hU})_k)_k \\
& \quad \quad \simeq 
( ( ((E')^{hH})_k \wedge_{(E^{hHV})_k} (E^{hV})_k )_k 
\wedge_{((E')^{hH})_k} ( ((E')^{hH})_k \wedge_{(E^{hHV})_k} (E^{hV})_k  )_k )_k\\
& \quad \quad \simeq 
( ((E')^{hH})_k \wedge_{(E^{hHV})_k} (E^{hV})_k 
\wedge_{(E^{hHV})_k} (E^{hV})_k  )_k \\
& \quad \quad \simeq 
( ((E')^{hH})_k \wedge_{(E^{hHV})_k} \Map(HV/V, (E^{hV})_k)  )_k \\
& \quad \quad \cong 
( ((E')^{hH})_k \wedge_{(E^{hHV})_k} \Map(H/U, (E^{hV})_k)  )_k \\
& \quad \quad \simeq 
\Map(H/U, (((E')^{hH})_k \wedge_{(E^{hHV})_k} (E^{hV})_k)_k) \\
& \quad \quad 
\simeq \Map(H/U, ((E')^{hU})_k).
\end{align*}

{\it $((E')^{hU})_k$ is $k$-locally faithful over $((E')^{hH})_k$.}
Suppose $M$ is an $((E')^{hH})_k$-module
and that we have
$$ (((E')^{hU})_k \wedge_{((E')^{hH})_k} M)_k \simeq \ast. $$
We must show $M_k$ is null.
We use Lemma~\ref{lem:HcapV} to deduce
\begin{align*}
\ast
& \simeq 
(( (E^{hV})_k \wedge_{(E^{hHV})_k} ((E')^{hH})_k )_k \wedge_{((E')^{hH})_k} M)_k \\
& \simeq 
((E^{hV})_k \wedge_{(E^{hHV})_k} \wedge M)_k.
\end{align*}
By Proposition~\ref{prop:independent}, we deduce that 
$(E^{hV})_k$ is $k$-locally faithful over $(E^{hHV})_k$, so we may conclude
that $M_k$ is null.

Let $L$ and $L'$ be defined by the colimits 
\begin{align*}
L & := \colim_{V \trianglelefteq_o G} (E^{hH \cap V})_k, \\
L' & := \colim_{V \trianglelefteq_o G} ((E')^{hH \cap V})_k.
\end{align*}
We have shown that the spectrum $L'$ is a profaithful $k$-local $H$-Galois
extension of $((E')^{hH})_k$.  Furthermore, the equivalence
$$ E \rightarrow E' $$
of discrete commutative $G$-$A$-algebras gives rise to
Diagram~(\ref{diag:LL'}).

{\it $E$ is $k$-locally $H$-equivariantly equivalent to $L$.} 
By Corollary~\ref{cor:GaloisHfixed}, for each $V$ we
have
$$ ((E_{fG})^{H \cap V})_k \simeq (E^{hH \cap V})_k. $$
Since $E_{fG}$ is a discrete $H$-spectrum, we have:
\begin{align*}
(E_{fG})_k 
& = (\colim_{V \trianglelefteq_o G} (E_{fG})^{H \cap V})_k \\
& \simeq (\colim_{V \trianglelefteq_o G} (E^{hH \cap V})_k)_k \\
& = L_k.
\end{align*}
The fibrant replacement map $E \rightarrow E_{fG}$ is an $H$-equivariant
equivalence.

{\it $L$ is consistent over $(E^{hH})_k$.}
By Corollary~\ref{cor:consistent}, we just need to check that the map
\begin{equation}\label{eq:EtoL}
(E^{hH})_k \rightarrow (L^{hH})_k
\end{equation}
is an equivalence.  We have already seen that 
the map $E \rightarrow L$
is a
$k$-local equivalence.  By 
Corollary~\ref{cor:localfixedpoints}, we see that the map of
(\ref{eq:EtoL}) is an equivalence.
\end{proof}

\begin{proof}[Proof of part (2)]
Let $K$ be the colimit $\colim_{U \trianglelefteq_o G} (E^{hHU})_k$.
Since $H$ is normal in $G$, the groups $HU$ are open normal subgroups of
$G$.  By
Proposition~\ref{prop:independent}, the spectra $(E^{hHU})_k$ are $k$-local
faithful 
$G/HU$-Galois extensions of $A$.  Therefore, $K$ is a $k$-local 
profaithful
$G/H$-Galois extension of $A$.
The spectrum $K$ is $k$-locally equivalent to $E^{hH}$ by
Theorem~\ref{thm:Galoiscolim}.

Suppose that $G/H$ is of finite vcd.  We are
left with showing that $K$ is consistent over $A$.  By
Corollary~\ref{cor:consistent}, it suffices to check that the map
$$ A \rightarrow (K^{hG/H})_k $$
is an equivalence.
Using Corollary~\ref{cor:kholim}, Theorem~\ref{thm:Galoiscolim}, 
Corollary~\ref{cor:GaloisHfixed}, and the fact that $E$ is consistent over $A$,
we have:
\begin{align*}
(K^{hG/H})_k
& = ((\colim_{U \trianglelefteq_o G} (E^{hHU})_k)^{hG/H})_k \\
& \simeq ((\colim_{U \trianglelefteq_o G} E^{hHU})^{hG/H})_k \\
& \simeq (((E_{fG})^H)^{hG/H})_k \\
& \simeq (E^{hG})_k \\
& \simeq A.
\end{align*}
\end{proof}

\subsection{Function spectra}\label{sec:funcspec}

In this section, we prove the following theorem.

\begin{thm}\label{thm:funcspec}
Let $H$ and $K$ be closed subgroups of $G$.
Then there is an equivalence
$$ F_A((E^{hH})_k,(E^{hK})_k) \simeq ((E[[G/H]])^{hK})_k, $$
where $E[[G/H]]$ has the
diagonal $K$-action.
\end{thm}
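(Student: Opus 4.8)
The plan is to reduce the statement to the finite levels $U$ with $H \le U \le_o G$, where it follows from Rognes's finite Galois computations together with Shapiro's Lemma. First I would use Theorem~\ref{thm:Galoiscolim} to write $(E^{hH})_k \simeq (\colim_{H \le U \le_o G} E^{hU})_k$, so that, since $(E^{hK})_k$ is $k$-local,
\[ F_A((E^{hH})_k,(E^{hK})_k) \simeq \holim_{H \le U \le_o G} F_A((E^{hU})_k,(E^{hK})_k). \]
On the right-hand side, $E[[G/H]] = \holim_{H \le U \le_o G} E[G/U]$ by definition, and both $(-)^{hK}$ and (on $T$-local spectra, by Corollary~\ref{cor:kholim}) the functor $(-)_k$ commute with homotopy limits, so
\[ ((E[[G/H]])^{hK})_k \simeq \holim_{H \le U \le_o G} ((E[G/U])^{hK})_k. \]
Thus it suffices to produce, naturally in $U$, a $k$-local equivalence $F_A((E^{hU})_k,(E^{hK})_k) \simeq ((E[G/U])^{hK})_k$.

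To establish this $U$-level equivalence, I would first observe that $(E^{hU})_k$ is a $k$-locally dualizable $A$-module which, by the discriminant equivalence of Proposition~\ref{prop:dualizable} propagated through the intermediate extensions of Proposition~\ref{prop:independent}, is $k$-locally self-dual: $D_A((E^{hU})_k) \simeq (E^{hU})_k$. Hence $F_A((E^{hU})_k,(E^{hK})_k) \simeq ((E^{hU})_k \wedge_A (E^{hK})_k)_k$. Next I would rewrite $(E^{hK})_k \simeq (\Map^c(G/K,E)^{hG})_k$ using Shapiro's Lemma (Lemma~\ref{lem:Shapiro}) and Corollary~\ref{cor:localfixedpoints}, pull the dualizable factor $(E^{hU})_k$ inside $(-)^{hG} = \HH_c(G;-)$ via Lemma~\ref{lem:holimdualizable} and Theorem~\ref{thm:cochaincomplex}, and use the Galois identification $(E^{hU})_k \wedge_A E \simeq \Map(G/U,E)$ (as in the proof of Lemma~\ref{lem:consistency}) to obtain
\[ ((E^{hU})_k \wedge_A (E^{hK})_k)_k \simeq (\Map^c(G/U \times G/K, E)^{hG})_k \simeq (\Map^c(G/K,E)^{hU})_k, \]
the last equivalence being Shapiro's Lemma once more. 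For the other side, the finiteness of $G/U$ gives a Wirthm\"uller-type equivalence of discrete $G$-spectra $E[G/U] \simeq \Map(G/U,E)$; restricting the $K$-action, splitting $G/U$ into its finitely many $K$-orbits, and applying Shapiro's Lemma over $K$ orbit-by-orbit would identify $(E[G/U])^{hK} \simeq \prod_{KgU \in K\backslash G/U} E^{h(K \cap gUg^{-1})} \simeq \Map^c(G/K,E)^{hU}$, a double-coset formula matching the previous display. Reassembling the homotopy limits over $U$ then yields the theorem.

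The hard part will be the bookkeeping of the various $G$-actions --- diagonal versus conjugation --- as they are carried through the Galois identification $(E^{hU})_k \wedge_A E \simeq \Map(G/U,E)$, through Shapiro's Lemma, and through the Wirthm\"uller equivalence, and doing so compatibly with the localizations so that Corollaries~\ref{cor:kholim} and \ref{cor:localfixedpoints} and Lemma~\ref{lem:holimdualizable} genuinely apply at each step. A secondary point needing care is the $k$-local self-duality of $(E^{hU})_k$ over $A$ for \emph{non-normal} open $U$, which does not follow directly from Proposition~\ref{prop:dualizable} but must be deduced by descent along the tower of intermediate extensions of Proposition~\ref{prop:independent}.
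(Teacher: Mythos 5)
Your outer reduction is essentially the same as the paper's: both arguments use Theorem~\ref{thm:Galoiscolim} to replace $(E^{hH})_k$ by $(\colim_{H \le U \le_o G} E^{hU})_k$ and the levelwise definition of $(E[[G/H]])^{hK}$ to reduce to a $U$-level comparison $F_A((E^{hU})_k,(E^{hK})_k) \simeq ((E[G/U])^{hK})_k$. Where you diverge is in how that comparison is made. The paper never decomposes anything into $K$-orbits: it first constructs (Lemmas~\ref{lem:weak} and~\ref{lem:technicallemma2}) an explicit $G$-equivariant equivalence $E[G/U] \simeq \colim_{V \trianglelefteq_o G,\, V \le U} F_A((E^{hU})_k,(E^{hV})_k)$, with the diagonal action on the left corresponding to the action through the target alone on the right, and then moves $F_A((E^{hU})_k,-)$ across the cobar complex $\Gamma_K^\bullet$ computing $(-)^{hK}$, using the $k$-local $F$-smallness of $(E^{hU})_k$ together with Lemmas~\ref{lem:technicallemma1} and~\ref{lem:mapc}. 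The $K$-action is thus handled once and for all by an equivariant map built before any fixed points are taken.

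Two steps in your route are genuine gaps rather than bookkeeping. First, the self-duality $D_A((E^{hU})_k) \simeq (E^{hU})_k$ for non-normal open $U$: Proposition~\ref{prop:dualizable} gives the discriminant equivalence only for Galois extensions, and the paper deliberately uses only \emph{dualizability} of $(E^{hU})_k$ (citing Rognes) at this point. Your proposed descent along Proposition~\ref{prop:independent} would require knowing that the resulting equivalence $(E^{hV})_k \wedge_{(E^{hU})_k} D_A((E^{hU})_k) \simeq (E^{hV})_k$ is one of $U/V$-equivariant $(E^{hV})_k$-modules before faithful descent applies, and that is not supplied; note that the paper's Lemma~\ref{lem:weak} is in effect the statement $D_A((E^{hU})_k) \wedge_A (E^{hV})_k \simeq (E^{hV})_k[G/U]$, proved without ever identifying $D_A((E^{hU})_k)$ itself. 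Second, and more seriously, your endgame matches two double-coset decompositions, $\prod_{K\backslash G/U} E^{h(K\cap gUg^{-1})}$ against $\prod_{U\backslash G/K} E^{h(U\cap gKg^{-1})}$, only as abstract homotopy types. To ``reassemble the homotopy limits over $U$'' you need a zig-zag of equivalences natural in $U$, compatible both with the transition maps $E[G/U] \to E[G/U']$ (under which $K$-orbits merge) and with $(E^{hU'})_k \to (E^{hU})_k$, and compatible with the conjugation identifications $E^{h(K\cap g^{-1}Ug)} \simeq E^{h(U\cap gKg^{-1})}$. That coherence is precisely the content that the paper's map $\psi_V$ and Lemma~\ref{lem:technicallemma2} are designed to supply, and it is absent from your outline.
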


\begin{cor}\label{cor:trivial}
If $H$ and $K$ are closed subgroups of $G$, and the left action of $K$ on
$G/H$ is trivial, then there is an equivalence
$$ F_A((E^{hH})_k, (E^{hK})_k) \simeq (E^{hK}[[G/H]])_k. $$
\end{cor}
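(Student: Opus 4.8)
The plan is to deduce the corollary from Theorem~\ref{thm:funcspec}; thus it suffices to exhibit an equivalence $((E[[G/H]])^{hK})_k \simeq (E^{hK}[[G/H]])_k$ under the hypothesis that the left action of $K$ on $G/H$ is trivial. Write $G/H = \lim_\alpha (G/H)_\alpha$ as the inverse limit of its finite discrete quotient $G$-sets; cofinally these are the sets $G/HU$ with $U$ an open normal subgroup of $G$. By definition $\mathbf{E}[[G/H]]$ is the continuous $G$-spectrum $\{E[(G/H)_\alpha]\}_\alpha$ with diagonal $G$-action, so by the definition of homotopy fixed points of a continuous $G$-spectrum,
$$(E[[G/H]])^{hK} = \holim_\alpha (E[(G/H)_\alpha])^{hK},$$
where each $E[(G/H)_\alpha]$ carries the diagonal $K$-action obtained by restriction.

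First I would fix $\alpha$ and identify $(E[(G/H)_\alpha])^{hK}$. Since $K$ acts trivially on $G/H$, it acts trivially on every finite quotient $(G/H)_\alpha$; hence the diagonal $K$-action on $E[(G/H)_\alpha] = E \wedge S[(G/H)_\alpha]$ is the given action on $E$ together with the trivial action on the finite wedge indexing set, and there is an isomorphism $E[(G/H)_\alpha] \cong \prod_{(G/H)_\alpha} E$ of discrete $K$-spectra (a finite product, since $(G/H)_\alpha$ is finite). Because $(-)^{hK}$ is a homotopy-limit construction it commutes with finite products, so
$$(E[(G/H)_\alpha])^{hK} \simeq \prod_{(G/H)_\alpha} E^{hK} = E^{hK}[(G/H)_\alpha],$$
via the canonical comparison map built from the $K$-equivariant projections $\prod_{(G/H)_\alpha} E \to E$.

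Next I would pass to the homotopy limit over $\alpha$. The comparison maps above are natural in the finite trivial $K$-set $(G/H)_\alpha$, hence compatible with the structure maps of the pro-spectrum $\mathbf{E}[[G/H]]$; since $E^{hK}[[G/H]]$ is by definition $\holim_\alpha E^{hK}[(G/H)_\alpha]$, taking homotopy limits yields $(E[[G/H]])^{hK} \simeq E^{hK}[[G/H]]$. Applying $(-)_k$ and combining with Theorem~\ref{thm:funcspec} gives the corollary.

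The argument is essentially bookkeeping on top of Theorem~\ref{thm:funcspec}, so the only point that genuinely requires care is the compatibility in $\alpha$ of the identifications $(E[(G/H)_\alpha])^{hK} \simeq E^{hK}[(G/H)_\alpha]$, so that the equivalence descends to the homotopy limits defining $E[[G/H]]$ and $E^{hK}[[G/H]]$. Here one uses that the permutation-spectrum and finite-product functors are strictly functorial in the indexing set, that a fixed functorial fibrant replacement in $\Sigma\Sp_K$ carries a product of $K$-spectra to a product of fibrant $K$-spectra, and that taking $K$-fixed points commutes with products on the nose; I do not anticipate any real obstacle.
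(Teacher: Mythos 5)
Your proposal is correct and follows essentially the same route as the paper: invoke Theorem~\ref{thm:funcspec}, unwind $(E[[G/H]])^{hK}$ as a homotopy limit of $(E[G/U])^{hK}$ over the finite discrete quotients of $G/H$, use triviality of the $K$-action on each finite quotient to commute $(-)^{hK}$ past the finite wedge/product, and reassemble the homotopy limit as $E^{hK}[[G/H]]$. The paper's proof is just a terser version of this same chain of equivalences.
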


\begin{proof}
We have the following sequence of equivalences:
\begin{align*}
F_A((E^{hH})_k, (E^{hK})_k) 
& \simeq ((E[[G/H]])^{hK})_k \\
& \simeq (\holim_{H \le U \le_o G} (E[G/U])^{hK})_k \\
& \simeq (\holim_{H \le U \le_o G} E^{hK}[G/U])_k \\
& \simeq (E^{hK}[[G/H]])_k.
\end{align*}
\end{proof}

\begin{rmk}
The conclusion of Corollary~\ref{cor:trivial} is typically far from true for
arbitrary $H$ and $K$.  For instance, 
let $n$ be odd.
It is shown in \cite[Prop.~16]{Strickland} that
in the case of $k = K(n)$, $A = S_{K(n)}$, $E = F_n$, $G = \GG_n$, 
$H = \{e \}$,
and $K = \GG_n$,
the $K(n)$-local Spanier-Whitehead dual of $E_n$ is given
by:
$$ F(E_n, E_n^{h\GG_n}) \simeq F(E_n, S_{K(n)}) \simeq \Sigma^{-n^2}E_n
\not \simeq E_n. $$  
\end{rmk}

The remainder of this section will be spent proving
Theorem~\ref{thm:funcspec}.
We first prove some technical lemmas.
Recall that an $A$-module $X$ is said to be $k$-\emph{locally}
\emph{$F$-small} if
the natural map
$$ \colim_i F_A(X,Y_i) \rightarrow F_A(X,(\colim_i Y_i)_k) $$
is a $k$-local equivalence
for every filtered diagram $\{Y_i\}_i$ of $k$-local $A$-modules.  
Observe that if $X$ is a $k$-locally dualizable $A$-module,
then it is $k$-locally $F$-small, since we have:
\begin{align*}
(\colim_i F_A(X,Y_i))_k
& \simeq (\colim_i (D_A(X) \wedge_A Y_i)_k)_k \\
& \simeq (\colim_i (D_A(X) \wedge_A Y_i))_k \\
& \simeq (D_A(X) \wedge_A \colim_i Y_i)_k \\
& \simeq F_A(X,(\colim_i Y_i)_k).
\end{align*}

\begin{lem}\label{lem:technicallemma1}
Suppose that $X$ is an $A$-module which is $k$-locally $F$-small, and
that $Y$ is a $k$-local $A$-module.
Let $T = \lim_i T_i$ be a profinite set.
Then the natural map
$$ \Map^c(T,F_A(X,Y)) \rightarrow F_A(X,\Map^c(T,Y)_k) $$
is a $k$-local equivalence.
\end{lem}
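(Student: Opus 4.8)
The plan is to reduce the statement to the case where $T$ is finite, where it becomes the standard mapping-out property of a finite product, and then pass to the colimit using the $k$-locally $F$-small hypothesis. First I would unwind the definitions: since $T = \lim_i T_i$ with each $T_i$ finite, we have $\Map^c(T, W) = \colim_i \Map(T_i, W)$ for any spectrum $W$, and for a finite set $T_i$ there is a natural isomorphism $\Map(T_i, W) \cong \prod_{T_i} W$, which (since $T_i$ is finite) is equivalent to $\bigvee_{T_i} W$ by the usual $\pi_*$-isomorphism between finite wedges and finite products (as recalled in the proof of Lemma~\ref{lem:Mapexact}). The map in question is the colimit over $i$ of the natural maps
\[
\Map(T_i, F_A(X,Y)) \cong \prod_{T_i} F_A(X,Y) \simeq F_A\Bigl(X, \prod_{T_i} Y\Bigr) \cong F_A(X, \Map(T_i, Y)),
\]
where the middle equivalence holds because $F_A(X,-)$ commutes with finite products (it is a right adjoint, hence preserves all products, and we then replace products by wedges using finiteness of $T_i$, or simply note $F_A(X, \prod_{T_i} Y) = \prod_{T_i} F_A(X,Y)$ directly since $F_A(X,-)$ is a right adjoint). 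So for each fixed $i$ the map is already an equivalence, with no hypothesis on $X$ needed.

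Next I would take the colimit over $i$. On the left we get $\colim_i \Map(T_i, F_A(X,Y)) = \Map^c(T, F_A(X,Y))$ by definition. On the right we need to identify $\colim_i F_A(X, \Map(T_i, Y))$ with $F_A(X, \Map^c(T,Y)_k)$. This is precisely where the $k$-locally $F$-small hypothesis on $X$ enters: applying the defining property of $F$-smallness to the filtered diagram $\{\Map(T_i, Y)\}_i$ of $k$-local $A$-modules (each $\Map(T_i, Y) \cong \prod_{T_i} Y$ is $k$-local since $Y$ is $k$-local and $k$-local spectra are closed under finite products), we obtain a $k$-local equivalence
\[
\colim_i F_A(X, \Map(T_i, Y)) \to F_A\bigl(X, (\colim_i \Map(T_i, Y))_k\bigr) = F_A(X, \Map^c(T,Y)_k).
\]
Stitching these together, the composite $\Map^c(T, F_A(X,Y)) = \colim_i \Map(T_i, F_A(X,Y)) \xrightarrow{\simeq} \colim_i F_A(X, \Map(T_i,Y)) \xrightarrow{\simeq_k} F_A(X, \Map^c(T,Y)_k)$ is a $k$-local equivalence, and one checks this composite agrees with the natural map asserted in the statement.

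I do not expect a serious obstacle here; the main point requiring care is the bookkeeping of naturality — verifying that the composite of the per-level isomorphisms, followed by the colimit comparison map, really is the canonical natural transformation $\Map^c(T, F_A(X,Y)) \to F_A(X, \Map^c(T,Y)_k)$ named in the statement, rather than merely \emph{some} equivalence between the two sides. A secondary technical point is ensuring the colimit of the $\Map(T_i, -)$ terms is formed correctly (up to taking functorial fibrant replacements, as in the conventions established after Corollary~\ref{cor:hocolim}), so that $\colim_i$ genuinely computes the homotopy-invariant $\Map^c$; this is handled by the same argument as in Lemma~\ref{lem:Mapexact}, using that the transition maps $\Map(T_\beta, Y) \to \Map(T_\alpha, Y)$ are levelwise monomorphisms when the $T_i$ surject onto one another (and one may always arrange a cofinal subsystem with this property). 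Once these are in place, the proof is complete.
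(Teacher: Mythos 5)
Your proposal is correct and follows essentially the same route as the paper: unwind $\Map^c(T,-)$ as $\colim_i \Map(T_i,-)$, use that $F_A(X,-)$ commutes with the finite products $\Map(T_i,-)$ to get a levelwise isomorphism, and then invoke the $k$-locally $F$-small hypothesis on the filtered diagram $\{\Map(T_i,Y)\}_i$ of $k$-local $A$-modules to commute $F_A(X,-)$ past the colimit. The extra points you flag (naturality of the composite, and $k$-locality of each $\Map(T_i,Y)$) are correctly identified and handled.
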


\begin{proof}
We have:
\begin{align*}
\Map^c(T,F_A(X,Y))_k
& = (\colim_i \Map(T_i, F_A(X, Y)))_k \\
& \cong (\colim_i F_A(X,\Map(T_i, Y)))_k \\
& \simeq F_A(X, (\colim_i \Map(T_i,Y))_k) \\ 
& = F_A(X, \Map^c(T,Y)_k). 
\end{align*}
\end{proof}

The following lemma is immediate from the definition of $\Map^c$.

\begin{lem}\label{lem:mapc}
Let $\{Y_j\}_j$ be a filtered diagram of spectra and let $T = \lim_i T_i$
be a profinite set.  Then the natural map
$$ \colim_j \Map^c(T, Y_j) \rightarrow \Map^c(T, \colim_j Y_j) $$
is an isomorphism.
\end{lem}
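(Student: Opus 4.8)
The plan is to unwind the definition $\Map^c(T,-)=\colim_i\Map(T_i,-)$, reduce to a statement about the finite quotient sets $T_i$, and invoke the fact that finite products commute with filtered colimits of spectra.

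First I would expand both sides using the definition recalled in Section~\ref{sec:Map}:
$$ \colim_j\Map^c(T,Y_j)=\colim_j\colim_i\Map(T_i,Y_j). $$
Here the colimit over $j$ is filtered by hypothesis, and the colimit over $i$ is filtered because $T=\lim_i T_i$ is the limit of a cofiltered system, so the indexing system for the $\Map(T_i,-)$ is its (filtered) opposite. Since colimits commute with colimits, this is isomorphic to $\colim_i\colim_j\Map(T_i,Y_j)$.

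Next I would note that for the finite set $T_i$ the spectrum $\Map(T_i,Y_j)$ is the finite product $\prod_{T_i}Y_j$, as in Section~\ref{sec:Map}. Colimits of spectra are computed levelwise on simplicial sets, and in $\Set$ (hence levelwise in simplicial sets) filtered colimits commute with finite limits, in particular with finite products. Therefore the canonical map
$$ \colim_j\Map(T_i,Y_j)=\colim_j\prod_{T_i}Y_j\xrightarrow{\ \cong\ }\prod_{T_i}\colim_j Y_j=\Map(T_i,\colim_j Y_j) $$
is an isomorphism.

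Finally I would reassemble, obtaining
$$ \colim_i\colim_j\Map(T_i,Y_j)\cong\colim_i\Map(T_i,\colim_j Y_j)=\Map^c(T,\colim_j Y_j), $$
and check that, tracing through the identifications, the resulting composite isomorphism is precisely the natural map of the statement, since at each stage the maps involved are those induced by the structure maps $Y_j\to\colim_j Y_j$ and by the surjections $T_i\to T_\ell$ in the pro-system. There is no genuine obstacle here; the only step needing a word of justification is the commutation of finite products with filtered colimits, which is standard, and this is why the lemma may be regarded as immediate from the definition of $\Map^c$.
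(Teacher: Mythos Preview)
Your argument is correct and is exactly the unpacking of what the paper means by ``immediate from the definition of $\Map^c$'': the paper gives no further proof beyond that phrase. You have simply spelled out the two ingredients implicit in that remark---interchange of colimits and the commutation of finite products with filtered colimits---so your approach coincides with the paper's.
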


\begin{lem}\label{lem:xi}
Let $U$ be an open subgroup of $G$, and let $V$ be an open normal subgroup 
of $G$, with $V \le U$. Then 
there is a map of discrete $G$-$A$-modules 
$$ \xi :  A[G/U] \wedge_A (E^{hU})_k \rightarrow (E^{hV})_k, $$ 
where $G$ acts on the source of $\xi$ by acting only on $A[G/U]$.
\end{lem}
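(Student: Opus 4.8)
The plan is to construct $\xi$ by hand from the $G$-equivariant decomposition
\[
A[G/U] \wedge_A (E^{hU})_k \;\cong\; \bigvee_{gU \in G/U} (E^{hU})_k ,
\]
in which $G$ permutes the wedge summands according to its action on $G/U$ (this uses only that $\wedge_A$ distributes over wedges, and that $G/U$ is finite since $U$ is open). Thus it suffices to produce, for each coset $gU$, a map $(E^{hU})_k \to (E^{hV})_k$ of $A$-modules, in a way that is independent of the chosen representative $g$ and compatible with the $G$-actions on the source and target.

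First I would fix models. Since $V$ is normal in $G$ and $V \le U$, the subgroup $V$ is also normal in $U$. Working with a functorial fibrant replacement $E_{fGA-\Alg}$ of $E$ and setting $E^{hW} := (E_{fGA-\Alg})^{W}$ for $W \in \{U,V\}$, the spectrum $E^{hV}$ carries a residual $G/V$-action (Proposition~\ref{prop:iterate}), hence a $U/V$-action, and $E^{hU}=(E^{hV})^{U/V}$ is literally the spectrum of $U/V$-fixed points of $E^{hV}$. In particular the inclusion of fixed points
\[
\iota \colon E^{hU} \hookrightarrow E^{hV}
\]
is a map of $A$-modules which is $U/V$-equivariant once its source is equipped with the trivial $U/V$-action. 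Because $U/V$ is a finite group, applying the functor $(-)_k$ carries $\iota$ to a map
\[
\phi := \iota_k \colon (E^{hU})_k \longrightarrow (E^{hV})_k
\]
with the same property: $(E^{hV})_k$ inherits its $G/V$- (hence $U/V$-) action from functoriality of $(-)_k$, and $\phi$ is $U/V$-equivariant with trivial action on its source. Concretely, this means that for $g \in G$ the composite of $\phi$ with the action of the image of $g$ on $(E^{hV})_k$ depends only on the coset $gU$.

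With $\phi$ in hand I would define $\xi$ on the summand indexed by $gU$ to be
\[
(E^{hU})_k \xrightarrow{\ \phi\ } (E^{hV})_k \xrightarrow{\ g\cdot\ } (E^{hV})_k ,
\]
where $g\cdot$ denotes the action of the image of $g$ in $G/V$. The previous paragraph shows this is independent of the representative $g$, so assembling the maps over all cosets yields a well-defined map $\xi$ out of the wedge; it is a map of $A$-modules since $\phi$ and each translation $g\cdot$ are; and it is $G$-equivariant by a direct check, using that $h \in G$ sends the $gU$-summand identically to the $hgU$-summand and that $(hg)\cdot = (h\cdot)\circ(g\cdot)$ on $(E^{hV})_k$. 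This produces the asserted map of discrete $G$-$A$-modules. (Equivalently, one notes that the source is $\Ind_U^G$ of $(E^{hU})_k$ with trivial $U$-action, and takes $\xi$ to be the adjoint of $\phi$ under the $(\Ind_U^G,\Res_G^U)$-adjunction, extended to $A$-modules in the evident way.)

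The only point requiring care is the claim in the middle paragraph that $\phi=\iota_k$ is still $U/V$-equivariant with trivial action on the source: this is where one uses that $(-)_k$ is a \emph{functor} and that $U/V$ is finite, so that both the $U/V$-action on $E^{hV}$ and the relation expressing $U/V$-equivariance of $\iota$ (namely that the $U/V$-action on $E^{hV}$ restricts to the identity on the subspectrum $E^{hU}$) are transported to $(E^{hV})_k$ by applying $(-)_k$. The remaining verifications — distributivity of $\wedge_A$ over the finite wedge, well-definedness over cosets, and $G$-equivariance of $\xi$ — are routine point-set manipulations.
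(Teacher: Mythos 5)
Your proposal is correct and takes essentially the same route as the paper: the paper also builds $\xi$ coset-by-coset, via the adjoint map of sets $G/U \to \Mod_A((E^{hU})_k,(E^{hV})_k)$ sending $gU$ to the localization of the map $\br{g}\colon E^{hU}\to E^{hV}$ obtained by descending the action of $g$, which is exactly your composite $(g\cdot)\circ\phi$. Your checks of independence of coset representative and of $G$-equivariance are the same ones the paper invokes.
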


\begin{proof}
To produce the map $\xi$, it suffices to construct the adjoint map of
sets
$$ \td{\xi} : G/U \rightarrow \Mod_A((E^{hU})_k, (E^{hV})_k). $$
Observe that for $g \in G$ the $G$-action map
$$ g: E \rightarrow E $$
descends to a map
$$ \br{g}: E^{hU} \rightarrow E^{hV}, $$
which localizes to give
$$ \td{\xi}(gU) = (\br{g})_k: (E^{hU})_k \rightarrow (E^{hV})_k. $$
It is easy to check that this map 
is independent of choice of coset
representative.
To show that $\xi$ is $G$-equivariant we must show that $\td{\xi}$
is $G$-equivariant, where $G$ acts on the morphism set
$\Mod_A((E^{hU})_k,(E^{hV})_k)$ by postcomposition.  This is clear from the
definition of $\td{\xi}$.
\end{proof}

The map $\xi$ gives rise to a map
$$ \psi_V: (E^{hV})_k [G/U] \rightarrow F_A((E^{hU})_k, (E^{hV})_k) $$
as follows:
the adjoint $\td{\psi}_V$ is given by the composite
\begin{eqnarray*}
\td{\psi}_{V}: (E^{hV})_k \wedge_A (A[G/U] \wedge_A (E^{hU})_k)
& \xrightarrow{1 \wedge \xi} &
(E^{hV})_k \wedge_A (E^{hV})_k
\\
& \xrightarrow{\mu} & 
(E^{hV})_k,
\end{eqnarray*}
where $\mu : (E^{hV})_k \wedge_A (E^{hV})_k \rightarrow (E^{hV})_k$ 
is the multiplication map of the $A$-algebra $(E^{hV})_k.$
The map $\psi_V$ is easily checked to be $G$-equivariant, where $G$
acts diagonally on the source and acts on the target 
through its action on the term $(E^{hV})_k$. 

\begin{lem}\label{lem:weak}
Let $U$ be an open subgroup of $G$, and suppose that $V$ is an open normal
subgroup of $G$ contained in $U$.  Then the map 
$$ \psi_V: (E^{hV})_k [G/U] \xrightarrow{\simeq} F_A((E^{hU})_k, (E^{hV})_k) $$
is an equivalence.
\end{lem}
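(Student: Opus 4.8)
The plan is to reduce the assertion, by faithfulness, to a statement after base change along $C := (E^{hV})_k$, and then to compute both the source and the target of $\psi_V \wedge_A C$ explicitly.

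First, write $B := (E^{hU})_k$ and $\Delta := U/V$, a finite group (as $V$ is open in $U$). By Proposition~\ref{prop:independent}(1), $C$ is a faithful finite $k$-local $G/V$-Galois extension of $A$; since $V$ is normal in $U$, Proposition~\ref{prop:independent}(2) (applied with the roles of its two subgroups played by $V$ and $U$) shows that $C$ is a faithful finite $k$-local $\Delta$-Galois extension of $B$, so in particular $B \simeq C^{h\Delta}$ by Definition~\ref{defn:finiteGalois}(2). Both $(E^{hV})_k[G/U] \cong \bigvee_{G/U} C$ and $F_A(B,C)$ are $k$-local $A$-modules, and smashing over $A$ with $C$ and then $k$-localizing preserves cofibre sequences; since $C$ is faithful over $A$, the map $\psi_V$ is a $k$-local equivalence if and only if $(\psi_V \wedge_A C)_k$ is an equivalence.

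The source is easy: the finite Galois condition $(C \wedge_A C)_k \simeq \Map(G/V, C)$ for $C/A$ (Definition~\ref{defn:finiteGalois}(3)) gives $((E^{hV})_k[G/U] \wedge_A C)_k \simeq \Map(G/V, C)[G/U]$. For the target I would first compute $(B \wedge_A C)_k$. Using $B \simeq C^{h\Delta}$, the $k$-local dualizability of $C$ as an $A$-module (Proposition~\ref{prop:dualizable}), Lemma~\ref{lem:holimdualizable}, and the cosimplicial model for homotopy fixed points of the finite group $\Delta$ (Theorem~\ref{thm:cochaincomplex}), one obtains $(B \wedge_A C)_k \simeq ((C \wedge_A C)^{h\Delta})_k$, where $\Delta$ acts on the tensor factor of $C \wedge_A C$ coming from $B$. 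Under the Galois identification $C \wedge_A C \simeq \Map(G/V, C)$ this action is free on $G/V$, with orbit set in canonical bijection with $G/U$, so decomposing $\Map(G/V,C)$ into a product of copies of $\Map(\Delta, C)$ and applying Shapiro's Lemma (Lemma~\ref{lem:Shapiro}) identifies $(B \wedge_A C)_k$ with a free $C$-module $C[G/U]$ of rank $[G:U]$. Then, using that $C$ is $k$-locally dualizable over $A$ to pull $\wedge_A C$ inside $F_A(B,-)$ via Lemma~\ref{lem:rognesdualizable}(1), followed by the base-change adjunction $F_A(B, N) \simeq F_C(B \wedge_A C, N)$ for $C$-modules $N$, we get $(F_A(B,C) \wedge_A C)_k \simeq F_C\big((B \wedge_A C)_k, (C \wedge_A C)_k\big) \simeq F_C\big(C[G/U], \Map(G/V, C)\big) \simeq \Map(G/V, C)[G/U]$, matching the source.

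Finally, one must check that $(\psi_V \wedge_A C)_k$ \emph{is} this identification, not merely a map between abstractly equivalent objects. This is where the definition of $\xi$ gets used: the assignment $gU \mapsto \br{g}\colon E^{hU} \to E^{hV}$ coming from the $G$-action on $E$ is exactly what realizes the trivialization $C \wedge_A C \xrightarrow{\simeq} \Map(G/V, C)$ along the intermediate extension, and one traces this through $\td{\psi}_V = \mu \circ (1 \wedge \xi)$ to see that, after base change, $\psi_V$ becomes the identity of $\Map(G/V, C)[G/U]$. I expect this last, conventions-heavy verification to be the main obstacle; by contrast, the computations of the two sides are routine applications of the Galois condition, dualizability, and Shapiro's Lemma established earlier in the paper.
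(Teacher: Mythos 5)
Your proposal is correct, and it reaches the result by a genuinely different reduction than the paper, although the two arguments share their computational core. The shared core is the identification $((E^{hV})_k \wedge_A (E^{hU})_k)_k \simeq \Map(G/U,(E^{hV})_k)$, obtained exactly as you describe: write $(E^{hU})_k \simeq ((E^{hV})_k)^{hU/V}$, commute the smash past the finite homotopy fixed points using $k$-local dualizability of $(E^{hV})_k$ (Proposition~\ref{prop:dualizable}, Lemma~\ref{lem:holimdualizable}, Corollary~\ref{cor:kholim}), apply the finite Galois condition $((E^{hV})_k\wedge_A(E^{hV})_k)_k\simeq\Map(G/V,(E^{hV})_k)$, and use freeness of the right $U/V$-action on $G/V$. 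Where you diverge is in how this is fed back into the function spectrum: the paper applies $F_{(E^{hV})_k}(-,(E^{hV})_k)$ to this equivalence, using $F_A((E^{hU})_k,(E^{hV})_k)\cong F_{(E^{hV})_k}((E^{hV})_k\wedge_A(E^{hU})_k,(E^{hV})_k)$ and then dualizing $\Map(G/U,-)$ over $(E^{hV})_k$ to land directly on $(E^{hV})_k[G/U]$ --- a computation that never invokes faithfulness (consistent with Proposition~\ref{prop:dualizable} being stated for not-necessarily-faithful extensions). You instead descend along the faithful extension $A\to(E^{hV})_k$ and compare source and target after base change; this is legitimate here because profaithfulness is a standing hypothesis of Section~\ref{sec:closed} and Proposition~\ref{prop:independent} supplies the faithfulness, and because both sides of $\psi_V$ are visibly $k$-local, but it buys nothing over the direct route and costs an extra hypothesis. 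Both arguments end at the same unavoidable verification --- that $\psi_V$ (equivalently, its base change) actually agrees with the chain of identifications, which ultimately comes down to $\xi$ realizing the Galois trivialization --- and the paper is no more explicit about this than you are, so this is not a gap relative to the paper's own standard of proof.
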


\begin{proof}
By Proposition~\ref{prop:independent}, the spectrum $(E^{hV})_k$ is
a $k$-local $G/V$-Galois extension of $A$, and by 
Proposition~\ref{prop:iterate}, there is an
equivalence
$$ E^{hU} \simeq (E^{hV})^{hU/V}. $$
By Proposition~\ref{prop:dualizable}, the $A$-module $(E^{hV})_k$ is
$k$-locally dualizable.  Making use of Corollary~\ref{cor:kholim} and 
Lemma~\ref{lem:holimdualizable}, we have:
\begin{align*}
((E^{hV})_k \wedge_A (E^{hU})_k)_k
& \simeq ((E^{hV})_k \wedge_A ((E^{hV})^{hU/V})_k)_k \\
& \simeq ((E^{hV})_k \wedge_A ((E^{hV})_k)^{hU/V})_k \\
& \simeq (((E^{hV})_k \wedge_A (E^{hV})_k)^{hU/V})_k \\
& \simeq \Map(G/V, (E^{hV})_k)^{hU/V} \\
& \simeq \Map(G/U, (E^{hV})_k),
\end{align*}
where the last equivalence follows from the fact that the right 
$U/V$-action on $G/V$ is free.
Applying $F_{(E^{hV})_k}(-,(E^{hV})_k)$ to both sides, we have:
\begin{align*}
F_A((E^{hU})_k, (E^{hV})_k)
& \cong F_{(E^{hV})_k}((E^{hV})_k \wedge_A (E^{hU})_k, (E^{hV})_k) \\
& \simeq F_{(E^{hV})_k}(((E^{hV})_k \wedge_A (E^{hU})_k)_k, (E^{hV})_k) \\
& \simeq F_{(E^{hV})_k}(\Map(G/U, (E^{hV})_k),(E^{hV})_k) \\
& \simeq F_{(E^{hV})_k}((E^{hV})_k,(E^{hV})_k[G/U]) \\
& \simeq (E^{hV})_k[G/U].
\end{align*}
This sequence of equivalences may be checked to be compatible with
the map $\psi_V$.
\end{proof}

\begin{lem}\label{lem:technicallemma2}
Let $U$ be an open subgroup of $G$.  There is an 
equivalence of discrete $G$-spectra
$$ \phi: 
E[G/U] \xrightarrow{\simeq}
\colim_{V \trianglelefteq_o G, V \le U} 
F_A((E^{hU})_k, (E^{hV})_k). $$
Here, $G$ is acting diagonally
on the left-hand side and acting only on each $(E^{hV})_k$ 
on the right-hand side.
\end{lem}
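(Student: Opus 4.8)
The plan is to obtain $\phi$ as a filtered colimit of the equivalences $\psi_V$ of Lemma~\ref{lem:weak}. First I would record that, since the permutation spectrum functor $X \mapsto X[G/U]$ is defined levelwise by smashing with the fixed pointed simplicial set $(G/U)_+$, it commutes with filtered colimits of discrete $G$-spectra (these being computed on underlying symmetric spectra). Combining this with the $G$-equivariant equivalence $E \xrightarrow{\simeq} E'$ of discrete commutative $G$-$A$-algebras from the discussion preceding Lemma~\ref{lem:HcapV}, and with the fact that the subsystem $\{V \trianglelefteq_o G : V \le U\}$ is cofinal in the system of all open normal subgroups of $G$ (given any open normal $U'$, intersect it with the core of $U$), we get $G$-equivariant equivalences
$$ E[G/U] \xrightarrow{\simeq} E'[G/U] \cong \colim_{\substack{V \trianglelefteq_o G \\ V \le U}} \bigl((E^{hV})_k[G/U]\bigr), $$
with $G$ acting diagonally on each term.

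Next I would verify that the maps $\psi_V$ constitute a natural transformation of diagrams indexed by $\{V \trianglelefteq_o G : V \le U\}$: for $V' \le V$ the square
$$
\xymatrix{
(E^{hV})_k[G/U] \ar[r]^-{\psi_V} \ar[d] & F_A((E^{hU})_k,(E^{hV})_k) \ar[d] \\
(E^{hV'})_k[G/U] \ar[r]^-{\psi_{V'}} & F_A((E^{hU})_k,(E^{hV'})_k)
}
$$
should commute, where the left vertical arrow is induced by the canonical map $(E^{hV})_k \to (E^{hV'})_k$ and the right vertical arrow is postcomposition with that same map. This is a diagram chase through the definitions: the map $\td{\xi}$ of Lemma~\ref{lem:xi} sends $gU$ to the localized $G$-action map, which is compatible with the transition maps of the tower $\{(E^{hV})_k\}$, and the multiplication maps $\mu$ used to define $\td{\psi}_V$ are likewise compatible. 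Granting this, I set $\phi = \colim_V \psi_V$, precomposed with the identifications of the previous paragraph. It is $G$-equivariant for the stated actions because each $\psi_V$ is (Lemma~\ref{lem:weak}: diagonal action on the source, action through the $(E^{hV})_k$-factor on the target), and a colimit of equivariant maps is equivariant.

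Finally, each $\psi_V$ is a weak equivalence by Lemma~\ref{lem:weak}, and filtered colimits of spectra preserve weak equivalences once the implicit functorial fibrant replacements are taken (Corollary~\ref{cor:hocolim} and the conventions following Lemma~\ref{lem:algebracolimits}); hence $\phi = \colim_V \psi_V$ is a weak equivalence, and the target colimit $\colim_V F_A((E^{hU})_k,(E^{hV})_k)$ is thereby identified with the discrete $G$-spectrum $E'[G/U] \simeq E[G/U]$. I expect the main obstacle to be the compatibility check in the second paragraph --- upgrading the family $\{\psi_V\}$ to an honest map of diagrams --- together with the bookkeeping needed to confirm that all the intermediate identifications ($E \simeq E'$, cofinality of the $V \le U$, and commuting $(-)[G/U]$ past the colimit) are simultaneously $G$-equivariant.
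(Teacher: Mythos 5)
Your proposal is correct and follows essentially the same route as the paper: identify $E[G/U]$ with $E'[G/U]$, commute $(-)[G/U]$ past the filtered colimit, and take the colimit of the equivalences $\psi_V$ of Lemma~\ref{lem:weak}. The extra care you take with the naturality of the $\psi_V$ in $V$, cofinality of $\{V \le U\}$, and $G$-equivariance is fine (the paper leaves these checks implicit) and changes nothing of substance.
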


\begin{rmk}
Let $V$ be an open normal subgroup of $G$ and let $U$ be an open
subgroup of $G$. 
Since $E^{hV}$ is a $G/V$-spectrum, 
the function spectrum  
$$F_A((E^{hU})_k, (E^{hV})_k) $$ 
is a 
discrete $G$-spectrum, where $G$ is acting only on the spectrum $(E^{hV})_k$.  
The colimit 
$$ \colim_{V \trianglelefteq_o G, V \le U} 
F_A((E^{hU})_k, (E^{hV})_k) $$ 
is therefore a discrete $G$-spectrum.
\end{rmk}

\begin{proof}[Proof of Lemma~\ref{lem:technicallemma2}]
The map $\phi$ is given as the composite
\begin{eqnarray*}
E[G/U] 
& \xrightarrow{\simeq} & 
({\colim_{V \trianglelefteq_o G}} (E^{hV})_k)[G/U]
\\
& \xrightarrow{\simeq} & 
{\colim_{V \trianglelefteq_o G}} ((E^{hV})_k[G/U])
\\
& \xrightarrow{\psi} & 
\colim_{V \trianglelefteq_o G, V \le U} F_A((E^{hU})_k,(E^{hV})_k).
\end{eqnarray*}
The map $\psi$
is the colimit of the equivalences 
$$ \psi_V: (E^{hV})_k[G/U] \rightarrow 
F_A((E^{hU})_k, (E^{hV})_k) $$
of Lemma~\ref{lem:weak}.
Therefore, $\psi$
is an equivalence,
so $\phi$ is an equivalence.
\end{proof}

\begin{proof}[Proof of Theorem~\ref{thm:funcspec}]
Fix $U$ to be an open subgroup of $G$, and suppose that $s
\ge 0$.  Using Lemma~\ref{lem:technicallemma2}, we have
$$
\Map^c(K^s, E[G/U])_k 
\simeq 
\Map^c(K^s, \colim_{V \trianglelefteq_o G, V \le U} 
F_A((E^{hU})_k, (E^{hV})_k))_k.
$$
Lemma~\ref{lem:mapc} gives
\begin{multline*}
\Map^c(K^s, \colim_{V \trianglelefteq_o G, V \le U} 
F_A((E^{hU})_k, (E^{hV})_k))_k
\\
\simeq 
(\colim_{V \trianglelefteq_o G, V \le U}
\Map^c(K^s, 
F_A((E^{hU})_k, (E^{hV})_k)))_k.
\end{multline*}
Using Lemma~\ref{lem:technicallemma1}, we get
\begin{multline*}
(\colim_{V \trianglelefteq_o G, V \le U}
\Map^c(K^s, 
F_A((E^{hU})_k, (E^{hV})_k)))_k
\\
\simeq 
(\colim_{V \trianglelefteq_o G, V \le U} 
F_A((E^{hU})_k, \Map^c(K^s,(E^{hV})_k)_k))_k.
\end{multline*}
Now, by \cite[Lem.~7.2.5]{Rognes}, the spectrum $(E^{hU})_k$ is a $k$-locally
dualizable $A$-module.  Therefore, as discussed prior to
Lemma~\ref{lem:technicallemma1}, it is $k$-locally $F$-small.  Therefore,
there is an equivalence
\begin{multline*}
(\colim_{V \trianglelefteq_o G, V \le U} 
F_A((E^{hU})_k, \Map^c(K^s,(E^{hV})_k)_k))_k
\\
\simeq 
F_A((E^{hU})_k, (\colim_{V \trianglelefteq_o G, V \le U}
\Map^c(K^s,(E^{hV})_k))_k).
\end{multline*}
Applying Lemma~\ref{lem:mapc} again, we have
\begin{multline*}
F_A((E^{hU})_k, (\colim_{V \trianglelefteq_o G, V \le U}
\Map^c(K^s,(E^{hV})_k))_k)
\\
\simeq
F_A((E^{hU})_k, \Map^c(K^s, \colim_{V \trianglelefteq_o G, V \le U}
(E^{hV})_k)_k)
\\
\simeq
F_A((E^{hU})_k, \Map^c(K^s,E)_k).
\end{multline*}
We therefore have an equivalence of cosimplicial spectra
\begin{equation}\label{eq:cosimp1}
\begin{split}
(\Gamma_K^\bullet E[G/U])_k 
& \cong
\Map^c(K^\bullet, E[G/U])_k \\
& \simeq
F_A((E^{hU})_k, \Map^c(K^\bullet,E)_k) \\
& \cong
F_A((E^{hU})_k, (\Gamma^\bullet_K E)_k).
\end{split}
\end{equation}
We will now deduce the desired equivalence by producing a string of
intermediate equivalences.
By Theorem~\ref{Davis} and Corollary~\ref{cor:kholim}, we have
$$
((E[[G/H]])^{hK})_k
\simeq
\holim_{H \le U \le_o G} \holim_\Delta
(\Gamma_K^\bullet E[G/U])_k.
$$
Using equivalence~(\ref{eq:cosimp1}), we have
\begin{align*}
\holim_{H \le U \le_o G} \holim_\Delta
(\Gamma_K^\bullet E[G/U])_k
& \simeq
\holim_{H \le U \le_o G} \holim_\Delta  
F_A((E^{hU})_k, (\Gamma_K^\bullet E)_k)
\\
& \simeq 
\holim_{H \le U \le_o G}   
F_A((E^{hU})_k, \holim_\Delta (\Gamma_K^\bullet E)_k).
\end{align*}
By Corollary~\ref{cor:kholim}, we have
$$
\holim_{H \le U \le_o G}   
F_A((E^{hU})_k, \holim_\Delta (\Gamma_K^\bullet E)_k)
\simeq
\holim_{H \le U \le_o G}   
F_A((E^{hU})_k, (\holim_\Delta (\Gamma_K^\bullet E))_k).
$$
Using Theorem~\ref{thm:cochaincomplex}, we get
\begin{align*}
\holim_{H \le U \le_o G}   
F_A((E^{hU})_k, (\holim_\Delta (\Gamma_K^\bullet E))_k)
& \simeq
\holim_{H \le U \le_o G}   
F_A((E^{hU})_k, (E^{hK})_k)
\\
& \simeq 
F_A(\colim_{H \le U \le_o G} (E^{hU})_k, (E^{hK})_k).
\end{align*}
We may now apply Theorem~\ref{thm:Galoiscolim} to deduce
$$
F_A(\colim_{H \le U \le_o G} (E^{hU})_k, (E^{hK})_k)
\simeq
F_A((E^{hH})_k, (E^{hK})_k).
$$
\end{proof}

\section{Applications to Morava $E$-theory}\label{sec:Etheory}

\subsection{Morava $E$-theory as a profinite Galois
extension}\label{sec:EGalois}

The general theory developed in this paper applies to Morava $E$-theory.
In this setting, we have
\begin{align*}
k & = K(n), \\
A & = S_{K(n)}, \\
G & = \GG_n, 
\end{align*}
where $K(n)$ is the $n$th Morava $K$-theory spectrum, 
$S_{K(n)}$ is the $K(n)$-local sphere spectrum, and 
$\GG_n$ is the $n$th extended Morava stabilizer group: 
$$ \GG_n = \MS_n \rtimes \mathrm{Gal}(\mathbb{F}_{p^n}/\mathbb{F}_p). $$  

Let 
$E_n$ be the $n$th Morava $E$-theory spectrum, where 
$$ (E_n)_\ast=W(\mathbb{F}_{p^n})[[u_1, ..., u_{n-1}]][u^{\pm 1}]. $$ 
Here, the degree of $u$ is $-2$ and the complete power series ring 
is in degree zero.  
Goerss and Hopkins \cite{GoerssHopkins}, 
building on work of Hopkins 
and Miller \cite{RezkHMT}, 
showed that $\GG_n$ acts on $E_n$ by maps of commutative $S$-algebras.  

Devinatz and
Hopkins \cite{DevinatzHopkins} constructed 
homotopy fixed point spectra 
$$ E_n^{dhH} $$ 
for closed subgroups $H$
of $\GG_n$.  (Here we use the notation $E_n^{dhH}$ to distinguish the
Devinatz-Hopkins homotopy fixed point spectra from the homotopy fixed point
spectra constructed in
this paper.) We give a brief overview 
of the way that Devinatz and Hopkins defined $E_n^{dhH}$.
\begin{enumerate}
\item For $U$ an open subgroup of $\GG_n$, Devinatz and Hopkins form a
cosimplicial object in the stable homotopy category
$$ \td{\mathbf{C}}^\bullet_U: \Delta \rightarrow \mathrm{Ho}(\Sigma\Sp) $$
having the property that
$$ \td{\mathbf{C}}^s_U = (E_n^{\wedge s} \wedge \Map(\GG_n/U, E_n))_{K(n)}. $$
This would yield a $K(n)$-local $E_n$-based Adams
resolution of $E_n^{dhU}$, if this spectrum were to exist, given that one
expects
$$ (E_n \wedge E_n^{dhU})_{K(n)} \simeq \Map(\GG_n/U, E_n). $$
The cosimplicial objects taken together give a functor
$$ \td{\mathbf{C}}^\bullet_{(-)}:
(\mathcal{O}^{\mathrm{fin}}_{\GG_n})^{\mathrm{op}} \rightarrow
c\mathrm{Ho}(\Sigma\Sp), $$
from the opposite category of finite $\GG_n$-orbits to the category of
cosimplicial objects in the stable homotopy category.

\item
Using the $E_\infty$-mapping space calculations of \cite{GoerssHopkins},
together with the rectification machinery of \cite{DKS}, Devinatz and
Hopkins show that the diagram $\td{\mathbf{C}}^\bullet_U$ admits a
rectification to give a point-set level commutative diagram
$$ \mathbf{C}^\bullet_U: \Delta \rightarrow \Alg_S. $$
Furthermore, they show that the rectification can be made functorial in
$U$.

\item The spectrum $E_n^{dhU}$ is defined by
$$ E_n^{dhU} := \holim_\Delta \mathbf{C}^\bullet_U. $$
When $U$ is normal in $\GG_n$, the functoriality of
$\mathbf{C}^\bullet_U$ in $U$ induces a $\GG_n/U$-action on $E_n^{dhU}$ by maps
of $S$-algebras. 

\item For $H$ a closed subgroup of $\GG_n$, the spectrum $E_n^{dhH}$ is
defined by
$$ E_n^{dhH} := (\colim_{H \le U \le_o \GG_n} E_n^{dhU})_{K(n)}. $$
\end{enumerate}

Rognes \cite[Thm.~5.4.4, Lem.~4.3.7]{Rognes} 
observed, for $U$ an open normal subgroup of $\GG_n$, that
the work of Devinatz and Hopkins \cite{LHS, DevinatzHopkins} 
proves that $E_n^{dhU}$ is a faithful 
$K(n)$-local
$\GG_n/U$-Galois extension of $S_{K(n)}$.
Therefore, the discrete commutative $\GG_n$-$S_{K(n)}$-algebra
$$ F_n = \colim_{U \trianglelefteq_o \GG_n} E_n^{dhU} $$
is a profaithful $K(n)$-local profinite $\GG_n$-Galois extension of 
$S_{K(n)}$. 
The spectrum $E_n$ is recovered by the equivalence 
(see \cite[Def.~1.5, Thm.~3(i)]{DevinatzHopkins})
$$ E_n \simeq (F_n)_{K(n)}. $$
With this in mind, we make the following definition.

\begin{defn}\label{defn:EnhG}
For $H$ a closed subgroup of $\GG_n$, we define
$$ E_n^{hH} := (F_n^{hH})_{K(n)}, $$
which is a commutative
$S_{K(n)}$-algebra by Lemma~\ref{lem:5.2.6}.
\end{defn}

We note that the use of pro-spectra  
gives an alternative, but equivalent
approach to defining $E_n^{hH}$.
For a sequence of integers $I = (i_0, \ldots, i_{n-1})$, we shall let $M_I$
denote the generalized Moore spectrum with
$$ BP_* M_I \cong BP_*/(p^{i_0}, v_1^{i_1}, \ldots, v_{n-1}^{i_{n-1}}). $$
These spectra are inductively defined by cofiber sequences
$$ \Sigma^{2i_j(p^j-1)}M_{(i_0, \ldots, i_{j-1})} \xrightarrow{v_j^{i_j}}
M_{(i_0, \ldots, i_{j-1})} \rightarrow M_{(i_0, \ldots, i_j)} $$
where $v_j^{i_j}$ is a $v_j$ self-map.  While $M_I$ does not exist for
every $I$, the Hopkins-Smith Periodicity Theorem implies that the maps
$v_j^{i_j}$ exist for $i_j \gg 1$.  

As explained in 
\cite[Prop.~7.10]{HoveyStrickland}, one can form a pro-spectrum
$\{ M_I \}_I$ for a cofinal system of multi-indices $I$.  
Then the pro-spectrum
$$ {\bf E}_n = \{
F_n \wedge M_I \}_I $$ 
is a continuous $H$-spectrum.  Since
$F_n$ is $E(n)$-local and each $M_I$ is a finite spectrum, the homotopy
fixed points are identified by
\begin{align*}
E_n^{hH} & = \holim_I (F_n \wedge M_I)^{hH} \\
& \simeq \holim_I F_n^{hH} \wedge M_I \\
& \simeq (F_n^{hH})_{K(n)}.
\end{align*}
Thus, the homotopy fixed points of the continuous $H$-spectrum ${\bf E}_n$ 
coincide with the
$K(n)$-localization of the homotopy fixed points of the discrete
$H$-spectrum $F_n$.  In particular, Definition~\ref{defn:EnhG} is
equivalent to the definition of $E_n^{hH}$ given in \cite{Davis}.

\begin{prop}\label{prop:EGalois}
The profaithful $K(n)$-local profinite 
$\GG_n$-Galois extension $F_n$ of $S_{K(n)}$ is 
consistent and has finite vcd.
\end{prop}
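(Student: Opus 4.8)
The plan is to treat the two added conditions—finite vcd and consistency—separately, the profaithfulness of $F_n$ over $S_{K(n)}$ having already been recorded above.

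For finite vcd it suffices to exhibit an open subgroup of $\GG_n$ of finite cohomological dimension. The group $\GG_n = \MS_n \rtimes \mathrm{Gal}(\FF_{p^n}/\FF_p)$ is a finite extension of $\MS_n$, the group of units in the maximal order of the central division algebra over $\QQ_p$ of invariant $1/n$; this is a compact open subgroup of the $p$-adic Lie group $D^\times$, so $\MS_n$—and hence $\GG_n$—is a compact $p$-adic analytic group. Such groups have finite vcd by \cite[Thm.~9.6]{DSMS} and \cite{Symonds} (compare the remark following Theorem~\ref{thm:Galois}); concretely $\MS_n$ contains an open torsion-free pro-$p$ subgroup which is a Poincar\'e duality group of dimension $n^2$, hence of finite cohomological dimension.

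For consistency, since $F_n$ now has finite vcd, Corollary~\ref{cor:consistent} reduces the claim to showing that the unit map $S_{K(n)} \to (F_n^{h\GG_n})_{K(n)}$ is an equivalence, and Proposition~\ref{prop:amitsurfixedpoint} identifies the right-hand side with the $K(n)$-local Amitsur derived completion $(S_{K(n)})^\wedge_{K(n),F_n} = \holim_\Delta (F_n^{\wedge_{S_{K(n)}}\bullet+1})_{K(n)}$. The heart of the argument is to identify this homotopy limit with the Devinatz--Hopkins spectrum $E_n^{dh\GG_n}$. Since $(F_n)_{K(n)} \simeq E_n$, and since $K(n)$-localization of an iterated smash product over $S_{K(n)}$ depends only on the $K(n)$-localizations of the smash factors, the coaugmented cosimplicial spectrum $(F_n^{\wedge_{S_{K(n)}}\bullet+1})_{K(n)}$ is levelwise equivalent, compatibly with coaugmentations, to the $K(n)$-localized $E_n$-cobar complex of $S^0$—that is, to $\td{\mathbf{C}}^\bullet_{\GG_n}$, whose point-set rectification $\mathbf{C}^\bullet_{\GG_n}$ defines $E_n^{dh\GG_n} = \holim_\Delta \mathbf{C}^\bullet_{\GG_n}$. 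As homotopy limits of levelwise-equivalent cosimplicial objects agree, $(S_{K(n)})^\wedge_{K(n),F_n} \simeq E_n^{dh\GG_n}$, and Devinatz and Hopkins \cite{DevinatzHopkins} proved $E_n^{dh\GG_n} \simeq S_{K(n)}$ via the coaugmentation; chasing this identification shows $S_{K(n)} \to (S_{K(n)})^\wedge_{K(n),F_n}$ is an equivalence, so $F_n$ is consistent.

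I expect the main obstacle to be this middle bookkeeping step—matching the cosimplicial commutative $S_{K(n)}$-algebra $(F_n^{\wedge_{S_{K(n)}}\bullet+1})_{K(n)}$ with the Devinatz--Hopkins object $\mathbf{C}^\bullet_{\GG_n}$ (whose terms are smash products over $S$, not $S_{K(n)}$, and which is only specified up to rectifying a diagram in $\mathrm{Ho}(\Sigma\Sp)$), and verifying compatibility with coaugmentations. One could also sidestep part of this by appealing to the consistency/convergence statements already in \cite[Sec.~8.1]{Rognes} or to the $E_n$-nilpotent completion results of \cite{HoveyStrickland}.
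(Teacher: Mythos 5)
Your proposal is correct and follows essentially the same route as the paper: finite vcd from $\GG_n$ being compact $p$-adic analytic, and consistency by identifying the Amitsur completion levelwise with the $K(n)$-local $E_n$-cobar complex and invoking its convergence to $S_{K(n)}$. The rectification worry you flag does not actually arise in the paper's argument, since it works with a zig-zag of genuine (point-set level) cosimplicial objects $(F_n^{\wedge_{S_{K(n)}}\bullet+1})_{K(n)} \leftarrow (F_n^{\wedge\bullet+1})_{K(n)} \rightarrow (E_n^{\wedge\bullet+1})_{K(n)}$ and then cites the convergence of the $K(n)$-local $E_n$-Adams resolution directly, rather than passing through the Devinatz--Hopkins object $\mathbf{C}^\bullet_{\GG_n}$.
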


\begin{proof}
There is a zig-zag
$$ (F_n^{\wedge_{S_{K(n)}} \bullet+1})_{K(n)} \xleftarrow{\simeq} 
(F_n^{\wedge \bullet + 1})_{K(n)} \xrightarrow{\simeq} 
((F_n)_{K(n)}^{\wedge \bullet + 1})_{K(n)}
\simeq (E_n^{\wedge \bullet + 1})_{K(n)}
$$
of levelwise equivalences of cosimplicial objects.  We
therefore have equivalences
\begin{align*}
(S_{K(n)})^\wedge_{K(n), F_n} & =
\holim_\Delta (F_n^{\wedge_{S_{K(n)}} \bullet+1})_{K(n)} \\
& \simeq 
\holim_\Delta (E_n^{\wedge \bullet + 1})_{K(n)} \\
& \simeq S_{K(n)},
\end{align*}
where the last equivalence follows from the fact that the cosimplicial
object 
$$ (E_n^{\wedge \bullet + 1})_{K(n)} $$ 
is the $K(n)$-local $E_n$-Adams
resolution for $S_{K(n)}$.

Since $\GG_n$ is a compact $p$-adic analytic group, 
it has finite virtual cohomological dimension.  
Therefore, the extension $F_n$ of $S_{K(n)}$ has finite vcd.
\end{proof}

\begin{cor}\label{neato}
There is a weak equivalence $E_n^{h\GG_n} \simeq S_{K(n)}$.
\end{cor}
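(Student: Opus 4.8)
The plan is to deduce this immediately from the consistency of $F_n$ together with the identification of $E_n^{h\GG_n}$ in terms of $F_n$. First recall that, by Definition~\ref{defn:EnhG}, we have $E_n^{h\GG_n} = (F_n^{h\GG_n})_{K(n)}$. By Proposition~\ref{prop:EGalois}, the spectrum $F_n$ is a consistent profaithful $K(n)$-local profinite $\GG_n$-Galois extension of $S_{K(n)}$ of finite vcd. We are therefore in the situation of Corollary~\ref{cor:consistent}, applied with $k = K(n)$, $A = S_{K(n)}$, $G = \GG_n$, and $E = F_n$.

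Corollary~\ref{cor:consistent} states that, for a $k$-local profinite $G$-Galois extension of finite vcd, being consistent is equivalent to the $A$-algebra unit map $A \rightarrow (E^{hG})_k$ being an equivalence. Since Proposition~\ref{prop:EGalois} guarantees that $F_n$ is consistent, the unit map $S_{K(n)} \rightarrow (F_n^{h\GG_n})_{K(n)}$ is an equivalence, and hence there is a weak equivalence $S_{K(n)} \simeq (F_n^{h\GG_n})_{K(n)} = E_n^{h\GG_n}$, as claimed. (There is no issue on the source side, since $S_{K(n)}$ is already $K(n)$-local, so no further localization is needed there.)

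There is essentially no obstacle here: the work has all been done in Proposition~\ref{prop:amitsurfixedpoint} and Corollary~\ref{cor:consistent}, where the $k$-local Amitsur derived completion $A^\wedge_{k,E}$ was identified with $(E^{hG})_k$ under the finite vcd hypothesis, and in Proposition~\ref{prop:EGalois}, where the completion $(S_{K(n)})^\wedge_{K(n),F_n}$ was computed to be $S_{K(n)}$ by recognizing the relevant cosimplicial object as the $K(n)$-local $E_n$-Adams resolution of $S_{K(n)}$. The corollary is simply the combination of these facts with the definition $E_n^{hH} := (F_n^{hH})_{K(n)}$.
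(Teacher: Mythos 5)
Your proof is correct and follows exactly the paper's argument: the paper also deduces the corollary immediately from Corollary~\ref{cor:consistent}, using the consistency and finite vcd of $F_n$ established in Proposition~\ref{prop:EGalois} together with Definition~\ref{defn:EnhG}.
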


\begin{proof}
This follows immediately from Corollary~\ref{cor:consistent}.
\end{proof}

\begin{rmk}
Because of the result of Devinatz and Hopkins that there is an equivalence
$E_n^{dh\GG_n} \simeq 
S_{K(n)}$ \cite[Theorem 1(iii)]{DevinatzHopkins}, 
it has been known for some time 
that the $K(n)$-local sphere behaves like a homotopy fixed point spectrum; 
Corollary~\ref{neato} makes this idea precise.  
\end{rmk}

\subsection{Comparison with the Devinatz-Hopkins homotopy fixed
points}\label{sec:comparison}

Let $H$ be a closed subgroup of $\GG_n$.
The following theorem relates the 
homotopy fixed point construction $E_n^{hH}$ of 
Definition~\ref{defn:EnhG}
to
the Devinatz-Hopkins homotopy fixed point construction $E_n^{dhH}$ of 
\cite{DevinatzHopkins}. 

\begin{thm}\label{dhGhG}
If $H$ is a closed subgroup of $\GG_n$, there is an equivalence
$$ E_n^{dhH} \simeq E_n^{hH}. $$
\end{thm}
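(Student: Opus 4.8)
The plan is to compare the two constructions by exhibiting both as homotopy fixed points of the discrete $\GG_n$-spectrum $F_n$ (localized), using the machinery developed in Sections~\ref{sec:progalois} and~\ref{sec:closed}. The key point is that Theorem~\ref{thm:Galoiscolim} identifies $(F_n^{hH})_{K(n)}$ with $(\colim_{H \le U \le_o \GG_n} F_n^{hU})_{K(n)}$, and by definition $E_n^{dhH} = (\colim_{H \le U \le_o \GG_n} E_n^{dhU})_{K(n)}$. So the whole theorem reduces to comparing the two systems $\{F_n^{hU}\}$ and $\{E_n^{dhU}\}$ indexed over open subgroups $U$ containing $H$, compatibly in $U$.

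First I would handle the case $H = U$ open. By Lemma~\ref{lem:consistency} (applicable since Proposition~\ref{prop:EGalois} tells us $F_n$ is consistent of finite vcd), for each $\alpha$ with $U_\alpha \trianglelefteq_o \GG_n$ the natural map $E_n^{dhU_\alpha} = (F_n)_\alpha \to (F_n^{hU_\alpha})_{K(n)}$ is an equivalence. Then Proposition~\ref{prop:independent} (or rather the argument in Proposition~\ref{prop:iterate} together with iterated homotopy fixed points) extends this from the cofinal system $\{U_\alpha\}$ to all open subgroups: for any open $U$, choosing $U_\alpha \le U$, one has $(F_n^{hU})_{K(n)} \simeq ((F_n^{hU_\alpha})^{hU/U_\alpha})_{K(n)} \simeq ((F_n^{hU_\alpha})_{K(n)})^{hU/U_\alpha} \simeq (E_n^{dhU_\alpha})^{hU/U_\alpha}$. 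The last spectrum must be identified with $E_n^{dhU}$; here one invokes property~(b) of the Devinatz--Hopkins construction (finite homotopy fixed points of $E_n^{dhU_\alpha}$ agree with the point-set construction), together with the functoriality in $U$ of the rectified cosimplicial objects $\mathbf{C}^\bullet_{(-)}$ recalled in Section~\ref{sec:comparison}. This identification should be made compatibly as $U$ varies.

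Having matched the two systems over open subgroups, I would take the filtered colimit over $\{U : H \le U \le_o \GG_n\}$ and then $K(n)$-localize, obtaining
$$ E_n^{dhH} = (\colim_{H \le U \le_o \GG_n} E_n^{dhU})_{K(n)} \simeq (\colim_{H \le U \le_o \GG_n} F_n^{hU})_{K(n)} \simeq (F_n^{hH})_{K(n)} = E_n^{hH}, $$
where the last equivalence is exactly Theorem~\ref{thm:Galoiscolim} applied to the extension $F_n$ (and Definition~\ref{defn:EnhG}).

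The main obstacle I anticipate is the bookkeeping in the open-subgroup comparison: showing that the equivalences $(E_n^{dhU_\alpha})^{hU/U_\alpha} \simeq E_n^{dhU}$ and $E_n^{dhU_\alpha} \simeq (F_n^{hU_\alpha})_{K(n)}$ can be chosen coherently across the whole poset of open $U$, so that one genuinely gets an equivalence of filtered diagrams (or at least a pro-equivalence, which is enough after $\holim$/colimit) rather than just levelwise equivalences with no compatibility. This is where one must lean on the functoriality built into the Devinatz--Hopkins construction (step (2)--(4) of Section~\ref{sec:comparison}) and on the naturality of the maps $E_\alpha \to (E^{hU_\alpha})_k$ in Lemma~\ref{lem:consistency}. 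Everything else is a formal consequence of the results already established.
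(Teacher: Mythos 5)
Your proposal is correct and follows essentially the same route as the paper's proof: identify $E_n^{dhU}\simeq (F_n^{hU})_{K(n)}$ for open normal $U$ via Lemma~\ref{lem:consistency}, extend to all open $U$ using Proposition~\ref{prop:iterate} together with the Devinatz--Hopkins theorem that $E_n^{dhV}\simeq (E_n^{dhW})^{hV/W}$ for finite quotients, and conclude with Theorem~\ref{thm:Galoiscolim} and the colimit definition of $E_n^{dhH}$. The only notable difference is that the paper carries out the open-subgroup comparison after smashing with the generalized Moore spectra $M_I$ and recovers the $K(n)$-localization as $\holim_I(-\wedge M_I)$, which sidesteps much of the localization and coherence bookkeeping you flag at the end.
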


\begin{proof}
As explained in Subsection~\ref{sec:EGalois}, 
$F_n = \colim_{U \trianglelefteq_o \GG_n} E_n^{dhU}$ is a 
consistent profaithful $K(n)$-local profinite $\GG_n$-Galois 
extension of $S_{K(n)}$ of finite 
vcd.  
Thus, by Lemma~\ref{lem:consistency}, 
for each $U \trianglelefteq_o \GG_n$,
there is a $\GG_n/U$-equivariant equivalence
\[E_n^{dhU} \simeq (F_n^{hU})_{K(n)}.  \] 
Therefore, given 
a generalized Moore spectrum $M_I$, there is an equivalence 
\begin{equation}\label{U}
E_n^{dhU} \wedge M_I \simeq F_n^{hU} \wedge M_I.
\end{equation} 
By Theorem~\ref{thm:Galoiscolim}, 
the natural map 
\[(\colim_{H \leq V \leq_o \GG_n} 
F_n^{hV})_{K(n)} \rightarrow (F_n^{hH})_{K(n)}\] 
is an equivalence. 

Let $V$ be any open subgroup of $\GG_n$. Then $V$ contains a subgroup $W$ such 
that $W$ is an open normal subgroup of $\GG_n$. 
We have the following chain of 
equivalences:
\begin{align*} 
E_n^{dhV} \wedge M_I & \simeq (E_n^{dhW})^{hV/W} \wedge M_I 
\simeq (E_n^{dhW} \wedge M_I)^{hV/W} \\
& \simeq (F_n^{hW} \wedge M_I)^{hV/W} \simeq 
(F_n^{hW})^{hV/W} \wedge M_I \\
& \simeq F_n^{hV} \wedge M_I,
\end{align*} 
where the first equivalence is \cite[Thm.~4]{DevinatzHopkins}, 
the second and fourth 
equivalences follow 
from the fact that $M_I$ is a finite spectrum, 
the third equivalence is because 
of (\ref{U}), and the last equivalence is due 
to Proposition~\ref{prop:iterate}.

Recall from Definition~\ref{defn:EnhG} that 
$E_n^{hH} = (F_n^{hH})_{K(n)}$. Then 
the above observations imply that
\begin{align*}
E_n^{hH} & \simeq 
(\colim_{H \leq V \leq_o \GG_n} F_n^{hV})_{K(n)} \\
& \simeq \holim_I \colim_{H \leq V \leq_o \GG_n} (F_n^{hV} \wedge M_I) \\
& \simeq \holim_I \colim_{H \leq V \leq_o \GG_n} (E_n^{dhV} \wedge M_I) \\
& \simeq (\colim_{H \leq V \leq_o \GG_n} E_n^{dhV})_{K(n)} \\
& \simeq E_n^{dhH},
\end{align*} 
where the last equivalence follows from
\cite[Def.~1.5]{DevinatzHopkins}.
\end{proof}

\begin{rmk}
As mentioned in the Introduction, 
Theorem~\ref{dhGhG} first appeared in the second author's thesis 
\cite{Davisthesis}. 
The 
arguments in \cite{Davisthesis} relied on a somewhat complicated analysis of a 
$K(n)$-local $E_n$-Adams resolution of $E_n^{dhH}$, 
whereas our proof makes use of Rognes's Galois theory and consequently, 
is more efficient.
\end{rmk}

\begin{cor}\label{finite}
If $H$ is a closed subgroup of $\GG_n$ and $X$ is a finite spectrum, then
there is an equivalence
\[E_n^{dhH} \wedge X \simeq (E_n \wedge X)^{hH}.\]
\end{cor}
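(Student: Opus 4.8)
The plan is to reduce everything to the \emph{discrete} $H$-spectrum $F_n$ and invoke Theorem~\ref{dhGhG}. First I would record the models for the two spectra in the statement. By Definition~\ref{defn:EnhG} and the discussion following it, $E_n^{hH} = (F_n^{hH})_{K(n)} \simeq \holim_I F_n^{hH}\wedge M_I$, where $\{M_I\}_I$ is the cofinal tower of generalized Moore spectra. On the other side, since $X$ is finite we have $(F_n\wedge X)_{K(n)} \simeq (F_n)_{K(n)}\wedge X \simeq E_n\wedge X$, so the continuous $H$-spectrum modeling $E_n\wedge X$ is $\{F_n\wedge M_I\wedge X\}_I$, and by the definition of homotopy fixed points of a continuous $H$-spectrum, $(E_n\wedge X)^{hH} = \holim_I (F_n\wedge M_I\wedge X)^{hH}$.

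Next I would prove the key lemma: if $Y$ is a discrete $H$-spectrum and $W$ is a finite spectrum with trivial $H$-action, then $Y\wedge W$ is again a discrete $H$-spectrum and there is a natural equivalence $(Y\wedge W)^{hH}\simeq Y^{hH}\wedge W$. Since $H$ is a closed subgroup of $\GG_n$ and $\GG_n$ has finite vcd, $H$ has finite vcd, so Theorem~\ref{thm:cochaincomplex} identifies both sides with hypercohomology spectra: $(Y\wedge W)^{hH}\simeq \holim_\Delta \Map^c(H^\bullet, Y\wedge W)$. Because $W$ is dualizable, $\Map^c(H^\bullet, Y\wedge W)\simeq \Map^c(H^\bullet, Y)\wedge W$ levelwise, and smashing with $W$ commutes with the totalization $\holim_\Delta$, giving $(Y\wedge W)^{hH}\simeq \HH_c(H;Y)\wedge W\simeq Y^{hH}\wedge W$. (This is exactly the step already used in the displayed computation preceding Proposition~\ref{prop:EGalois}, now stated once and for all.)

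Applying the lemma with $Y=F_n$ and $W=M_I\wedge X$ yields, compatibly in $I$, equivalences $(F_n\wedge M_I\wedge X)^{hH}\simeq F_n^{hH}\wedge M_I\wedge X$. Therefore
$$
(E_n\wedge X)^{hH} = \holim_I (F_n\wedge M_I\wedge X)^{hH} \simeq \holim_I\bigl(F_n^{hH}\wedge M_I\wedge X\bigr) \simeq \Bigl(\holim_I F_n^{hH}\wedge M_I\Bigr)\wedge X \simeq (F_n^{hH})_{K(n)}\wedge X = E_n^{hH}\wedge X,
$$
where the third equivalence again uses that $X$ is finite, hence dualizable, so that $-\wedge X$ commutes with $\holim_I$. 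Finally, Theorem~\ref{dhGhG} gives $E_n^{hH}\simeq E_n^{dhH}$, and combining the two chains of equivalences produces $E_n^{dhH}\wedge X\simeq (E_n\wedge X)^{hH}$.

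The main obstacle is the key lemma that the homotopy fixed point functor commutes with smashing by a finite spectrum; this is where the finite vcd hypothesis on $\GG_n$ (hence on $H$) is essential, since it is Theorem~\ref{thm:cochaincomplex} that lets one pass the finite smash factor through the homotopy fixed point construction via the hypercohomology description. Everything else is a formal manipulation of (co)limits, using repeatedly that a finite spectrum is dualizable and so smashing with it commutes with arbitrary homotopy limits and with Bousfield localization.
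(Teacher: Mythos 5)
Your proposal is correct and follows essentially the same route as the paper: both arguments reduce to Theorem~\ref{dhGhG} combined with the fact that smashing with a finite spectrum commutes with the continuous homotopy fixed point construction. The only difference is that the paper outsources that commutation step to a citation of \cite[Thm.~9.9]{Davis} (together with \cite[Thm.~1.3, Rmk.~9.3]{Davis} to see that $E_n \wedge X$ is a continuous $H$-spectrum), whereas you reprove it directly via the hypercohomology description of Theorem~\ref{thm:cochaincomplex} and the dualizability of $M_I \wedge X$ --- a legitimate unpacking that uses only tools already present in the paper.
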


\begin{proof}
By \cite[Thm.~1.3, Rmk.~9.3]{Davis}, 
$E_n \wedge X$ is a continuous $H$-spectrum (in the sense of
\cite{Davis}). 
Then, by Theorem~\ref{dhGhG} and \cite[Thm.~9.9]{Davis}, 
\[E_n^{dhH} \wedge X \simeq E_n^{hH} \wedge X \simeq (E_n \wedge X)^{hH}.\]
\end{proof}

\begin{cor}\label{finite2}
If $X$ is a finite spectrum, there is an equivalence
$$ X_{K(n)} \simeq (E_n \wedge X)^{h\GG_n}. $$
\end{cor}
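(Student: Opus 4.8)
The plan is to deduce this directly from Corollary~\ref{finite} in the special case $H = \GG_n$, together with the computation of $E_n^{dh\GG_n}$.

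First I would apply Corollary~\ref{finite} with $H = \GG_n$, which yields an equivalence
$$ E_n^{dh\GG_n} \wedge X \simeq (E_n \wedge X)^{h\GG_n}. $$
Next I would identify the left-hand side: by Theorem~\ref{dhGhG} and Corollary~\ref{neato} we have $E_n^{dh\GG_n} \simeq E_n^{h\GG_n} \simeq S_{K(n)}$ (this is also the theorem of Devinatz--Hopkins, \cite[Thm.~1(iii)]{DevinatzHopkins}), so that
$$ (E_n \wedge X)^{h\GG_n} \simeq S_{K(n)} \wedge X. $$
It then remains to recognize $S_{K(n)} \wedge X$ as a model for $X_{K(n)}$. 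Since $X$ is finite, the Spanier--Whitehead dual $DX := F(X,S)$ is also finite and $S_{K(n)} \wedge X \simeq F(DX, S_{K(n)})$; being a function spectrum into a $K(n)$-local spectrum, this is $K(n)$-local. Moreover the unit map $X \to S_{K(n)} \wedge X$ is a $K(n)_*$-isomorphism: smashing with $K(n)$ and using that $X$ is finite reduces this to the statement that $K(n) \wedge S \to K(n) \wedge S_{K(n)}$ is an equivalence, which holds because $S \to S_{K(n)}$ is a $K(n)_*$-equivalence. Hence $S_{K(n)} \wedge X \simeq X_{K(n)}$, and combining the three displayed equivalences gives the corollary.

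The argument is short; the only subtlety is the last step, where finiteness of $X$ is genuinely used. One cannot simply assert $S_{K(n)} \wedge X \simeq X_{K(n)}$ in general, since $K(n)$-localization is not smashing (unlike $E(n)$-localization); the point is precisely that a finite $X$ is dualizable, so that smashing with $X$ commutes with $K(n)$-localization.
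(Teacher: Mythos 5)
Your proposal is correct and follows essentially the same route as the paper: the paper applies \cite[Thm.~9.9]{Davis} to get $(E_n \wedge X)^{h\GG_n} \simeq E_n^{h\GG_n} \wedge X$ and then invokes Corollary~\ref{neato}, whereas you reach the same two inputs by specializing Corollary~\ref{finite} to $H = \GG_n$ and identifying $E_n^{dh\GG_n} \simeq S_{K(n)}$. Your careful justification of the final step $S_{K(n)} \wedge X \simeq X_{K(n)}$ (dualizability of finite $X$ forcing $S_{K(n)} \wedge X$ to be $K(n)$-local) correctly fills in what the paper leaves as ``follows from the fact that $X$ is finite.''
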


\begin{proof}
By \cite[Thm.~9.9]{Davis} and Corollary~\ref{neato}, 
\[(E_n \wedge X)^{h\GG_n} \simeq E_n^{h\GG_n} \wedge X \simeq S_{K(n)} \wedge X 
\simeq X_{K(n)},\] 
where the last equivalence follows from the fact that 
$X$ is finite.
\end{proof}

Let $X$ be a finite spectrum. By \cite[Thm.~2(ii)]{DevinatzHopkins}, 
there is a strongly convergent $K(n)$-local $E_n$-Adams 
spectral sequence that has the form 
\begin{equation}\label{ssone}
H^s_c(H;\pi_t(E_n \wedge X)) \Rightarrow \pi_{t-s}(E_n^{dhH} \wedge X).
\end{equation} 
Also, by 
\cite[Thm.~1.7]{Davis}, there is a descent spectral sequence 
\begin{equation}\label{sstwo}
H^s_c(H;\pi_t(E_n \wedge X)) \Rightarrow \pi_{t-s}((E_n \wedge X)^{hH}).
\end{equation}

\begin{thm}\label{ss}
If $H$ is a closed subgroup of $\GG_n$ and 
$X$ is a finite spectrum, then spectral 
sequence $\mathrm{(}$\ref{ssone}$\mathrm{)}$ is isomorphic to spectral sequence 
$\mathrm{(}$\ref{sstwo}$\mathrm{)}$, from the $E_2$-terms onward.
\end{thm}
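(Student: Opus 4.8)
The plan is to exhibit both spectral sequences as Bousfield--Kan spectral sequences of cosimplicial spectra built, up to $K(n)$-localization, out of the Galois data of $F_n$, and then to match them compatibly with filtrations. First, the reduction: both spectral sequences are conditionally (indeed strongly) convergent --- (\ref{ssone}) by \cite{DevinatzHopkins} and (\ref{sstwo}) by \cite{Davis} --- they have the same $E_2$-page by hypothesis, and by Corollary~\ref{finite} they have the same abutment $\pi_*(E_n^{dhH}\wedge X)\cong\pi_*((E_n\wedge X)^{hH})$. Hence it suffices to produce a map of spectral sequences that is an isomorphism on $E_2$-terms, and such a map will be induced by a map (or a zig-zag of maps) of the underlying $\Tot$-towers.

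Next I would identify each side concretely. By \cite{Davis}, $(E_n\wedge X)^{hH}$ is the homotopy fixed point spectrum of the continuous $H$-spectrum $\{F_n\wedge M_I\wedge X\}_I$ and (\ref{sstwo}) is its homotopy fixed point spectral sequence; since $\GG_n$, hence $H$, has finite vcd (Proposition~\ref{prop:EGalois}), Theorem~\ref{Davis} and the description of that spectral sequence in Section~\ref{sec:cont} present it as the Bousfield--Kan spectral sequence of the cosimplicial spectrum $s\mapsto\holim_I\Gamma_H^s(j^*F_n\wedge M_I\wedge X)$, where $j\colon H\hookrightarrow\GG_n$. Using Lemma~\ref{lem:Mapexact}, finiteness of each $M_I\wedge X$, and $\holim_I(Y\wedge M_I)\simeq Y_{K(n)}$ for $E(n)$-local $Y$, the $s$-cosimplices are $(\Map^c(H^s,j^*F_n)\wedge X)_{K(n)}$, with the comonadic cobar structure of $\Gamma_H$. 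On the Devinatz--Hopkins side, (\ref{ssone}) is, by \cite[Thm.~2(ii)]{DevinatzHopkins} and the construction recalled in Subsection~\ref{sec:EGalois}, the $K(n)$-local $E_n$-Adams spectral sequence of $E_n^{dhH}\wedge X$, i.e. the Bousfield--Kan spectral sequence of $s\mapsto(E_n^{\wedge s+1}\wedge E_n^{dhH}\wedge X)_{K(n)}$. Iterating Proposition~\ref{prop:Rognesfuncspec} (in the form $(F_n^{\wedge_{S_{K(n)}}\bullet+1})_{K(n)}\simeq\Map^c(\GG_n^\bullet,E_n)$ used in the proof of Proposition~\ref{prop:EGalois}) together with the identification $(E_n\wedge E_n^{dhH})_{K(n)}\simeq\Map^c(\GG_n/H,E_n)$ extracted from the proof of Lemma~\ref{lem:consistency}, these $s$-cosimplices become $(\Map^c(\GG_n^s\times\GG_n/H,\,j_*j^*F_n\wedge X))_{K(n)}$; so this cosimplicial spectrum is, $K(n)$-locally, the comonadic $\Gamma_{\GG_n}$-cobar construction on the discrete $\GG_n$-spectrum $j_*j^*F_n=\Map^c(\GG_n/H,F_n)$, smashed with $X$.

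The remaining task is a cosimplicial refinement of Shapiro's lemma: to compare the $\Gamma_{\GG_n}$-cobar of $j_*j^*F_n\wedge X$ with the $\Gamma_H$-cobar of $j^*F_n\wedge X$ compatibly with their $\Tot$-filtrations. I would do this by reducing to the open case via $\colim_{H\le U\le_o\GG_n}$: by Theorem~\ref{thm:Galoiscolim} the colimit of the $U$-towers computes the $H$-tower $K(n)$-locally (a fact special to consistent profaithful extensions of finite vcd, cf. Section~\ref{sec:difficulties}), and at the level of the finite Galois extensions $(E^{hU})_k$ one can match the $E_n$-Adams cosimplicial resolution of $E_n^{dhU}\wedge X$ with the $\Gamma_{\GG_n/U}$-cobar of $(E^{hU})_k\wedge X$ directly --- essentially the cosimplicial equivalence of type~(\ref{eq:cosimp1}) established in the proof of Theorem~\ref{thm:funcspec}, applied to these intermediate extensions. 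Applying $(-)^{hH}$ levelwise, using that coinduction carries $H$-fibrant objects to $\GG_n$-fibrant objects (Lemma~\ref{lem:homopair}) and that hypercohomology commutes with the relevant localizations and smash products on $T$-local spectra (Theorem~\ref{thm:cochaincomplex}, Lemmas~\ref{lem:localfixedpoints} and \ref{lem:holimdualizable}), then assembles a levelwise $K(n)$-equivalence of cosimplicial spectra, hence an isomorphism of Bousfield--Kan spectral sequences from $E_2$ onward.

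I expect the main obstacle to be precisely this comparison at the level of towers rather than of homotopy-limit spectra. Lemma~\ref{lem:Shapiro} and the Galois identifications above are equivalences of homotopy fixed point \emph{spectra}, but the underlying chain-level Shapiro map is only a chain homotopy equivalence and need not respect cosimplicial degree, so it does not automatically descend to an isomorphism of spectral sequences; the real work --- and the reason the original argument in \cite{Davisthesis} was delicate --- is to produce a \emph{filtered} comparison. The reduction to open $U$ is the natural way to obtain honest maps of towers, but one must still control the passage to the colimit on $E_1$-pages, using finiteness of $X$ to kill $\lim^1$-obstructions and $k$-local dualizability of the $(E^{hU})_k$ as $A$-modules to commute the smash products past the homotopy limits, as in Proposition~\ref{prop:dualizable} and Lemma~\ref{lem:holimdualizable}.
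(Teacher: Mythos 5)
Your overall architecture --- realize both spectral sequences as Bousfield--Kan spectral sequences of cosimplicial spectra, reduce to open subgroups, and pass to the colimit --- matches the shape of the actual argument, and you are right that the entire difficulty is a comparison compatible with the $\Tot$-filtrations rather than with the homotopy-limit spectra alone. But the step where you propose to actually produce that comparison is misidentified. The $K(n)$-local $E_n$-Adams resolution of $E_n^{dhU}\wedge X$ has $E_2$-term $H^s_c(U;\pi_t(E_n\wedge X))$ and abutment $\pi_*(E_n^{dhU}\wedge X)$, whereas the $\Gamma_{\GG_n/U}$-cobar construction on $(E^{hU})_k\wedge X$ computes $((E^{hU})_k\wedge X)^{h\GG_n/U}\simeq X_{K(n)}$ with $E_2$-term $H^s(\GG_n/U;\pi_t(E^{hU}\wedge X))$: these cosimplicial objects have different abutments and different $E_2$-terms, so no equivalence ``of type~(\ref{eq:cosimp1})'' can match them, and (\ref{eq:cosimp1}) is in any case a statement about function spectra out of $(E^{hU})_k$, not about these resolutions. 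The reduction that works is not to the open subgroups $U$ (which are still infinite profinite groups) but to the \emph{finite} quotients $NH/N$ for $N\trianglelefteq_o\GG_n$: after smashing with $M_I$, each side is exhibited as a colimit over $N$ of spectral sequences for the finite group $NH/N$ acting on $E_n^{dhN}\wedge M_I\wedge X$, and the comparison is made there.

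Even granting that reduction, the heart of the matter --- comparing an Adams-type filtration with a fixed-point-type filtration at the finite-group level --- is a genuine theorem, namely Devinatz's \cite[Thm.~A.1]{LHS}, which identifies the Lyndon--Hochschild--Serre-type $K(n)$-local $E_n$-Adams spectral sequence for $E_n^{dhNH}\to E_n^{dhN}$ with the homotopy fixed point spectral sequence for $NH/N$. Your proposal neither invokes this nor supplies a substitute; you explicitly flag the filtered Shapiro comparison as ``the main obstacle'' and then leave it unresolved. Two further ingredients you omit are needed to make the colimit argument legitimate: one must know that the Devinatz--Hopkins spectral sequence can be computed from the relative Amitsur complex in $E_n^{dhH}$-modules (Lemma~\ref{lem:adams}), and one must construct actual maps of spectral sequences from the finite-level ones into the $H$-level ones (Lemma~\ref{lem:mapofSS} on the Adams side, and the maps $\psi_U$ on the descent side) so that an isomorphism of colimits of $E_2$-terms upgrades to an isomorphism of spectral sequences. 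As written, the proposal correctly frames the problem but does not prove it.
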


\begin{proof}
By
\cite[proof of Prop. 7.4]{HMS}, 
spectral sequence (\ref{ssone}) is 
the inverse limit over $\{I\}$ of 
$K(n)$-local $E_n$-Adams 
spectral sequences 
that have the form
\begin{equation}\label{ssthree} 
^{I}E_2^{s,t}(I) = H^s_c(H;\pi_t(E_n \wedge M_I \wedge X)) \Rightarrow \pi_{t-s}(E_n^{dhH} \wedge 
M_I \wedge X).
\end{equation}
Similarly, spectral sequence (\ref{sstwo}) is the inverse limit 
over $\{ I \}$
of conditionally convergent descent 
spectral sequences 
that have the form
\begin{equation}\label{ssfour} 
^{II}{E_2^{s,t}}(I) = 
H^s_c(H;\pi_t(E_n \wedge M_I \wedge X)) \Rightarrow \pi_{t-s}(E_n^{hH} \wedge 
M_I \wedge X).
\end{equation}
Henceforth, we write the $E_2$-terms
$^{I}E_2^{s,t}(I)$ and $^{II}E_2^{s,t}(I)$
as $^{I}E_2^{s,t}$ and $^{II}E_2^{s,t}$, respectively.
\par
Note that spectral sequence (\ref{ssthree}) is isomorphic to the 
strongly convergent $K(n)$-local $E_n$-Adams spectral sequence 
\begin{equation}\label{ssfive}
^{I}{E_2^{s,t}} \cong H^s_c(H;(E_n)^{-t}(DX \wedge DM_I)) 
\Rightarrow (E_n^{dhH})^{-t+s}(DX \wedge 
DM_I).
\end{equation}
Thus, to prove the theorem, it suffices to show that the spectral sequences in (\ref{ssfour}) and 
(\ref{ssfive}) are isomorphic to each other.
\par
Notice that 
\begin{align*}
^{I}E_2^{s,t} & \cong {^{II}E_2^{s,t}} \\ &  
\cong \colim_{N \trianglelefteq_o \GG_n} H^s(H/{(H \cap N)}; 
\pi_t(E_n^{dhN} \wedge M_I \wedge X)) 
\\  
& \cong \colim_{N \trianglelefteq_o \GG_n} H^s(NH/N; 
(E_n^{dhN})^{-t}(DX \wedge DM_I)) \\ & 
= \colim_{N \trianglelefteq_o \GG_n} {^{III}E_2^{s,t}}(N),
\end{align*} 
where 
${^{III}E_2^{s,t}}(N)$ is the $E_2$-term of the strongly convergent
spectral sequence 
$^{III}E_r^{\ast,\ast}(N)$, which has the form 
\begin{equation}\label{sssix}
H^s(NH/N; (E_n^{dhN})^{-t}(DX \wedge DM_I)) 
\Rightarrow (E_n^{dhNH})^{-t+s}(DX \wedge DM_I)
\end{equation}
and is the Adams spectral sequence constructed by Devinatz in
\cite[(0.1)]{LHS}.
\par
By Lemma \ref{lem:mapofSS} below, there is a map from spectral 
sequence (\ref{sssix}) to spectral sequence (\ref{ssfive}), such that the 
isomorphism 
\[{^{I}E_2^{s,t}} \cong \colim_{N \trianglelefteq_o \GG_n}
{^{III}E_2^{s,t}}(N)\] 
implies that the spectral sequence of (\ref{ssfive}) is isomorphic to the 
spectral sequence $\colim_{N \trianglelefteq_o \GG_n}
{^{III}E_r^{\ast,\ast}}(N)$. Thus, 
we only have to show that spectral sequences (\ref{ssfour}) and 
$\colim_{N \trianglelefteq_o \GG_n} {^{III}E_r^{\ast,\ast}}(N)$ are isomorphic to each other.
\par
By \cite[Thm.~A.1]{LHS}, $^{III}E_r^{\ast,\ast}(N)$ is isomorphic to the 
usual descent spectral sequence $^{IV}E_r^{\ast,\ast}(N)$
that has the form
\[H^s(NH/N; (E_n^{dhN})^{-t}(DX \wedge DM_I)) 
\Rightarrow ((E_n^{dhN})^{hNH/N})^{-t+s}(DX \wedge DM_I),\] 
since, in
the notation of \cite[App.~A]{LHS}, the ``homotopy
fixed point spectral sequence" with abutment
$$ [E_n^{dhNH} \wedge DX \wedge DM_I,
(E_n^{dhN})^{hNH/N}]^\ast_{E_{n}^{dhNH}}, $$
which is isomorphic to
$[DX \wedge DM_I, (E_n^{dhN})^{hNH/N}]^\ast$, is
equivalent to $^{IV}E_r^{\ast,\ast}(N)$.
Because of the isomorphism
\[\colim_{N \trianglelefteq_o \GG_n} {^{III}E_r^{\ast,\ast}}(N) 
\cong \colim_{N \trianglelefteq_o \GG_n} {^{IV}E_r^{\ast,\ast}}(N),\] 
our proof reduces to showing
that (\ref{ssfour}) and 
$\colim_{N \trianglelefteq_o \GG_n} {^{IV}E_r^{\ast,\ast}}(N)$ 
are isomorphic 
spectral sequences.
\par
The abutment of spectral sequence (\ref{ssfour}) is the homotopy of 
\begin{align*}
E_n^{hH} \wedge M_I \wedge X & \simeq (F_n \wedge M_I \wedge X)^{hH} 
\\ & \cong \holim_\Delta \mathrm{Map}^c(H^\bullet, 
\colim_{N \trianglelefteq_o \GG_n} (E_n^{dhN} \wedge M_I \wedge X))
\\ & \cong \holim_\Delta \colim_{N \trianglelefteq_o \GG_n} 
\mathrm{Map}^c((NH/N)^\bullet, E_n^{dhN} \wedge M_I \wedge X),
\end{align*} 
where the first equivalence is by
\cite[Cor. 9.8]{Davis}
and the second equivalence is
an identification, given by Theorem
\ref{thm:cochaincomplex}. 
Since $NH/N$ is a finite group, there is an identification
$$ (E_n^{dhN} \wedge M_I \wedge X)^{hNH/N} 
\cong \holim_\Delta \mathrm{Map}^c((NH/N)^\bullet, E_n^{dhN} \wedge M_I \wedge X). $$
Thus, for each $U \trianglelefteq_o \GG_n$, the canonical map 
$$ \mathrm{Map}^c((UH/U)^\bullet, E_n^{dhU} \wedge M_I \wedge X) 
\rightarrow \colim_{N \trianglelefteq_o \GG_n} 
\mathrm{Map}^c((NH/N)^\bullet, E_n^{dhN} \wedge M_I \wedge X) $$ of 
cosimplicial spectra induces a map 
$$ (E_n^{dhU} \wedge M_I \wedge X)^{hUH/U} 
\rightarrow (F_n \wedge M_I \wedge X)^{hH} $$ 
and a map 
$$ \psi_U \colon ^VE_r^{\ast,\ast}(U) \rightarrow {} ^{II}E_r^{\ast,\ast} $$  
of conditionally convergent spectral sequences, where $^{V}E_r^{\ast,\ast}(U)$ is the 
descent spectral sequence that has the form 
$$ H^s(UH/U;\pi_t(E_n^{dhU} \wedge M_I \wedge X)) \Rightarrow 
\pi_{t-s}((E_n^{dhU} \wedge M_I \wedge X)^{hUH/U}). $$ 
It will be helpful to note that the abutment of $^{V}E_r^{\ast,\ast}(U)$ can 
also be written as $\pi_{t-s}((E_n^{dhU})^{hUH/U} \wedge M_I \wedge X)$.
\par
Since the map $\colim_{N \trianglelefteq_o \GG_n} \psi_N$ 
induces an isomorphism 
$$ \colim_{N \trianglelefteq_o \GG_n} {^{V}E_2^{s,t}}(N) 
\cong {^{II}E_2^{s,t}}, $$ there is an 
isomorphism between spectral sequences (\ref{ssfour}) and 
$\colim_{N \trianglelefteq_o \GG_n} {^{V}E_r^{\ast,\ast}}(N) .$ Therefore, the proof 
is completed by showing that there is an isomorphism 
$$ \colim_{N \trianglelefteq_o \GG_n} {^{IV}E_r^{\ast,\ast}}(N) \cong 
\colim_{N \trianglelefteq_o \GG_n} {^{V}E_r^{\ast,\ast}}(N) $$ 
of spectral sequences; 
this follows from the fact that spectral sequences 
${^{IV}E_r^{\ast,\ast}}(N)$ and 
${^{V}E_r^{\ast,\ast}}(N)$ are equivalent to each other.
\end{proof}

The following results are needed for the above proof.

\begin{lem}\label{lem:adams}
Suppose that $A$ is a $k$-local commutative symmetric ring
spectrum and that $E$ is a $k$-local commutative $A$-algebra.
Then the canonical $k$-local $E$-resolution of $A$ in the
category of $A$-modules
\begin{equation*}
\ast \rightarrow A \rightarrow E \rightarrow (E \wedge_A E)_k 
\rightarrow (E \wedge_A E \wedge_A E)_k \rightarrow \cdots 
\end{equation*}
is a $k$-local $E$-resolution of $A$ in the category of
$S$-modules. 
\end{lem}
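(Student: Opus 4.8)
The plan is to verify directly the two defining properties of a $k$-local $E$-resolution of $A$ in the category of $S$-modules for the Amitsur cosimplicial object $(E^{\wedge_A \bullet+1})_k$, deducing each from the corresponding (given) property in the category of $A$-modules. Recall that the Tot-tower of this cosimplicial object is (equivalent to) the $\overline{E}$-adic tower $\{Y_s\}_{s\ge 0}$, where $\overline{E}=\mathrm{fib}_A(A\to E)$ is the fiber in $A$-modules of the unit $A\to E$, $Y_0=A$, $Y_s=(\overline{E}^{\wedge_A s})_k$, the structure maps $Y_{s+1}\to Y_s$ are induced by $\overline{E}\to A$, and the cofibers are $(E\wedge_A\overline{E}^{\wedge_A s})_k$. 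That this tower is a $k$-local $E$-resolution of $A$ in $S$-modules amounts to checking (i) each cofiber $(E\wedge_A\overline{E}^{\wedge_A s})_k$ is $k$-locally $E$-injective in $S$-modules, i.e.\ a retract of $(E\wedge_S V)_k$ for some spectrum $V$, and (ii) each structure map $Y_{s+1}\to Y_s$ becomes null after applying $(E\wedge_S-)_k$.

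Property (i) is immediate: $E\wedge_A\overline{E}^{\wedge_A s}$ is an $E$-module, hence so is its $k$-localization, and any $k$-local $E$-module $M$ is a retract of $(E\wedge_S M)_k$ via the unit $M\to E\wedge_S M$ and the multiplication $(E\wedge_S M)_k\to M$ (which factors through the localization since $M$ is $k$-local). For property (ii) the crucial point is that the map $E\wedge_A(\overline{E}\to A)\colon E\wedge_A\overline{E}\to E$ is null \emph{as a map of $E$-modules}, and not merely as a map of spectra: smashing the $A$-module fiber sequence $\overline{E}\to A\to E$ with $E$ over $A$ identifies $E\wedge_A\overline{E}\to E\wedge_A A\cong E$ with the fiber of a map $E\to E\wedge_A E$ which is a split monomorphism of $E$-modules, split by the ($E$-linear) multiplication $\mu\colon E\wedge_A E\to E$; composing the fiber map with $\mu$ then exhibits it as null. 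Consequently, for every $E$-module $M$ the map $M\wedge_A(\overline{E}\to A)\colon M\wedge_A\overline{E}\to M$ is null, since $M\wedge_A N\simeq M\wedge_E(E\wedge_A N)$ for an $A$-module $N$ and we are smashing a null map of $E$-modules with $M$ over $E$. Taking $M=E\wedge_S\overline{E}^{\wedge_A s}$, a free $E$-module, we get that $E\wedge_S(Y_{s+1}\to Y_s)\cong(E\wedge_S\overline{E}^{\wedge_A s})\wedge_A(\overline{E}\to A)$ is null, and hence so is its $k$-localization. This establishes (ii), and thus the lemma.

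The step I expect to be the main obstacle is property (ii): knowing that $E\wedge_A(\overline{E}\to A)$ is null as a map of spectra is insufficient, because the null-homotopy must be transported to $M\wedge_A\overline{E}\to M$ for an $E$-module $M$ (namely $M=E\wedge_S\overline{E}^{\wedge_A s}$) that has no a priori relation to $A$; this works precisely because the retraction $\mu$ is $E$-linear, so the relevant null-homotopy lives in the category of $E$-modules. The remaining matters — identifying $(E^{\wedge_A\bullet+1})_k$ with the resolution attached to the $\overline{E}$-adic tower, and freely commuting the various $k$-localizations past the smash products that appear (legitimate because $k$-local equivalences are preserved by smashing with any spectrum) — are routine bookkeeping.
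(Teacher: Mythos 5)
Your reduction to properties (i) and (ii) of the $\overline{E}$-adic tower is the right framework, and (i) is handled correctly (and matches the paper's Claim (1)). The gap is exactly at the step you flagged as the main obstacle. The identification $M \wedge_A N \simeq M \wedge_E (E \wedge_A N)$ is valid only when the $A$-module structure on $M$ used to form $M \wedge_A N$ is the restriction along $A \to E$ of the $E$-module structure used to form $M \wedge_E(-)$. For $M = E \wedge_S \overline{E}{}^{\wedge_A s}$ the $E$-action is through the first smash factor, whereas the $A$-action implicit in the identification $E \wedge_S(Y_{s+1} \to Y_s) \cong M \wedge_A(\overline{E} \to A)$ is through the second factor $\overline{E}{}^{\wedge_A s}$; these two $A$-actions on $M$ do not agree (they coincide only after passing to $E \wedge_A(-)$), so the $E$-linear null-homotopy of $E \wedge_A \overline{E} \to E$ cannot be transported this way. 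Moreover the step genuinely fails rather than merely lacking justification: already for $s=0$ your claim is that $E \wedge_S \overline{E} \to E \wedge_S A$ is null, i.e.\ that $1_E \wedge \eta \colon E \wedge_S A \to E \wedge_S E$ is split monic, and for a general $k$-local commutative $A$-algebra this is false --- take $k = S$, $A = H\mathbb{F}_2 \wedge H\mathbb{F}_2$ and $E = H\mathbb{F}_2$ via the multiplication, where the map in question is $1 \wedge \mu \colon H\mathbb{F}_2^{\wedge 3} \to H\mathbb{F}_2^{\wedge 2}$, which is not even injective on homotopy groups.

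For comparison, the paper's proof works with the equivalent ``$E$-monic'' formulation rather than ``the tower maps are smash-null,'' and its Claim (2) hinges on the assertion that $(E \wedge X)_k \to (E \wedge E \wedge_A X)_k$ is split monic --- which is the same crux, with the same mismatch between the $\wedge_S$ and $\wedge_A$ module structures. So you have in effect reproduced the paper's strategy together with its weakest point. What actually makes the conclusion hold in the intended application ($A = E_n^{dhH}$, $E = E_n$, $k = K(n)$) is a special property of the pair: there $(E \wedge_S A)_k \to (E \wedge_S E)_k$ is the map $\Map^c(\GG_n/H, E_n) \to \Map^c(\GG_n, E_n)$, which is split by a continuous section of $\GG_n \to \GG_n/H$. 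A complete argument must input something of this kind about $(A,E)$; it cannot be extracted from the $A$-relative resolution alone, so your concluding remark that the rest is ``routine bookkeeping'' is where the real content is hiding.
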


\begin{proof}
We use the terminology of \cite{Miller}, adapted to the category of
$k$-local $A$-modules as in \cite[Sec.~2]{LHS}.  To prove the
lemma, we will show that
the associated $k$-local $E$-Adams resolution of $A$-modules
\begin{equation}\label{eq:adamsres}
\xymatrix@C-1em{
A \ar@{=}[r] & A^0 \ar[dr]_{j_0}  \
&& A^1 \ar[dr]_{j_1} \ar[ll]
&& A^2 \ar[ll] && \cdots 
\\
&& E \ar[ur] && (E \wedge_A E)_k \ar[ur]
}
\end{equation}
is a $k$-local $E$-Adams resolution of $S$-modules.  It suffices to
verify that (1) for all $s > 0$, the spectra $(E^{\wedge_A s})_k$ are $k$-local
$E$-injective, and (2) that each of the maps $j_i$ in
(\ref{eq:adamsres})
is $k$-local $E$-monic.

Claim (1) follows from the fact that the map
$$ (E^{\wedge_A s})_k = (S \wedge E^{\wedge_A s})_k \rightarrow 
(E \wedge E^{\wedge_A s})_k $$
is split-monic.
Claim (2) follows from the fact that every
$k$-local $E$-monic map of $A$-modules is $k$-local $E$-monic as a map 
of $S$-modules.  Indeed, if $f: X \rightarrow Y$ is a $k$-local
$E$-monic map of $A$-modules, then consider the following diagram.
$$
\xymatrix{
X \ar[r]^f \ar[d]_{u} 
& Y \ar[d]
\\
(E \wedge_A X)_k \ar[r]_{1 \wedge f} & 
(E \wedge_A Y)_k
}
$$
The map $u$ is seen to be a $k$-local $E$-monic map of $S$-modules
because the map
$$ (E \wedge X)_k \xrightarrow{1 \wedge u} (E \wedge E \wedge_A
X)_k $$
is split-monic.  Because $f$ is a $k$-local $E$-monic map of 
$A$-modules, the map $1 \wedge f$ is split-monic, and therefore $1
\wedge f$ is a $k$-local $E$-monic map of $S$-modules.  
We deduce that $f$ is a $k$-local $E$-monic map of $S$-modules.
\end{proof}

\begin{lem}\label{lem:mapofSS}
Let $H$ be a closed subgroup of $\GG_n$ and let $N$ be an open normal subgroup. 
Then, for any spectrum $Z$, there is a natural map from the Adams 
spectral sequence 
\[H^s(NH/N;(E_n^{dhN})^{-t}(Z)) \Rightarrow (E_n^{dhNH})^{-t+s}(Z)\] 
of \cite[(0.1)]{LHS} 
to the Adams spectral sequence 
\[H^s_c(H;(E_n)^{-t}(Z)) \Rightarrow (E_n^{dhH})^{-t+s}(Z) \]
of \cite[Thm.~2(ii)]{DevinatzHopkins}.
When
$Z = DM_I \wedge Z',$ where $Z'$ is any finite
spectrum, the induced map on $E_2$-terms is
the usual map in continuous group cohomology
that is induced by the canonical maps
$H \rightarrow NH/N$ and
$E_n^{dhN} \rightarrow
\colim_{U \trianglelefteq_o \mathbb{G}_n}
E_n^{dhU}$.
\end{lem}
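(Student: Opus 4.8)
The plan is to realize both spectral sequences as homotopy spectral sequences of $K(n)$-local Adams resolutions and then to exhibit a morphism of resolutions. Recall first that the spectral sequence of \cite[(0.1)]{LHS} is the $K(n)$-local $E_n^{dhN}$-based Adams spectral sequence computing $(E_n^{dhNH})^\ast(Z)$: applying Lemma~\ref{lem:adams} with $A = E_n^{dhNH}$ and $E = E_n^{dhN}$, it may be built from the cobar resolution $s \mapsto \bigl((E_n^{dhN})^{\wedge_{E_n^{dhNH}} s+1}\bigr)_{K(n)}$ in the category of $S$-modules, and since $N$ is open and normal $NH/N$ is finite and $E_n^{dhNH} \to E_n^{dhN}$ is a finite $K(n)$-local $NH/N$-Galois extension (cf.\ Proposition~\ref{prop:independent}), iterating the defining property of a Galois extension (Definition~\ref{defn:finiteGalois}) identifies this cosimplicial spectrum with $s \mapsto \Map((NH/N)^s, E_n^{dhN})$, whence the $E_2$-term $H^s(NH/N; (E_n^{dhN})^{-t}(Z))$. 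Similarly, the spectral sequence of \cite[Thm.~2(ii)]{DevinatzHopkins} is the $K(n)$-local $E_n$-based Adams spectral sequence computing $(E_n^{dhH})^\ast(Z)$, assembled by Devinatz and Hopkins for $H$ closed out of the $E_n$-Adams resolutions of the $E_n^{dhU}$, $H \le U \le_o \GG_n$, so that at level $s$ it is $F\bigl(Z,\, \colim_{H \le U \le_o \GG_n}(E_n^{\wedge s+1}\wedge E_n^{dhU})_{K(n)}\bigr)$, with $E_2$-term $H^s_c(H;(E_n)^{-t}(Z))$.

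To produce the map, observe that there are canonical ring maps $E_n^{dhN}\to F_n = \colim_{U\trianglelefteq_o\GG_n}E_n^{dhU}\to E_n$, and that $NH$ is open, so $E_n^{dhNH}$ occurs as one of the $E_n^{dhU}$ and $E_n^{dhNH}\to E_n^{dhH}$ is one of the structure maps of the colimit $E_n^{dhH} = \bigl(\colim_{H\le U\le_o\GG_n}E_n^{dhU}\bigr)_{K(n)}$. Since the Adams resolution $R^{\wedge\bullet+1}\wedge Y$ is functorial in the ring object $R$ and, separately, in $Y$, applying the ring map $E_n^{dhN}\to E_n$ levelwise and the identity $E_n^{dhNH}\xrightarrow{=}E_n^{dhNH}$ and $K(n)$-localizing gives a morphism from the $E_n^{dhN}$-Adams resolution of $E_n^{dhNH}$ to the $E_n$-Adams resolution of $E_n^{dhNH}$; composing with the canonical map of the latter into the colimit $E_n$-Adams resolution that defines $E_n^{dhH}$ (via the $U = NH$ term) yields a morphism of cosimplicial $S$-modules. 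Applying $F(Z,-)$ and passing to homotopy spectral sequences produces the desired map, natural in $Z$; on abutments it is the map $(E_n^{dhNH})^\ast(Z)\to(E_n^{dhH})^\ast(Z)$ induced by $E_n^{dhNH}\to E_n^{dhH}$.

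It remains to identify the map on $E_2$-terms when $Z = DM_I\wedge Z'$ with $Z'$ finite. Under the identifications above, the induced map of $E_1$-pages is the map of cochain complexes $\Map((NH/N)^\bullet, (E_n^{dhN})^{-t}(Z))\to\Map^c(H^\bullet, (E_n)^{-t}(Z))$ obtained by precomposing with the surjection $H^\bullet\twoheadrightarrow(NH/N)^\bullet$ and postcomposing with the coefficient homomorphism $(E_n^{dhN})^{-t}(Z)\to(E_n)^{-t}(Z)$ induced by $E_n^{dhN}\to F_n$; here one uses that, because $Z$ is finite and $F_n$ is $E(n)$-local, smashing with the generalized Moore spectra $M_I$ identifies $(E_n)^\ast(Z)$ with $(F_n)^\ast(Z) = \colim_{U\le_o\GG_n}(E_n^{dhU})^\ast(Z)$ (cf.\ Lemma~\ref{lem:mapc} and Subsection~\ref{sec:EGalois}), so that the target cochain complex indeed has coefficients in the topological module $(E_n)^\ast(Z)$. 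Passing to cohomology gives precisely the canonical map $H^s(NH/N;(E_n^{dhN})^{-t}(Z))\to H^s_c(H;(E_n)^{-t}(Z))$ induced by $H\to NH/N$ and $E_n^{dhN}\to\colim_U E_n^{dhU}$, as claimed.

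The main obstacle is the $K(n)$-localization bookkeeping: one must verify that the colimit construction of the $E_n$-Adams spectral sequence of $E_n^{dhH}$ in \cite{DevinatzHopkins} is genuinely the target of the morphism of resolutions built above, and that, after smashing with the $M_I$-tower and invoking \cite[proof of Prop.~7.4]{HMS} for convergence (as in the proof of Theorem~\ref{ss}), the $E_1$- and $E_2$-level identifications are preserved under localization. Once the resolutions have been matched up, the identification of the $E_2$-map with the evident map in continuous group cohomology is a routine diagram chase.
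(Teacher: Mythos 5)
Your overall strategy --- realize both spectral sequences via $K(n)$-local $E_n$-Adams resolutions and produce a map of resolutions through the colimit over open normal subgroups, then identify the $E_2$-map via the continuous-cochain description of the homotopy of the resolution terms --- is the paper's strategy. But there is a genuine gap at the central step, which you flag as ``the main obstacle'' and attribute to $K(n)$-localization bookkeeping; it is not bookkeeping. The source of your morphism of resolutions is the cobar construction $\bigl((E_n^{dhN})^{\wedge_{E_n^{dhNH}}\,\bullet+1}\bigr)_{K(n)}$, built from \emph{relative} smash products over $E_n^{dhNH}$, while the Devinatz--Hopkins spectral sequence for $E_n^{dhH}$ is constructed in $S$-modules from \emph{absolute} smash products (a colimit of resolutions with terms of the form $(E_n^{\wedge s}\wedge \Map(\GG_n/U,E_n))_{K(n)}$, exactly as in your own description of its level-$s$ term). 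There is no natural map from a relative cobar construction into an absolute one: the canonical comparison maps $X\wedge Y\to X\wedge_A Y$ go the other way. So the composite you describe (``apply the ring map $E_n^{dhN}\to E_n$ levelwise, then map into the colimit $E_n$-Adams resolution defining $E_n^{dhH}$ via the $U=NH$ term'') does not typecheck as stated, and this is precisely the point the lemma has to address.

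The paper closes the gap with two ingredients you do not use. First, Lemma~\ref{lem:adams} is applied to the \emph{second} spectral sequence (with $A=E_n^{dhH}$ and $E=E_n$), not the first: it shows that the relative resolution $\bigl(E_n^{\wedge_{E_n^{dhH}}\,\bullet+1}\bigr)_{K(n)}$ is also a $K(n)$-local $E_n$-resolution of $E_n^{dhH}$ in $S$-modules, so the Devinatz--Hopkins spectral sequence may be re-read as the relative Adams spectral sequence of \cite[(0.1)]{LHS} in $E_n^{dhH}$-modules. Second, by \cite[(3.7)]{LHS} there is a levelwise weak equivalence from the colimit of relative resolutions $\bigl(\colim_{U}\,(E_n^{dhU})^{\wedge_{E_n^{dhUH}}\,\bullet+1}\bigr)_{K(n)}$ (note that the base ring $E_n^{dhUH}$ varies with $U$) to that relative resolution. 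With both ends now relative, your first resolution maps in simply as the $U=N$ term of the colimit, and the composite gives the map $\lambda$ of spectral sequences; the $E_2$-identification then follows from \cite[Cor.~3.9]{LHS}, much as you indicate. Your application of Lemma~\ref{lem:adams} to the first spectral sequence is harmless but is not where the lemma is needed.
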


\begin{proof}
To ease our notation, we write $E_n^{hK}$ in place of $E_n^{dhK}$, whenever 
$K$ is closed in $\GG_n$. 
Also, we take implicit cofibrant replacements
as needed.
The first spectral sequence is formed from the resolution 
\[\ast \rightarrow E_n^{hNH} \rightarrow E_n^{hN} \rightarrow 
(E_n^{hN} \wedge_{E_n^{hNH}} E_n^{hN})_{K(n)} 
\rightarrow \cdots\] (see the 
discussions after \cite[(0.1) and Prop.~3.6]{LHS}). 
By Lemma~\ref{lem:adams}, the canonical
$K(n)$-local $E_n$-resolution
\[\ast \rightarrow E_n^{hH} \rightarrow E_n \rightarrow 
(E_n \wedge_{E_n^{hH}} E_n)_{K(n)} \rightarrow 
(E_n \wedge_{E_n^{hH}} E_n \wedge_{E_n^{hH}} E_n)_{K(n)} 
\rightarrow \cdots \] 
of $E_n^{hH}$
in the category of $E_n^{hH}$-modules is also a $K(n)$-local
$E_n$-resolution of $E_n^{hH}$ in the category of $S$-modules.
Thus, the second spectral sequence, which was originally
constructed by using such a resolution in the category of $S$-modules
(see \cite[p.~32, App.~A]{DevinatzHopkins}), can be regarded
as a $K(n)$-local $E_n$-Adams spectral sequence in the category of
$E_n^{hH}$-modules, so that we can also regard the second
spectral sequence as being given by \cite[(0.1)]{LHS} through the
resolution 
\[\ast \rightarrow E_n^{hH} \rightarrow E_n \rightarrow 
(E_n \wedge_{E_n^{hH}} E_n)_{K(n)} \rightarrow 
(E_n \wedge_{E_n^{hH}} E_n \wedge_{E_n^{hH}} E_n)_{K(n)} 
\rightarrow \cdots.\] 
\par
As in \cite[(3.7)]{LHS}, there is a canonical map to 
the preceding resolution, 
from the resolution 
\[(\colim_{U \trianglelefteq_o \GG_n} E_n^{hUH})_{K(n)} 
\rightarrow 
(\colim_{U \trianglelefteq_o \GG_n} E_n^{hU})_{K(n)} \rightarrow
(\colim_{U \trianglelefteq_o \GG_n} (E_n^{hU} \wedge_{E_n^{hUH}} 
E_n^{hU}))_{K(n)} \rightarrow \cdots\] 
and this map is a levelwise weak equivalence (at the beginning of
the last resolution, the usual 
``\ $\ast \rightarrow \ $" was omitted for the sake of space). 
This last 
resolution receives the obvious map 
from the first resolution
\[\ast \rightarrow E_n^{hNH} \rightarrow E_n^{hN} \rightarrow 
(E_n^{hN} \wedge_{E_n^{hNH}} E_n^{hN})_{K(n)} 
\rightarrow \cdots.\] 
Thus, composition gives a map $\lambda$ 
from the resolution for the first spectral sequence 
to the resolution for the second spectral sequence; $\lambda$ 
induces the desired 
map of spectral sequences.   
\par
By \cite[Cor.~3.9]{LHS},
$$ \pi_\ast((E_n^{hN} \wedge_{{E_n^{hNH}}} E_n^{hN})_{K(n)})
\cong \Map^c(NH/N,\pi_\ast(E_n^{hN})) $$ and
$$ \pi_\ast((E_n \wedge_{{E_n^{hH}}} E_n)_{K(n)})
\cong \Map^c(H,\pi_\ast(E_n)), $$
and, hence, the last statement
of the lemma follows easily from the definition of
$\lambda$ and \cite[proof of Thm.~3.1]{LHS}.
\end{proof}

\end{document}